\renewcommand\nomgroup[1]{%
	\item[\bfseries
	\ifstrequal{#1}{A}{Abbreviations}{%
		\ifstrequal{#1}{B}{Feed-Forward Neural Networks}{%
			\ifstrequal{#1}{C}{Neural ODEs}{%
				\ifstrequal{#1}{D}{Functions}{%
					\ifstrequal{#1}{E}{Sets}{%
						\ifstrequal{#1}{F}{Other Symbols}{}}}}}}%
	]}
\numberwithin{equation}{section}
\newcommand*{\dd}{\mathrm{d}}
\newtheorem{theorem}{Theorem}[section]
\newtheorem{definition}[theorem]{Definition}
\newtheorem{example}[theorem]{Example}
\newtheorem{proposition}[theorem]{Proposition}
\newtheorem{corollary}[theorem]{Corollary}
\newtheorem{lemma}[theorem]{Lemma}
\newtheorem{remark}[theorem]{Remark}
\newtheorem{assumpA}{Assumption}
\newcommand\norm[1]{\left\lVert#1\right\rVert}
\newcommand\SV[1]{{\color{black}{#1}}} % Changes Version 2
\title{\vspace{-13mm}Embedding Capabilities of Neural ODEs}
\author{C.~Kuehn$^\ast$ \& S.-V.~Kuntz \thanks{Department of Mathematics and Munich Data Science Institute (MDSI), Technical University of Munich, \\  \textcolor{white}{.} \hspace{6pt} Garching bei M\"unchen, 85748, Germany. \\
		\textcolor{white}{.} \hspace{6pt} Email: ckuehn@ma.tum.de (Christian Kuehn), saraviola.kuntz@ma.tum.de (Sara-Viola Kuntz)}
	\footnote{Corresponding author.}}
\date{\today}
\begin{document}
	
	\maketitle
	
	\vspace{-5mm}
	
	%	\tableofcontents
	
	\begin{abstract}
		A class of neural networks that gained particular interest in the last years are neural ordinary differential equations (neural ODEs). We study input-output relations of neural ODEs using \mbox{dynamical} systems theory and prove several results about the exact embedding of maps in different neural ODE architectures in low and high dimension. The embedding capability of a neural ODE architecture can be increased by adding, for example, a linear layer, or augmenting the phase space. Yet, there is currently no systematic theory available and our work contributes \mbox{towards} this goal by developing various embedding results as well as identifying situations, where no embedding is possible. The mathematical techniques used include as main components iterative functional equations, Morse functions and suspension flows, as well as several further ideas from analysis. Although practically, mainly universal approximation theorems are used, our \mbox{geometric} dynamical systems viewpoint on universal embedding provides a fundamental understanding, why certain neural ODE architectures perform better than others. 
	\end{abstract}
	
	\vspace{2mm}
	
	\noindent {\small \textbf{Keywords: neural ODEs, universal embedding, suspension flow, functional equations, \\ non-embeddability.}}
	
	\vspace{1mm}
	
	\noindent {\small \textbf{MSC2020: 34A34, 37C05, 68T07}}
	
	\tableofcontents
	
	\newpage
	
	\section{Introduction}
	
	Neural Networks are a machine learning technique inspired by the human brain. The goal is to create an artificial intelligence, which is in theory capable to learn any mathematical function. A general neural network consists of neurons, which can be represented as nodes of the graph, and weighted connections in between, which can be represented as edges of the graph. Based on an input and the weights that are used as parameters, a neural network computes an output. The process of adapting the weighted connections to data is called learning~\cite{aggarwal18}.\medskip
	
	The simplest neural network is the perceptron studied already by Rosenblatt in 1957~\cite{rosenblatt57}. The perceptron is a feed-forward neural network structured in layers $h_k$ for $k \in \{0,1,\ldots,K\}$ with input layer $h_0$ and output layer $h_K$. The layers $h_1,h_2,\ldots,h_{K-1}$ are called hidden layers. Each layer consists of a number $n_k \in \mathbb{N}$, $k \in \{0,1,\ldots,K\}$, of neurons. The state of each neuron is represented by a real number and the states of all neurons in layer $k$ is denoted by $h_k \in \mathbb{R}^{n_k}$. The connections between neurons of neighboring consecutive layers are characterized by weight matrices $\theta_k \in \mathbb{R}^{n_{k+1}\times n_k}$ for $k \in \{0,1,\ldots,K-1\}$. In a feed-forward neural network, the layers are iteratively computed from the preceding layer. Each layer $h_k \in \mathbb{R}^{n_{k}}$ is calculated by an activation function \mbox{$f_\text{P,k}: \mathbb{R}^{n_k} \times \mathbb{R}^{n_{k+1}\times n_k} \rightarrow \mathbb{R}^{n_{k+1}}$} of the preceding layer and weight matrix:
	\begin{equation} \label{eq:intro_perceptron}
		h_{k+1} = f_\text{P,k}(h_k,\theta_k), \qquad k \in \{0,1,\ldots,K-1\}.
	\end{equation}  
	In summary, the perceptron is a function mapping the input $h_0$ to the output $h_K$. Typically, the nonlinear activation function is either a $\tanh$, a sigmoid or a (normal, leaky or parametric) $\text{ReLU}$ and is applied to each component the matrix vector product $\theta_k h_k \in \mathbb{R}^{n_{k+1}}$.
	
	\SV{More advanced classes of neural networks are residual neural networks (ResNets)~\cite{he16} and recurrent neural networks (RNNs)~\cite{rumelhart85}. As RNNs can be seen as ResNets with shared weights \cite{liao16}, we consider in the following only the broader class of ResNets.} In contrast to perceptrons, the layer structure is weakened and additional shortcut connections are allowed. In the easiest case, ResNets still have a layer structure, where all layers consist of the same number of neurons $n \in \mathbb{N}$. Each layer $h_k \in \mathbb{R}^{n}$ is computed as the sum of the preceding layer and a typical nonlinear activation function $\SV{f_\text{ResNet}}: \mathbb{R}^n \times \mathbb{R}^{n \times n} \rightarrow \mathbb{R}^n$, which is independent of $k$:
	\begin{equation}\label{eq:intro_ResNet}
		h_{k+1} = h_{k} + \SV{f_\textup{ResNet}}(h_k, \theta_k), \qquad k \in \{0,1,\ldots,K-1\}.
	\end{equation} 
	As before, the neural network is a function mapping the input $h_0$ to the output $h_K$. In contrast to~\eqref{eq:intro_perceptron}, the iterative updates~\eqref{eq:intro_ResNet} add the current state $h_k$ to the output of the activation function.\medskip
	
	In the case of a large numbers of layers $K \gg 1$, the iterative updates~\eqref{eq:intro_ResNet} can be obtained as an Euler discretization of the ordinary differential equation (ODE) 
	\begin{equation} \label{eq:intro_ODE_1}
		\frac{\dd h}{\dd t} = f_\textup{ODE}(h(t),\theta(t)), \qquad h(0) = x,
	\end{equation}
	on the time interval $ [0,T]$ with step size $\delta = T/K$ and $f_\textup{ODE}(\cdot,\cdot) = \frac{1}{\delta} \SV{f_\textup{ResNet}}(\cdot,\cdot): \mathbb{R}^n \times \mathbb{R}^{n \times n} \rightarrow \mathbb{R}^n$ \SV{\cite{chen18, weinan17}}. The function $h: [0,T] \rightarrow \mathbb{R}^n$ can hereby be interpreted as hidden states and the function $\theta: [0,T] \rightarrow \mathbb{R}^{n \times n}$ as weights. Note that $x \in \mathbb{R}^n$ is the initial condition, corresponding to the input layer $h_0 \in \mathbb{R}^n$ of the neural network. The output of the network corresponding to the output layer \SV{$h_K \in \mathbb{R}^n$} is obtained as the time-$T$ map (cf.~\cite{guckenheimer02}) of the ODE~\eqref{eq:intro_ODE_1}. The Euler discretization of~\eqref{eq:intro_ODE_1} is
	\begin{equation*}
		h(t+\delta) \approx h(t) + \delta f_\textup{ODE}(h(t),\theta(t))
	\end{equation*}
	for $t \in \{0,\delta,2\delta,\ldots,T-\delta\}$. As such a discretization subdivides the time interval $[0,T]$ into $K$ intervals, it can be interpreted as a ResNet in which each layer with index $k$ corresponds to the discrete time~$k \delta \in [0,T]$:
	\begin{align*}
		h_{(t+\delta)/\delta} &= h_{t/\delta} + \delta f_\textup{ODE}(h_{t/\delta},\theta_{t/\delta}) \\
		\Leftrightarrow  \hspace{11mm} h_{k+1} &= h_{k} + \SV{f_\textup{ResNet}}(h_{k},\theta_{k}), 
	\end{align*}
	with $k \in  \{0,\ldots,K-1\}$. This shows, that ResNets of the form~\eqref{eq:intro_ResNet} can be obtained as Euler discretizations of the ODE~\eqref{eq:intro_ODE_1}. The Euler approximation becomes more accurate the smaller the step size~$\delta$, i.e., the larger the width~$K$ of the neural network for fixed $T$. To better understand the behavior of deep ResNets, i.e., ResNets with a large number of layers $K \gg 1$, it is helpful to study the solutions of the underlying ODE~\eqref{eq:intro_ODE_1} mapping the input $h(0) = x$ to some output $h_x(T)$. 
	
	Classical learning algorithms for neural networks optimize stationary parameters $\theta$. To be able to optimize the non-stationary parameters $\theta(t)$ in the ODE~\eqref{eq:intro_ODE_1}, the system can be rewritten as 
	\begin{equation} \label{eq:intro_ODE_2}
		\frac{\dd h}{\dd t} = f_\theta(h(t),t), \qquad h(0) = x,
	\end{equation}
	with stationary parameters $\theta \in \mathbb{R}^p$, the time variable $t$ and a function $f_\theta: \mathbb{R}^n \times [0,T] \rightarrow \mathbb{R}^n$. In machine learning, the parameter-dependent ODE~\eqref{eq:intro_ODE_2} is referred to as a neural ordinary differential equation (neural ODE)~\cite{chen18} but it is evidently also a classical class of differential equations studied in many contexts. The main difference in the context of artificial intelligence is that the focus lies on input-output relations of neural ODEs on finite time-scales. In this work, we shall expand upon this viewpoint using techniques from the theory of dynamical systems. The vector field $f_\theta(h(t),t)$ can in general be any neural network architecture. As feed-forward neural networks with continuous activation functions are continuous functions themselves, we assume $f_\theta: \mathbb{R}^n \times [0,T] \rightarrow \mathbb{R}^n$ to be a continuous, parameter-dependent function. Neural ODEs can be trained with the adjoint sensitivity method studied already by Pontryagin et al.\ in~\cite{pontryagin86} and then adapted to neural ODEs by Chen et~al.\ in~\cite{chen18}. The idea is to numerically solve a second augmented ODE backwards in time to compute the gradients needed to update the parameters. Hence, neural ODEs can be trained without storing intermediate quantities, such that the memory requirement is constant. In contrast, the memory cost of training feed-forward neural networks increases with the depth $K$ of the network. Another advantage of neural ODEs is that they can not only embed functions as the time-$T$ map of the ODE, but also model time-series data via the solution function $h(t)$. Compared to discrete networks, the data can lie on a continuous time-scale and does not need to be spaced equally.\medskip 
	
	An important property of large enough neural networks is universal approximation, which means that the set of functions a neural network can approximate is dense in the space of underlying functions. In an abstract context, the relevant definition for universal approximation in the space of continuous functions is the following:
	
	\begin{definition}[\cite{kratsios21}]
		A neural network $\mathcal{N}_\theta: \mathcal{X} \rightarrow \mathcal{Y}$ with parameters $\theta$, topological space $\mathcal{X}$ and metric space $\mathcal{Y}$ has the universal approximation property w.r.t.\ the space of continuous functions~$C^0(\mathcal{X},\mathcal{Y})$, if for every $\varepsilon >0$ and for each function $\Phi \in C^0(\mathcal{X},\mathcal{Y})$, there exists a choice of parameters~$\theta$, such that $\textup{dist}_\mathcal{Y}(\mathcal{N}_\theta(x),\Phi(x)) < \varepsilon$ for all $x \in \mathcal{X}$.
	\end{definition}
	
	The universal approximation property depends on the metric of the space $\mathcal{Y}$. \SV{For feed-forward neural networks like perceptrons, ResNets and RNNs, various universal approximation theorems exist~\cite{hornik89, kidger21, lin18, pinkus99, Schaefer2006}}, stating that by increasing the width or depth of the network and the number of parameters, any function $\Phi \in C^0( \mathcal{X}, \mathcal{Y})$ can be approximated arbitrarily well. Although universal approximation is practically extremely useful, the proofs of it tend to require careful tracking of intermediate approximation errors. In contrast, if we demand an exact representation, the mathematical arguments gain clarity. We define a neural network to have the universal embedding property, if every continuous function can be represented exactly:
	
	\begin{definition}
		A neural network $\mathcal{N}_\theta: \mathcal{X} \rightarrow \mathcal{Y}$ with parameters $\theta$ and topological spaces $\mathcal{X}$ and $\mathcal{Y}$ has the universal embedding property w.r.t.\ the space of continuous functions $C^0(\mathcal{X},\mathcal{Y})$, if for every function $ \Phi \in C^0(\mathcal{X},\mathcal{Y})$, there exists a choice of parameters~$\theta$, such that $\mathcal{N}_\theta(x) = \Phi(x)$ for all $x \in \mathcal{X}$.
	\end{definition}
	
	Embedding capabilities are already interesting on their own and can help to understand the approximation capability of a network. We study neural networks, which are based on the solution~$h(t)$ of the neural ODE~\eqref{eq:intro_ODE_2}. In the easiest case, the output of the neural network is the time-$T$ map~$h_x(T)$. In general, a neural ODE architecture is a composition of functions, which include the time-$T$ map of a neural ODE. For neural ODE architectures, only few results regarding the approximation and embedding capability exist~\cite{dupont19, kidger21, zhang20}. In these works, the neural ODE architectures differ and the space of functions approximated is often restricted to homeomorphisms. Considering time-$T$ maps of ODEs is already non-trivial. For example, the solution~$h(t)$ of the one-dimensional ODE $h'(t) = f(h(t),t)$, \mbox{$h(0) = x \in \mathbb{R}$} for \mbox{$f \in C^{1,1}(\mathbb{R} \times [0,T],\mathbb{R})$} is strictly monotonically increasing in $x$. Here $C^{1,1}(\mathbb{R} \times [0,T],\mathbb{R})$ denotes the class of functions $f:\mathbb{R} \times [0,T] \rightarrow \mathbb{R}$ that are continuously differentiable in both input variables. Hence non-increasing functions in $x$, e.g., $\Phi(x) = -x$, cannot be time-$T$ maps of neural ODEs with sufficiently regular vector field $f$. \medskip
	
	We aim to contribute to the study of neural ODEs with a dynamical systems viewpoint. In this work we study systematically if and which functions can be embedded in neural ODE architectures. In particular, we do not consider, how the parameters of the right hand side can be learned. In this paper we introduce different neural ODE architectures, generalize and mathematically sharpen existing initial explorations into the topic, prove several completely new structure theorems, and develop a more transparent context for the embedding capabilities of neural ODEs. In particular, for each neural ODE architecture we contribute to at least one of the following fundamental questions:
	
	\begin{itemize}
		\item[(Q1)] How does the neural ODE architecture perform in low dimensions?
		\item[(Q2)] Are there function classes, which cannot be embedded in the neural ODE architecture in arbitrary dimension?
		\item[(Q3)] Does this neural ODE architecture have a universal embedding property? How large does the neural ODE architecture need to be to have the universal embedding property?
	\end{itemize}
	
	Even though neural ODEs in low dimensions are not the primary use case in applications, their study helps to understand, illustrate and compare how different neural ODE architectures perform. The first neural ODE architecture we consider is based on~\eqref{eq:intro_ODE_2} and we refer to it as basic neural ODE. It maps the initial condition of an $n$-dimensional ODE to its time-$T$ map. As shown in Section~\ref{sec:restrictedembedding}, the embedding capability of basic neural ODEs is very restrictive, hence the neural ODE architecture must be modified to embed larger function classes. Possibilities are to compose the basic neural ODE with a linear layer or to increase the dimension of the phase space to obtain an augmented neural ODE~\cite{dupont19,zhang20}. In this work we show that the additional layer or the augmented phase space still have restrictions such that big function classes cannot be embedded. However, the combination of both, i.e., augmented neural ODEs with a linear layer, have under some conditions the ability to embed any integrable function. \medskip
	
	In Section~\ref{sec:overview}, different neural ODE architectures are introduced, the relevant existing results are collected, generalized and full proofs are provided for completeness. Furthermore, we also state our new theorems that require more complex mathematical arguments, which are postponed to later sections. In Section~\ref{sec:restrictedembedding} we discuss iterative functional equations, which characterize, how to choose the vector field of the neural ODE in order to embed a given map. The following Section~\ref{sec:morsesection} introduces Morse functions, which allow to define a function class, which is non-embeddable in basic neural ODEs, neural ODEs with a linear layer and augmented neural ODEs. In Section \ref{sec:suspensionflows} we prove how to embed an augmented neural ODE on a special manifold, called mapping torus, in a Euclidean space in order to use it in machine learning applications. In all three Sections \ref{sec:restrictedembedding}, \ref{sec:morsesection} and \ref{sec:suspensionflows}, the mathematical theory is followed by the proof of the main results. In summary, our work contributes to a geometric dynamical systems perspective on machine learning. We find that this viewpoint can concisely and mathematically rigorously explain the key elements for the theory of neural ODE embeddings.
	
	\section{Overview and Results}
	\label{sec:overview}
	
	In this section, several common and fundamental neural ODE architectures are introduced. A neural ODE architecture is a composition of functions, whereby one of these functions is the solution map of a neural ODE. The architectures introduced are basic neural ODEs in Section~\ref{sec:node}, neural ODEs with a linear layer in Section~\ref{sec:node_lin}, augmented neural ODEs in Section~\ref{sec:node_aug} and the combination of both - augmented neural ODEs with a linear layer - in Section~\ref{sec:node_aug_lin}. Section~\ref{sec:node_two_layer} continues with the most general neural ODE architecture with two additional layers. The different neural ODE architectures introduced are the ones most studied in the literature \cite{chen18,dupont19,zhang20}.
	
	In each case, already existing ideas are generalized and refined, as well as several fundamentally new theorems are stated. The mathematical foundations and the proofs of the new theorems can be found in Sections~\ref{sec:restrictedembedding},~\ref{sec:morsesection} and~\ref{sec:suspensionflows}. \medskip
	
	In this work we consider continuous functions $\Phi:\mathcal{X} \rightarrow \mathbb{R}^{n_\textup{out}}$ mapping an input $x \in \mathcal{X} \subset \mathbb{R}^{n_\textup{in}}$ to some output $\Phi(x) \in \mathbb{R}^{n_\textup{out}}$. Neural ODE architectures also receive an input $x \in \mathcal{X}$ and map it to some output $\textup{NODE}(x)\in \mathbb{R}^{n_\textup{out}}$, such that a neural ODE architecture defines a map $\textup{NODE}: \mathcal{X} \rightarrow \mathbb{R}^{n_\textup{out}}$. If there exists a choice of the network $\textup{NODE}$, such that the functions $\Phi$ and $\textup{NODE}$ agree, we refer to it as an embedding of $\Phi$ in $\textup{NODE}$.
	
	\begin{definition}
		A map $\Phi:\mathcal{X} \rightarrow \mathbb{R}^{n_\textup{out}}$, $\mathcal{X} \subset \mathbb{R}^{n_\textup{in}}$, is embedded in a neural ODE architecture $\textup{NODE}:\mathcal{X} \rightarrow \mathbb{R}^{n_\textup{out}}$, if $\Phi(x) = \textup{NODE}(x)$ for all $x \in\mathcal{X}$.
	\end{definition}
	
	Depending on properties of $\Phi$ and the vector field of the neural ODE, we characterize which functions can be embedded in which neural ODE architectures. As each neural ODE architecture is based on the solution of an initial value problem (IVP) on a time interval $[0,T]$, we have to assume that the solution of the IVP exists for all $t \in [0,T]$. Sufficient conditions for the existence of solutions to IVPs are stated in Appendix~\ref{app:odetheory}. For all upcoming neural ODE architectures, the following standing assumption is made.
	
	\begin{assumpA} \label{assA:ode_existence}
		The vector field of the initial value problem contained in the neural ODE architecture is continuous and the solution exists for all $t \in [0,T]$.
	\end{assumpA}
	
	For most results, we have to additionally assume uniqueness of solution curves. Sufficient conditions are also stated in Appendix~\ref{app:odetheory}. 
	
	\begin{assumpA} \label{assA:ode_unique}
		The vector field of the initial value problem contained in the neural ODE architecture is continuous and the solution is unique for all $t \in \mathcal{I}$, where $\mathcal{I}$ denotes the maximal time interval of existence.
	\end{assumpA}
	
	In the case, that Assumptions~\ref{assA:ode_existence} and~\ref{assA:ode_unique} are combined, the solution is unique for all \mbox{$t \in [0,T]$}. As we consider in Section~\ref{sec:restrictedembedding} solution maps, which might not exist for all $t \in [0,T]$ we state Assumption~\ref{assA:ode_unique} for the maximal time interval of existence $\mathcal{I}$. For feed-forward neural networks, the classical back-propagation algorithm used for learning requires differentiability of the neural network. A continuously differentiable vector field of a neural ODE is sufficient to imply Assumption~\ref{assA:ode_unique}, see Appendix~\ref{app:odetheory}. \medskip
	
	As we do not optimize the neural ODE architectures with respect to its parameters, we denote from now on the vector field by $f$ and do not explicitly state the dependency on its parameters $\theta$ anymore. In particular, we are here interested in the existence of an embedding and not how it can be learned. 
	
	\subsection{Basic Neural ODEs}
	\label{sec:node}
	
	A basic neural ODE is defined by
	\begin{equation} \label{eq:node_basic} \tag{NODE$_\text{basic}$}
		\frac{\dd h}{\dd t} = f(h(t),t), \qquad h(0) = x \in \mathcal{X},
	\end{equation}
	for a set of initial conditions $\mathcal{X} \subset \mathbb{R}^n$ and a vector field $f \in C^{0,0}( \mathbb{R}^n \times [0,T],\mathbb{R}^n)$, which is continuous in both input variables. The solution of the neural ODE is denoted by $h_x: [0,T] \rightarrow \mathbb{R}^n$ to take into account the dependence on the initial condition $x \in \mathcal{X}$. The output of the neural ODE is the time-$T$ map
	\begin{equation*}
		\textup{NODE}_{(1)}: \mathcal{X} \mapsto \mathbb{R}^n, \qquad \textup{NODE}_{(1)}(x) \coloneqq h_x(T).
	\end{equation*} 
	Basic neural ODEs can only be used to embed maps $\Phi: \mathcal{X} \rightarrow \mathbb{R}^n$, $\mathcal{X} \subset \mathbb{R}^n$ where the input and output dimension agree with the dimension of the ODE, i.e., $n \coloneqq n_\textup{in} = n_\textup{out} $, see Figure~\ref{fig:node}.  As the space is not augmented in basic neural ODEs, the problem of embedding a map $\Phi: \mathcal{X} \rightarrow \mathbb{R}^n$ in a basic neural ODE is called the restricted embedding problem. \medskip
	
	Due to the topological structure of solution curves of ODEs, the class of functions which can be embedded in basic neural ODEs is restricted. In the following example, a simple one-dimensional non-embeddable map is given. 
	
	\begin{figure}[H]
		\centering
		\includegraphics[width=0.3\textwidth]{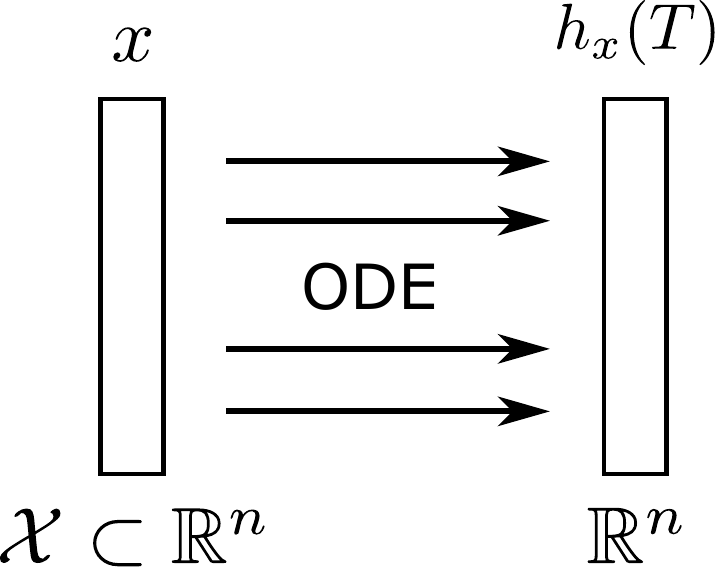}
		\caption{Sketch of a basic neural ODE to embed maps $\Phi: \mathcal{X} \rightarrow \mathbb{R}^n$, $\mathcal{X} \subset \mathbb{R}^n$.}
		\label{fig:node}
	\end{figure}

	\begin{example} \label{ex:minusxembedding1}
		Under Assumptions~\ref{assA:ode_existence} and~\ref{assA:ode_unique}, the map $\Phi: \mathbb{R} \rightarrow \mathbb{R}$, $x \mapsto -x$ cannot be embedded in the neural ODE architecture $\textup{NODE}_{(1)}$. As solutions of~\eqref{eq:node_basic} are unique, solution curves do not cross. This is a contradiction to the fact that solution curves going from $h_0(0) = 0$ to $h_0(T) = 0$ and from $h_{x^*}(0) = x^*$ to $h_{x^*}(T) = -x^*$ for some $x^* \neq 0$ need to cross by the intermediate value theorem. The setting is visualized in Figure~\ref{fig:node_example}.
	\end{example}
	
	\begin{figure}[H]
		\centering
		\includegraphics[width=0.3\textwidth]{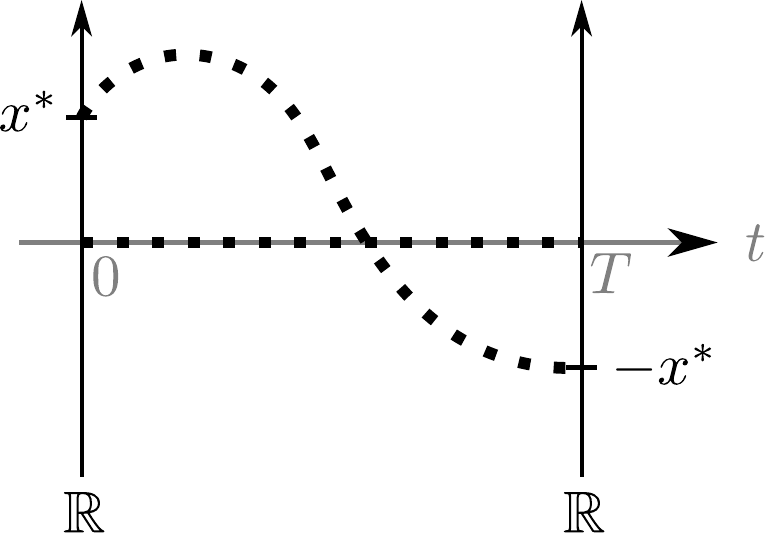}
		\caption{The map $\Phi(x) = -x$ cannot be embedded in a basic neural ODE. The dotted lines represent possible trajectories from  $h_0(0) = 0$ to $h_0(T) = 0$ and from $h_{x^*}(0) = x^*$ to $h_{x^*}(T) = -x^*$ for some $x^* \neq 0$, which always need to intersect.}
		\label{fig:node_example}
	\end{figure}
	
	This counterexample can be generalized to higher dimensions, contributing to question (Q2). The following theorem is based on ideas of~\cite[Theorem 1]{zhang20}, but we weaken the assumptions on the map~$\Phi$ and on the regularity of the vector field $f$ of~\eqref{eq:node_basic}.
	
	\begin{theorem} \label{th:zhang_generalized}
		Let $\mathcal{Z} \subset \mathbb{R}^n$ subdividing $\mathbb{R}^n$ in at least two, but finitely many disjoint, connected subsets $\mathcal{C}_i$, $i \in \{1,2,\ldots,m\}$, such that every curve from $x \in \mathcal{C}_i$ to $y \in \mathcal{C}_j$, $i \neq j$ has to intersect the set $\mathcal{Z}$. Consider a continuous map $\Phi: \mathcal{X} \rightarrow \mathbb{R}^n$, $\mathcal{Z} \subset \mathcal{X} \subset \mathbb{R}^n$, which is the identity transformation on $\mathcal{Z}$ (i.e., $\Phi(x) = x$ for $x \in \mathcal{Z}$), and for which there exists a point $x^\ast \in \mathcal{X} \cap \mathcal{C}_{i^\ast}$ being mapped to $\Phi(x^\ast) \in \mathcal{C}_{j^\ast}$ with $i^\ast \neq j^\ast$. Then under Assumptions~\ref{assA:ode_existence} and~\ref{assA:ode_unique}, the map $\Phi$ cannot be embedded in the neural ODE architecture $\textup{NODE}_{(1)}$.
	\end{theorem}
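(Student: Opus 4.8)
The plan is to argue by contradiction, exploiting that under Assumptions~\ref{assA:ode_existence} and~\ref{assA:ode_unique} the flow of~\eqref{eq:node_basic} is a homeomorphism. Suppose $\Phi$ is embedded in $\textup{NODE}_{(1)}$, so that $h_x(T)=\Phi(x)$ for every $x\in\mathcal X$. Existence and uniqueness produce a continuous family of homeomorphisms $\varphi_{0,t}\colon\mathbb R^n\to\mathbb R^n$, $t\in[0,T]$, with $\varphi_{0,t}(x)=h_x(t)$, $\varphi_{0,0}=\mathrm{id}$, and inverse $\varphi_{t,0}$ obtained by solving backwards in time; in particular $\varphi_{0,T}$ agrees with $\Phi$ on $\mathcal X$. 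The geometric translation of uniqueness is that two distinct solution curves of~\eqref{eq:node_basic} never meet in the extended phase space $\mathbb R^n\times[0,T]$, which is the higher-dimensional form of the non-crossing used in Example~\ref{ex:minusxembedding1}.

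First I would localize the crossing of $\mathcal Z$: the curve $t\mapsto h_{x^\ast}(t)$ is continuous, starts in $\mathcal C_{i^\ast}$ and ends at $\Phi(x^\ast)\in\mathcal C_{j^\ast}$ with $i^\ast\neq j^\ast$, so by the separating hypothesis there is $t^\ast\in(0,T)$ with $z^\ast:=h_{x^\ast}(t^\ast)\in\mathcal Z$. Since $\mathcal Z\subseteq\mathcal X$ and $\Phi$ is the identity on $\mathcal Z$, the time-$T$ map fixes $\mathcal Z$ pointwise (the fact that $\varphi_{0,T}(z^\ast)=z^\ast$ being the concrete instance of this); hence $\varphi_{0,T}$ maps $\mathbb R^n\setminus\mathcal Z$ onto itself and permutes its connected components, $\varphi_{0,T}(\mathcal C_i)=\mathcal C_{\sigma(i)}$ for some permutation $\sigma$.

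The heart of the argument is that $\sigma$ is the identity: granting this, $\Phi(x^\ast)=\varphi_{0,T}(x^\ast)\in\varphi_{0,T}(\mathcal C_{i^\ast})=\mathcal C_{i^\ast}$, contradicting $\Phi(x^\ast)\in\mathcal C_{j^\ast}$. To prove $\sigma=\mathrm{id}$ I would use that $\{\varphi_{0,t}\}_{t\in[0,T]}$ deforms the identity through homeomorphisms, so that $\Psi(x,t):=(\varphi_{0,t}(x),t)$ is a homeomorphism of the slab $\mathbb R^n\times[0,T]$ taking $\mathcal Z\times[0,T]$ onto the flow-out $\mathcal M:=\{(\varphi_{0,t}(z),t):z\in\mathcal Z,\ t\in[0,T]\}$ and each tube $\mathcal C_i\times[0,T]$ onto a connected set $\mathcal D_i$, with the $\mathcal D_i$ being precisely the components of $(\mathbb R^n\times[0,T])\setminus\mathcal M$; every solution curve of~\eqref{eq:node_basic}, viewed in the slab, is trapped in a single $\mathcal D_i$. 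One then tracks the bottom slice $\mathcal C_{i^\ast}$ (at $t=0$) of $\mathcal D_{i^\ast}$ to its top slice $\mathcal C_{\sigma(i^\ast)}$ (at $t=T$) and argues that this labelling cannot change because the deformation begins at $\mathrm{id}$ --- equivalently, an orientation-preserving self-homeomorphism of $\mathbb R^n$ that fixes the separating set $\mathcal Z$ pointwise must preserve each complementary component. I expect this last step to be the main obstacle: for $n=1$ it reduces to monotonicity of scalar flows, exactly as in Example~\ref{ex:minusxembedding1}, but in general it is an orientation/degree argument near the common boundary $\partial\mathcal C_{i^\ast}\subseteq\mathcal Z$, and it is precisely where the higher-dimensional, non-autonomous setting cannot be handled by one-dimensional intermediate-value reasoning alone.
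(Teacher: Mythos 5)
Your strategy stalls exactly where you say it might: the claim that the time-$T$ map, being deformed from the identity through the time-$t$ maps and fixing $\mathcal{Z}$ pointwise, must send each $\mathcal{C}_i$ into itself ($\sigma=\mathrm{id}$) is never proven, and it is essentially the theorem itself in disguise. The ``orientation/degree argument near the common boundary'' you defer to is not routine in this generality: no regularity is assumed on $\mathcal{Z}$ (it is an arbitrary separating set), the crossing point $z^\ast$ need not lie on a tame piece of a common boundary of $\mathcal{C}_{i^\ast}$ and $\mathcal{C}_{j^\ast}$, and a local winding-number or degree computation at $z^\ast$ needs structural information about $\mathcal{Z}$ near $z^\ast$ that the hypotheses do not provide. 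In addition, the scaffolding is stronger than Assumptions~\ref{assA:ode_existence} and~\ref{assA:ode_unique} grant: existence is only assumed for the initial value problem with data in $\mathcal{X}$, so the maps $\varphi_{0,t}$ need not be defined on all of $\mathbb{R}^n$, and surjectivity (needed for ``maps $\mathbb{R}^n\setminus\mathcal{Z}$ onto itself and permutes its components'') would require backward-in-time solvability from arbitrary points; what uniqueness plus Theorem~\ref{th:continuousdependence} actually give is only that the time-$T$ map is continuous and injective on its domain. (Injectivity alone does give $\varphi_{0,T}(\mathcal{C}_{i})\cap\mathcal{Z}=\varnothing$ and hence containment in a single component wherever the map is defined, but that is as far as your setup gets without the missing lemma.)

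The paper's proof takes a different and much shorter route that uses $\Phi\vert_{\mathcal{Z}}=\mathrm{id}$ dynamically rather than topologically, and it needs none of the component-permutation machinery. The trajectory of $x^\ast$ meets $\mathcal{Z}$ at some $\tau\in(0,T)$ in a point $z^\ast=h_{x^\ast}(\tau)$; since $\Phi(z^\ast)=z^\ast$, the solution through $z^\ast$ returns to $z^\ast$ after time $T$, so the orbit through the crossing point is a closed loop $\gamma$, and uniqueness forces the entire trajectory of $x^\ast$ to lie on $\gamma$ (the paper writes $\Phi(h_{x^\ast}(\tau))=h_{x^\ast}(\tau+T)$, i.e.\ it invokes the flow property of the solution operator); being on a $T$-periodic orbit yields $\Phi(x^\ast)=h_{x^\ast}(T)=h_{x^\ast}(0)=x^\ast$, contradicting $\Phi(x^\ast)\in\mathcal{C}_{j^\ast}$ with $\mathcal{C}_{j^\ast}$ disjoint from $\mathcal{C}_{i^\ast}$. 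So the separation hypothesis is used only to produce the crossing time, and no degree theory or knowledge of $\mathcal{Z}$ beyond that is required. If you want to salvage your approach, the honest comparison is: your route, if the $\sigma=\mathrm{id}$ lemma were actually supplied (say for hyperplanes or other tame $\mathcal{Z}$ via the local degree computation you sketch), would be more robust with respect to explicit time dependence than the paper's closed-loop argument, but as written it is a plan with its central step missing, not a proof.
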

	
	\begin{proof}
		Suppose there exists an embedding of $\Phi$ in the neural ODE architecture $\textup{NODE}_{(1)}$ with solution map $h_x: [0,T] \rightarrow \mathbb{R}$. By the assumptions of the theorem, it holds $h_{x^\ast}(0) = x^\ast \in \mathcal{C}_{i^\ast}$, \mbox{$h_{x^\ast}(T) = \Phi(x^\ast) \in \mathcal{C}_{j^\ast}$} and $h_{x^\ast}(\tau) \in \mathcal{Z}$ for some $\tau \in (0,T)$. As $\Phi$ is an identity transformation on $\mathcal{Z}$, it holds $h_{x^\ast}(\tau) = \Phi(h_{x^\ast}(\tau)) = h_{x^\ast}(\tau+T)$, i.e., the trajectory starting at $h_{x^\ast}(\tau)$ builds a closed loop $\gamma$ ending at the same point in $\mathcal{Z}$ after the time $T$. By Assumptions~\ref{assA:ode_existence} and~\ref{assA:ode_unique}, it holds $h_{x^\ast}(t) \in \gamma$ for all $t \in [0,\tau+T]$, which is a contradiction to $h_{x^\ast}(0) = x^\ast \in \mathcal{C}_{i^\ast}$ and $h_{x^\ast}(T) = \Phi(x^\ast) \in \mathcal{C}_{j^\ast}$.
	\end{proof}

	The one-dimensional map $\Phi: \mathbb{R} \rightarrow \mathbb{R}$, $x \mapsto -x$ is a special case of Theorem~\ref{th:zhang_generalized} with $\mathcal{X} = \mathbb{R}$, $C_1 = (-\infty,0)$, $\mathcal{Z} = \{0\}$ and $C_2 = (0,\infty)$. \medskip
	
	In Section~\ref{sec:restrictedembedding}, the restricted embedding problem is discussed. For the case that~\eqref{eq:node_basic} is autonomous, i.e., $f$ does not depend explicitly on $t$, functional equations characterizing the relationship between $f$, $h$ and $\Phi$ are derived. If the functional equations have no solutions, $\Phi$ cannot be embedded in an autonomous basic neural ODE. If there exists a solution to the corresponding functional equations, a candidate for a vector field $f$ with time-$T$ map $\Phi$ is found. In the one-dimensional case, we obtain the following results, which contribute to question (Q1).
	
	\begin{theorem}[See Theorems~\ref{th:julia0d1d} and~\ref{th:julia_monome}]
		The following holds for the neural ODE architecture $\textup{NODE}_{(1)}$ used to embed maps $\Phi:\mathbb{R}\rightarrow \mathbb{R}$, $x \mapsto cx^\alpha$ depending on the coefficient $c \in \mathbb{R}$ and the exponent $\alpha \in \mathbb{R}_{\geq 0}$.
		\begin{enumerate}[label=(\alph*)]
			\item For $\alpha = 0$: let $\Phi: \mathbb{R} \rightarrow \mathbb{R}$, $x \mapsto c$. Then under Assumptions~\ref{assA:ode_existence},~\ref{assA:ode_unique}, there exists no basic neural ODE embedding $\Phi$ as its time-$T$ map.
			\item For $\alpha = 1$: let $\Phi: \mathbb{R} \rightarrow \mathbb{R}$, $x \mapsto cx$. If $c>0$, the linear function $f(h) = \frac{\ln(c)}{T} h$ leads to the basic neural ODE $h' = f(h)$, $h(0) = x$ with time-$T$ map $h_x(T) = cx$. If $c\leq 0$, then under Assumptions~\ref{assA:ode_existence},~\ref{assA:ode_unique} no basic neural ODE with time-$T$ map $\Phi$ exists. 
			\item Let $\Phi: \mathbb{R}_{>0} \rightarrow \mathbb{R}_{>0}$, $x \mapsto cx^\alpha$ with $c >0$ and $\alpha \notin \{0,1\}$. Then the neural ODE 
			\begin{equation*}
				\frac{\dd h}{\dd t} = \frac{\ln(\alpha)}{T} h \ln\left(c^{1/(\alpha-1)}h\right), \quad h(0) =x >0
			\end{equation*}
			has for all $t\geq 0$ the solution
			\begin{equation*}
				h_x(t) = c^{1/(1-\alpha)}\left(x c^{1/(\alpha-1)}\right)^{\alpha^{t/T}}
			\end{equation*}
			with time-$T$ map $h_x(T) = \Phi(x) = cx^\alpha$.
		\end{enumerate}
	\end{theorem}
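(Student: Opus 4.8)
The plan is to split the statement into its constructive half --- the positive claim in~(b) for $c>0$ together with all of~(c) --- and its obstruction half --- the non-existence claims in~(a) and in~(b) for $c\le 0$.

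For the obstructions I would first isolate the elementary structural fact that, under Assumptions~\ref{assA:ode_existence} and~\ref{assA:ode_unique}, the time-$T$ map $x\mapsto h_x(T)$ of a one-dimensional basic neural ODE is strictly increasing, hence injective: if $x_1<x_2$, the two graphs $t\mapsto(t,h_{x_i}(t))$ in the extended phase space $[0,T]\times\mathbb{R}$ cannot meet (by uniqueness they would otherwise coincide, contradicting $h_{x_1}(0)\neq h_{x_2}(0)$), so $h_{x_1}(t)<h_{x_2}(t)$ for all $t$, in particular at $t=T$. Given this: in~(a) the constant map $\Phi\equiv c$ on $\mathbb{R}$ is not injective, so it cannot be a time-$T$ map; in~(b) with $c<0$ the map $x\mapsto cx$ is strictly decreasing, contradicting strict monotonicity of the time-$T$ map (equivalently one may invoke Theorem~\ref{th:zhang_generalized} with $\mathcal{Z}=\{0\}$, $\mathcal{C}_1=(-\infty,0)$, $\mathcal{C}_2=(0,\infty)$, $x^\ast=1$); and~(b) with $c=0$ is again the constant map, reducing to~(a). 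The positive part of~(b) is then immediate: the linear vector field $f(h)=\frac{\ln c}{T}h$ is globally Lipschitz, so Assumptions~\ref{assA:ode_existence} and~\ref{assA:ode_unique} hold, and the IVP $h'=f(h)$, $h(0)=x$ has the unique global solution $h_x(t)=x\,c^{t/T}$, whose time-$T$ value is $cx=\Phi(x)$.

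The core of the argument is~(c), where the idea is to construct a genuine one-dimensional flow interpolating the iterates of the monomial $\Phi$. By induction one gets $\Phi^{\circ n}(x)=c^{(\alpha^n-1)/(\alpha-1)}x^{\alpha^n}$ for $x>0$; replacing the iteration count $n$ by the continuous time $t/T$ suggests the candidate
\begin{equation*}
	h_x(t)=c^{(\alpha^{t/T}-1)/(\alpha-1)}\,x^{\alpha^{t/T}},
\end{equation*}
which is precisely the formula in the statement after rewriting $c^{(\alpha^{t/T}-1)/(\alpha-1)}=c^{1/(1-\alpha)}\,(c^{1/(\alpha-1)})^{\alpha^{t/T}}$. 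Since every power occurring is a power of a strictly positive real, $h_x(t)$ is well-defined and strictly positive for all $x>0$ and $t\ge 0$ --- this is exactly why~(c) is restricted to $\mathbb{R}_{>0}$ and $c>0$. One checks $h_x(0)=x$ and $h_x(T)=cx^\alpha=\Phi(x)$ directly, and recovers the ODE by differentiation: writing $\ln h_x(t)$ as an affine function of $u\coloneqq\alpha^{t/T}$, using $\dot u=\frac{\ln\alpha}{T}u$, and eliminating $u$ via $\ln h_x$, one obtains $\frac{\dd}{\dd t}\ln h_x(t)=\frac{\ln\alpha}{T}\ln(c^{1/(\alpha-1)}h_x(t))$, i.e.\ the stated equation $h'=\frac{\ln\alpha}{T}h\ln(c^{1/(\alpha-1)}h)$. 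Since the right-hand side is smooth, hence locally Lipschitz, on the open set $(0,\infty)$, solutions through points of $(0,\infty)$ are unique, so $h_x$ is \emph{the} solution; as the explicit formula already lives in $(0,\infty)$ for all $t\ge 0$, Assumptions~\ref{assA:ode_existence} and~\ref{assA:ode_unique} hold on $[0,\infty)$ and the time-$T$ map equals $\Phi$. (Absence of finite-time escape can also be seen directly, since $\int^{\infty}\dd h/(h\ln h)$ diverges.)

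I expect the entire difficulty to sit in~(c): it amounts to building a continuous iteration semigroup (a family of fractional iterates) of $\Phi(x)=cx^\alpha$ and realizing it as an honest flow, which is what forces the sign restrictions $x>0$, $c>0$ --- negative bases or negative coefficients break the real-power branch and the semigroup identity $h_x(t+s)=h_{h_x(s)}(t)$. Secondary technical care is needed in carrying out the differentiation so that it lands exactly on the stated vector field, and in noting that the equilibrium $h^\ast=c^{1/(1-\alpha)}$ and the boundary $h=0$ are reached only in infinite time. By contrast, the obstruction parts~(a) and~(b) are routine once the monotonicity lemma for one-dimensional time-$T$ maps is recorded.
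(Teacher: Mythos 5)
Your proof is correct, and parts (a) and (b) match the paper's argument: the non-existence claims are obtained, exactly as in the paper, from the strict monotonicity of one-dimensional time-$T$ maps (Proposition~\ref{prop:monotonicallyincreasing}), and the positive half of (b) is the same explicit linear flow $h_x(t)=x\,c^{t/T}$. Where you genuinely diverge is part (c). The paper (Theorem~\ref{th:julia_monome}) derives the vector field by solving Julia's functional equation~\eqref{eq:julia}: for $\Phi(x)=cx^\alpha$ it reads $c\alpha x^{\alpha-1}f(x)=f(cx^\alpha)$, and the ansatz $f(x)=x\tilde f(x)$ together with the substitution $\nu(x)=\tilde f\bigl(c^{1/(1-\alpha)}e^{x}\bigr)$ reduces it to the linear equation $\alpha\nu(x)=\nu(\alpha x)$ already settled in part (b), yielding the one-parameter family $f_a(x)=ax\ln\bigl(c^{1/(\alpha-1)}x\bigr)$; the explicit flow is then only stated. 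You instead build the flow directly as a fractional-iteration interpolation of $\Phi^{\circ n}(x)=c^{(\alpha^n-1)/(\alpha-1)}x^{\alpha^n}$, rewrite it into the stated form, and verify by differentiation (via $u=\alpha^{t/T}$, $\dot u=\tfrac{\ln\alpha}{T}u$) that it satisfies the stated ODE with $h_x(0)=x$ and $h_x(T)=cx^\alpha$. Both routes are sound; the paper's fits the functional-equation framework of Section~\ref{sec:restrictedembedding} and explains how the vector field is found systematically (giving the whole family $f_a$, not just $a=\ln\alpha/T$), while yours is more self-contained and actually carries out the verification of the solution formula and its time-$T$ map, a computation the paper leaves implicit; your additional remarks on local Lipschitz uniqueness on $(0,\infty)$ and the absence of finite-time blow-up are not required for the claim but are correct and harmless.
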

	
	This result is interesting, as the vector fields embedding monomials can be combined to construct a neural ODE architecture approximating in each component any polynomial $p: \mathbb{R} \rightarrow \mathbb{R}$ with $p(0) = 0$ up to a certain order, c.f.\ Corollary~\ref{cor:approximatepolynomial}. \medskip

	In Section~\ref{sec:morsesection}, (topological) Morse functions are introduced~\cite{hirsch76,morse59}. With Morse functions, we can define a more general class of functions than in Theorem \ref{th:zhang_generalized}, which is also non-embeddable in basic neural ODEs. If one component of a continuous map $\Phi$ is a topological Morse function with a topologically critical point, then we prove that the map $\Phi$ cannot be embedded in the basic neural ODE architecture $\textup{NODE}_{(1)}$. The relevant definitions of topological Morse functions and topologically critical points can be found in Section~\ref{sec:morsesection}.
	
	\begin{theorem}[See Corollary~\ref{cor:neg_3}] \label{th:neg_3}
		Let $\Phi \in C^0(\mathcal{X},\mathbb{R}^{n})$, $\mathcal{X} \subset \mathbb{R}^{n}$ be a map which has at least one component $\Phi_i \in C^0(\mathcal{X},\mathbb{R})$, $i \in \{1,2,\ldots,n\}$, which is a topological Morse function with a topologically critical point. Then under Assumptions~\ref{assA:ode_existence},~\ref{assA:ode_unique}, the map $\Phi$ cannot be embedded in the neural ODE architecture $\textup{NODE}_{(1)}$.
	\end{theorem}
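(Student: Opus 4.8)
The plan is to argue by contradiction. If $\Phi$ were embedded in $\textup{NODE}_{(1)}$, then $\Phi$ would be the time-$T$ map of an ODE satisfying Assumptions~\ref{assA:ode_existence} and~\ref{assA:ode_unique}, hence a homeomorphism onto its image; but then I will show that no component of such a map can possess a topologically critical point, contradicting the hypothesis.

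The first step is to establish that the time-$T$ map $H\colon\mathcal{X}\to\mathbb{R}^n$, $H(x)\coloneqq h_x(T)$, where $h_x$ is the solution of~\eqref{eq:node_basic} with $h_x(0)=x$, is a homeomorphism onto its image $H(\mathcal{X})$. Injectivity comes from backward uniqueness of solution curves (Assumption~\ref{assA:ode_unique}): if $h_x(T)=h_y(T)$, the two curves coincide on all of $[0,T]$, so $x=y$ — this is the same non-crossing principle already used in Example~\ref{ex:minusxembedding1} and Theorem~\ref{th:zhang_generalized}. Continuity of $H$ is continuous dependence on initial conditions, which holds for a continuous vector field once solutions are unique (Appendix~\ref{app:odetheory}). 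For continuity of $H^{-1}$, I would note that for every $x$ the reversed curve $t\mapsto h_x(T-t)$ solves the time-reversed ODE on $[0,T]$; hence the backward time-$T$ flow $\psi_{-T}$ is defined and continuous on a neighbourhood of each point of $H(\mathcal{X})$, and $\psi_{-T}\circ H=\mathrm{id}_{\mathcal{X}}$ by uniqueness, so $H^{-1}=\psi_{-T}|_{H(\mathcal{X})}$ is continuous. Assuming $\mathcal{X}$ is an open subset of $\mathbb{R}^n$ (or, more generally, a topological $n$-manifold), as is implicit in speaking of topological Morse functions on $\mathcal{X}$, Brouwer's invariance of domain shows in addition that $H(\mathcal{X})$ is open in $\mathbb{R}^n$.

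The second step uses that $\Phi_i=\pi_i\circ H$, where $\pi_i\colon\mathbb{R}^n\to\mathbb{R}$ denotes the $i$-th coordinate projection. On the open set $H(\mathcal{X})$ the map $\pi_i$ is smooth with nowhere-vanishing differential, so every point of $H(\mathcal{X})$ is a topologically regular point of $\pi_i$ — near any point $\pi_i$ is literally a coordinate function. The lemma I would isolate in Section~\ref{sec:morsesection} is that topological regularity of a point is invariant under precomposition with a homeomorphism of the domain: a chart conjugating $\pi_i$ near $H(p)$ to a coordinate projection, composed with the homeomorphism $H$, conjugates $\Phi_i=\pi_i\circ H$ near $p$ to a coordinate projection. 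Consequently every point of $\mathcal{X}$ is a topologically regular point of $\Phi_i$, so $\Phi_i$ has no topologically critical point — contradicting the hypothesis that some component of $\Phi$ is a topological Morse function with a topologically critical point. Hence $\Phi$ cannot be embedded in $\textup{NODE}_{(1)}$.

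I expect the main obstacle to be the first step: carefully proving continuity of the inverse time-$T$ map and pinning down the hypotheses on $\mathcal{X}$ under which ``topological Morse function'' is meaningful, together with stating the invariance lemma for topologically regular points cleanly. An alternative, more hands-on route to the second step (especially transparent for $n\ge 2$) avoids the language of regular points altogether: for a regular value $c$ of $\pi_i$ the level set $\Phi_i^{-1}(c)=H^{-1}(\{y\in\mathbb{R}^n : y_i=c\})$ is locally homeomorphic to $\mathbb{R}^{n-1}$ around each of its points, whereas at a topologically critical point of index $\lambda$ the local level set through the critical value is, by the topological Morse lemma, a single point when $\lambda\in\{0,n\}$ and a cone that fails to be an $(n-1)$-manifold when $0<\lambda<n$; in neither case can such a set arise as the image of a piece of a hyperplane under a homeomorphism. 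Either way the content is the same: the flow of an ODE realises, in each coordinate, only projection-like functions, never genuinely critical ones.
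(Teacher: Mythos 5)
Your argument is correct, but it follows a genuinely different route from the paper. The paper obtains this statement as Corollary~\ref{cor:neg_3}, a special case of Theorem~\ref{th:neg_1} (linear layer equal to the identity): there, one passes to the canonical chart $\mu_1$ of the topologically critical point, uses the symmetry $\Phi_i(\mu_1(u))=\Phi_i(\mu_1(-u))$ of the index-$k$ normal form from Definition~\ref{def:topologicallycritical}, and applies the Borsuk--Ulam Theorem~\ref{th:borsukulam} to the first $n-1$ components of $H\circ\mu_1\circ\mu_2$ on a small sphere to produce two distinct initial conditions with the same time-$T$ image, contradicting injectivity. You instead never invoke Borsuk--Ulam or the quadratic normal form: you show that $H$ is a homeomorphism onto an open image (invariance of domain; note that your backward-flow argument for continuity of $H^{-1}$ is superfluous once invariance of domain is used, and its backward-uniqueness reading of Assumption~\ref{assA:ode_unique} is exactly the reading the paper itself uses for injectivity and for the homeomorphism claim in the proof of Theorem~\ref{th:dupont_nonembeddable}), and that topological ordinariness is preserved under precomposition with a homeomorphism, so $\Phi_i=\pi_i\circ H$ is topologically ordinary at every point. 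This buys a slightly stronger conclusion for $\textup{NODE}_{(1)}$ --- any topologically critical point, indexed or not, already obstructs embedding, so the Morse hypothesis is not needed --- and it is arguably more elementary; what it does not buy, as written, is the paper's greater generality: the Borsuk--Ulam argument is designed to survive composition with a linear layer (Theorem~\ref{th:neg_1}) and phase-space augmentation (Theorem~\ref{th:neg_2}), where $\Phi_i$ is no longer simply a coordinate of a homeomorphism (your scheme would still extend to the linear-layer case only after a separate treatment of rows of $A$ that vanish). Your second, level-set-based variant of the final step is also sound but unnecessary given the invariance lemma.
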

	
	In Section~\ref{sec:morsesection} it is shown that for example all one-dimensional analytic maps with at least one extreme point are topological Morse functions with a topologically critical point. Every topological Morse function is also a Morse function. Already the class of Morse functions is quite common, as it is dense in the Banach space of $k$ times continuously differentiable functions.
	
	\begin{theorem}[See Corollary~\ref{cor:morsefunctionsdense}]
		The set of Morse functions $\Psi: \mathcal{X} \rightarrow \mathbb{R}$, $\mathcal{X} \subset \mathbb{R}^n$ open and bounded is for $k \geq n+1$ a dense subset of the Banach space 
		\begin{equation*}
			B \coloneqq \left(C^k(\bar{\mathcal{X}},\mathbb{R}),\norm{\cdot}_{C^k(\bar{\mathcal{X}})} \right),
		\end{equation*}
		where the vector space $C^k(\bar{\mathcal{X}},\mathbb{R})$ and the norm $\norm{\cdot}_{C^k(\bar{\mathcal{X}})}$ are defined in Corollary~\ref{cor:morsefunctionsdense}.
	\end{theorem}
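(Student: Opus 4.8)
The plan is to reduce the statement to the classical Morse–Sard-type density theorem for Morse functions on manifolds, which is usually proved via parametrized transversality (Thom's transversality theorem). First I would fix the setup: $\mathcal{X} \subset \mathbb{R}^n$ is open and bounded, so $\bar{\mathcal{X}}$ is compact, and $C^k(\bar{\mathcal{X}},\mathbb{R})$ with the $C^k$-norm (the sup of all partial derivatives up to order $k$ over $\bar{\mathcal{X}}$) is a Banach space. A function $\Psi$ is Morse if all its critical points are nondegenerate, i.e. at every $x$ with $\nabla \Psi(x) = 0$ the Hessian $D^2\Psi(x)$ is invertible. The goal is: given $\Psi \in C^k(\bar{\mathcal{X}},\mathbb{R})$ and $\varepsilon > 0$, produce a Morse $\tilde\Psi$ with $\|\Psi - \tilde\Psi\|_{C^k(\bar{\mathcal{X}})} < \varepsilon$.

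The key step is the standard perturbation trick: consider the family $\Psi_a(x) \coloneqq \Psi(x) - \langle a, x\rangle$ for $a \in \mathbb{R}^n$. Then $\nabla \Psi_a(x) = \nabla\Psi(x) - a$, so $x$ is a critical point of $\Psi_a$ iff $\nabla\Psi(x) = a$, and in that case $D^2\Psi_a(x) = D^2\Psi(x)$. Hence $\Psi_a$ is Morse iff $a$ is a regular value of the map $\nabla\Psi : \mathcal{X} \to \mathbb{R}^n$: indeed, if $\nabla\Psi(x) = a$ and $D(\nabla\Psi)(x) = D^2\Psi(x)$ is singular, then $a$ is a critical value, and conversely. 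Now $\nabla\Psi$ is $C^{k-1}$, and since $k \geq n+1$ we have $k - 1 \geq n$, so the Morse–Sard theorem applies and the set of critical values of $\nabla\Psi$ has Lebesgue measure zero in $\mathbb{R}^n$. Therefore regular values $a$ are dense; choosing $|a|$ small enough makes $\|\langle a,\cdot\rangle\|_{C^k(\bar{\mathcal{X}})} = |a| \cdot \|\,\mathrm{id}\,\|_{C^k(\bar{\mathcal{X}})}$ (which is $|a|$ times a fixed constant depending only on $\bar{\mathcal{X}}$, since the second and higher derivatives of a linear map vanish) smaller than $\varepsilon$. Then $\tilde\Psi \coloneqq \Psi_a = \Psi - \langle a,\cdot\rangle$ is the desired Morse function.

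I expect the main obstacle to be bookkeeping about regularity thresholds and boundary behaviour rather than any deep difficulty. Specifically: (i) one must ensure $\nabla\Psi$ is regular enough for Morse–Sard, which is exactly why the hypothesis $k \geq n+1$ appears (Morse–Sard for a $C^r$ map $\mathbb{R}^n \to \mathbb{R}^n$ needs $r \geq 1$ here since domain and codomain have equal dimension, but to be safe with the general Sard statement one wants $r \geq \max(1, n - n + 1) = 1$; the paper's stronger $k \geq n+1$ presumably also guarantees that critical points of $\tilde\Psi$ are isolated and that the Morse lemma applies in $C^k$, giving genuine nondegenerate critical points in the differentiable sense). (ii) Because we work on $\bar{\mathcal{X}}$ rather than a closed manifold, one should be slightly careful that "Morse" is interpreted as a condition on critical points in the open set $\mathcal{X}$ (or one restricts attention to interior critical points), so that the Morse–Sard argument on the open set $\mathcal{X}$ suffices; alternatively one invokes that $\bar{\mathcal{X}}$ can be covered by / embedded in a manifold and applies the manifold version. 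These are the points the full proof in Corollary~\ref{cor:morsefunctionsdense} will need to pin down, but the essential mechanism is the linear-perturbation plus Morse–Sard argument sketched above.
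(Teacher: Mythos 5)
Your proposal is correct and follows essentially the same route as the paper: the paper's Theorem~\ref{th:morseperturbation} establishes exactly your linear-perturbation plus Morse--Sard step (applied to $\nabla\Psi \in C^{k-1}$), and Corollary~\ref{cor:morsefunctionsdense} then concludes density by bounding $\norm{\Psi_a - \Psi}_{C^k(\bar{\mathcal{X}})}$ for $\norm{a}_\infty$ small, just as you do. Your side remarks on the regularity threshold and on interpreting ``Morse'' via interior critical points match how the paper sets things up (Morse on $\mathcal{X}$, norms taken on the compact closure $\bar{\mathcal{X}}$), so no substantive difference remains.
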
  
	
	Consequently, if at least one component of a map $\Phi$ is a topological Morse function with a topologically critical point, then the map is non-embeddable in the neural ODE architecture $\text{NODE}_{(1)}$, answering question (Q2) for quite a large class of functions. 
	
	\subsection{Neural ODEs with a Linear Layer}
	\label{sec:node_lin}
	
	We have seen in Section~\ref{sec:node} that basic neural ODEs are restricted to embed maps, where the input and the output dimension are the same and that this is often insufficient to embed sufficiently large classes of maps. To embed general maps $\Phi: \mathcal{X} \rightarrow \mathbb{R}^{n_\textup{out}}$, $\mathcal{X} \subset \mathbb{R}^{n_\textup{in}}$, a basic neural ODE in dimension~$\mathbb{R}^n$ with $n \coloneqq n_\textup{in}$ can be followed by a linear layer \SV{$L: \mathbb{R}^n \rightarrow \mathbb{R}^{n_\text{out}}$, given by a affine linear function $L: x \mapsto Ax + a$, where $A \in \mathbb{R}^{n_\textup{out} \times n}$, $x \in \mathbb{R}^n$ and $a \in \mathbb{R}^{n_\text{out}}$}, see Figure~\ref{fig:node_lin}. Using the time-$T$ map $h_x(T)$ of~\eqref{eq:node_basic}, the map induced by a neural ODE with a linear layer is given by
	\begin{equation*}
		\textup{NODE}_{(2)}: \mathcal{X} \mapsto \mathbb{R}^{n_\textup{out}}, \qquad \textup{NODE}_{(2)}(x) \coloneqq \SV{L( h_x(T)) = A \cdot h_x(T) + a.}
	\end{equation*} 
	In the case of a scalar output $n_\textup{out} = 1$, this neural ODE architecture is often used for regression and classification tasks~\cite{dupont19}.
	
	\begin{figure}[H]
		\centering
		\includegraphics[width=0.4\textwidth]{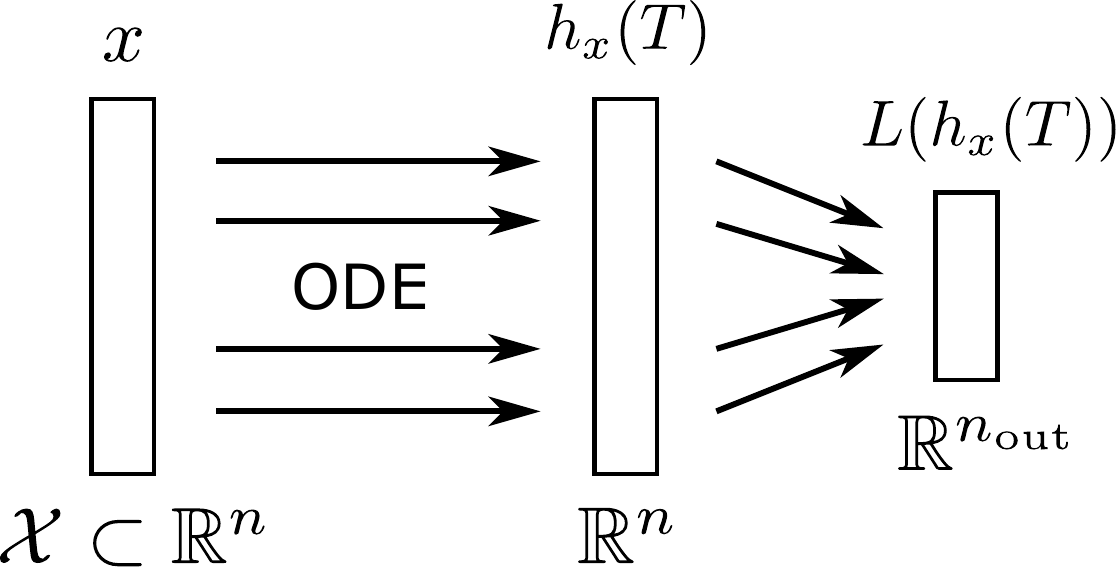}
		\caption{Sketch of a neural ODE with a linear layer to embed maps $\Phi: \mathcal{X} \rightarrow \mathbb{R}^{n_\textup{out}}$, $\mathcal{X} \subset \mathbb{R}^n$.}
		\label{fig:node_lin}
	\end{figure}
	
	The additional linear layer allows to embed maps that cannot be embedded in basic neural ODEs. We demonstrate this for the map of Example~\ref{ex:minusxembedding1}, illustrating the impact on question (Q1).
	
	\begin{example}
		The map $\Phi: \mathbb{R} \rightarrow \mathbb{R}$, $x \mapsto -x$ can be embedded in the neural ODE architecture $\textup{NODE}_{(2)}$ by choosing $f \equiv 0$ in~\eqref{eq:node_basic}, such that for $x \in \mathbb{R}$ it holds $h_x(T) = x$. The basic neural ODE is followed by the linear layer \SV{$L: x \mapsto -x$}, such that 
		\begin{equation*}
			\textup{NODE}_{(2)}(x) = \SV{L(h_x(T)) = - x.}
		\end{equation*}
	\end{example}
	
	Based on the idea of the proof of~\cite[Proposition 2]{dupont19}, the following theorem shows, that there exist continuous functions $\Phi: \mathbb{R}^n \rightarrow \mathbb{R}^{n_\textup{out}}$, which cannot be embedded in neural ODEs followed by a linear \SV{function, i.e.\ a linear layer with $a = 0$}, contributing to question (Q2). Compared to~\cite[Proposition~2]{dupont19}, we weaken the assumptions on the map $\Phi$ and the vector field $f$.
	
	\begin{theorem}\label{th:dupont_nonembeddable}
		Let $\Phi: \mathcal{X} \rightarrow \mathbb{R}^{n_\textup{out}}$, $\mathcal{X}\subset \mathbb{R}^n$ be a continuous map and $\mathcal{U},\mathcal{V},\mathcal{W}$ be connected subsets of $\mathbb{R}^n$ with $\mathcal{U} \subset \mathcal{V}$, $\partial \mathcal{V} \subset \mathcal{W} \subset \mathcal{X}$, $\mathcal{U} \cap \mathcal{W} = \varnothing$, such that 
		\begin{equation*}
			\begin{cases}
				[\Phi(x)]_i >c \qquad  \textup{ if } x \in \mathcal{U}, \\
				[\Phi(x)]_i <c \qquad \textup{ if } x \in \mathcal{W},
			\end{cases}
		\end{equation*}
		for some constant $c \in \mathbb{R}$ and $i \in \{1,\ldots,n_\textup{out}\}$. Hereby $\partial \mathcal{V}$ denotes the boundary of $\mathcal{V}$ and $[\Phi(x)]_i$ the $i$-th component of $\Phi(x)$. Then under Assumptions~\ref{assA:ode_existence},~\ref{assA:ode_unique}, the map $\Phi$ cannot be embedded in the neural ODE architecture $\textup{NODE}_{(2)}$ \SV{with $a = 0$.}
	\end{theorem}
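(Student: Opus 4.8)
The plan is to argue by contradiction, using that the time-$T$ map of a neural ODE obeying Assumptions~\ref{assA:ode_existence}--\ref{assA:ode_unique} is a homeomorphism, together with the fact that a linear layer with $a=0$ can read off only a single linear functional in the distinguished output coordinate. Assume $\Phi$ is embedded in $\textup{NODE}_{(2)}$ with $a=0$: there are a vector field $f$ satisfying~\ref{assA:ode_existence}--\ref{assA:ode_unique} and a matrix $A\in\mathbb{R}^{n_\textup{out}\times n}$ with $\Phi(x)=A\,h_x(T)$ for all $x\in\mathcal{X}$. Write $\phi_T(x):=h_x(T)$ and let $\ell$ be the $i$-th row of $A$, regarded as a linear functional on $\mathbb{R}^n$; then $[\Phi(x)]_i=\ell(\phi_T(x))$, so $\ell\circ\phi_T>c$ on $\mathcal{U}$ and $\ell\circ\phi_T<c$ on $\mathcal{W}$. (The statement is vacuous unless $\mathcal{U},\mathcal{W}\neq\varnothing$, which forces $\ell\neq 0$.) By the continuous-dependence and uniqueness results of Appendix~\ref{app:odetheory}, $\phi_T\colon\mathbb{R}^n\to\mathbb{R}^n$ is a homeomorphism; in particular $\phi_T$ carries interiors, closures and boundaries of sets to interiors, closures and boundaries, and preserves connectedness and compactness.

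Put $H^{+}:=\{y:\ell(y)>c\}$ and $H^{-}:=\{y:\ell(y)<c\}$, disjoint open half-spaces, so that $\phi_T(\mathcal{U})\subseteq H^{+}$ and $\phi_T(\partial\mathcal{V})\subseteq\phi_T(\mathcal{W})\subseteq H^{-}$. From $\mathcal{U}\subseteq\mathcal{V}$ and $\mathcal{U}\cap\partial\mathcal{V}\subseteq\mathcal{U}\cap\mathcal{W}=\varnothing$ we get $\mathcal{U}\subseteq\operatorname{int}\mathcal{V}$. Let $\Omega:=\phi_T(\operatorname{int}\mathcal{V})$, an open connected set. Since $\partial(\operatorname{int}\mathcal{V})\subseteq\partial\mathcal{V}$ and $\phi_T$ maps boundaries to boundaries, $\partial\Omega=\phi_T(\partial(\operatorname{int}\mathcal{V}))\subseteq\phi_T(\partial\mathcal{V})\subseteq H^{-}$, whereas $\phi_T(\mathcal{U})\subseteq\Omega$ gives $\Omega\cap H^{+}\neq\varnothing$.

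Next, fix $y_{0}\in\Omega\cap H^{+}$ and a unit vector $e$ with $\ell(e)>0$, and consider the ray $R:=\{y_{0}+se:s\geq 0\}$. For every $s\geq 0$ one has $\ell(y_{0}+se)=\ell(y_{0})+s\,\ell(e)\geq\ell(y_{0})>c$, so $R\subseteq H^{+}$ and hence $R\cap\partial\Omega=\varnothing$. On the other hand $\overline{\mathcal{V}}$ is compact, so $\Omega\subseteq\phi_T(\overline{\mathcal{V}})$ is bounded; since $\Omega$ is open and $y_{0}\in\Omega$, the number $s^{\ast}:=\sup\{s\geq 0:y_{0}+se\in\Omega\}$ is finite, and $y^{\ast}:=y_{0}+s^{\ast}e$ lies in $\overline{\Omega}\setminus\Omega=\partial\Omega$. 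But $y^{\ast}\in R\subseteq H^{+}$, contradicting $\partial\Omega\subseteq H^{-}$. Therefore no such $f$ and $A$ exist, so $\Phi$ cannot be embedded in $\textup{NODE}_{(2)}$ with $a=0$.

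I expect the only delicate points to be: (i) deducing ``$\phi_T$ is a homeomorphism of $\mathbb{R}^n$'' from~\ref{assA:ode_existence}--\ref{assA:ode_unique} (global existence on $[0,T]$ plus uniqueness, via the ODE theory of Appendix~\ref{app:odetheory}); and (ii) the use of compactness of $\overline{\mathcal{V}}$ in the last step---this is needed, since for unbounded $\mathcal{V}$ the conclusion fails (take $f\equiv 0$, whence $\phi_T=\operatorname{id}$, and a single linear functional already realizes the required sign pattern), so the argument tacitly requires $\mathcal{V}$ to be bounded, matching the explicit ball/annulus choice in~\cite[Proposition~2]{dupont19}. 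Everything else is elementary point-set topology; compared with~\cite[Proposition~2]{dupont19} only the abstract features $\mathcal{U}\subseteq\operatorname{int}\mathcal{V}$ and $\partial\mathcal{V}\subseteq\mathcal{W}$ enter, so no smoothness of $\Phi$ and no regularity of $f$ beyond continuity is used.
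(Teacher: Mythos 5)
Your proof is correct (given the boundedness caveat you yourself raise) and follows essentially the same strategy as the paper's proof: assume an embedding $\Phi(x)=A\,h_x(T)$, use uniqueness together with continuous dependence (Theorem~\ref{th:continuousdependence}) to conclude that the time-$T$ map is continuous and injective, deduce that it sends $\operatorname{int}\mathcal{V}$ to an open set whose boundary is contained in the image of $\partial\mathcal{V}$, and then play the half-space constraints $\ell>c$ on the image of $\mathcal{U}$ against $\ell<c$ on the image of $\partial\mathcal{V}$. The only real divergence is the concluding step: the paper writes an interior image point as a convex combination of two points of $\partial h_{\mathcal{V}}(T)$ and uses linearity of $A$, whereas you follow a ray inside $\{\ell\geq\ell(y_0)\}$ until it must exit the bounded open set $\Omega$ through $\partial\Omega\subset\{\ell<c\}$; these two devices are interchangeable and exploit the same linear structure. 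Your most valuable addition is making the boundedness issue explicit: both your ray argument and the paper's convex-combination step require $h_{\mathcal{V}}(T)$ to be bounded (which you obtain from compactness of $\overline{\mathcal{V}}$), a hypothesis present in the ball/annulus setting of~\cite{dupont19} but absent from the statement of Theorem~\ref{th:dupont_nonembeddable}; your $f\equiv 0$ half-space example shows the statement is in fact false without such an assumption, so this is a gap in the paper's formulation rather than in your argument.

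Two smaller points. First, surjectivity of the time-$T$ map onto $\mathbb{R}^n$ is not guaranteed by Assumptions~\ref{assA:ode_existence},~\ref{assA:ode_unique} (no backward existence is assumed), so you should only claim a homeomorphism onto its image, as the paper does; injectivity, continuity, invariance of domain and compactness of $\overline{\mathcal{V}}$ are all your argument actually uses. Second, in the step $\partial\Omega=\phi_T(\partial(\operatorname{int}\mathcal{V}))$ only the inclusion $\partial\Omega\subseteq\phi_T(\partial(\operatorname{int}\mathcal{V}))\subseteq\phi_T(\partial\mathcal{V})$ is justified (via $\overline{\Omega}\subseteq\phi_T(\overline{\mathcal{V}})$ and injectivity), but that inclusion is also all you need.
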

	
	\begin{proof}
		Suppose there exists a neural ODE architecture $\textup{NODE}_{(2)}$ \SV{with $a = 0$} embedding the map $\Phi$, then it holds $\Phi(x) = A \cdot h_x(T)$ for all $x \in \mathcal{X}$, some \SV{matrix} $A \in \mathbb{R}^{n_\textup{out} \times n}$ and time-$T$ map $h_x(T) \in \mathbb{R}^n$ of~\eqref{eq:node_basic}. Theorem~\ref{th:continuousdependence} implies with Assumption~\ref{assA:ode_unique}, that the time-$T$ map $h_x(T)$ is a homeomorphism $h_x(T): \mathcal{X} \rightarrow \{h_x(T): x \in \mathcal{X}\}$. As homeomorphisms map in $\mathbb{R}^n$ interiors of sets to interiors and boundaries to boundaries \mbox{(c.f.\ ~\cite{armstrong83})}, it holds for $w \in \partial \mathcal{V}$ that $h_w(T) \in \partial h_\mathcal{V}(T)$, $h_\mathcal{V}(T) \coloneqq \{h_v(T): v \in \mathcal{V}\}$ and for $u \in \mathcal{U} \subset \text{int}(\mathcal{V})$ that $h_u(T) \in \text{int}(h_\mathcal{V}(T))$, where $\text{int}(\mathcal{V})$ denotes the interior of $\mathcal{V}$. By construction we have \mbox{$h_\mathcal{U}(T) \subset \text{int}(h_\mathcal{V}(T))$}, such that every $\bar{u} \in h_\mathcal{U}$ can be written as a convex combination of two boundary points $\bar{w}_1,\bar{w}_2 \in \partial h_\mathcal{V}(T)$. As $h_x(T)$ is a homeomorphism, there exist $u \in \mathcal{U}$ with $h_u(T) = \bar{u}$ and $w_1,w_2 \in \partial\mathcal{V}$ with $h_{w_1}(T) = \bar{w}_1$, $h_{w_2}(T) = \bar{w}_2$ yielding
		\begin{equation*}
			h_u(T) = \lambda h_{w_1}(T) + (1-\lambda) h_{w_2}(T)
		\end{equation*}
		for some $\lambda \in (0,1)$. The assumption $\Phi(x) = A \cdot h_x(T)$ for all $x \in \mathcal{X}$ now implies
		\begin{equation*}
			[\Phi(u)]_i = [ A \cdot h_u(T)]_i = \lambda [ A \cdot h_{w_1}(T)]_i + (1-\lambda)  [ A \cdot h_{w_2}(T)]_i = \lambda [\Phi(w_1)]_i + (1-\lambda) [\Phi(w_2)]_i <c
		\end{equation*}
		since $[\Phi(w)]_i <c$ for $w \in \partial \mathcal{V}\subset \mathcal{W}$, which contradicts $ [\Phi(u)]_i>c$ for $u \in \mathcal{U}$.
	\end{proof}
	
	The following theorem shows, that the class of functions, which are non-embeddable in the neural ODE architecture $\text{NODE}_{(2)}$, can be enlarged \SV{and generalized to linear layers defined by affine linear functions}. As for basic neural ODEs, the non-embeddable function class can be characterized via Morse functions. For neural ODEs with an additional linear layer it also holds that if one component of a continuous map is a topological Morse function with a topologically critical point, then the map is non-embeddable. In particular, we can prove the following result.
	
	\begin{theorem}[See Theorem~\ref{th:neg_1}]
		Let $\Phi \in C^0(\mathcal{X},\mathbb{R}^{n_\textup{out}})$, $\mathcal{X} \subset \mathbb{R}^{n}$ be a map which has at least one component $\Phi_i \in C^0(\mathcal{X},\mathbb{R})$, $i \in \{1,2,\ldots,n_\textup{out}\}$, which is a topological Morse function with a topologically critical point. Then, under Assumptions~\ref{assA:ode_existence},~\ref{assA:ode_unique}, the map $\Phi$ cannot be embedded in the neural ODE architecture $\textup{NODE}_{(2)}$.
	\end{theorem}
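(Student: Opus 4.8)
The plan is to use the affine output layer to reduce the claim to a statement about topological Morse functions under homeomorphisms of the domain, and then to exploit that an affine function has no topologically critical point. So suppose, for contradiction, that $\Phi$ is embedded in $\textup{NODE}_{(2)}$, i.e.\ there are $A\in\mathbb{R}^{n_\textup{out}\times n}$, $a\in\mathbb{R}^{n_\textup{out}}$ and a basic neural ODE~\eqref{eq:node_basic} with time-$T$ map $\varphi:=h_\cdot(T)$ such that $\Phi(x)=A\varphi(x)+a$ for all $x\in\mathcal{X}$. Writing $A_i$ for the $i$-th row of $A$, the distinguished component is $\Phi_i=\ell\circ\varphi$ with the affine map $\ell:\mathbb{R}^n\to\mathbb{R}$, $\ell(y)=\langle A_i,y\rangle+a_i$.

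First I would record the regularity of $\varphi$. Since $\mathcal{X}$ is open (implicit in the hypothesis that $\Phi_i$ is a topological Morse function on $\mathcal{X}$) and Assumption~\ref{assA:ode_unique} holds, Theorem~\ref{th:continuousdependence} gives that $\varphi:\mathcal{X}\to\varphi(\mathcal{X})$ is a homeomorphism; invariance of domain then shows that $\varphi(\mathcal{X})$ is open in $\mathbb{R}^n$. Hence $\ell|_{\varphi(\mathcal{X})}=\Phi_i\circ\varphi^{-1}$ arises from $\Phi_i$ by precomposition with a homeomorphism between open subsets of $\mathbb{R}^n$.

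Next I would invoke that ``being a topological Morse function'' and ``having a topologically critical point'' are local topological properties, hence preserved under such a precomposition: if $p\in\mathcal{X}$ is a topologically critical point of $\Phi_i$, then $\varphi(p)$ is a topologically critical point of $\ell|_{\varphi(\mathcal{X})}$, which is itself a topological Morse function. It then remains to see that the restriction of an affine function to an open subset of $\mathbb{R}^n$ cannot have a topologically critical point. If $A_i=0$ this is clear, because $\Phi_i\equiv a_i$ is constant and a constant function on an open subset of $\mathbb{R}^n$ is not a topological Morse function at all (no point is ordinary, and no critical point is non-degenerate), already contradicting the hypothesis. If $A_i\neq 0$, an invertible linear change of coordinates in $\mathbb{R}^n$ --- which only replaces $\varphi$ by another homeomorphism onto an open set --- puts $\ell$ in the form $\ell(y)=y_1+a_i$, so $\Phi_i(x)=[\varphi(x)]_1+a_i$ is, up to an additive constant, the first coordinate of a homeomorphism; then for every $p\in\mathcal{X}$ the translate $x\mapsto\varphi(x)-\varphi(p)$ is, on a small neighbourhood of $p$, a homeomorphism onto a neighbourhood of the origin in $\mathbb{R}^n$ realising $\Phi_i(x)=[\varphi(x)-\varphi(p)]_1+\Phi_i(p)$, so every point of $\Phi_i$ is ordinary. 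Either way $\Phi_i$ has no topologically critical point, contradicting the assumption; this completes the argument.

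The step I expect to require the most care is the bookkeeping with the definition of topological Morse functions from Section~\ref{sec:morsesection}: one should isolate as a lemma that ``topological Morse function'' and ``topologically critical point'' are invariant under homeomorphisms of the domain, and one must ensure that the local normal-form homeomorphisms take values in a full-dimensional neighbourhood of $0$ in $\mathbb{R}^n$ --- which is exactly why invariance of domain is used to upgrade ``homeomorphism onto its image'' to ``homeomorphism onto an open subset of $\mathbb{R}^n$''. The affine change of coordinates and the constant case are routine. I would also note that the special case $n_\textup{out}=n$, $A=I_n$, $a=0$ recovers the basic neural ODE statement of Theorem~\ref{th:neg_3}.
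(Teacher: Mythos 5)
Your proposal is correct, but it takes a genuinely different route from the paper's proof of Theorem~\ref{th:neg_1}. The paper argues locally at the topologically critical point $p$: the index normal form of Definition~\ref{def:topologicallycritical} yields the antipodal symmetry $\Phi_i(\mu_1(u))=\Phi_i(\mu_1(-u))$ on a small ball, and the Borsuk--Ulam Theorem~\ref{th:borsukulam}, applied to the first $n-1$ components of the time-$T$ map on a small sphere around $p$, produces two distinct initial conditions with the same time-$T$ image (the linear layer recovering the remaining component), contradicting the injectivity coming from Assumption~\ref{assA:ode_unique}. You instead upgrade the time-$T$ map to an open embedding via invariance of domain and exploit $\Phi_i=\ell\circ\varphi$ with $\ell$ affine: if $A_i\neq 0$, then after an invertible linear change of coordinates $\Phi_i$ is, near every point of $\mathcal{X}$, a coordinate of a homeomorphism onto a full neighbourhood of $0$, so every point is topologically ordinary (to match Definition~\ref{def:topologicallycritical} literally, place $A_i$ in the last row so the normal form reads $\Phi_i(p)+v_n$ rather than $v_1$, and in the constant case say ``no topologically critical point has an index'' rather than ``is non-degenerate''); if $A_i=0$, then $\Phi_i$ is constant and hence not a topological Morse function. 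Both cases contradict Assumption~\ref{assA:morse}, as required. The trade-off: your route replaces Borsuk--Ulam by invariance of domain, dispenses with the quadratic normal form, and in the case $A_i\neq0$ uses only that $\Phi_i$ has \emph{some} topologically critical point, so it in fact proves a slightly stronger statement (non-constant $\Phi_i$ with one topologically critical point suffices); your explicit case distinction on $A_i$ also covers the degenerate situation $A_{in}=0$, which the paper's ``the last component agrees'' step implicitly assumes away. The paper's template, on the other hand, needs only continuity and injectivity of the time-$T$ map---not openness of its image, hence no invariance of domain---and is reused verbatim for the augmented architecture in Theorem~\ref{th:neg_2}; the homeomorphism property you need is consistent with what the paper itself asserts after Theorem~\ref{th:continuousdependence}, so your argument stays within its toolkit.
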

	
	Consequently, adding a linear layer to a basic neural ODE does not prevent that if at least one component of a map $\Phi$ is a topological Morse function with a topologically critical point, then the map is non-embeddable in the neural ODE architecture $\text{NODE}_{(2)}$, contributing again to question~(Q2). 
	
	\subsection{Augmented Neural ODEs}
	\label{sec:node_aug}
	
	As the embedding capability of the neural ODE architectures presented in Sections~\ref{sec:node} and~\ref{sec:node_lin} is restricted, one can extend the phase space and consider augmented neural ODEs~\cite{dupont19}. The idea is to embed a map $\Phi: \mathcal{X} \rightarrow \mathbb{R}^{n}$, $\mathcal{X} \subset \mathbb{R}^n$ with $n \coloneqq n_\textup{in} = n_\textup{out}$ in a neural ODE in dimension $\mathbb{R}^m$ with $m > n$, see Figure~\ref{fig:node_aug}. The augmented neural ODE is then given by 
	\begin{equation} \label{eq:node_aug} \tag{$\text{NODE}_\text{aug}$}
		\frac{\dd h}{\dd t} = f(h(t),t), \qquad h(0) = \begin{pmatrix}
			x \\ 0
		\end{pmatrix} \in \mathcal{X} \times \{0\}^{m-n} \subset \mathbb{R}^m,
	\end{equation}
	with vector field $f \in C^{0,0}(\mathbb{R}^m \times [0,T] ,\mathbb{R}^m)$ and the $m-n$ additional dimensions are initialized by zeros. To maintain under iteration of the map $\Phi$ the property that points corresponding to $\Phi$ are represented as vectors in $\mathbb{R}^{n} \times \{0\}^{m-n}$, we need to assume that the last $m-n$ components of the time-$T$ map $h_{(x,0)^\top}(T)$ are zeros~\cite{zhang20}. In this sense, augmented means that trajectories starting in the $n$-dimensional subspace $\mathcal{X} \times \{0\}^{m-n}$ have $m$ dimensions to flow and then come back after the time~$T$ to the $n$-dimensional subspace $\mathbb{R}^{n} \times \{0\}^{m-n}$. The idea to consider an augmented (or extended) differential equation is well-known in various contexts in dynamical systems. The subspace condition can classically be interpreted as a finite-time-$T$ invariance of a subspace, which is frequently important in non-autonomous dynamics. The map induced by the augmented neural network architecture is 
	\begin{equation*}
		\textup{NODE}_{(3)}: \mathcal{X} \mapsto \mathbb{R}^{n}, \qquad \textup{NODE}_{(3)}(x) \coloneqq  \left[h_{(x,0)^\top}(T)\right]_{1,\ldots,n}, \qquad h_{(x,0)^\top}(T) \in \mathbb{R}^{n} \times \{0\}^{m-n},
	\end{equation*} 
	where $\left[h_{(x,0)^\top}(T)\right]_{1,\ldots,n}$ denotes the first $n$ components of the time-$T$ map $h_{(x,0)^\top}(T)$. 
	
	\begin{figure}[H]
		\centering
		\includegraphics[width=0.35\textwidth]{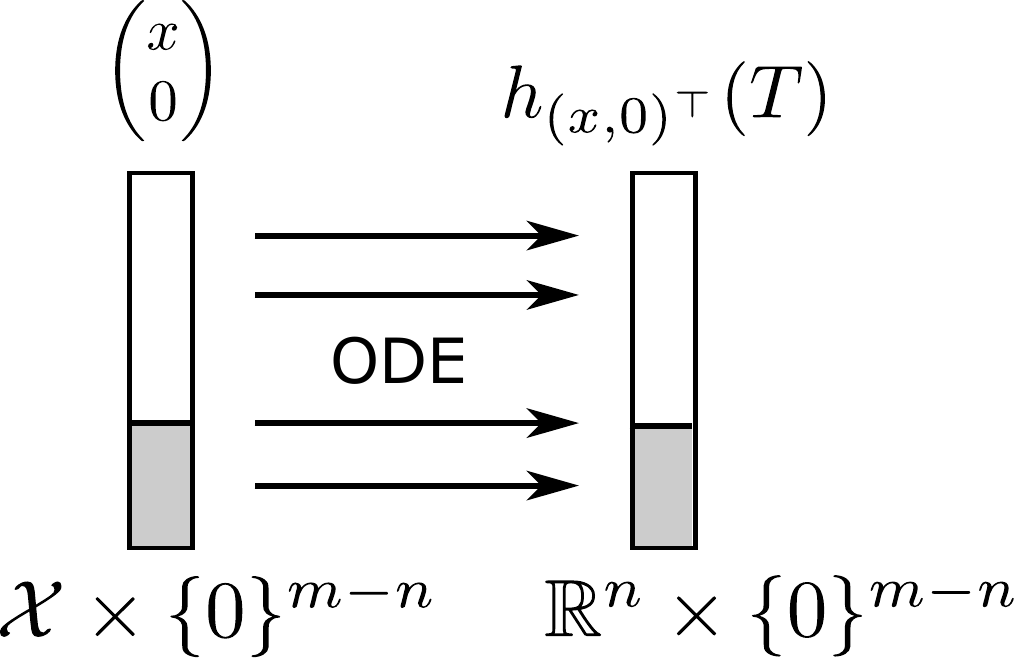}
		\caption{Sketch of an augmented neural ODE to embed maps $\Phi: \mathcal{X} \rightarrow \mathbb{R}^n$, $\mathcal{X} \subset \mathbb{R}^n$.}
		\label{fig:node_aug}
	\end{figure}
	
	Augmented neural ODEs allow to embed more functions than basic neural ODEs, for instance the map of Example~\ref{ex:minusxembedding1}, illustrating question (Q1).
	
	\begin{example}
		The map $\Phi: \mathbb{R} \rightarrow \mathbb{R}$, $x \mapsto -x$ can be embedded in the neural ODE architecture $\textup{NODE}_{(3)}$ by choosing
		\begin{equation*}
			\begin{pmatrix}
				h_1' \\ h_2'
			\end{pmatrix}
			= \frac{\pi}{T}\cdot\begin{pmatrix}
				-h_2 \\ \hspace{7pt} h_1
			\end{pmatrix},
			\qquad 
			\begin{pmatrix}
				h_1(0) \\ h_2(0)
			\end{pmatrix}
			= \begin{pmatrix}
				x \\ 0
			\end{pmatrix},
			\qquad \Rightarrow  \qquad 
			\begin{pmatrix}
				h_1(t) \\ h_2(t)
			\end{pmatrix} = 
			\begin{pmatrix}
				x \cdot \cos(\pi t/T)  \\ x \cdot \sin(\pi t/T)
			\end{pmatrix},
		\end{equation*}
		such that 
		\begin{equation*}
			\textup{NODE}_{(3)}(x) = [h_{(x,0)^\top}(T)]_{1} = \left[\begin{pmatrix}
				-x \\ 0
			\end{pmatrix} \right]_1 = -x.
		\end{equation*}
	\end{example}
	
	By working in general topological spaces, augmented neural ODEs allow to embed all diffeomorphisms $\Phi \in C^1(\mathcal{X}, \mathcal{X})$, $\mathcal{X} \subset \mathbb{R}^n$ with one additional dimension. This is achieved by the suspension flow, which is a construction on a special manifold called the mapping torus.
	
	\begin{definition}[\cite{brin02, katok95}]
		Let $\Phi \in C^0(\mathcal{X}, \mathcal{X})$, $\mathcal{X} \subset \mathbb{R}^n$ be a homeomorphism. The $(n+1)$-dimensional manifold 
		\begin{equation*}
			\mathcal{M} \coloneqq \frac{\mathbb{R}^n \times [0,T]}{(\Phi(x),0)^\top \sim (x,T)^\top}
		\end{equation*}
		is called the mapping torus of $\Phi$. The $\sim$ hereby means that  $\mathcal{M}$ is a quotient space, where the points $(\Phi(x),0)^\top$ and $(x,T)^\top$ are identified with each other.
	\end{definition}
	
	\begin{theorem}[Suspension Flow Theorem~\cite{brin02, katok95}] \label{th:suspension}
		Let $\Phi \in C^1(\mathcal{X}, \mathcal{X})$, $\mathcal{X} \subset \mathbb{R}^n$ be a diffeomorphism. Then the ODE 
		\begin{equation*}
			\begin{pmatrix}
				h' \\ t'
			\end{pmatrix} = 
			\begin{pmatrix}
				0 \\ 1
			\end{pmatrix}, \qquad
			\begin{pmatrix}
				h(0) \\ t(0)
			\end{pmatrix}
			= \begin{pmatrix}
				x \\ 0
			\end{pmatrix}
		\end{equation*}
		has on the $(n+1)$-dimensional mapping torus $\mathcal{M}$ the time-$T$ map $(\Phi(x),0)^\top$, such that $\Phi$ is embedded in an augmented neural ODE with one additional dimension.
	\end{theorem}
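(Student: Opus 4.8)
The plan is to work directly on the quotient $\mathcal{M}$ and reduce the assertion to a one-line flow computation; the only genuine work is to equip $\mathcal{M}$ with a $C^1$ structure for which the constant field $(0,1)^\top$ (here $0\in\mathbb{R}^n$) descends to a well-defined vector field. Write $\pi:\mathbb{R}^n\times[0,T]\to\mathcal{M}$ for the quotient projection. As a first chart take $\pi$ restricted to $\mathbb{R}^n\times(0,T)$, where $\pi$ is injective. To cover the seam $\{t=0\}\sim\{t=T\}$, observe that the identification reads $(x,0)\sim(\Phi^{-1}(x),T)$, and use the charts $\psi(x,s)=\pi(x,s)$ for $s\in[0,T/2)$ and $\psi(x,s)=\pi(\Phi^{-1}(x),s+T)$ for $s\in(-T/2,0)$, which glue continuously at $s=0$. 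On the overlaps the transition maps are, up to the obvious translation in $t$, either the identity or $(x,s)\mapsto(\Phi^{-1}(x),s+T)$; since $\Phi$ is a diffeomorphism, $\Phi^{-1}$ is $C^1$ (inverse function theorem), so these transitions are $C^1$ and $\mathcal{M}$ is an $(n+1)$-dimensional $C^1$ manifold (cf.\ \cite{brin02,katok95}; for $\mathcal{X}$ not open one passes to the corresponding manifold-with-boundary version). I expect the bookkeeping of these identifications near the seam to be the most error-prone part of the argument, although it is entirely standard.

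Next I would verify that the field $v$ which equals $(0,1)^\top$, i.e.\ the last coordinate vector field, in each of the above charts is globally well-defined on $\mathcal{M}$: the Jacobian of the transition $(x,s)\mapsto(\Phi^{-1}(x),s+T)$ is block-diagonal with blocks $D\Phi^{-1}(x)$ and $1$, hence sends the last coordinate vector to the last coordinate vector, which is exactly the compatibility needed; on the identity overlap there is nothing to check. In every chart $v$ is constant, hence Lipschitz, so the initial value problem $h'=0$, $t'=1$ on $\mathcal{M}$ has, for every initial point, a solution that is moreover unique, which is where Assumptions~\ref{assA:ode_existence} and~\ref{assA:ode_unique} come from for this neural ODE (and is also the reason $\Phi$ must be a diffeomorphism rather than merely a homeomorphism: only then is the quotient a $C^1$ manifold on which this is a bona fide ODE).

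Finally I would compute the flow. The integral curve of $v$ through the section point $\pi(x,0)$ is $s\mapsto\pi(x,s)$, which is defined for all $s\in[0,T]$ (indeed the flow is complete, since $\Phi$ maps $\mathcal{X}$ bijectively onto itself), and at $s=T$ it equals $\pi(x,T)=\pi(\Phi(x),0)$ by the very definition of $\mathcal{M}$. Identifying the section $\mathcal{X}\times\{0\}$ with $\mathcal{X}$ via $x\mapsto\pi(x,0)$, the time-$T$ map of the ODE restricts on this section to $x\mapsto\Phi(x)$, and it maps the section onto itself because $\Phi$ is surjective --- which is precisely the finite-time-$T$ invariance of an $n$-dimensional ``subspace'' demanded by the architecture $\textup{NODE}_{(3)}$, now with the Euclidean phase space $\mathbb{R}^m$ replaced by the $(n+1)$-dimensional manifold $\mathcal{M}$. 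Hence $\Phi$ is embedded in an augmented neural ODE with one additional dimension; the complementary task of embedding $\mathcal{M}$ back into a Euclidean phase space, so that the construction becomes usable in practice, is carried out in Section~\ref{sec:suspensionflows}.
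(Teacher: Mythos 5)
Your argument is correct and is essentially the paper's proof: the heart of both is the one-line flow computation that the integral curve through $\pi(x,0)$ is $s\mapsto\pi(x,s)$, which at $s=T$ equals $\pi(x,T)=\pi(\Phi(x),0)$ by the defining identification of the mapping torus, so the time-$T$ map restricted to the section $\{t=0\}$ is $\Phi$. Your chart-level verification that the constant field $(0,1)^\top$ descends to a well-defined $C^1$ vector field on $\mathcal{M}$ (using that $\Phi^{-1}$ is $C^1$) is a legitimate filling-in of details the paper delegates to the cited references \cite{brin02,katok95}, not a different route.
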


	\begin{proof}
		The mapping torus $\mathcal{M}$ is well-defined as the map $\Phi$ is bijective. By definition, the time-$T$ map restricted to the invariant subset $\{t = 0\} \subset \mathcal{M}$ is the map $\Phi$. Consequently the time-$T$ map of the suspension flow with initial condition $(x,0)^\top \subset \mathcal{M}$ is $(\Phi(x),0)^\top$.
	\end{proof}

	In machine learning applications, it is often not practical to work with non-Euclidean manifolds like the mapping torus $\mathcal{M}$. To resolve this problem, the mapping torus $\mathcal{M}$ can be embedded in the $(2n+2)$-dimensional Euclidean space, see Section \ref{sec:suspensionflows}. As the embedding makes use of two additional transformations, which can be interpreted as (possibly nonlinear) layers, the embedded suspension flow is a neural ODE architecture with two additional layers, presented in Section~\ref{sec:node_two_layer}. The embedded suspension flow hence answers question (Q3) for Euclidean spaces. \medskip
	
	In~\cite{zhang20} another statement regarding universal embedding of augmented neural ODEs is made. It is discussed how to embed homeomorphisms $\Phi: \mathcal{X} \rightarrow \mathcal{X}$, $\mathcal{X} \subset \mathbb{R}^n$ in augmented neural ODEs in dimension $2n$. The statement is based on the existence of a feed-forward neural network for $\delta(x) = \Phi(x) - x$. In our setting we cannot take $\delta$ as the vector field $f(h(t),t)$, as $\delta$ depends on the initial condition~$x$ and the right hand side of an ODE cannot depend on its initial condition. The assumption of the existence of a feed-forward neural network for $\delta$ relies on the universal approximation capability of feed-forward networks if the dimension of the phase space and the number of parameters is sufficiently high. Consequently, only approximation but no embedding statements can be made using this construction.
	
	\medskip
	
	The last two results discussed the embedding of homeomorphism and diffeomorphisms in augmented neural ODEs. Considering general continuous functions $\Phi \in C^0(\mathbb{R}^n,\mathbb{R}^n)$, neural ODEs with architecture $\text{NODE}_{(3)}$ show similar problems to the neural ODE architecture $\text{NODE}_{(2)}$ with a linear layer. If one component of the map $\Phi: \mathcal{X} \rightarrow \mathbb{R}^n$, $\mathcal{X} \subset \mathbb{R}^n$ is a topological Morse function with a topologically critical point, then the map is non-embeddable in an augmented neural ODE. 
	
	\begin{theorem}[See Theorem~\ref{th:neg_2}] 
		Let $\Phi \in C^0(\mathcal{X},\mathbb{R}^{n})$, $\mathcal{X} \subset \mathbb{R}^{n}$ be a map which has at least one component $\Phi_i \in C^0(\mathcal{X},\mathbb{R})$, $i \in \{1,2,\ldots,n\}$, which is a topological Morse function with a topologically critical point. Then under Assumptions~\ref{assA:ode_existence},~\ref{assA:ode_unique}, the map $\Phi$ cannot be embedded in the neural ODE architecture $\textup{NODE}_{(3)}$.
	\end{theorem}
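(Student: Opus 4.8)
The plan is to mimic the two-stage strategy used in the proof of Theorem~\ref{th:dupont_nonembeddable}: first show that an embedding of $\Phi$ into $\textup{NODE}_{(3)}$ forces $\Phi$ to be a homeomorphism onto its image, and then exclude this using the local structure of topological Morse functions developed in Section~\ref{sec:morsesection}.

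\emph{Stage 1 (reduction to a topological embedding).} Suppose $\Phi$ is embedded in the augmented neural ODE~\eqref{eq:node_aug}: there is a vector field $f\in C^{0,0}(\mathbb{R}^m\times[0,T],\mathbb{R}^m)$, $m>n$, whose time-$T$ flow $\Psi_T\colon z\mapsto h_z(T)$ maps $\mathcal{X}\times\{0\}^{m-n}$ into $\mathbb{R}^n\times\{0\}^{m-n}$ and satisfies $\Phi(x)=[\Psi_T((x,0)^\top)]_{1,\dots,n}$ for all $x\in\mathcal{X}$. By Assumptions~\ref{assA:ode_existence} and~\ref{assA:ode_unique}, Theorem~\ref{th:continuousdependence} shows that $x\mapsto\Psi_T((x,0)^\top)$ is a homeomorphism from $\mathcal{X}$ onto its image, which by the subspace condition lies in $\mathbb{R}^n\times\{0\}^{m-n}$. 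Postcomposing with the coordinate projection, which restricted to $\mathbb{R}^n\times\{0\}^{m-n}$ is a homeomorphism onto $\mathbb{R}^n$ with continuous inverse $y\mapsto(y,0)^\top$, shows that $\Phi=\textup{NODE}_{(3)}\colon\mathcal{X}\to\mathbb{R}^n$ is a homeomorphism onto its image. This is exactly where the subspace condition on the last $m-n$ components is used: it makes the projection injective on the relevant set, so that, unlike in Theorem~\ref{th:dupont_nonembeddable}, we do not have to contend with a non-injective linear layer.

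\emph{Stage 2 (the Morse obstruction).} Let $\Phi_i$ be the component that is a topological Morse function with a topologically critical point $p$, and set $c:=\Phi_i(p)$; since the domain of a topological Morse function is an $n$-manifold we may take $\mathcal{X}$ open in $\mathbb{R}^n$. Work in a Morse chart around $p$, an open set $U\subseteq\mathbb{R}^n$ on which $\Phi_i$ equals, up to a homeomorphism, the normal form $c-x_1^2-\dots-x_\lambda^2+x_{\lambda+1}^2+\dots+x_n^2$ for the index $\lambda$. Restricted to $U$, $\Phi$ is a continuous injection between open subsets of $\mathbb{R}^n$, so by invariance of domain $\Phi(U)$ is open and $\Phi|_U$ is an open map. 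If $n=1$, then $\Phi|_U=\Phi_i|_U$ is a strictly monotone function of one variable, contradicting that it has an extremum at the interior point $p$. If $n\ge 2$, observe that $\Phi$ carries the level set $\{x\in U:\Phi_i(x)=c\}$ homeomorphically onto $\Phi(U)\cap\{y_i=c\}$, a nonempty open subset of the hyperplane $\{y_i=c\}\cong\mathbb{R}^{n-1}$ and hence an $(n-1)$-manifold; while by the normal form that level set is homeomorphic to the zero set near the origin of the index-$\lambda$ quadratic form, namely a single point if $\lambda\in\{0,n\}$ and a topological cone over $S^{\lambda-1}\times S^{n-\lambda-1}$ if $1\le\lambda\le n-1$. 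A point is not an $(n-1)$-manifold for $n\ge 2$, and the cone is not one at its vertex because deleting the vertex changes the number of local connected components (when $\lambda$ or $n-\lambda$ equals $1$) or the local homology (otherwise) away from that of $\mathbb{R}^{n-1}$ at a point. This contradicts the preceding sentence, so no embedding of $\Phi$ in $\textup{NODE}_{(3)}$ exists.

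\emph{Main obstacle.} Stage~1 is routine once continuous dependence on initial conditions is available and parallels the opening of the proof of Theorem~\ref{th:dupont_nonembeddable}; the substantive part is Stage~2, i.e.\ converting ``$p$ is a topologically critical point'' into a local topological invariant that is incompatible with $\Phi$ being a homeomorphism onto an open subset of $\mathbb{R}^n$. Making this precise requires the normal-form theory for topological Morse functions and a case analysis over the index $\lambda$, and is what is carried out as Theorem~\ref{th:neg_2} in Section~\ref{sec:morsesection}, from which the statement above follows.
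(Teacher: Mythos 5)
Your proposal is correct in outline, but it takes a genuinely different route from the paper. The paper's proof of Theorem~\ref{th:neg_2} never isolates the statement that $\Phi$ is a homeomorphism onto its image and does no case analysis over the index: it uses only the antipodal symmetry $\Phi_i(\mu_1(u))=\Phi_i(\mu_1(-u))$ of the topological Morse normal form on a small ball, applies the Borsuk--Ulam Theorem~\ref{th:borsukulam} to the map built from the remaining $n-1$ components of the (projected) time-$T$ map on a small sphere around the critical point, and concludes that the time-$T$ map takes the same value at two distinct points, contradicting the injectivity forced by Assumption~\ref{assA:ode_unique} --- uniformly in the index. Your Stage~1 is essentially the same reduction (uniqueness, Theorem~\ref{th:continuousdependence}, and the subspace condition make $x\mapsto h_{(x,0)^\top}(T)$, hence $\Phi$, a continuous injection), but your Stage~2 replaces Borsuk--Ulam by invariance of domain plus the local topology of level sets: injectivity would force $\Phi_i^{-1}(c)$ near $p$ to be an $(n-1)$-manifold, while the normal form makes it a single point (definite index) or a cone over $S^{\lambda-1}\times S^{n-\lambda-1}$ (saddle index), neither locally Euclidean of dimension $n-1$ at $p$. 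This is a valid argument, and its intermediate step (no injective continuous map on an open set can have a topological Morse component with a topologically critical point) is of independent interest; but it buys the conclusion with machinery the paper does not develop --- invariance of domain, the cone formula $H_k\bigl(C(L),C(L)\setminus\{v\}\bigr)\cong\tilde H_{k-1}(L)$ together with a K\"unneth computation for the saddle case, and some care that the ``delete the vertex'' test is phrased via local homology or arbitrarily small neighborhoods so it is independent of the neighborhood chosen --- whereas the paper's Borsuk--Ulam argument needs none of this and handles all indices at once. One correction: your closing sentence suggests that Theorem~\ref{th:neg_2} in Section~\ref{sec:morsesection} carries out your index case analysis; it does not (it is proved by the Borsuk--Ulam route), so your Stage~2 should stand as a self-contained alternative rather than a pointer to the paper's proof.
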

	
	As a result, augmenting the phase space does not prevent that if at least one component of a map~$\Phi$ is a topological Morse function with a topologically critical point, then the map is non-embeddable in the neural ODE architecture $\text{NODE}_{(3)}$, giving a partial answer to question (Q2).

	\subsection{Augmented Neural ODEs with a Linear Layer}
	\label{sec:node_aug_lin}
	
	As for basic neural ODEs, it is also possible for augmented neural ODEs in dimension $\mathbb{R}^m$ to add a linear layer to embed general maps $\Phi: \mathcal{X} \rightarrow \mathbb{R}^{n_\textup{out}}$, $\mathcal{X} \subset \mathbb{R}^{n_\textup{in}}$, $m \geq n_\textup{in}$. Suppose we add a linear layer \SV{$L: \mathbb{R}^n \rightarrow \mathbb{R}^{n_\text{out}}$, $x \mapsto Ax + a$}, after an augmented neural ODE of the form~\eqref{eq:node_aug}, \SV{where $A \in \mathbb{R}^{n_\textup{out} \times n}$, $x \in \mathbb{R}^n$ and $a \in \mathbb{R}^{n_\text{out}}$}. The resulting neural ODE architecture is then 
	\begin{equation*}
		\textup{NODE}_{(4)}: \mathcal{X} \mapsto \mathbb{R}^{n_\textup{out}}, \qquad \textup{NODE}_{(4)}(x) \coloneqq \SV{L( h_{(x,0)^\top}(T)) = A \cdot h_{(x,0)^\top}(T) + a.}
	\end{equation*}
	In contrast to the neural ODE architecture $\textup{NODE}_{(3)}$, it is not necessary for $\textup{NODE}_{(4)}$ to assume $h_{(x,0)^\top}(T) \in \mathbb{R}^{n_\textup{in}} \times \{0\}^{m-n_\textup{in}}$, as the neural ODE is followed by a linear layer mapping $h_{(x,0)^\top}(T)$ back into a $n_\textup{out}$-dimensional space, as shown in Figure~\ref{fig:node_aug_lin}.
	
	\begin{figure}[H]
		\centering
		\includegraphics[width=0.5\textwidth]{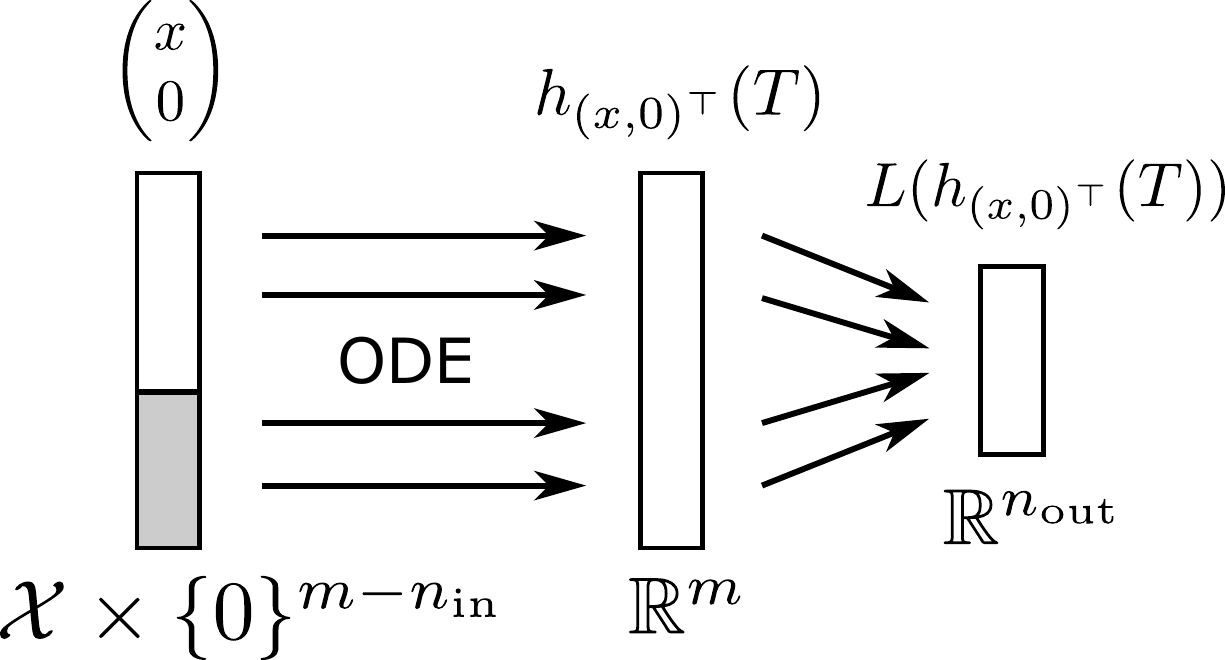}
		\caption{Sketch of an augmented neural ODE with a linear layer to embed maps $\Phi: \mathcal{X} \rightarrow \mathbb{R}^{n_\textup{out}}$, $\mathcal{X} \subset \mathbb{R}^{n_\textup{in}}$.}
		\label{fig:node_aug_lin}
	\end{figure}
	
	The following theorem shows, that the combination of an augmented neural ODE with a linear \SV{function, i.e.\ a linear layer with $a=0$,} is already sufficient to be able to embed any Lebesgue-integrable map $\Phi: \mathcal{X} \rightarrow \mathbb{R}^{n_\textup{out}}$, $\mathcal{X} \subset \mathbb{R}^{n_\textup{in}}$ , answering question (Q3). The following theorem is a straightforward adaption of~\cite[Theorem 7]{zhang20} to our setting that we present with a shortened proof.
	
	\begin{theorem} \label{th:node_aug_lin}
		Let $\Phi: \mathcal{X} \rightarrow \mathbb{R}^{n_\textup{out}}$, $\mathcal{X} \subset \mathbb{R}^{n_\textup{in}}$ be Lebesgue integrable. Then $\Phi$ can be embedded in the neural ODE architecture $\textup{NODE}_{(4)}$ with an augmented neural ODE in dimension $m = n_\textup{in} + n_\textup{out}$ \SV{and $a=0$.}
	\end{theorem}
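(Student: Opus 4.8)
The plan is to build the vector field on $\mathbb{R}^m$ with $m = n_\textup{in} + n_\textup{out}$ so that the flow first transports the input $x$ (stored in the first $n_\textup{in}$ coordinates) essentially untouched, while using the last $n_\textup{out}$ coordinates as an accumulator that integrates up to the value $\Phi(x)$ over the time interval $[0,T]$; the final linear layer with $a=0$ then simply projects onto the accumulator coordinates. More precisely, write a point of $\mathbb{R}^m$ as $(y,z)$ with $y\in\mathbb{R}^{n_\textup{in}}$, $z\in\mathbb{R}^{n_\textup{out}}$. First I would choose the $y$-component of the vector field to vanish, so that $y(t)\equiv x$ for all $t$; this is what makes the right-hand side independent of the initial condition while still "remembering" $x$. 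Then I would choose the $z$-component of the vector field to be a time-dependent function $g(y,t)$, independent of $z$, so that $z(T) = z(0) + \int_0^T g(x,t)\,\dd t = \int_0^T g(x,t)\,\dd t$, and I want to arrange $\int_0^T g(x,t)\,\dd t = \Phi(x)$ for every $x\in\mathcal{X}$.

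The key construction is therefore to realize $\Phi$ as a time-average of a continuous (in $t$) right-hand side. Here is where Lebesgue integrability of $\Phi$ is used rather than continuity: the trick (following Zhang et al.) is to partition $[0,T]$ into two halves (or finitely/countably many subintervals), and on successive subintervals let $g(x,\cdot)$ be a sequence of continuous-in-$t$ "bump" profiles whose time-integrals form a telescoping or convergent series approaching $\Phi(x)$. Concretely, one can pick a sequence of mollified indicator-type functions $\psi_k: [0,T]\to\mathbb{R}_{\geq 0}$ supported on shrinking disjoint subintervals with $\int_0^T \psi_k(t)\,\dd t = 1$, together with measurable coefficient functions $c_k:\mathcal{X}\to\mathbb{R}^{n_\textup{out}}$ chosen so that $\sum_k c_k(x) = \Phi(x)$; setting $g(x,t) = \sum_k c_k(x)\psi_k(t)$ gives $z(T)=\Phi(x)$. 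Since Assumption~\ref{assA:ode_existence} only requires the vector field to be \emph{continuous} in the state and the solution to exist on $[0,T]$ — and here the $(y,z)$-system is decoupled and explicitly solvable by quadrature, with $y$ constant and $z$ an absolutely continuous antiderivative — existence on all of $[0,T]$ is immediate, and I would verify that the constructed $g$ is continuous in the state variable $y$ (the dependence on $x$ enters only through $y$, and one arranges $x\mapsto c_k(x)$ to piece together into a continuous-in-$y$ field, or alternatively absorbs any measurability into the $t$-variable where only integrability is needed).

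The final step is bookkeeping: the augmented neural ODE $\textup{NODE}_{(3)}$-style flow $h_{(x,0)^\top}(t)$ on $\mathbb{R}^m$ has time-$T$ map $(x,\Phi(x))^\top$, and composing with the linear layer $L(y,z) = A\,(y,z)^\top$ where $A = \begin{pmatrix} 0 & I_{n_\textup{out}} \end{pmatrix} \in \mathbb{R}^{n_\textup{out}\times m}$ (and $a=0$) yields $\textup{NODE}_{(4)}(x) = \Phi(x)$ for all $x\in\mathcal{X}$, which is exactly the claimed embedding. I expect the main obstacle to be the regularity matching: one must ensure the vector field genuinely satisfies Assumption~\ref{assA:ode_existence} (continuity in the state, existence on $[0,T]$) despite $\Phi$ being merely Lebesgue integrable — the resolution is precisely to route all the "bad" behavior of $\Phi$ into the explicit $t$-dependence of $g$, where only integrability in $t$ is needed for the quadrature $z(T)=\int_0^T g\,\dd t$ to make sense, while keeping the $h$-dependence trivial (the $y$-block is frozen and the $z$-block does not feed back). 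Everything else — decoupling, solvability by quadrature, and the projection — is routine.
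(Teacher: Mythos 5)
Your overall skeleton --- freeze the first $n_\textup{in}$ coordinates so that $y(t)\equiv x$, let the last $n_\textup{out}$ coordinates accumulate the value $\Phi(x)$ by time $T$, then project with $A=\begin{pmatrix}0^{n_\textup{out}\times n_\textup{in}} & I^{n_\textup{out}\times n_\textup{out}}\end{pmatrix}$ and $a=0$ --- is exactly the paper's proof of Theorem~\ref{th:node_aug_lin}; the paper simply takes the accumulator field to be the time-independent choice $\tfrac1T\Phi(y)$, so that $z(t)=\tfrac tT\Phi(x)$, $z(T)=\Phi(x)$, and is done.

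The detour you insert, writing $g(x,t)=\sum_k c_k(x)\psi_k(t)$ with bump profiles $\psi_k$ and coefficients summing to $\Phi(x)$, is unnecessary, and the step where you claim it resolves the regularity issue would fail. The right-hand side of the ODE can see $x$ only through the state $y$, so the irregularity of $\Phi$ cannot be ``absorbed into the $t$-variable'': whatever you do, the map $y\mapsto\int_0^T g(y,t)\,\dd t$ must equal $\Phi$ pointwise on $\mathcal{X}$. In particular, arranging the $c_k$ to be continuous in $y$ is impossible for a general Lebesgue integrable $\Phi$: an everywhere-pointwise sum of continuous functions is Borel (Baire-class) measurable, whereas a Lebesgue integrable $\Phi$ need not even be Borel measurable, and the embedding must hold for \emph{every} $x\in\mathcal{X}$, not almost every $x$. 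With merely measurable $c_k$, the constructed field is no more regular in the state than the direct choice $\tfrac1T\Phi(y)$, so the bump construction buys nothing. The honest situation --- in the paper's proof as well as in yours --- is that for discontinuous $\Phi$ the vector field is only as regular as $\Phi$ itself (so Assumption~\ref{assA:ode_existence} is not literally restored); the construction nevertheless works because the system is decoupled and explicitly solvable ($y$ constant, $z$ affine in $t$), so the relevant solution on $[0,T]$ exists by direct computation rather than via continuity of the vector field. Dropping the bump construction and taking $g(y,t)=\tfrac1T\Phi(y)$ makes your argument coincide with the paper's and completes the proof.
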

	
	\begin{proof}
		Fix $T>0$ and define the augmented neural ODE
		\begin{equation*}
			\begin{pmatrix}
				\left[\frac{\dd h}{\dd t}\right]_{1,\ldots,n_\textup{in}} \textcolor{white}{\ldots.} \\
				\left[\frac{\dd h}{\dd t}\right]_{n_{\textup{in}}+1,\ldots,m}
			\end{pmatrix}
			= \begin{pmatrix}
				0 \\
				\frac{1}{T} \cdot \Phi(h_{1,..,n_\textup{in}})
			\end{pmatrix},
			\qquad \begin{pmatrix}
				h_{1,\ldots,n_\textup{in}}(0) \\
				h_{n_{\textup{in}}+1,\ldots,m}(0)
			\end{pmatrix}
			= \begin{pmatrix}
				x \\ 0
			\end{pmatrix},
		\end{equation*} 
		followed by a linear layer \SV{$L: x \mapsto Ax$ with} the matrix
		\begin{equation*}
			A = \begin{pmatrix}
				0^{n_\textup{out} \times n_\textup{in}} && I^{n_\textup{out} \times n_\textup{out}}
			\end{pmatrix} \in \mathbb{R}^{n_\textup{out} \times m},
		\end{equation*}
		which projects the solution to the last $n_\textup{out}$ components. Then it holds
		\begin{equation*}
			\textup{NODE}_{(4)}(x) = \SV{L(h_{(x,0)^\top}(T))}  = A \cdot h_{(x,0)^\top}(T) = A \cdot \begin{pmatrix}
				x \\ \Phi(x)
			\end{pmatrix} = \Phi(x). \qedhere
		\end{equation*}
	\end{proof}
	
	\subsection{Neural ODEs with Two Additional Layers}
	\label{sec:node_two_layer}
	
	Even though augmented neural ODEs with a linear layer introduced in Section~\ref{sec:node_aug_lin} have by Theorem~\ref{th:node_aug_lin} the universal embedding property, neural ODEs with two additional, possibly nonlinear layers are also interesting to study, as these are more flexible regarding the input data. In the following we introduce a neural ODE architecture, which can embed general maps $\Phi: \mathcal{X} \rightarrow \mathbb{R}^{n_\textup{out}}$, $\mathcal{X} \subset \mathbb{R}^{n_\textup{in}}$ with two additional layers and a neural ODE in dimension $\mathbb{R}^n$. One layer $\SV{L_1}: \mathcal{X} \rightarrow \mathbb{R}^{n}$ is added before and the other layer $\SV{L_2}: \mathbb{R}^n \rightarrow \mathbb{R}^{n_\textup{out}}$ is added after the basic neural ODE of the form~\eqref{eq:node_basic}, see Figure~\ref{fig:node_two_lin}. The resulting map of the neural ODE architecture is then 
	\begin{equation*}
		\textup{NODE}_{(5)}: \mathcal{X} \mapsto \mathbb{R}^{n_\textup{out}}, \qquad \textup{NODE}_{(5)}(x) \coloneqq \SV{L_2(h_{L_1(x)}(T)).}
	\end{equation*}
	
	\begin{figure}[H]
		\centering
		\includegraphics[width=0.55\textwidth]{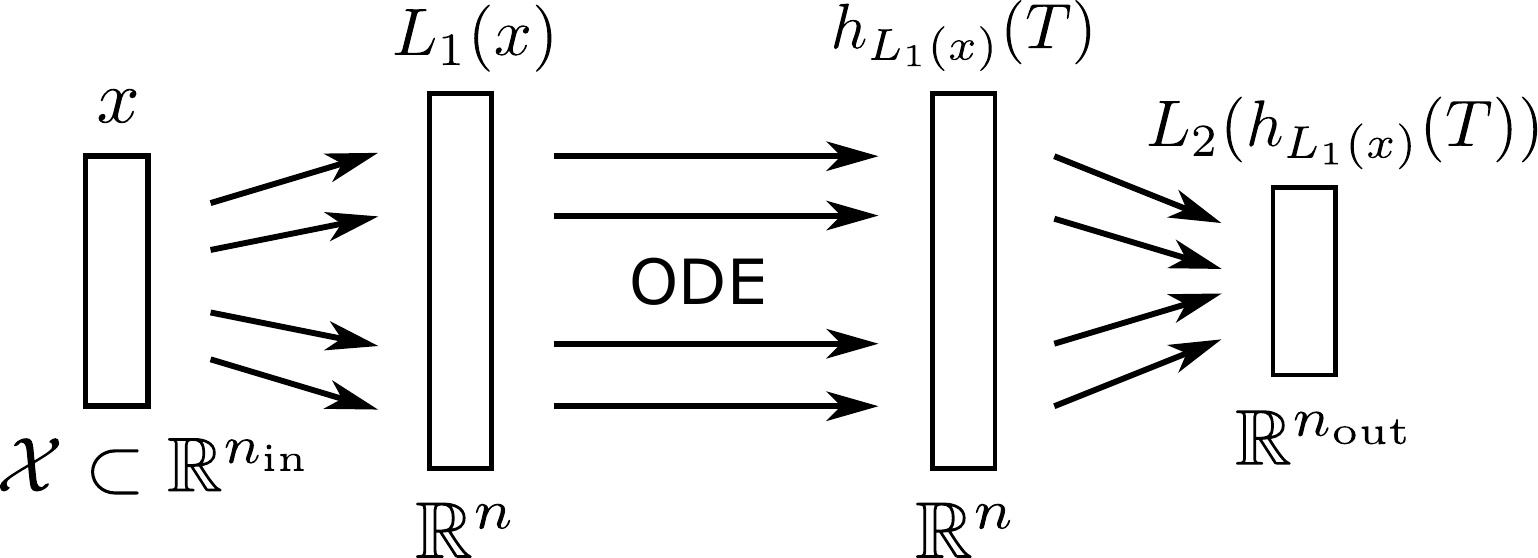}
		\caption{Sketch of an augmented neural ODE with two additional layers \SV{$L_1$, $L_2$} to embed maps $\Phi: \mathcal{X} \rightarrow \mathbb{R}^{n_\textup{out}}$, $\mathcal{X} \subset \mathbb{R}^{n_\textup{in}}$.}
		\label{fig:node_two_lin}
	\end{figure}
	
	Augmented neural ODEs of Section~\ref{sec:node_aug_lin} are a special case of neural ODEs with two additional layers by choosing the first layer linear as
	\begin{equation*}
		\SV{L_1}(x) = \begin{pmatrix}
			\text{Id}^{n_\textup{in}\times n_\textup{in}} \\ 0^{(n-n_\textup{in}) \times n_\textup{in}} 
		\end{pmatrix} \cdot x = \begin{pmatrix}
			x \\ 0^{n-n_\textup{in}}
		\end{pmatrix}.
	\end{equation*}
	Consequently the neural ODE architecture $\textup{NODE}_{(5)}$ is the most general, from which all the architectures $\textup{NODE}_{(i)}$, $i \in \{1,2,3,4\}$ can be obtained as special cases. Furthermore, neural ODEs with two additional layers have as a consequence of Theorem~\ref{th:node_aug_lin} also the universal embedding property.\medskip
	
	In Section~\ref{sec:node_aug} the suspension flow on the $(n+1)$-dimensional mapping torus $\mathcal{M}$ was introduced, which allows to embed every diffeomorphism $\Phi \in C^1(\mathcal{X}, \mathcal{X})$, $\mathcal{X} \subset \mathbb{R}^n$ in an augmented neural ODE in dimension $n+1$. To avoid working in applications with the general topological manifold~$\mathcal{M}$, it is possible to embed~$\mathcal{M}$ as a submanifold in $\mathbb{R}^{2n+2}$. The diffeomorphism $\Phi$ is then embedded in the neural ODE architecture $\textup{NODE}_{(5)}$. In Section 5, we show the following theorem contributing to solve question (Q3).
	
	\begin{theorem}[See Theorem~\ref{th:twononlinearlayers}]
		Let $\Phi \in C^\infty(\mathcal{X},\mathcal{X})$, $\mathcal{X} \subset \mathbb{R}^n$ be a diffeomorphism. Then $\Phi$ can be embedded in a neural ODE in dimension $2n+2$ with two additional (possibly nonlinear) layers.
	\end{theorem}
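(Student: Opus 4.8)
The plan is to realise $\Phi$ as the suspension flow of Theorem~\ref{th:suspension} transported from the $(n+1)$-dimensional mapping torus $\mathcal{M}$ into the Euclidean space $\mathbb{R}^{2n+2}$ by a Whitney embedding, using the layer $L_1$ of the architecture $\textup{NODE}_{(5)}$ to enter the embedded torus and the layer $L_2$ to leave it. Since $\Phi \in C^\infty(\mathcal{X},\mathcal{X})$ is a diffeomorphism, the mapping torus $\mathcal{M} = (\mathcal{X}\times[0,T]) / \bigl((\Phi(x),0)^\top \sim (x,T)^\top\bigr)$ is a smooth $(n+1)$-manifold (with boundary if $\mathcal{X}$ has one), so by the strong Whitney embedding theorem there is a smooth, proper embedding $\iota:\mathcal{M}\hookrightarrow \mathbb{R}^{2(n+1)}=\mathbb{R}^{2n+2}$ onto a closed submanifold $\mathcal{N}\coloneqq\iota(\mathcal{M})$. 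Define the first layer $L_1:\mathcal{X}\to\mathbb{R}^{2n+2}$, $x\mapsto \iota([(x,0)^\top])$, which is smooth as a composition. By Theorem~\ref{th:suspension} the suspension vector field $(h',t')^\top=(0,1)^\top$ is complete on $\mathcal{M}$ and its time-$T$ map sends $[(x,0)^\top]$ to $[(\Phi(x),0)^\top]$.

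Next I would transport the dynamics to the ambient space. Let $V\coloneqq \iota_\ast(0,1)^\top$ be the pushforward vector field on $\mathcal{N}$, which is smooth and tangent to $\mathcal{N}$. Using a tubular neighbourhood $\pi:U\to\mathcal{N}$ of $\mathcal{N}$ and its normal-bundle trivialisation, extend $V$ to a smooth vector field $\tilde V$ on $U$ for which $\mathcal{N}$ is invariant (set the normal component to zero in fibre coordinates), and multiply $\tilde V$ by a smooth bump function equal to $1$ on a neighbourhood of $\mathcal{N}$ and supported in $U$ to obtain $f\in C^\infty(\mathbb{R}^{2n+2},\mathbb{R}^{2n+2})$. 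Being $C^\infty$, $f$ satisfies Assumption~\ref{assA:ode_unique}; and since trajectories issued from $\mathcal{N}$ never leave $\mathcal{N}$, where $f=\tilde V=V$, they coincide with the complete suspension flow, hence exist for all $t\in[0,T]$ so that Assumption~\ref{assA:ode_existence} also holds. In particular $h_{L_1(x)}(T)=\iota([(\Phi(x),0)^\top])$ for every $x\in\mathcal{X}$.

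Finally I would construct the output layer: the assignment $\iota([(y,0)^\top])\mapsto y$ is smooth on the submanifold $\iota(\{t=0\})\subset\mathbb{R}^{2n+2}$, and it extends to a smooth map $L_2:\mathbb{R}^{2n+2}\to\mathbb{R}^{n}$ (by a Whitney/Tietze-type extension, or by following a tubular-neighbourhood retraction with $\iota^{-1}$ and the projection onto the first $n$ coordinates). Then
\[
\textup{NODE}_{(5)}(x)=L_2\bigl(h_{L_1(x)}(T)\bigr)=L_2\bigl(\iota([(\Phi(x),0)^\top])\bigr)=\Phi(x)
\]
for all $x\in\mathcal{X}$, so $\Phi$ is embedded in $\textup{NODE}_{(5)}$ with a neural ODE in dimension $2n+2$ and two, in general nonlinear, layers $L_1,L_2$.

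The main obstacle is the middle step: extending the intrinsic suspension vector field off the embedded torus $\mathcal{N}$ to all of $\mathbb{R}^{2n+2}$ while simultaneously (i) keeping $\mathcal{N}$ invariant, so the time-$T$ map on $L_1(\mathcal{X})$ really is the suspension time-$T$ map; (ii) keeping enough regularity ($C^1$ already suffices) for uniqueness in Assumption~\ref{assA:ode_unique}; and (iii) retaining global-in-time existence on $[0,T]$ for the relevant initial conditions. The tubular-neighbourhood construction settles (i), smoothness of $\iota$ and of the bump function settles (ii), and completeness of the suspension flow on $\mathcal{M}$---which holds regardless of compactness of $\mathcal{X}$, since the flow merely advances the $[0,T]$-coordinate cyclically---settles (iii). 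A secondary subtlety is that landing in exactly $2n+2$ ambient dimensions, rather than $2n+3$, requires the hard Whitney embedding theorem for the $(n+1)$-manifold $\mathcal{M}$, which is precisely why the hypothesis is strengthened from the $C^1$ of Theorem~\ref{th:suspension} to $C^\infty$.
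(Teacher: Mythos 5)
Your proposal follows essentially the same route as the paper's proof: realize $\Phi$ via the suspension flow on the smooth mapping torus, embed the torus into $\mathbb{R}^{2n+2}$ by the (hard) Whitney theorem, push the suspension vector field forward, and use the embedding and its (local) inverse as the two additional layers. The only difference is that you make explicit some details the paper leaves implicit---extending the vector field off the embedded torus by a tubular neighbourhood and bump function, and globalizing the output layer---which refines rather than changes the argument.
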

	
	It is interesting to note, that the number of dimensions needed to embed any Lebesgue integrable function in Theorem~\ref{th:node_aug_lin} agrees up to an additive constant with the number of dimensions in Theorem~\ref{th:twononlinearlayers} needed to embed diffeomorphisms.
	
	\section{The Restricted Embedding Problem}
	\label{sec:restrictedembedding}
	
	In this section we discuss the restricted embedding problem of embedding a given map $\Phi$ in a basic neural ODE. The problem is called restricted, as the dimensions of the map $\Phi$ and the neural ODE agree. We consider again basic neural ODEs introduced in Section~\ref{sec:node} of the form
	\begin{equation} \label{eq:node_basic2} \tag{$\text{NODE}_\text{basic}$}
		\frac{\dd h}{\dd t} = f(h(t),t), \qquad h(0) = x \in \mathcal{X},
	\end{equation}
	with $\mathcal{X} \subset \mathbb{R}^n$ and continuous right hand side $f \in C^{0,0}( \mathbb{R}^n \times \mathcal{I},\mathbb{R}^n)$, where $\mathcal{I}$ denotes the maximal time interval of existence of the solution map $h_x(t)$ of~\eqref{eq:node_basic2} with $0 \in \mathcal{I}$. To explicitly take into account the dependence on the initial condition, we denote in this section the solution map of~\eqref{eq:node_basic2} by $h(x,t): \mathcal{X} \times \mathcal{I} \rightarrow \mathbb{R}^n$. A first important and well-known observation in the case~$n = 1$ is, that the time-$T$ map used to embed the map $\Phi: \mathcal{X} \rightarrow \mathbb{R}$, $\mathcal{X} \subset \mathbb{R}$ is always strictly monotonically increasing in $x$.
	
	\begin{proposition} \label{prop:monotonicallyincreasing}
		Under Assumptions~\ref{assA:ode_existence},~\ref{assA:ode_unique}, the time-$t$ map of~\eqref{eq:node_basic2} is strictly monotonically increasing in $x$, i.e., for $x_1,x_2 \in \mathcal{X}$ with $x_1<x_2$ it holds $h(x_1,t)<h(x_2,t)$ for all $t \in \mathcal{I}$. To be able to embed $\Phi: \mathcal{X} \rightarrow \mathbb{R}$, $\mathcal{X} \subset \mathbb{R}$ as a time-$T$ map in a one-dimensional neural ODE, $\Phi$ also needs to be strictly monotonically increasing in $x$ on $\mathcal{X} \subset \mathbb{R}$. 
	\end{proposition}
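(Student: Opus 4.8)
The plan is to use uniqueness of solutions, which forbids solution curves from crossing, together with the intermediate value theorem to propagate an ordering from $t=0$ to all later times. First I would take $x_1, x_2 \in \mathcal{X}$ with $x_1 < x_2$ and consider the two solution curves $t \mapsto h(x_1,t)$ and $t \mapsto h(x_2,t)$, both of which exist on the maximal interval $\mathcal{I}$ by Assumption~\ref{assA:ode_existence}. Suppose for contradiction that the strict ordering fails at some time, i.e.\ there is $t^\ast \in \mathcal{I}$ with $h(x_1,t^\ast) \geq h(x_2,t^\ast)$. Since at $t = 0$ we have $h(x_1,0) = x_1 < x_2 = h(x_2,0)$, the continuous function $g(t) \coloneqq h(x_2,t) - h(x_1,t)$ satisfies $g(0) > 0$ and $g(t^\ast) \leq 0$, so by the intermediate value theorem there exists $\tau$ between $0$ and $t^\ast$ with $g(\tau) = 0$, that is, $h(x_1,\tau) = h(x_2,\tau) \eqqcolon y$.

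Now both $t \mapsto h(x_1,t)$ and $t \mapsto h(x_2,t)$ are solutions of the initial value problem $\frac{\dd h}{\dd t} = f(h,t)$ with the same value $y$ at time $\tau$. By Assumption~\ref{assA:ode_unique}, the solution through $(\tau, y)$ is unique, so $h(x_1,t) = h(x_2,t)$ for all $t \in \mathcal{I}$; in particular $h(x_1,0) = h(x_2,0)$, i.e.\ $x_1 = x_2$, contradicting $x_1 < x_2$. Hence no such $t^\ast$ exists and $h(x_1,t) < h(x_2,t)$ for all $t \in \mathcal{I}$, which proves strict monotonicity of the time-$t$ map in $x$. For the second claim, if $\Phi$ is embedded as the time-$T$ map then $\Phi(x) = h(x,T)$ for all $x \in \mathcal{X}$ with $T \in \mathcal{I}$, and the monotonicity just established applied at $t = T$ shows $\Phi(x_1) < \Phi(x_2)$ whenever $x_1 < x_2$, so $\Phi$ must itself be strictly monotonically increasing on $\mathcal{X}$.

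There is no serious obstacle here; the only point requiring a little care is ensuring that the time $\tau$ at which the curves meet actually lies in the interval $\mathcal{I}$ where both solutions are defined and where uniqueness applies — this is immediate since $\tau$ lies between $0$ and $t^\ast$, and $\mathcal{I}$ is an interval containing both. One should also note that $\mathcal{X} \subset \mathbb{R}$ need not be an interval, but this causes no difficulty: the argument only compares two fixed initial conditions and uses the connectedness of the time interval $\mathcal{I}$, not of $\mathcal{X}$.
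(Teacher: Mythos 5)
Your proposal is correct and follows essentially the same route as the paper's proof: assume the ordering fails, apply the intermediate value theorem to the continuous difference of the two solution curves to find a time where they meet, and derive a contradiction with the uniqueness of solutions from Assumption (A2), then read off the monotonicity of $\Phi$ at $t=T$. The only cosmetic difference is that you treat the equality case via the same IVT argument (allowing $\tau = t^\ast$) and spell out that coincidence at $\tau$ forces coincidence at $t=0$, whereas the paper handles equality as a separate immediate contradiction; the substance is identical.
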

	
	\begin{proof}
		Let $x_1,x_2 \in \mathcal{X}$ with $x_1<x_2$ and assume $h(x_1,t) \geq h(x_2,t)$. The case $h(x_1,t) = h(x_2,t)$ contradicts Assumption~\ref{assA:ode_unique}. If $h(x_1,t) > h(x_2,t)$, then $h(x_1,t) - h(x_2,t) > 0$ and \mbox{$h(x_1,0) - h(x_2,0) =$} $ x_1-x_2 <0$. As the function $h(x_1,t)-h(x_2,t)$ is by Theorem~\ref{th:continuousdependence} continuous, the intermediate value theorem guarantees the existence of a value $t_0 \in (0,t)$, such that $h(x_1,t_0)-h(x_2,t_0) = 0$ for $x_1 \neq x_2$, which contradicts again the uniqueness of solution curves of Assumption~\ref{assA:ode_unique}.
	\end{proof}
	
	Using this observation, a one-dimensional neural ODE can be constructed from a given function~$h(x,t)$.
	
	\begin{remark}	
		If for a given map $\Phi \in C^0(\mathcal{X},\mathbb{R})$ a function $h\in C^{0,1}(\mathbb{R} \times [0,T],\mathbb{R})$ with \mbox{$h(x,0) = x \in \mathcal{X}$} $\subset \mathbb{R}$ and $h(x,T) = \Phi(x)$ can be found, which is for every $t \in [0,T]$ monotone in $x$, then a neural ODE with solution map $h(x,t)$ can be constructed. As for every $t$, $h(x,t)$ is monotone in $x$, also the inverse $h^{-1}(x,t)$ exists for every fixed $t \in [0,T]$. $h_x(t) \coloneqq h(x,t)$ is then a solution of the neural ODE
		\begin{equation*}
			\frac{\dd h_x}{\dd t} = \frac{\partial h}{\partial t}(h^{-1}(h_x,t),t), \qquad h_x(0) = x,
		\end{equation*}
		as $h^{-1}(h_x,t) = x$ for every $t \in [0,T]$.
	\end{remark}
	
	To further study the restricted embedding problem, we first remark that every non-autonomous ODE like~\eqref{eq:node_basic2} can be reformulated as an autonomous ODE with one additional dimension. 
	
	\begin{remark} \label{rmk:nonautonomous}
		Non-autonomous ODEs, which depend explicitly on the time $t$ 
		\begin{equation*}
			\frac{ \dd h }{\dd t} = f(h(t),t), \qquad h(0) = x \in \mathbb{R}^n,
		\end{equation*}
		with $f \in C^{0,0}(\mathbb{R}^n \times \mathcal{I}, \mathbb{R}^n)$ can be reformulated as an autonomous ordinary differential equations by adding an extra dimension for the time component:
		\begin{equation*}
			\begin{pmatrix}
				h' \\ t'
			\end{pmatrix} = 
			\begin{pmatrix}
				f(t,h) \\ 1
			\end{pmatrix},
			\qquad 
			\begin{pmatrix}
				h(0) \\ t(0)
			\end{pmatrix} = 
			\begin{pmatrix}
				x \\ 0
			\end{pmatrix}.
		\end{equation*} 
		Followed by the linear layer $A = \begin{pmatrix} I^{n \times n}  & 0^{n \times 1}\end{pmatrix} \in \mathbb{R}^{n \times (n+1)}$, the solution of the autonomous $(n+1)$-dimensional system agrees with the solution of the non-autonomous $n$-dimensional system.  
	\end{remark}
	
	Hence non-autonomous ODEs can also be seen as a special case of higher-dimensional ODE systems. Especially every solution of a non-autonomous ODE can be obtained by augmenting the phase space by one extra dimension and adding a linear layer restricting the solution to the first $n$~dimensions. Augmented neural ODEs with a linear layer have been studied in Section~\ref{sec:node_aug_lin}. In this section, we aim to study the class of basic neural ODEs, which cannot be rewritten as augmented neural ODEs with a linear layer, i.e., we focus on autonomous ODEs like
	\begin{equation} \label{eq:node_auto} \tag{$\text{NODE}_\text{auto}$}
		\frac{\dd h}{\dd t} = f(h(t)), \qquad h(0) = x \in \mathcal{X},
	\end{equation}
	with continuous vector field $f \in C^0(\mathbb{R}^n,\mathbb{R}^n)$ and set of initial conditions $\mathcal{X} \subset \mathbb{R}^n$. In the following Section~\ref{sec:jabotinsky} we derive the Jabotinsky functional equations characterizing solutions of~\eqref{eq:node_auto}. Taking additionally into account the condition $h(x,T) = \Phi(x)$ we obtain Julia's functional equation, which is analyzed in Section~\ref{sec:juliafunctional}. Solutions to Julia's functional equation allow to characterize the vector field $f$ of~\eqref{eq:node_auto}, which embeds $\Phi$ as its time-$T$ map.
	
	\subsection{Jabotinksy Equations} 
	\label{sec:jabotinsky}
	
	Under Assumption~\ref{assA:ode_unique}, by Theorem~\ref{th:continuousdependence} the solution map $h(x,t)$ of~\eqref{eq:node_auto} is a continuous function in $x$ for each fixed $t \in \mathcal{I}$. Furthermore, being a solution of an autonomous ordinary differential equation, $h(x,t)$ is differentiable in $t$ and fulfills the translation equation 
	\begin{equation} \label{eq:translation_equation} \tag{T}
		h(x,s+t) = h(h(x,s),t),
	\end{equation}
	with $s,t,s+t \in \mathcal{I}$, $ x \in \mathcal{X}'$ and $\mathcal{X}' \subset \mathcal{X}$ such that $h(x,s) \in \mathcal{X}$~\cite{chicone06}. 
	
	\begin{definition}[Flow~\cite{chicone06}]
		A map $h \in C^{0,0}(\mathcal{X} \times \mathcal{I}, \mathbb{R}^n)$, $\mathcal{X} \subset \mathbb{R}^n$ is called a flow, if $h(x,0) = x$ and the translation equation~\eqref{eq:translation_equation} is fulfilled for all $s,t \in \mathcal{I}$ and $x \in \mathcal{X}$ for which both sides of the equation are well defined. 
	\end{definition}
	
	The problem of finding a basic neural ODE of the form~\eqref{eq:node_auto}, which embeds a given map $\Phi: \mathcal{X} \rightarrow \mathbb{R}$, is equivalent to finding a flow $h \in C^{0,1}(\mathcal{X} \times \mathcal{I}, \mathbb{R}^n)$ with $h(x,T) = \Phi(x)$ for all $x \in \mathcal{X}$. The autonomous ODE used as the neural ODE is obtained by differentiating the translation equation~\eqref{eq:translation_equation} with respect to $t$, evaluating at $t = 0$ and renaming $s$ to $t$:
	\begin{equation*}
		\frac{\partial h(x,s+t)}{\partial t} = \frac{\partial h(h(x,s),t)}{\partial t} \qquad \Rightarrow \qquad 	\frac{\partial h(x,t)}{\partial t} = \frac{\partial h(h(x,t),0)}{\partial t} = f(h(x,t)),
	\end{equation*}
	where $f(h(x,t)) \coloneqq \frac{\partial h(x,t)}{\partial t} \big\vert_{t=0}$ is continuous. \medskip
	
	In the one-dimensional case, the embedding problem of homeomorphisms in flows is discussed in~\cite{fort55} and the following result is obtained.
	
	\begin{theorem}[\cite{fort55}] \label{th:jabotinsky}
		Let $\Phi\in C^0((a,b],(a,b])$ be a strictly monotonically increasing homeomorphism.
		\begin{enumerate}[label=(\alph*)]
			\item It is possible to embed $\Phi$ in a flow $h \in C^{0,0}((a,b]\times \mathbb{R},(a,b])$.
			\item If additionally $\Phi\in C^1((a,b],(a,b])$, $\Phi(x)>x$ for $x \in (a,b)$ and $\Phi'$ positive and monotonically non-increasing on $(a,b]$, then there exists a unique flow $h \in C^{1,0}((a,b]\times \mathbb{R},(a,b])$, which embeds the map $\Phi$. 
		\end{enumerate}
	\end{theorem}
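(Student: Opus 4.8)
The plan is to construct the embedding flow explicitly via a conjugacy argument that reduces the general case to the model flow on $\mathbb{R}$ given by translation, $t\cdot s = s+t$. For part (a), the key observation is that a strictly increasing homeomorphism $\Phi$ of $(a,b]$ with (without loss of generality) $\Phi(x)<x$ on $(a,b)$ partitions the interval into the fundamental domains $[\Phi(x_0),x_0]$ for a fixed choice of $x_0$, and the orbit $\{\Phi^n(x_0)\}_{n\in\mathbb{Z}}$ is a strictly decreasing sequence accumulating only at $a$. First I would build a homeomorphism $\sigma:(a,b]\to(a,b]$ of the fundamental domain $[\Phi(x_0),x_0]$ onto itself that realizes the "time-one within one domain" piece, then extend it across all domains by the functional equation $\sigma(\Phi(x)) = \Phi(\sigma(x))$; the candidate flow is then $h(x,t) = \Phi^{\lfloor t\rfloor}\big(\sigma_{t-\lfloor t\rfloor}(x)\big)$ where $\sigma_\tau$ is an interpolating isotopy from the identity to $\Phi$ inside each fundamental domain, glued consistently at the domain boundaries. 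Verifying $h(x,0)=x$, $h(x,1)=\Phi(x)$, the translation equation~\eqref{eq:translation_equation}, and joint continuity in $(x,t)$ is then a matter of careful but routine checking; the gluing condition at the points $\Phi^n(x_0)$ is exactly what makes $h$ well defined and continuous.

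For part (b), where $\Phi\in C^1$ with $\Phi(x)>x$ on $(a,b)$ and $\Phi'$ positive and monotonically non-increasing, the stronger hypotheses should force uniqueness and extra regularity. The natural approach is to produce an Abel function: a $C^1$ diffeomorphism $\alpha:(a,b]\to(\alpha(b),\infty)$ (or some half-line) solving Abel's equation $\alpha(\Phi(x)) = \alpha(x)+1$. Once $\alpha$ exists, the flow is simply $h(x,t) = \alpha^{-1}(\alpha(x)+t)$, which is visibly $C^{1,0}$, and any embedding flow must satisfy $\alpha(h(x,t)) = \alpha(x)+t$ after setting $\alpha$ to be (a normalization of) the generator's antiderivative $\alpha(x)=\int^x \mathrm{d}u/f(u)$, forcing uniqueness. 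The existence of a $C^1$ Abel function is where the monotonicity of $\Phi'$ enters: one builds $\alpha$ on a fundamental domain and extends by Abel's equation, and the non-increasing derivative is exactly the convexity-type condition that guarantees the one-sided derivatives of the pieced-together $\alpha$ match at the domain boundaries $\Phi^n(x_0)$, yielding a genuine $C^1$ function rather than merely a continuous one.

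I expect the main obstacle to be the regularity matching at the countably many boundary points $\{\Phi^n(x_0)\}$. In part (a) this is only a continuity condition and is manageable by choosing the interpolating isotopy $\sigma_\tau$ to agree with the identity and with $\Phi$ to infinite order in $\tau$ at $\tau=0,1$ is not needed — mere continuous gluing suffices — so the real work is bookkeeping. In part (b) the obstruction is genuine: without $\Phi'$ non-increasing, the naive extension of the Abel function can have a jump in its derivative across a fundamental-domain wall, and the whole point of that hypothesis is to rule this out. I would therefore spend most effort on showing that, under "$\Phi'$ positive and non-increasing," the left and right derivatives of $\alpha$ at each $\Phi^n(x_0)$ coincide, equivalently that $\Phi'(\Phi(x_0)) / \Phi'(x_0)$ relates the slopes correctly — this is the one step that is not pure formality. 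Since the paper cites~\cite{fort55} for this theorem, I would follow Fort's construction in outline and simply flag that the $C^1$ gluing is the crux; everything else (well-definedness, the translation equation, continuity, and the final conversion to the ODE $h' = f(h)$ with $f(y) = \partial_t h(x,t)|_{t=0}$ evaluated at $y=x$) follows the pattern already established in Section~\ref{sec:jabotinsky}.
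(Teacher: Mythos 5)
First, note that the paper itself gives no proof of Theorem~\ref{th:jabotinsky}: it is imported verbatim from Fort's paper~\cite{fort55}, so there is no internal argument to compare against; your sketch has to stand on its own. In outline it follows the classical route (fundamental domains and an equivariant isotopy for (a), an Abel coordinate $\alpha$ with $h(x,t)=\alpha^{-1}(\alpha(x)+t)$ for (b)), which is the right family of ideas, but two steps are genuinely off. In part (a), ``without loss of generality $\Phi(x)<x$ on $(a,b)$'' is not a WLOG: a strictly increasing homeomorphism of $(a,b]$ onto itself always fixes $b$, and may have an arbitrary closed set of interior fixed points, including subintervals on which it is the identity and regions where the sign of $\Phi(x)-x$ alternates. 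The general proof must define the flow to be trivial on the fixed set, run your fundamental-domain construction separately on each complementary open interval (where $\Phi$ is fixed-point free), and then prove \emph{joint} continuity of the glued flow at the fixed set (e.g.\ by squeezing $h(x,t)$ between integer iterates of $\Phi$ uniformly for $t$ in compact sets). That gluing/continuity argument is the actual content of Fort's theorem for (a), and your sketch skips it entirely.

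In part (b) the misplaced step is where uniqueness comes from. Matching one-sided derivatives of the pieced-together $\alpha$ at the walls $\Phi^n(x_0)$ is not the crux: you can always arrange that by choosing the germ of $\alpha$ on one fundamental domain with the prescribed endpoint derivatives (the matching condition is just $\alpha'(\Phi(x_0))\,\Phi'(x_0)=\alpha'(x_0)$), and doing so still leaves infinitely many $C^1$ Abel functions on $(a,b)$, hence infinitely many flows that are $C^1$ in $x$ on the \emph{open} interval. What singles out a unique flow is the requirement that $h(\cdot,t)$ be $C^1$ up to the fixed endpoint $b$ (recall $\Phi(b)=b$ and, under $\Phi(x)>x$ on $(a,b)$, all forward orbits accumulate at $b$): the hypothesis that $\Phi'$ is positive and monotonically non-increasing is exactly the Szekeres--Kuczma regular-iteration condition that forces $\alpha$ to be the principal Abel function (obtained as a limit of normalized iterates), and it is this regularity at $b$, not the wall-matching, that kills the freedom. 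Relatedly, your uniqueness argument via ``$\alpha(x)=\int^x \mathrm{d}u/f(u)$'' presupposes a generator $f=\partial_t h|_{t=0}$, i.e.\ differentiability in $t$; the class $C^{1,0}((a,b]\times\mathbb{R},(a,b])$ only gives continuity in $t$, so no such $f$ is available a priori and uniqueness must be argued at the level of the Abel function itself.
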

	
	The assumption that $\Phi$ is strictly monotonically increasing (i.e., $\Phi'(x) >0$ for $x \in (a,b]$ in the differentiable case) is necessary due to Proposition~\ref{prop:monotonicallyincreasing}. As the theorem does not guarantee the differentiability of $h$ with respect to $t$, it is not guaranteed that the flow $h$ can be obtained as a solution of an autonomous ODE. In the two-dimensional case, the embedding of homeomorphisms is discussed in~\cite{andrea65}, but again differentiability of the flow $h$ with respect to $t$ is not guaranteed. 
	
	To avoid this problem of not finding a related autonomous ODE, we assume in the following that the solution map $h(x,t)$ is differentiable both with respect to the initial condition $x$ and the time $t$. The solution map then satisfies the three Jabotinsky equations, which are defined in the following Lemma.
	
	\begin{lemma}[see also \cite{aczel88}] 
		Let $h: \mathcal{X} \times \mathcal{I} \rightarrow \mathbb{R}^n$ be a map fulfilling the translation equation~\eqref{eq:translation_equation} for $s,t,s+t \in \mathcal{I}$ with initial condition 
		\begin{equation} \tag{I} \label{eq:initialcondition}
			h(x,0) = x
		\end{equation}
		for $x \in \mathcal{X}' \subset \mathcal{X} \subset \mathbb{R}^n$, such that $h(x,s) \in \mathcal{X}$. If $h$ is differentiable with respect to $x$ and $t$, then it satisfies the three Jabotinsky equations
		\begin{align} 
			\hspace{4cm}&\frac{\partial h(x,t)}{\partial t} &&=&& \frac{\partial h(x,t)}{\partial x} \cdot f(x), \hspace{4cm} \tag{J1} \label{eq:J1} \\
			&\frac{\partial h(x,t)}{\partial t}  &&=&& f(h(x,t)), \tag{J2} \label{eq:J2} \\
			&\frac{\partial h(x,t)}{\partial x} \cdot f(x) &&=&& f(h(x,t)), \tag{J3} \label{eq:J3}
		\end{align}
		for $x \in \mathcal{X}'$ and $t \in \textup{int}(\mathcal{I})$ (i.e., $t$ is in the interior of $\mathcal{I}$) with differential initial condition 
		\begin{equation} \tag{D} \label{eq:differentialIC}
			f(x) = \frac{\partial h(x,t)}{\partial t} \big\vert_{t=0}.
		\end{equation}
		For $n\geq 2$, the partial derivatives with respect to $x$ are Jacobian matrices and the $\cdot$ denotes matrix multiplication. 
	\end{lemma}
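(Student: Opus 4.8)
The plan is to derive all three identities by differentiating the translation equation~\eqref{eq:translation_equation} in each of its two time arguments and then specializing one of them to $0$. Write the translation equation as $h(x,s+t) = h(h(x,s),t)$, and let $\partial_1 h$ and $\partial_2 h$ denote the (Jacobian) derivatives of $h$ with respect to its first and its second argument; by hypothesis both exist for $x\in\mathcal{X}'$ and $t\in\mathrm{int}(\mathcal{I})$, and this is enough regularity to apply the chain rule below. Two facts will be used repeatedly: by the initial condition~\eqref{eq:initialcondition} we have $h(x,0)=x$, hence $\partial_1 h(x,0)=\mathrm{Id}$; and by the differential initial condition~\eqref{eq:differentialIC} we have $\partial_2 h(y,0)=f(y)$ wherever this is meaningful.

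First I would prove~\eqref{eq:J2}. Differentiating $h(x,s+t)=h(h(x,s),t)$ with respect to $t$ gives $\partial_2 h(x,s+t)=\partial_2 h(h(x,s),t)$, since on the right-hand side only the second slot depends on $t$. Setting $t=0$ and invoking~\eqref{eq:differentialIC} at the point $h(x,s)\in\mathcal{X}$ yields $\partial_2 h(x,s)=f(h(x,s))$; renaming $s$ to $t$ produces~\eqref{eq:J2}, after identifying $\partial_2 h(x,t)=\partial h(x,t)/\partial t$.

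Next I would prove~\eqref{eq:J1}. Differentiating $h(x,s+t)=h(h(x,s),t)$ with respect to $s$ and using the chain rule on the right gives $\partial_2 h(x,s+t)=\partial_1 h(h(x,s),t)\cdot\partial_2 h(x,s)$. Setting $s=0$ and using $h(x,0)=x$ together with $\partial_2 h(x,0)=f(x)$, this becomes $\partial_2 h(x,t)=\partial_1 h(x,t)\cdot f(x)$, which is~\eqref{eq:J1} once one reads $\partial_1 h(x,t)=\partial h(x,t)/\partial x$. Finally,~\eqref{eq:J3} follows immediately by equating the right-hand sides of~\eqref{eq:J1} and~\eqref{eq:J2}, both of which equal $\partial h(x,t)/\partial t$.

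The computations themselves are short; the only point requiring genuine care is the bookkeeping of domains and of which argument each derivative acts on. Concretely, one must check that when differentiating in $t$ the variable $s$ (later renamed $t$) lies in $\mathrm{int}(\mathcal{I})$, so that $\partial_2 h(x,s)$ is a legitimate two-sided derivative; that when differentiating in $s$ the variable $s$ ranges over a neighborhood of $0$ with $s+t\in\mathcal{I}$, again guaranteed by $t\in\mathrm{int}(\mathcal{I})$; and that the chain rule is applied at points where $h$ is differentiable in its first slot, which is exactly the standing hypothesis $h(x,s)\in\mathcal{X}$. I do not expect any single step to be a real obstacle: the content of the lemma is precisely that differentiating the flow property in its two natural time directions produces the infinitesimal-generator relations, and obtaining these three equivalent forms is what sets up Julia's functional equation in the next subsection.
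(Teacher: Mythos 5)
Your proposal is correct and follows essentially the same route as the paper's proof: differentiate the translation equation~\eqref{eq:translation_equation} with respect to $s$ and set $s=0$ to get~\eqref{eq:J1}, differentiate with respect to $t$ and set $t=0$ to get~\eqref{eq:J2}, and combine the two for~\eqref{eq:J3}. Your additional bookkeeping of domains and of which slot each derivative acts on is a welcome elaboration but does not change the argument.
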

	
	\begin{proof}
		The first Jabotinsky equation is obtained by differentiating the translation equation~\eqref{eq:translation_equation} with respect to $s$ and then setting $s=0$. Analogously, the second Jabotinsky equation is obtained by differentiating~\eqref{eq:translation_equation} with respect to $t$ and then setting $t=0$. The third Jabotinsky equation is a combination of the first two. 
	\end{proof}
	
	\begin{remark} \label{rmk:node_translationequation}
		The differential initial condition $f(x) = \frac{\partial h(x,t)}{\partial t} \big\vert_{t=0}$ follows from~\eqref{eq:node_auto} with initial condition $h(x,0) = x$:
		\begin{equation*}
			\frac{\partial h(x,t)}{\partial t} \bigg\vert_{t=0} = f(h(x,t)) \vert_{t=0} = f(h(x,0)) = f(x),
		\end{equation*}
		and the second Jabotinsky equation~\eqref{eq:J2} is the autonomous neural ODE~\eqref{eq:node_auto} which induces the translation equation~\eqref{eq:translation_equation}.
	\end{remark}
	
	We are interested in explicit solutions of the Jabotinsky equations to describe solutions of the autonomous restricted embedding problem. In~\cite{aczel88}, the solutions of \eqref{eq:J1}, \eqref{eq:J2} and \eqref{eq:J3} are characterized in the one-dimensional case, as summarized in the following theorem. 
	
	\begin{theorem}[\cite{aczel88}] \label{th:jabotinsky_solutions}
		Let $f \in C^0(\mathcal{X}, \mathbb{R})$ with $f(x) \neq 0$ on $\mathcal{X} \subset \mathbb{R}$. Define a function $r$ by $r'(x) = \frac{1}{f(x)}$.
		\begin{enumerate}[label=(\alph*)]
			\item The differentiable solution of~\eqref{eq:J1} is given by 
			\begin{equation*}
				h(x,t) = r^{-1}(r(x)+t).
			\end{equation*}
			The solution also satisfies the translation equation~\eqref{eq:translation_equation}.
			\item The solution of~\eqref{eq:J2} that is differentiable in its second component is given by 
			\begin{equation*}
				h(x,t) = r^{-1}(r(x)+t).
			\end{equation*}
			The solution also satisfies the translation equation~\eqref{eq:translation_equation}.
			\item The differentiable solution of~\eqref{eq:J3} is given by 
			\begin{equation*}
				h(x,t) = r^{-1}(r(x)+\gamma(t)),
			\end{equation*}
			where $\gamma$ is an arbitrary differentiable function with $\gamma(0) = 0$ and $\gamma'(0) = 1$. The solution does not necessarily satisfy the translation equation~\eqref{eq:translation_equation}.
		\end{enumerate}
	\end{theorem}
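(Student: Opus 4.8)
The plan is to verify each of the three formulas by direct substitution and then to pin them down uniquely, the key structural observation being that the hypothesis $f\in C^0(\mathcal{X},\mathbb{R})$ with $f\neq 0$ already forces $r$ to be a $C^1$-diffeomorphism onto its image: $r'=1/f$ is continuous and of constant sign, so $r$ is strictly monotone and $r^{-1}$ is $C^1$ with
\begin{equation*}
  (r^{-1})'(u)=\frac{1}{r'(r^{-1}(u))}=f\bigl(r^{-1}(u)\bigr).
\end{equation*}
This single identity makes all three Jabotinsky equations collapse after composing with $r$, and — crucially — it replaces any need for a Lipschitz hypothesis on $f$ when arguing uniqueness. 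All formulas are understood locally in $t$, i.e.\ on the maximal $t$-interval on which the relevant curves stay in $\mathcal{X}$ so that $r$ and $r^{-1}$ remain defined.

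For part (a), I would substitute $\xi=r(x)$ and set $\tilde h(\xi,t)\coloneqq h(r^{-1}(\xi),t)$; using $\partial_x h=\partial_\xi\tilde h\cdot r'(x)=\partial_\xi\tilde h/f(x)$, equation \eqref{eq:J1} becomes the constant-coefficient transport equation $\partial_t\tilde h=\partial_\xi\tilde h$, whose differentiable solutions are exactly $\tilde h(\xi,t)=G(\xi+t)$; the initial condition \eqref{eq:initialcondition} forces $G=r^{-1}$, giving $h(x,t)=r^{-1}(r(x)+t)$. For part (b), for each fixed $x$ the curve $t\mapsto h(x,t)$ solves the scalar autonomous problem $\dot y=f(y)$, $y(0)=x$, and then $\frac{\dd}{\dd t}r(h(x,t))=r'(h)\,\partial_t h=f(h)^{-1}f(h)=1$, so $r(h(x,t))=r(x)+t$ and again $h(x,t)=r^{-1}(r(x)+t)$; the same computation delivers uniqueness. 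For both parts the translation equation \eqref{eq:translation_equation} is then immediate: $h(h(x,s),t)=r^{-1}\bigl(r(r^{-1}(r(x)+s))+t\bigr)=r^{-1}(r(x)+s+t)=h(x,s+t)$.

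For part (c), I would fix $t$ and read \eqref{eq:J3} as $\partial_x h=f(h)/f(x)$, whence
\begin{equation*}
  \frac{\dd}{\dd x}\bigl(r(h(x,t))-r(x)\bigr)=r'(h)\,\partial_x h-r'(x)=\frac{1}{f(h)}\cdot\frac{f(h)}{f(x)}-\frac{1}{f(x)}=0 .
\end{equation*}
Thus $r(h(x,t))-r(x)$ depends only on $t$; call it $\gamma(t)$ and invert to get $h(x,t)=r^{-1}(r(x)+\gamma(t))$. Differentiability of $h$ in $t$ makes $\gamma$ differentiable; \eqref{eq:initialcondition} yields $\gamma(0)=0$, and since $\partial_t h(x,t)=f(h(x,t))\gamma'(t)$, the differential initial condition \eqref{eq:differentialIC} yields $\gamma'(0)=1$, with $\gamma$ otherwise free. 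Conversely any such $\gamma$ gives a solution by running the computation backwards. Finally $h$ satisfies \eqref{eq:translation_equation} iff $\gamma(s+t)=\gamma(s)+\gamma(t)$ on the admissible domain, which for differentiable $\gamma$ forces $\gamma(t)=t$; a generic $\gamma$ therefore breaks \eqref{eq:translation_equation}.

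The parts that are genuinely routine are the substitutions and verifications. The one point needing care is the uniqueness statements under only $f\in C^0$, and the resolution is precisely the observation in the first paragraph that $r$ is a $C^1$-diffeomorphism onto its image, so "separation of variables" is rigorous and no Picard--Lindelöf argument is required. A secondary, purely bookkeeping, obstacle is to phrase all the identities on the intervals where $r$, $r^{-1}$ and the trajectories $t\mapsto h(x,t)$ are defined, so that the globally written formulas are asserted only on the maximal such $t$-interval.
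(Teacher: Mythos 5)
Your proposal is correct, but note that the paper itself offers no proof of this theorem: it is quoted from Acz\'el's work \cite{aczel88} as a known result, so there is no in-paper argument to compare against. What you supply is a self-contained elementary derivation, and it is sound: the key observation that $r$ is a $C^1$-diffeomorphism with $(r^{-1})'=f\circ r^{-1}$ does make all three equations collapse — (a) via the change of variable $\xi=r(x)$ turning \eqref{eq:J1} into the transport equation $\partial_t\tilde h=\partial_\xi\tilde h$ and the method of characteristics, (b) via $\frac{\dd}{\dd t}r(h(x,t))=1$, which simultaneously gives existence and uniqueness without any Lipschitz/Picard--Lindel\"of input, and (c) via $\frac{\dd}{\dd x}\bigl(r(h(x,t))-r(x)\bigr)=0$, with \eqref{eq:initialcondition} and \eqref{eq:differentialIC} pinning down $\gamma(0)=0$, $\gamma'(0)=1$, and the Cauchy-additivity criterion explaining why \eqref{eq:translation_equation} generally fails (consistent with the paper's subsequent example $\gamma(t)=t^3+t$). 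Two small points deserve explicit mention rather than being absorbed into the ``bookkeeping'' caveat: you implicitly use that the relevant $x$-domain is an interval — connectedness is what forces $f$ to have constant sign (so $r$ is injective) and what lets the vanishing $x$-derivative in (c) conclude that $r(h(x,t))-r(x)$ is independent of $x$; and in (a) the conclusion $\tilde h(\xi,t)=G(\xi+t)$ from the transport equation requires the intersections of the characteristic lines with the domain to be connected, which again is part of the same domain restriction. With those hypotheses stated (as they are, implicitly, in the one-dimensional setting of \cite{aczel88}), your argument is a complete and more transparent substitute for the citation.
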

	
	In the following we show via an example, that there exist functions that are solutions of the third Jabotinsky equation~\eqref{eq:J3}, but that do not satisfy the translation equation~\eqref{eq:translation_equation}.
	
	\begin{example}[\cite{aczel88}]
		The differentiable map $h(x,t) = 2 \ln(\exp(x/2)+t^3+t)$ with $f(x) = 2 \exp(-x/2)$ satisfies~\eqref{eq:J3},~\eqref{eq:initialcondition} and~\eqref{eq:differentialIC}, but not~\eqref{eq:translation_equation}.
	\end{example}
	
	To study the embedding a map $\Phi$ in the autonomous system~\eqref{eq:node_auto}, under Assumption~\ref{assA:ode_existence}, the constraint
	\begin{equation*}
		h(x,T) = \Phi(x), \qquad x \in \mathcal{X}
	\end{equation*}
	needs to be combined with the results of Theorem \ref{th:jabotinsky_solutions}. As the embedding considers the map $h(x,t)$ at the fixed time $t = T$, only the third Jabotinsky equation~\eqref{eq:J3} is of major interest, as~\eqref{eq:J1} and~\eqref{eq:J2} contain partial derivatives with respect to $t$. Under the assumption that $\Phi$ is differentiable, inserting $t=T$ in the third Jabotinsky equations \eqref{eq:J3} leads to Julia's functional equation
	\begin{equation} \label{eq:julia}
		J_\Phi(x)  \cdot f(x) = f(\Phi(x)), \qquad x \in \mathcal{X}, \tag{J}
	\end{equation}
	where $J_\Phi$ denotes the Jacobian matrix of the differentiable map $\Phi$.  
	
	The constraint $t = T$ can also be inserted in the general one-dimensional solutions of the Jabotinsky equations given by Theorem~\ref{th:jabotinsky_solutions}. In all three cases this leads to Abel's functional equation
	\begin{equation*}
		r(\Phi(x)) = r(x) + c, \qquad x \in \mathcal{X}, \; c \in \mathbb{R}.
	\end{equation*}
	In the literature, conditions for solutions to Abel's functional equation are discussed for specific functions $\Phi$~\cite{belitskii98,kuczma90}. In the case that $r$ is differentiable, every solution to Abel's functional equation is also a solution of Julia's functional equation as differentiating leads to
	\begin{equation*}
		\Phi'(x) \cdot r'(\Phi(x)) = r'(x), \qquad x \in \mathcal{X},
	\end{equation*}
	which is the functional equation~\eqref{eq:julia} for $r'(x) = \frac{1}{f(x)}$ in the one-dimensional case. Hence it is for the application of neural ODEs sufficient to study Julia's functional equation and not Abel's functional equation. Julia's functional equation and its implications on neural ODEs are discussed in the following section.

	\subsection{Julia's Functional Equation} \label{sec:juliafunctional}
	
	In the literature, Julia's functional equation~\eqref{eq:julia} is mainly defined and studied in the one-dimensional case, where the Jacobian $J_\Phi$ is the derivative $\Phi'$~\cite{kuczma90}. A first important observation is that a trivial solution to Julia's functional equation always exists.
	
	\begin{remark}
		For every differentiable function $\Phi: \mathbb{R}^n \rightarrow \mathbb{R}^n$, the zero function $f: \mathbb{R}^n \rightarrow \mathbb{R}^n$, $x \mapsto 0$ is a solution to Julia's functional equation. The zero function is in the following called the trivial solution to Julia's functional equation~\eqref{eq:julia}.
	\end{remark}
	
	In the context of neural ODEs we are interested in non-trivial solutions to Julia's functional equation as the trivial ordinary differential equation $h' = 0$, $h(0) = x$ has only the constant solution $h_x(t) = x$, which embeds the time-$T$ map $\Phi(x) = x$.
	
	\begin{remark}
		For every differentiable function $\Phi: \mathbb{R}^n \rightarrow \mathbb{R}^n$ and solution $f: \mathbb{R}^n \rightarrow \mathbb{R}^n$ of Julia's functional equation, also $af: x \mapsto a f(x)$ solves~\eqref{eq:julia} for $a \in \mathbb{R}$. Hence the solution $f$ is defined up to a multiplicative constant.
	\end{remark}
	
	\begin{remark}
		By Theorem~\ref{th:jabotinsky} and Remark~\ref{rmk:node_translationequation}, solutions $\Phi,f$ of Julia's functional equation \eqref{eq:julia} are candidates of autonomous basic neural ODEs
		\begin{equation*}
			\frac{\dd h}{\dd t} = f(h(t)), \qquad h(0) = x
		\end{equation*}
		to have a time-$T$ map $h_x(T)  = \Phi(x)$.
	\end{remark}
	
	Even though we argued in Remark \ref{rmk:nonautonomous} that non-autonomous neural ODEs can be rewritten as autonomous augmented neural ODEs with a linear layer, it is interesting to note that Julia's functional equation is also a necessary condition for solutions of initial value problems based on one-dimensional separable ODEs.
	
	\begin{lemma} \label{lem:separable}
		Consider the one-dimensional separable ordinary differential equation
		\begin{equation*}
			\frac{\dd h}{\dd t} = f(h(t)) \cdot g(t), \qquad h(0) = x \in \mathcal{X},
		\end{equation*}
		where $f\in C^0(\mathbb{R},\mathbb{R})$, $g\in C^0(\mathbb{R},\mathbb{R})$ and $\mathcal{X} \subset \mathbb{R}$. If the solution of this ODE fulfills the time-$T$ constraint $h_x(T) = \Phi(x)$ for a differentiable map $\Phi: \mathbb{R} \rightarrow \mathbb{R}$, then $f$ and $\Phi$ need to satisfy Julia's functional equation~\eqref{eq:julia}.
	\end{lemma}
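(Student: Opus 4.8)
The plan is to follow the same route that turned the one-dimensional Jabotinsky solutions into Julia's equation in the autonomous case: separate variables, evaluate at $t=T$ to land on Abel's functional equation, and then differentiate in $x$. Fix $x \in \mathcal{X}$, write $h_x(t)$ for the solution, and set $G(t) \coloneqq \int_0^t g(s)\,\dd s$, a $C^1$ function with $G(0)=0$. Suppose first that $f$ has no zero on the compact interval $h_x([0,T]) \subset \mathbb{R}$; then $1/f$ is continuous there, so it admits a $C^1$ primitive $F$ with $F'=1/f$. Dividing the ODE by $f(h_x(t))\neq 0$ gives $\frac{\dd}{\dd t}F(h_x(t)) = \frac{h_x'(t)}{f(h_x(t))} = g(t)$, and integrating from $0$ to $t$ yields $F(h_x(t)) = F(x)+G(t)$ for all $t\in[0,T]$; at $t=T$, using $h_x(T)=\Phi(x)$, this is Abel's functional equation $F(\Phi(x)) = F(x)+G(T)$.

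To be allowed to differentiate in $x$, I would first make the primitive $F$ uniform in $x$. The key point is that, by uniqueness of solutions (Assumption~\ref{assA:ode_unique}), every zero $x_0$ of $f$ carries the constant trajectory $h_{x_0}\equiv x_0$, so no other trajectory can cross the value $x_0$; together with Assumption~\ref{assA:ode_existence} (existence on $[0,T]$) this forces $h_x([0,T])$ to stay inside the connected component $I$ of $\{y : f(y)\neq 0\}$ containing $x$, whenever $x\in I$. Hence a single $C^1$ primitive $F$ of $1/f$ on $I$ serves all $x\in I$, and $F(\Phi(x)) = F(x)+G(T)$ holds for every $x\in I$. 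Since $F\in C^1$ and $\Phi$ is differentiable by hypothesis, differentiating gives $F'(\Phi(x))\,\Phi'(x) = F'(x)$, i.e.\ $\frac{\Phi'(x)}{f(\Phi(x))} = \frac{1}{f(x)}$, which is exactly \eqref{eq:julia} once denominators are cleared, for all $x\in I$.

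It then remains to cover the zeros of $f$: if $f(x_0)=0$ then $\Phi(x_0)=h_{x_0}(T)=x_0$, so both sides of \eqref{eq:julia} vanish at $x_0$ (namely $\Phi'(x_0)f(x_0)=0$ and $f(\Phi(x_0))=f(x_0)=0$), and hence \eqref{eq:julia} holds on all of $\mathcal{X}$. I expect the only genuine obstacle to be the bookkeeping around these zeros — one must justify that a trajectory neither reaches nor passes through a nodal point of $f$, so that the separation-of-variables step is legitimate on each nodal interval and the relation $F(\Phi(x))=F(x)+G(T)$ is valid on an open $x$-interval rather than merely at isolated points — and this is precisely where Assumption~\ref{assA:ode_unique} enters. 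If $f$ vanishes nowhere, the argument reduces to the computation of the first paragraph followed by a single differentiation.
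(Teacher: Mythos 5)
Your proof is correct and takes essentially the same route as the paper: the paper simply writes the separated identity $\int_x^{\Phi(x)} \tfrac{1}{f(h)}\,\dd h = \int_0^T g(t)\,\dd t$ and differentiates in $x$ by Leibniz's rule to obtain \eqref{eq:julia}. Your passage through the primitive $F$ (Abel's equation) is the same computation, with additional, welcome bookkeeping at the zeros of $f$ via uniqueness of solutions --- a case the paper's short argument implicitly excludes by dividing by $f$.
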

	
	\begin{proof}
		As the ODE is separable, it holds
		\begin{equation*}
			\int_x^{\Phi(x)} \frac{1}{f(h)} \; \dd h = \int_0^T g(t) \; \dd t
		\end{equation*}
		due to the initial condition $h(0) = x$ and the time-$T$ condition $h_x(T) = \Phi(x)$. Differentiating with respect to $x$ gives by Leibniz's Integration rule~\cite{protter85}
		\begin{equation*}
			\frac{1}{f(\Phi(x))} \cdot \Phi'(x) - \frac{1}{f(x)} \cdot 1 = 0
		\end{equation*}
		leading to Julia's functional equation~\eqref{eq:julia} in the one-dimensional case.
	\end{proof}
	
	Already for one-dimensional maps $\Phi\in C^0(\mathbb{R,\mathbb{R}})$ it is interesting to know, if these can be embedded in autonomous basic neural ODEs. A necessary condition is that Julia's functional equation is fulfilled. First, we consider the class of monomials $\Phi(x) = x^\alpha$ with $\alpha \in \mathbb{N}_0$, as these are the basis for polynomials, which can approximate by the Stone-Weierstrass Theorem every continuous function on a real closed interval~\cite{cheney82}. The following theorem characterizes solutions of~\eqref{eq:julia} for $\alpha \in \{0,1\}$ and the possibility to embed the map $\Phi$ as a time-$T$ map of a basic neural ODE.
	
	\begin{theorem} \label{th:julia0d1d}
		The following holds for continuous solutions $f$ of the one-dimensional Julia functional equation~\eqref{eq:julia} with monomial map $\Phi: \mathbb{R} \rightarrow \mathbb{R}$, $x \mapsto c x^\alpha$, $\alpha \in \{0,1\}$, $c \in \mathbb{R}$.
		\begin{enumerate}[label=(\alph*)]
			\item For $\alpha = 0$: let $\Phi: \mathbb{R} \rightarrow \mathbb{R}$, $x \mapsto c$. Then all functions $f\in C^0(\mathbb{R},\mathbb{R})$ with $f(c) = 0$ solve Julia's functional equation. Under Assumptions~\ref{assA:ode_existence},~\ref{assA:ode_unique}, there exists no basic neural ODE embedding $\Phi$ as its time-$T$ map.
			\item \label{th:julia_b} For $\alpha = 1$: let $\Phi: \mathbb{R} \rightarrow \mathbb{R}$, $x \mapsto cx$ with $c \in \mathbb{R}$. Then Julia's functional equation is solved by all linear functions $f(x) = ax$, $a \in \mathbb{R}$. If $c>0$, the linear function $f(h) = \frac{\ln(c)}{T} h$ leads to the autonomous basic neural ODE \mbox{$h' = f(h)$}, $h(0) = x$ with time-$T$ map $h_x(T) = cx$. If $c\leq 0$, then under Assumptions~\ref{assA:ode_existence},~\ref{assA:ode_unique}, no basic neural ODE with time-$T$ map $\Phi$ exists.
		\end{enumerate}
	\end{theorem}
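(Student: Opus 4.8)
The plan is to substitute the monomial $\Phi$ into Julia's functional equation~\eqref{eq:julia} to read off the full solution set for $f$, and then to rule out embeddings by contradicting the monotonicity constraint of Proposition~\ref{prop:monotonicallyincreasing}.

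\textbf{Part (a).} Since $\Phi(x)=c$ is constant, $\Phi'(x)\equiv 0$, so~\eqref{eq:julia} collapses to $0=f(\Phi(x))=f(c)$; hence exactly the continuous functions $f$ with $f(c)=0$ solve it, and nothing more need be checked. To see that $\Phi$ cannot be a time-$T$ map, suppose a basic neural ODE of the form~\eqref{eq:node_basic2} had time-$T$ map $\Phi$. Then $h(x,T)=c$ for all $x\in\mathbb{R}$, so the time-$T$ map is constant and in particular not strictly increasing in $x$, contradicting Proposition~\ref{prop:monotonicallyincreasing}. Equivalently, two solution curves through distinct initial points both arriving at $c$ at time $T$ would coincide, violating Assumption~\ref{assA:ode_unique}.

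\textbf{Part (b).} Here $\Phi(x)=cx$ has $\Phi'(x)=c$, so~\eqref{eq:julia} reads $c\,f(x)=f(cx)$, and every linear $f(x)=ax$ clearly satisfies it. When $c>0$ I would simply exhibit the embedding: with $f(h)=\frac{\ln c}{T}\,h$ the linear IVP $h'=\frac{\ln c}{T}\,h$, $h(0)=x$, has the unique global solution $h_x(t)=x\exp\!\big(\frac{\ln c}{T}\,t\big)=x\,c^{t/T}$, so $h_x(T)=cx=\Phi(x)$. When $c\le 0$ I would split into two cases. For $c=0$ the map $\Phi\equiv 0$ is constant, so the non-embeddability argument of part (a) (with constant value $0$) applies verbatim. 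For $c<0$ the map $\Phi$ is strictly decreasing in $x$, which again contradicts Proposition~\ref{prop:monotonicallyincreasing}; concretely, the trajectory from $h(0,0)=0$ to $h(0,T)=0$ and the trajectory from $h(x^\ast,0)=x^\ast$ to $h(x^\ast,T)=cx^\ast$ (of opposite sign to $x^\ast\neq 0$) must intersect by the intermediate value theorem, as in Example~\ref{ex:minusxembedding1}, contradicting uniqueness.

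I do not anticipate any genuine obstacle: both parts reduce to a direct substitution into~\eqref{eq:julia} together with the single structural input of Proposition~\ref{prop:monotonicallyincreasing}. The only step carrying real content is verifying the explicit solution of the scalar linear ODE in the case $c>0$, and the only point requiring a little care is that Julia's equation is being invoked as a necessary condition only under differentiability of $\Phi$, which holds for all monomials.
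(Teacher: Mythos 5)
Your proposal is correct and follows essentially the same route as the paper: read off the solution set of Julia's equation by direct substitution, verify the explicit exponential solution for $c>0$, and rule out embeddings for the remaining cases via the strict monotonicity requirement of Proposition~\ref{prop:monotonicallyincreasing}. Your extra split of $c\le 0$ into $c=0$ and $c<0$ and the intersecting-trajectories picture are just more detailed versions of the paper's one-line appeal to non-monotonicity.
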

	
	\begin{proof}
		Part (a): For $\Phi(x) = c$, Julia's functional equation is given by $f(c) = 0$, which directly characterizes all continuous functions $f$ solving~\eqref{eq:julia}. As $\Phi(x) = c$ is not strictly monotonically increasing in $x$, Proposition~\ref{prop:monotonicallyincreasing} implies under Assumptions~\ref{assA:ode_existence},~\ref{assA:ode_unique} that there cannot exist any (possibly non-autonomous) basic neural ODE with time-$T$ map $\Phi$. \medskip
		
		Part (b): For $\Phi(x) = cx$, Julia's functional equation is given by $c f(x) = f(cx)$, which is solved for every linear function $f(x) = ax$ with $a \in \mathbb{R}$. For $c>0$, the autonomous neural ODE
		\begin{equation*}
			\frac{\dd h}{\dd t} = f(h) = \frac{\ln(c)}{T} h, \qquad h(0) = x
		\end{equation*}
		has the solution $h_x(t) = x \exp\left(\frac{\ln(c)}{T}t\right)$ with time-$T$ map $h_x(T) = cx = \Phi(x)$. If $c\leq 0$, the map \mbox{$\Phi(x) = cx$} is not strictly monotonically increasing in $x$, such that under Assumptions~\ref{assA:ode_existence},~\ref{assA:ode_unique} by Proposition~\ref{prop:monotonicallyincreasing} there cannot exist any (possibly non-autonomous) basic neural ODE with time-$T$ \mbox{map $\Phi$}.
	\end{proof}
	
	A first ansatz studying Julia's functional equation for $\alpha \notin \{0,1\}$ is the usage of power series. The following theorem shows, that there exists no non-trivial formal power series solution $f$ for~\eqref{eq:julia} for one-dimensional monomial maps $\Phi$.
	
	\begin{theorem}
		For $\Phi: \mathbb{R} \rightarrow \mathbb{R}$, $x \mapsto c x^\alpha$ with $\alpha \in \mathbb{N}_{\geq 2}$ and $c \in \mathbb{R}/\{0\}$, no non-trivial formal power series $f(x) = \sum_{i=0}^\infty \gamma_i x^i$ solving Julia's equation~\eqref{eq:julia} exists in the one-dimensional case. 
	\end{theorem}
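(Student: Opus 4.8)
The plan is to turn Julia's functional equation~\eqref{eq:julia} into a comparison of the lowest-order terms of two formal power series. In the one-dimensional case with $\Phi(x) = cx^\alpha$ we have $J_\Phi(x) = \Phi'(x) = c\alpha x^{\alpha-1}$, so \eqref{eq:julia} reads
\begin{equation*}
	c\alpha\, x^{\alpha-1} f(x) = f\bigl(cx^\alpha\bigr).
\end{equation*}
Inserting the ansatz $f(x) = \sum_{i=0}^\infty \gamma_i x^i$, the right-hand side is, as a formal power series, $\sum_{i=0}^\infty \gamma_i c^i x^{\alpha i}$, while the left-hand side is $\sum_{i=0}^\infty c\alpha\gamma_i x^{i+\alpha-1}$.

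First I would suppose, towards a contradiction, that $f$ is a non-trivial formal power series, and let $k_0 \coloneqq \min\{i\ge 0 : \gamma_i \ne 0\}$ denote its order. Since $c\alpha \ne 0$, the left-hand side is a nonzero power series of order $k_0+\alpha-1$ with lowest-degree coefficient $c\alpha\gamma_{k_0}$; since $c^{k_0}\ne 0$, the right-hand side is a nonzero power series of order $\alpha k_0$ with lowest-degree coefficient $\gamma_{k_0}c^{k_0}$. Equality of the two power series forces equality of their orders, i.e.\ $k_0+\alpha-1 = \alpha k_0$, equivalently $(\alpha-1)(k_0-1)=0$; because $\alpha\ge 2$ this yields $k_0 = 1$. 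Comparing then the coefficients of $x^\alpha$, the common lowest degree, gives $c\alpha\gamma_1 = c\gamma_1$, i.e.\ $c(\alpha-1)\gamma_1 = 0$, and since $c\ne 0$ and $\alpha-1\ne 0$ this forces $\gamma_1 = 0$, contradicting $\gamma_{k_0}=\gamma_1\ne 0$. Hence $f\equiv 0$, i.e.\ only the trivial solution exists.

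I do not expect a genuine obstacle here: the argument is a short order-comparison. The only points requiring a little care are that one works in the ring of formal power series, where \emph{order} and \emph{lowest-degree coefficient} are well defined for every nonzero element and equality means coefficient-wise equality, and that both hypotheses are genuinely used --- $c\ne 0$ to guarantee $c\alpha\ne 0$ and $c^{k_0}\ne 0$, and $\alpha\ge 2$ to guarantee $\alpha-1\ne 0$ in both the order identity and the final coefficient identity. A more laborious alternative would match every power of $x$, obtaining a recursion among the $\gamma_i$, and show by induction that all of them vanish; the order-comparison above is the shortcut that bypasses this bookkeeping, and I would present the proof in that form.
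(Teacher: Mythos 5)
Your proof is correct, and it takes a genuinely different route from the paper. You argue by valuation: letting $k_0$ be the order of a supposed non-trivial solution, you note that $c\alpha x^{\alpha-1}f(x)$ has order $k_0+\alpha-1$ while $f(cx^\alpha)$ has order $\alpha k_0$ (both with explicitly nonzero lowest coefficients, using $c\neq 0$), so equality in the formal power series ring forces $(\alpha-1)(k_0-1)=0$, hence $k_0=1$, and then the single coefficient comparison $c\alpha\gamma_1=c\gamma_1$ kills $\gamma_1$ — a contradiction. The paper instead does the full coefficient bookkeeping you describe as the ``laborious alternative'': it first shows $\gamma_0=\gamma_1=\gamma_2=0$, observes that only indices $i\equiv 1 \pmod{\alpha}$, i.e.\ $i^*(k)=(k-1)\alpha+1$, can carry nonzero coefficients, derives the recursion $\alpha\gamma_{i^*(k)}=c^{k-1}\gamma_k$ with $i^*(k)>k$, and runs an infinite descent: a nonzero coefficient at index $i^*(k)$ forces one at the strictly smaller index $k$, eventually contradicting the vanishing of the first few coefficients. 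Your order-comparison is shorter and isolates exactly where $c\neq 0$ and $\alpha\geq 2$ enter; the paper's descent yields as a by-product the explicit coupling structure among the coefficients (which indices could interact and how), but this extra information is not needed for the statement. One tacit point common to both arguments is that the substitution $f(cx^\alpha)$ is legitimate in the ring of formal power series because $cx^\alpha$ has zero constant term; this is immediate here, so it is not a gap.
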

	
	\begin{proof}
		Inserting $\Phi(x) = c x^\alpha$ with $\alpha \geq 2$ and the formal power series $f(x) = \sum_{i=0}^\infty \gamma_i x^i$ into Julia's functional equation leads to 
		\begin{equation*}
			\sum_{i=0}^\infty c\alpha \gamma_i x^{\alpha-1+i} = \sum_{j=0}^\infty c^j\gamma_j x^{\alpha j}.
		\end{equation*}
		Comparing terms in $\mathcal{O}(1)$ leads to $\gamma_0 = 0$ as $\alpha \geq 2$. The terms of order $\mathcal{O}(x^\alpha)$ imply that $\alpha \gamma_1 = \gamma_1$ such that $\gamma_1 = 0$ as $\alpha\geq 2$. On the right hand side only terms in the powers of $\alpha j$ occur, hence all coefficients $\gamma_i$ are zero, where $\alpha-1+i \neq 0$ mod $\alpha$, which is equivalent to $i \neq 1$ mod $\alpha$. Consequently only coefficients defined by $i^*(k) = (k-1)\alpha+1$ with $k \in \mathbb{N}_{\geq 1}$ can be non-zero, which is equivalent to $k = (i^*(k)-1)/\alpha+1$. We can directly conclude $\gamma_2 = 0$, as $2\neq 1$ mod $\alpha$ for all $\alpha \geq 2$. Inserting the condition $i^*(k)$ into the functional equation and collecting the coefficients of order $\mathcal{O}(\alpha-1+i^*(k)) = \mathcal{O}(k \alpha)$ leads to
		\begin{equation*}
			\alpha \gamma_{i^*(k)} = \alpha \gamma_{(k-1)\alpha+1} = c^{k-1} \gamma_k. 
		\end{equation*}
		As $\alpha \geq 2$, it holds for $i^*(k) \geq 2$ that  
		\begin{equation*}
			i^*(k) = i^*(k)-1+1 > \frac{i^*(k)-1}{\alpha} +1 = k.
		\end{equation*}
		Suppose there exists $i^*(k) \in \mathbb{N}$, such that $\gamma_{i^*(k)} \neq 0$. Then $i^*(k) >k$ and $c^{k-1}\gamma_k = \alpha \gamma_{i^*(k)} \neq 0$. Inductively we obtain non-zero coefficients $\gamma_k$ with a strictly smaller index as long $i^*(k) \geq 2$. As $\gamma_2 = \gamma_1 = \gamma_0 = 0$, this is a contradiction to the existence of a coefficient $\gamma_{i^*(k)} \neq 0$ and consequently no non-trivial power series solving Julia's equation for $\alpha \geq 2$ exists.
	\end{proof}
	
	The last theorem implies that we can not hope for analytic solutions of Julia's functional equation even for simple monomial maps. Therefore in the following we study solutions of~\eqref{eq:julia} by relaxing the underlying function space. As the map $\Phi$ has to be strictly monotonically increasing in $x$ to be embeddable as a time-$T$ map in a basic neural ODE, we study maps of the form $cx^\alpha$ for $c, x, \alpha \in \mathbb{R}_{>0}$.
	
	\begin{theorem} \label{th:julia_monome}
		Consider for $c\in \mathbb{R}_{>0}$ the map $\Phi(x) = c x^\alpha$ with $x \in \mathbb{R}_{>0}$ and $\alpha\in \mathbb{R}_{>0}/\{1\}$. Then Julia's functional equation is solved by the family of functions $f_a \in C^\infty((0,\infty), \mathbb{R})$ defined by
		\begin{equation*}
			f_a(x) = ax\ln\left(c^{1/(\alpha-1)}x\right)
		\end{equation*}
		with a parameter $a \in \mathbb{R}$. The basic neural ODE 
		\begin{equation*}
			\frac{\dd h}{\dd t} = \frac{\ln(\alpha)}{T} h \ln\left(c^{1/(\alpha-1)}h\right), \quad h(0) =x >0
		\end{equation*}
		has for all $t\geq 0$ the solution
		\begin{equation*}
			h_x(t) = c^{1/(1-\alpha)} \left(x c^{1/(\alpha-1)}\right)^{\alpha^{t/T}}
		\end{equation*}
		with time-$T$ map $h_x(T) = \Phi(x) = cx^\alpha$.
	\end{theorem}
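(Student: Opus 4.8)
The statement splits into three verifications, carried out in this order: (i) each $f_a$ solves Julia's equation~\eqref{eq:julia}; (ii) the displayed curve $h_x(t)$ is the (unique, global) solution of the stated initial value problem; (iii) evaluating it at $t = T$ returns $\Phi$. None of these is deep — the computation \emph{is} the proof — so I keep it schematic. Throughout set $u \coloneqq c^{1/(\alpha-1)} > 0$, well defined precisely because $\alpha \neq 1$, so that $f_a(x) = a x \ln(u x)$ and $\Phi(x) = c x^\alpha$ on $(0,\infty)$. For step (i), since $\Phi'(x) = c\alpha x^{\alpha-1}$, equation~\eqref{eq:julia} reads $c\alpha x^{\alpha-1} f_a(x) = f_a(\Phi(x))$; substituting $f_a$ makes the left side equal $a c \alpha\, x^{\alpha}\ln(ux)$, and the only nontrivial simplification needed on the right is
\[
\ln\bigl(u\,\Phi(x)\bigr) = \ln\bigl(c^{\alpha/(\alpha-1)} x^\alpha\bigr) = \alpha\Bigl(\tfrac{1}{\alpha-1}\ln c + \ln x\Bigr) = \alpha\ln(ux),
\]
whence $f_a(\Phi(x)) = a\,\Phi(x)\,\alpha\ln(ux) = a c \alpha\, x^\alpha\ln(ux)$, matching the left side.

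\emph{Solving the ODE.} The stated neural ODE is $f_a$ with the choice $a = \ln(\alpha)/T$, i.e.\ $h' = \tfrac{\ln\alpha}{T} h\ln(uh)$, $h(0) = x > 0$. I would integrate it via the substitution $v(t) \coloneqq \ln(u h(t))$, which linearizes the equation: $v' = h'/h = \tfrac{\ln\alpha}{T}\ln(uh) = \tfrac{\ln\alpha}{T} v$ with $v(0) = \ln(ux)$, so $v(t) = \ln(ux)\,\alpha^{t/T}$. Exponentiating gives $u\,h_x(t) = (ux)^{\alpha^{t/T}}$, that is
\[
h_x(t) = u^{-1}(ux)^{\alpha^{t/T}} = c^{1/(1-\alpha)}\bigl(x\, c^{1/(\alpha-1)}\bigr)^{\alpha^{t/T}},
\]
which is the claimed formula. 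Since $ux > 0$, this is smooth and strictly positive for all $t \ge 0$, so the solution is global; alternatively one may simply differentiate the displayed expression and check it solves the ODE with $h_x(0) = u^{-1}(ux)^1 = x$. The right-hand side of the ODE is locally Lipschitz on $(0,\infty)$, so this solution is unique.

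\emph{The time-$T$ map.} Finally, at $t = T$ one has $\alpha^{t/T} = \alpha$, and therefore
\[
h_x(T) = c^{1/(1-\alpha)}\bigl(x\, c^{1/(\alpha-1)}\bigr)^{\alpha} = c^{1/(1-\alpha)} c^{\alpha/(\alpha-1)}\, x^\alpha = c^{(\alpha-1)/(\alpha-1)} x^\alpha = c x^\alpha = \Phi(x),
\]
using $-\tfrac{1}{\alpha-1} + \tfrac{\alpha}{\alpha-1} = 1$. The only spots where one can slip are the bookkeeping of the exponents of $c$ in steps (i) and (iii), and spotting the linearizing substitution $v = \ln(uh)$ in step (ii); there is no conceptual obstacle.

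\emph{Remark on the structure.} The substitution $v = \ln(uh)$ is not ad hoc: it is exactly the normalizing change of variables from Theorem~\ref{th:jabotinsky_solutions}. Indeed, with $r'(x) = 1/f_a(x)$ one computes $r(x) = \tfrac1a\ln|\ln(ux)|$ up to an additive constant, and then $h(x,t) = r^{-1}(r(x) + t)$ reproduces the formula above once $a = \ln(\alpha)/T$; by Theorem~\ref{th:jabotinsky} this $h$ is a genuine flow, so the result sits consistently inside the Jabotinsky framework of Section~\ref{sec:jabotinsky}.
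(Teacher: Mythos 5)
Your proof is correct, and it takes a different route from the paper's. The paper \emph{derives} the family $f_a$: it inserts the ansatz $f(x) = x\tilde f(x)$ into Julia's equation, obtains $\alpha\tilde f(x) = \tilde f(cx^\alpha)$, and then conjugates by $x \mapsto c^{1/(1-\alpha)}e^x$ to reduce this to the functional equation $\alpha\nu(x) = \nu(\alpha x)$, which is the case already treated in Theorem~\ref{th:julia0d1d}(b) and is solved by the linear maps $\nu(x) = ax$; unwinding the substitutions yields $f_a(x) = ax\ln(c^{1/(\alpha-1)}x)$. You instead \emph{verify} directly that $f_a$ satisfies $\Phi'(x)f_a(x) = f_a(\Phi(x))$ via the identity $\ln(u\Phi(x)) = \alpha\ln(ux)$ with $u = c^{1/(\alpha-1)}$ — which is logically sufficient, since the theorem only claims that the $f_a$ solve Julia's equation, not that they exhaust the solutions; what the paper's constructive reduction buys is an explanation of where the family comes from and a reuse of the earlier result. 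Notably, the paper's written proof stops after producing $f_a$ and never checks the ODE claims, whereas you supply that part explicitly: the linearizing substitution $v = \ln(uh)$ turns the ODE into $v' = \tfrac{\ln\alpha}{T}v$, giving $h_x(t) = u^{-1}(ux)^{\alpha^{t/T}}$, global positivity, local Lipschitz uniqueness on $(0,\infty)$, and $h_x(T) = cx^\alpha$ by the exponent bookkeeping $\tfrac{1}{1-\alpha} + \tfrac{\alpha}{\alpha-1} = 1$. Your substitution is in fact the same exponential conjugacy the paper uses at the level of the functional equation, and your closing remark correctly identifies it with the $r$-coordinate of Theorem~\ref{th:jabotinsky_solutions}; in short, your argument is complete and, on the ODE side, more explicit than the paper's.
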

	
	\begin{proof}
		For $\Phi(x) = c x^\alpha$, Julia's functional equation is given by
		\begin{equation*}
			c \alpha x^{\alpha-1}f(x) = f(c x^\alpha),
		\end{equation*}
		which implies $f(0) = 0$ as $\alpha \neq 1$. With the ansatz $f(x) = x \tilde{f}(x)$ the functional equation reduces to
		\begin{equation*}
			\alpha\tilde{f}(x) = \tilde{f}(cx^\alpha).
		\end{equation*}
		Define the function $\nu(x) = \tilde{f}\left(c^{1/(1-\alpha)}e^x\right)$ for $x \in \mathbb{R}$. It holds
		\begin{equation*}
			\alpha \nu(x) = \alpha \tilde{f} \left(c^{1/(1-\alpha)}e^x\right) = \tilde{f}\left(c \left(c^{1/(1-\alpha)}e^x\right)^\alpha \right) = \tilde{f}\left(c^{1/(1-\alpha)} e^{\alpha x}\right) = \nu(\alpha x).
		\end{equation*}
		By Theorem~\ref{th:julia0d1d}~\ref{th:julia_b}, this functional equation is solved by all linear functions $\nu(x) = ax$ with a parameter $a \in \mathbb{R}$. Consequently it holds
		\begin{equation*}
			f(x) = x \tilde{f}(x) = x \nu \left( \ln\left(c^{-1/(1-\alpha)} x\right) \right) = ax\ln\left(c^\frac{1}{\alpha-1} x\right),
		\end{equation*}
		which is for every $a \in \mathbb{R}$, $c \in \mathbb{R}_{>0}$ and $\alpha \in \mathbb{R}_{>0}/\{1\}$ a smooth function $f: (0,\infty) \rightarrow \mathbb{R}$. 
	\end{proof}

	The one-dimensional basic neural ODEs embedding $\Phi(x) = cx^\alpha$ can also be combined to a multi-dimensional neural ODE followed by a linear layer to approximate arbitrary polynomials:
	
	\begin{corollary} \label{cor:approximatepolynomial}
		The neural ODE
		\begin{align*}
			\frac{\partial h_1}{\partial t} &= 0 \\
			\frac{\partial h_2}{\partial t} &= \frac{\ln(2)}{T} h_2 \ln(h_2)\\
			\vdots \; \; \; &= \quad \vdots\\
			\frac{\partial h_n}{\partial t} &= \frac{\ln(n)}{T} h_n \ln(h_n)
		\end{align*}
		with initial condition $h(0)=x \in \mathbb{R}^n$ can combined with a linear layer $A \in \mathbb{R}^{n_\textup{out} \times n}$ approximate as a time-$T$ map in each component any polynomial $p:\mathbb{R} \rightarrow \mathbb{R}$ with $p(0) = 0$ up to order $n$. 
	\end{corollary}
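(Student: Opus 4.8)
The plan is to deduce the statement almost immediately from the two preceding results on Julia's equation, exploiting that the displayed system is \emph{decoupled}: the $k$-th equation involves only $h_k$. First I would record that, for positive initial data $x\in(0,\infty)^n$, the right-hand side $h_k\mapsto \frac{\ln k}{T}h_k\ln h_k$ is $C^\infty$ on $(0,\infty)$, so Assumptions~\ref{assA:ode_existence} and~\ref{assA:ode_unique} hold componentwise, and by Theorem~\ref{th:julia_monome} (with $\alpha=k$, $c=1$) the trajectory $h_k(t)=x_k^{\,k^{t/T}}$ stays in $(0,\infty)$ for all $t\ge 0$. Since the equations do not interact, the time-$T$ map of the $n$-dimensional system is the product of the scalar time-$T$ maps:
\begin{equation*}
	h_{(x_1,\dots,x_n)^\top}(T)=\bigl(x_1,\; x_2^2,\; x_3^3,\;\dots,\; x_n^n\bigr)^\top,\qquad x_1,\dots,x_n>0,
\end{equation*}
where the first component uses $h_1'=0$ (equivalently Theorem~\ref{th:julia0d1d}\ref{th:julia_b} with $c=1$, giving the identity) and the components $k\ge 2$ use Theorem~\ref{th:julia_monome} with $\alpha=k$ and $c=1$, giving $x_k\mapsto x_k^k$.

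Next, given a polynomial $p(\xi)=\sum_{k\ge 0}a_k\xi^k$ with $p(0)=0$ (hence $a_0=0$), I would restrict the system to the diagonal input $x=(\xi,\xi,\dots,\xi)^\top$, so that $h_x(T)=(\xi,\xi^2,\dots,\xi^n)^\top$, and choose the linear layer $A=(a_1,a_2,\dots,a_n)\in\mathbb{R}^{1\times n}$ (stacking one such row per output coordinate when $n_\textup{out}>1$). Then $A\,h_x(T)=\sum_{k=1}^n a_k\xi^k$, which equals $p(\xi)$ if $\deg p\le n$ and otherwise is its order-$n$ truncation; this is the precise content of approximating $p$ "up to order $n$", and since $n$ is free one can match arbitrarily many leading terms.

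The single point that deserves care — and the reason the statement reads "approximate" rather than "embed" — is that the vector fields $h_k\ln h_k$ are real-valued only for $h_k>0$, so the construction genuinely applies to positive inputs: on a compact interval $[\delta,M]\subset(0,\infty)$ it is an exact embedding of $p|_{[\delta,M]}$, and the corollary should be read with this domain restriction (one may additionally note that $h\ln h$ extends continuously by $0$ at $h=0$, where $h\equiv 0$ is the unique solution, consistent with $p(0)=0$). I expect no real obstacle in the argument itself; the only thing to get right is the bookkeeping — checking that the decoupled flows assemble into a solution of the coupled system satisfying Assumptions~\ref{assA:ode_existence} and~\ref{assA:ode_unique} on $(0,\infty)^n$, and pinning down exactly what "approximate a polynomial up to order $n$" means when $\deg p>n$.
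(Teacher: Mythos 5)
Your proposal is correct and follows exactly the route the paper intends: the corollary is stated as an immediate consequence of Theorems~\ref{th:julia0d1d} and~\ref{th:julia_monome} (with $c=1$, $\alpha=k$, giving componentwise time-$T$ maps $x_k\mapsto x_k^k$), and assembling these decoupled flows with a diagonal input and the row of coefficients $(a_1,\dots,a_n)$ as the linear layer is precisely the intended argument. Your additional remark that the vector fields $h_k\ln h_k$ restrict the exact construction to positive inputs is a sound observation consistent with the paper's cautious wording ``approximate'' rather than ``embed''.
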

	
	In the literature, the following result can be found for continuously differentiable convex or concave functions $\Phi$ with $0<\Phi(x)<x$ and $\Phi'(x) \neq 0$ for $x >0$ in the domain of definition.
	
	\begin{theorem}\cite{kuczma90,zdun74}
		Let $\mathcal{X}  = [0,b]$, $b >0$ and $\Phi \in C^1(\mathcal{X},\mathcal{X})$ be convex or concave with $0<\Phi(x)<x$ and $\Phi'(x) \neq 0$ on $(0,b]$. Denote the derivative at zero by $s \coloneqq \Phi'(0)$, such that $0 \leq s \leq 1$. All the continuous solutions $f: \mathcal{X} \rightarrow \mathbb{R}$ of Julia's functional equation that are differentiable at $x = 0$ are the following. 
		\begin{enumerate}[label=(\alph*)]
			\item If $s = 0$, then the only solution is $f(x) = 0$ for all $x \in \mathcal{X}$.
			\item If $0<s<1$, then all solutions are given by $f_a(x) = a \lim_{n \rightarrow \infty} \frac{f^n(x)}{(f^n)'(x)}$
			with a parameter $a \in \mathbb{R}$.
			\item If $s = 1$, then $f'(0) = 0$ for every solution $f$.
		\end{enumerate}
	\end{theorem}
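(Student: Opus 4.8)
\emph{Proof strategy.} The plan is to work with the iterates of $\Phi$ at its attracting fixed point $0$. I would first record the structural facts. Since $\Phi\in C^1([0,b],[0,b])$ with $\Phi'$ continuous and nonvanishing on the connected set $(0,b]$, $\Phi'$ has constant sign there; a negative sign would make $\Phi$ decreasing with $\Phi(b)<\Phi(0)=0$, contradicting $\Phi(b)>0$. Hence $\Phi'>0$, $\Phi$ is increasing, $\Phi(0)=0$, and $0<\Phi(x)<x$ forces the iterates $a_n:=\Phi^n(x)$ to decrease strictly to the unique fixed point $0$. Every iterate $\Phi^n$ inherits convexity (resp. concavity) as a composition of convex (resp. concave) increasing maps, and this pins down $s=\Phi'(0)$: if $\Phi$ is convex then $\Phi'\ge s$ everywhere, so $s<1$ (otherwise $\Phi(x)\ge x$), while if $\Phi$ is concave then $s\ge\Phi'(b)>0$ and $s\le 1$ (otherwise $\Phi(x)/x\to s>1$ near $0$). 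Thus (a) occurs only in the convex case, (c) only in the concave case, and (b) is the hyperbolic regime.

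Next I would iterate Julia's equation $f(\Phi(x))=\Phi'(x)f(x)$ by the chain rule to obtain $f(\Phi^n(x))=(\Phi^n)'(x)\,f(x)$, hence $f(x)=f(a_n)/(\Phi^n)'(x)$ for every $n$, which is legitimate because $(\Phi^n)'(x)=\prod_{k<n}\Phi'(a_k)\neq 0$. Setting $x=0$ in Julia's equation gives $(s-1)f(0)=0$, so $f(0)=0$ whenever $s\neq1$; in the case $s=1$ one instead uses $(\Phi^n)'(x)=\prod_k\Phi'(a_k)\to0$ — which holds because $\sum_k(1-\Phi'(a_k))\ge\sum_k(a_k-a_{k+1})/a_k=+\infty$, using $1-\Phi'(y)\ge 1-\Phi(y)/y$ for concave $\Phi$ and the fact that $\sum_k(a_k-a_{k+1})/a_k$ diverges for any sequence $a_k\downarrow0$ — so that $f(a_n)=f(x)(\Phi^n)'(x)\to0$ and therefore $f(0)=\lim_n f(\Phi^n(x))=0$ again. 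In all cases, then, $f(0)=0$ and $f$ is differentiable at $0$, so with $\sigma_n(x):=\Phi^n(x)/(\Phi^n)'(x)$ we get the single governing relation $f(x)/\sigma_n(x)=f(a_n)/a_n\to f'(0)$ as $n\to\infty$. Moreover $\sigma_{n+1}(x)/\sigma_n(x)=(\Phi(a_n)/a_n)/\Phi'(a_n)$ is the average of $\Phi'$ over $[0,a_n]$ divided by $\Phi'(a_n)$, hence $\le1$ when $\Phi$ is convex and $\ge1$ when $\Phi$ is concave; so $(\sigma_n(x))_n$ is monotone and converges in $[0,\infty]$.

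The theorem then reduces to identifying this limit in the three regimes. For $0<s<1$ I would show $\sigma(x):=\lim_n\sigma_n(x)\in(0,\infty)$: the conceptual route is the Koenigs linearization of $\Phi$ at the hyperbolic fixed point, giving a homeomorphism $\psi$ tangent to the identity at $0$ with $\psi(\Phi(x))=s\psi(x)$, from which $\sigma(x)=\psi(x)/\psi'(x)>0$; a self-contained route is to check $\sum_k|1-\sigma_{k+1}(x)/\sigma_k(x)|<\infty$ directly, which works because $\Phi'\ge s>0$ and the $a_k$ decay geometrically, convexity/concavity making the summand controllable. This yields $f(x)=f'(0)\,\sigma(x)$; since $\sigma$ solves Julia's equation ($\sigma(\Phi(x))=\Phi'(x)\sigma(x)$, by reindexing the defining limit) and is differentiable at $0$, every $f_a=a\sigma$ is a genuine solution and these exhaust them — this is (b), with $a=f'(0)$ and $\sigma$ the limit appearing in the statement. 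For $s=0$ (a super-attracting fixed point) one has $\sigma_n(x)\to0$, both $a_n$ and $(\Phi^n)'(x)$ collapsing to $0$ with $a_n$ the faster, so $f(x)=\lim_n\sigma_n(x)\cdot(f(x)/\sigma_n(x))=0\cdot f'(0)=0$ on $(0,b]$, hence $f\equiv0$, which is (a). For $s=1$ (a parabolic fixed point) one has $\sigma_n(x)\to+\infty$, since the $a_n$ reach $0$ only polynomially while $(\Phi^n)'(x)\to0$ strictly faster; then $f'(0)=\lim_n f(x)/\sigma_n(x)=0$ for every solution, which is (c).

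The main obstacle is the rate information on $\sigma_n(x)$ in the two borderline cases — $\sigma_n\to0$ for $s=0$ and $\sigma_n\to+\infty$ for $s=1$ — because monotonicity alone only places $\lim_n\sigma_n(x)$ in $[0,\infty]$, and excluding the wrong endpoint requires a quantitative comparison of $\Phi(y)/y$ with $\Phi'(y)$ as $y\to0$. This is exactly where convexity or concavity is indispensable, playing the role that additional smoothness plays in a linearization argument; for these estimates I would follow \cite{kuczma90,zdun74}.
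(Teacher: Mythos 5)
The paper offers no proof of this statement — it is quoted verbatim from \cite{kuczma90,zdun74} — so there is no internal argument to compare against; I can only judge your attempt on its own terms. Your skeleton is the classical one and much of it is sound: iterating Julia's equation to $f(\Phi^n(x))=(\Phi^n)'(x)f(x)$, deducing $f(0)=0$ (directly when $s\neq 1$, and via $(\Phi^n)'(x)\to 0$ when $s=1$; your divergence argument $\sum_k(a_k-a_{k+1})/a_k=\infty$ is the Abel--Dini fact and is fine), reducing everything to $\sigma_n(x)=\Phi^n(x)/(\Phi^n)'(x)$, and identifying its monotonicity under convexity/concavity (you also correctly read the paper's $f^n$ in the displayed limit as $\Phi^n$). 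The genuine gap is that the heart of the theorem --- the actual limit behaviour of $\sigma_n$ --- is never established. For (a) and (c) you defer $\sigma_n\to 0$ and $\sigma_n\to\infty$ entirely to the very references being proved, so those parts are not proofs. For (b) you assert that $\sum_k\bigl|1-\sigma_{k+1}(x)/\sigma_k(x)\bigr|<\infty$ is ``controllable'', but the obvious estimate does not close: in the convex case $1-\sigma_{k+1}/\sigma_k\le\bigl(\Phi'(a_k)-s\bigr)/s$, and for a merely $C^1$ convex $\Phi$ the modulus $\Phi'(y)-s$ may decay as slowly as $1/\ln(1/y)$, making this bound of order $1/k$ along a geometric orbit, hence non-summable; the true summability comes from the smaller quantity $\Phi'(y)-\Phi(y)/y=\tfrac1y\int_0^y\bigl(\Phi'(y)-\Phi'(t)\bigr)\,\dd t$ and needs a finer comparison (e.g.\ exploiting $\int_0^{a_0}\bigl|\Phi'(y)-\Phi(y)/y\bigr|\,\tfrac{\dd y}{y}<\infty$, or Zdun's regular-iteration machinery). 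Your alternative ``Koenigs'' route quietly assumes the linearizing map is differentiable with nonvanishing derivative, which for $C^1$ data is precisely the nontrivial content, not a shortcut.

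A second gap is the converse half of (b): to conclude that every $f_a=a\,\sigma$ is one of the admissible solutions you must show $\sigma$ is continuous on $[0,b]$ and differentiable at $0$ (with $\sigma(0)=0$); you assert this in passing, but $\sigma$ is only a monotone pointwise limit of continuous functions, so neither property is automatic, and both again require the quantitative estimates above. In short: correct strategy and several correct reductions, but the decisive estimates on $\sigma_n$ in all three regimes, and the admissibility of the limit function in (b), are missing, so as it stands the attempt does not prove the theorem independently of \cite{kuczma90,zdun74}.
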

	
	The following theorem gives a general solution to Julia's functional equation for near-identity transformations $\Phi$. These functions are relevant, as they often occur in singularity theory as coordinate transformations. Away from singular points, the Rectification Theorem~\cite{arnold01} guarantees that each differentiable map can locally be written as a near identity transformation.
	
	\begin{theorem}[\cite{ecalle75,kuczma90}] \label{th:iterative_logarithm}
		Let $\Phi: \mathbb{R} \rightarrow \mathbb{R}$ be a formal power series of the form
		\begin{equation*}
			\Phi(x) = x + \sum_{n = m}^\infty b_n x^n, \quad b_m \neq 0, \; m \geq 2.
		\end{equation*}
		Then the general formal solution $f: \mathbb{R} \rightarrow \mathbb{R}$ of Julia's functional equation is given by
		\begin{equation*}
			f_a(x) = a \cdot \left( b_m x^m + \sum_{n = m+1}^\infty c_n x^n\right)
		\end{equation*}
		with some arbitrary parameter $a \in \mathbb{R}$ and constants $c_n \in \mathbb{R}$, $n >m$, which can be uniquely determined from $b_m$. The solution $f_1$ is also called the iterative logarithm.
	\end{theorem}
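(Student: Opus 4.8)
The plan is to solve the one-dimensional Julia equation \eqref{eq:julia}, which here reads $\Phi'(x)f(x) = f(\Phi(x))$, by a formal power series coefficient comparison. Write $\Phi(x) = x + \psi(x)$ with $\psi(x) = \sum_{n\ge m} b_n x^n$, $b_m \ne 0$, $m \ge 2$, and insert the ansatz $f(x) = \sum_{k\ge 0}\gamma_k x^k$. The three things to establish are: (i) every formal solution has $\gamma_0 = \dots = \gamma_{m-1} = 0$; (ii) $\gamma_m$ is a free parameter and, once it is fixed, every $\gamma_\ell$ with $\ell > m$ is uniquely forced; and (iii) since \eqref{eq:julia} is linear in $f$, the solution set is exactly the one-parameter family obtained by scaling a single normalised solution — which matches the earlier remark that $af$ solves \eqref{eq:julia} whenever $f$ does — so it can be written $f_a = a f_1$ with $f_1$ having leading coefficient $b_m$.

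For the bookkeeping I would expand both sides: $\Phi'(x)f(x) = f(x) + \psi'(x)f(x)$ on the left, and $f(\Phi(x)) = \sum_k \gamma_k (x+\psi(x))^k = f(x) + \sum_k \gamma_k \sum_{j\ge 1}\binom{k}{j} x^{k-j}\psi(x)^j$ on the right, so that the $f(x)$ terms cancel. Because $\psi(x) = O(x^m)$ with $m\ge 2$, in $\psi'(x)f(x)$ the coefficient $\gamma_k$ is carried only by powers $x^{k-1+n}$ with $n\ge m$, and likewise the $j=1$ term carries $\gamma_k$ on powers $x^{k-1+n}$, $n \ge m$; the nonlinear pieces ($j\ge 2$) reach the power $x^N$ only through $\gamma_k$ with $k \le N - j(m-1)$, hence through strictly smaller indices. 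Comparing coefficients of $x^N$ and putting $\ell := N - m + 1$, the highest-index unknown that occurs is $\gamma_\ell$, appearing on the left with coefficient $m b_m$ (from $m b_m x^{m-1}\cdot\gamma_\ell x^\ell$) and on the right with coefficient $\ell b_m$ (the $j=1$ term $\gamma_\ell\,\binom{\ell}{1} x^{\ell-1}\cdot b_m x^{m}$); after cancelling the common $\gamma_N$, the $x^N$-equation becomes
\begin{equation*}
	(m-\ell)\, b_m\, \gamma_\ell \;=\; Q_\ell(\gamma_0,\dots,\gamma_{\ell-1}),
\end{equation*}
where $Q_\ell$ is a fixed linear form in the lower-index coefficients with coefficients polynomial in $b_m, b_{m+1},\dots$, in particular $Q_\ell(0,\dots,0)=0$. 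Since $b_m \ne 0$, the prefactor $m-\ell$ vanishes exactly at $\ell = m$: for $\ell < m$ an induction gives $\gamma_\ell = 0$; for $\ell = m$ the relation reads $0 = 0$, so $\gamma_m$ is unconstrained; for $\ell > m$ one divides by $(m-\ell)b_m$ and obtains $\gamma_\ell$ uniquely from $\gamma_m,\dots,\gamma_{\ell-1}$. Setting $\gamma_m = a\,b_m$ and unwinding the recursion produces exactly $f_a(x) = a\big(b_m x^m + \sum_{n>m} c_n x^n\big)$ with the $c_n$ determined recursively from $\Phi$, and the linearity of the recursion in $\gamma_m$ shows every formal solution has this form.

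The only step requiring genuine care — the main obstacle — is the coefficient chase in (ii): one must verify that in the $x^{\ell+m-1}$-equation the coefficient multiplying $\gamma_\ell$ is precisely $(m-\ell)b_m$ and that no coefficient $\gamma_k$ with $k > \ell$ enters. This is exactly where the two hypotheses do their work: $m\ge 2$ forces the nonlinear contributions $\psi(x)^j$, $j\ge 2$, to sit at strictly higher order and so not raise the top index, while $b_m \ne 0$ supplies the non-degeneracy $(m-\ell)b_m \ne 0$ for $\ell\ne m$ that makes the recursion solvable at every stage. A treatment through the formalism of iteration, together with the name ``iterative logarithm'' for $f_1$, is given in \cite{ecalle75,kuczma90}; the argument above is a self-contained formal-power-series version.
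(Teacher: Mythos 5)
Your argument is correct, and it is worth noting that the paper itself gives no proof of this statement: Theorem~\ref{th:iterative_logarithm} is simply quoted from \cite{ecalle75,kuczma90}, so your coefficient-comparison is a self-contained substitute for the citation rather than a variant of an in-paper argument. The key computation checks out: writing $\Phi = x+\psi$ with $\psi = O(x^m)$, at order $x^N$ (with $\ell = N-m+1$) the only appearances of the top-index unknown $\gamma_\ell$ are $m b_m\gamma_\ell$ from $\psi'(x)f(x)$ and $\ell b_m\gamma_\ell$ from the $j=1$ binomial term, while the $j\ge 2$ terms involve only indices $k \le \ell-(m-1) < \ell$ because $m\ge 2$; since the equation is linear and homogeneous in $f$, the right-hand side $Q_\ell$ is a homogeneous linear form in lower coefficients, so the recursion $(m-\ell)b_m\gamma_\ell = Q_\ell$ forces $\gamma_0=\dots=\gamma_{m-1}=0$, leaves $\gamma_m$ free (the $\ell=m$ equation is $0=0$), and determines each $\gamma_\ell$, $\ell>m$, uniquely and linearly in $\gamma_m$, which yields exactly the one-parameter family $f_a = a f_1$ of the statement. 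This is essentially the classical derivation one finds in Kuczma for the iterative logarithm, so your proof matches the cited literature in spirit; it is also slightly more careful than the paper's wording in one respect, namely that the constants $c_n$ are determined by all the coefficients $b_m,b_{m+1},\dots$ of $\Phi$ (as you say, ``recursively from $\Phi$''), not by $b_m$ alone. Two cosmetic remarks: the phrase ``after cancelling the common $\gamma_N$'' is redundant once you have already cancelled $f(x)$ from both sides, and you use the letter $n$ both for the exponents in $\psi$ and in the statement's series, which is worth disentangling if this were written out in full; neither affects correctness.
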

	
	As the previous two theorems have shown, solutions to Julia's functional equation can help to find autonomous basic neural ODEs embedding a given map $\Phi$. Contrarily, if a given map $\Phi$ leads to a functional equation without solution, we can conclude that there exists no one-dimensional autonomous basic neural ODE embedding $\Phi$ as its time-$T$ map, however a non-autonomous embedding might exist. We conclude this section with another example of a map $\Phi$ leading to an easily solvable functional equation. 
	
	\begin{example}
		For $\Phi: \left(-\infty,\frac{1}{c}\right) \rightarrow \mathbb{R}$, $x \mapsto \frac{x}{1-cx}$, Julia's functional equation reduces to
		\begin{equation*}
			\frac{1}{(1-cx)^2} f(x) = f\left(\frac{x}{1-cx}\right),
		\end{equation*}
		such that a solution is given by $f(x) = ax^2$ with $a \in \mathbb{R}$~\cite{kuczma90}. The neural ODE
		\begin{equation*}
			\frac{\dd h}{\dd t} = \frac{c}{T} h^2, \quad h(0) =x \in \left(-\infty,\frac{1}{c}\right)
		\end{equation*}
		has for all $t\in \left[0,\frac{T}{cx}\right)$ the solution $h_x(t) = \frac{x}{1-cxt/T}$ with time-$T$ map $h_x(T) = \Phi(x)$, which is well-defined as $T<\frac{T}{cx}$.
	\end{example}

	\section{Morse Functions: A Class of Non-Embeddable Maps}
	\label{sec:morsesection}
	
	In this section, we use topological arguments to prove results about functions that cannot be embedded in certain neural ODE architectures. To that purpose, we introduce in Section~\ref{sec:borsukulam} the Borsuk-Ulam Theorem and its implications about injectivity of scalar functions. In Section~\ref{sec:morsefunctions} Morse functions are introduced, whose functional form can be simplified locally near critical points. The simplified function term combined with the assumption on uniqueness of solution curves allows us then to show in Section~\ref{sec:nonembeddable} that no embedding of Morse functions in neural ODEs with a linear layer or augmented phase space is possible.

	\subsection{The Borsuk-Ulam Theorem}
	\label{sec:borsukulam}
	
	The results proven in Section~\ref{sec:nonembeddable} are based upon the following Borsuk-Ulam Theorem. The theorem guarantees the existence of two antipodal points with the same function value on the unit $m$-sphere $S^{m}_1 = \left\{x \in \mathbb{R}^{m+1}: \norm{x}_2 = 1\right\}$ with Euclidean norm $\norm{x}_2 \coloneqq \left(\sum_{i = 1}^n x_i^2\right)^{1/2}$ for $x \in \mathbb{R}^n$.
	
	\begin{theorem}[Borsuk-Ulam Theorem~\cite{borsuk33}] \label{th:borsukulam}
		Let $g \in C^0(S^{m}_1,\mathbb{R}^{m})$, $m \geq 1$. Then there exists a point $x \in S_1^m$, such that $g(x) = g(-x)$.
	\end{theorem}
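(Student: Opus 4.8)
The plan is to argue by contradiction and to reduce the statement to the non-existence of continuous \emph{odd} (antipodal-equivariant) maps from a sphere to a sphere of one lower dimension. Suppose, for contradiction, that $g(x) \neq g(-x)$ for every $x \in S^m_1$. Then
\[
f(x) \coloneqq \frac{g(x) - g(-x)}{\norm{g(x) - g(-x)}_2}
\]
is a well-defined continuous map $f \colon S^m_1 \to S^{m-1}_1$ satisfying $f(-x) = -f(x)$, so it suffices to show that no such continuous odd map exists. For $m = 1$ this is immediate: $S^0_1 = \{-1,1\}$ is disconnected while $S^1_1$ is connected, so a continuous $f$ must be constant, contradicting $f(-x) = -f(x)$.

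For $m \geq 2$ I would use mapping degree. Restricting a hypothetical odd $f \colon S^m_1 \to S^{m-1}_1$ to the equator $E \coloneqq S^m_1 \cap \{x_{m+1} = 0\} \cong S^{m-1}_1$ produces an odd continuous self-map $f|_E \colon S^{m-1}_1 \to S^{m-1}_1$. By Borsuk's odd-mapping theorem — every odd continuous self-map of a sphere has odd, hence nonzero, mapping degree — $f|_E$ has nonzero degree. On the other hand, the closed upper hemisphere $D_+ \coloneqq S^m_1 \cap \{x_{m+1} \geq 0\}$ is a topological $m$-disk with $\partial D_+ = E$, and $f$ restricts to a continuous map $D_+ \to S^{m-1}_1$; since $D_+$ is contractible, $f|_E$ extends over $D_+$, is therefore null-homotopic, and so has degree zero. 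This contradiction finishes the argument.

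The genuine obstacle is Borsuk's odd-mapping theorem itself; the reduction and the hemisphere argument are soft. I would prove it by induction on the sphere dimension: approximate a given odd continuous map by an odd smooth map (convolve with a mollifier and antisymmetrize, then renormalize onto the sphere), take a regular value, observe that its preimage is an antipodally invariant finite set, and track the parity of the local degrees; the one-dimensional base case is a winding-number computation. A tidy alternative that bypasses this induction is to pass to the antipodal quotients: an odd $f \colon S^m_1 \to S^{m-1}_1$ descends to $\bar f \colon \mathbb{RP}^m \to \mathbb{RP}^{m-1}$, which is an isomorphism on $\pi_1 \cong \mathbb{Z}/2$; hence, using the ring structure $H^*(\mathbb{RP}^n;\mathbb{Z}/2) \cong \mathbb{Z}/2[w]/(w^{n+1})$ with $\deg w = 1$, one gets $\bar f^{*} w_{m-1} = w_m$ for the degree-one generators, so $\bar f^{*}(w_{m-1}^{m}) = w_m^{m} \neq 0$ in $H^m(\mathbb{RP}^m;\mathbb{Z}/2)$, whereas $w_{m-1}^{m} = 0$ in $H^m(\mathbb{RP}^{m-1};\mathbb{Z}/2) = 0$ — a contradiction. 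Either route can be cited from a standard reference, with the main work being exactly the odd-mapping/degree input.
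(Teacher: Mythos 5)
The paper itself gives no proof of this statement: it is quoted as a classical result with a citation to Borsuk's 1933 paper and used as a black box in Section~4, so any complete argument you give is necessarily a different route. Your overall outline is the standard and correct one: the reduction of Borsuk--Ulam to the non-existence of a continuous odd map $f\colon S^m_1 \to S^{m-1}_1$ via $f(x) \coloneqq (g(x)-g(-x))/\norm{g(x)-g(-x)}_2$ is sound, the $m=1$ case by connectedness is fine, and the equator-plus-hemisphere argument (odd degree from Borsuk's odd-mapping theorem versus degree zero from extendability over the contractible hemisphere) is a correct way to finish once the odd-mapping theorem is in hand. Since you are content to cite that input from a standard reference, the proposal is acceptable as a proof sketch.

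Two of the details you sketch for the key input are, however, not right as stated. First, in the smooth-approximation route, the preimage $f^{-1}(y)$ of a single regular value of an odd map is \emph{not} antipodally invariant; oddness gives $f^{-1}(-y) = -f^{-1}(y)$, so the ``track the parity of the local degrees over an antipodally invariant fibre'' step collapses. The genuine proof of the odd-mapping theorem by smooth methods is a more delicate induction on dimension (one perturbs $f$ to be transverse to an equatorial hyperplane and relates the mod-$2$ degree of $f$ to that of its restriction to the equator), so as written this paragraph is a gap rather than a proof. Second, in the cohomological alternative, the claim that $\bar f\colon \mathbb{RP}^m \to \mathbb{RP}^{m-1}$ is an isomorphism on $\pi_1 \cong \mathbb{Z}/2$ fails verbatim for $m=2$, where $\pi_1(\mathbb{RP}^1)\cong \mathbb{Z}$; the correct statement is that $\bar f$ pulls back the nontrivial double cover to the nontrivial double cover (equivalently $\bar f^* w_{m-1} = w_m$ on the generators of $H^1(\cdot;\mathbb{Z}/2)$), after which the cup-power contradiction you give goes through for all $m \ge 2$. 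With these two points repaired or replaced by a citation (e.g., to a textbook treatment of the odd-mapping theorem), the argument is complete.
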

	
	The following statement is a direct consequence of the Borsuk-Ulam Theorem~\ref{th:borsukulam}.
	
	\begin{corollary} \label{cor:noninjective}
		No injective function $g \in C^0(\mathcal{X}, \mathbb{R}^m)$ with $\mathcal{X} \subset \mathbb{R}^n$ open and $n>m$ exists. 
	\end{corollary}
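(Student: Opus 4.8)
The plan is to deduce this directly from the Borsuk--Ulam Theorem~\ref{th:borsukulam} by restricting a hypothetical injective $g$ to a small round sphere sitting inside $\mathcal{X}$. We may assume $\mathcal{X}\neq\varnothing$ (otherwise there is nothing to prove) and argue by contradiction: suppose $g\in C^0(\mathcal{X},\mathbb{R}^m)$ is injective with $n>m$. Since $\mathcal{X}$ is open and nonempty, it contains an open ball around some point $p\in\mathbb{R}^n$ of some radius $r>0$; in particular the $(n-1)$-sphere $\{\,p+ry : y\in S^{n-1}_1\,\}$ is contained in $\mathcal{X}$.

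First I would reduce to the equal-dimension situation required by Theorem~\ref{th:borsukulam}. Because $m\le n-1$, we may use the inclusion $\iota:\mathbb{R}^m\hookrightarrow\mathbb{R}^{n-1}$, $z\mapsto(z,0,\ldots,0)$, and define $\tilde g:S^{n-1}_1\to\mathbb{R}^{n-1}$ by $\tilde g(y):=\iota\big(g(p+ry)\big)$. This map is continuous, being a composition of continuous maps, and the sphere dimension $n-1$ satisfies $n-1\ge 1$ since $n\ge m+1\ge 2$, so Theorem~\ref{th:borsukulam} applies with $m$ there replaced by $n-1$.

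Then I would invoke Borsuk--Ulam to get a point $y_0\in S^{n-1}_1$ with $\tilde g(y_0)=\tilde g(-y_0)$. Since $\iota$ is injective, this forces $g(p+ry_0)=g(p-ry_0)$. But $y_0\in S^{n-1}_1$ implies $y_0\neq 0$, hence $p+ry_0\neq p-ry_0$, and both points lie in $\mathcal{X}$. This contradicts the injectivity of $g$, which proves the corollary.

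I do not expect any genuine obstacle here; the only points requiring a little care are that a nonempty open subset of $\mathbb{R}^n$ really does contain a round $(n-1)$-sphere of the full ambient dimension, and the dimension bookkeeping $m\le n-1$ that lets us pad the target up to $\mathbb{R}^{n-1}$ so that the hypothesis of Theorem~\ref{th:borsukulam} (sphere and codomain of equal dimension) is met. As an alternative one could cite invariance of domain, but the Borsuk--Ulam route is shorter and entirely self-contained with what has already been established.
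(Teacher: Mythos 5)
Your proposal is correct and takes essentially the same route as the paper: restrict $g$ to a round sphere contained in the open set $\mathcal{X}$, apply the Borsuk--Ulam Theorem~\ref{th:borsukulam} to obtain antipodal points with equal image, and contradict injectivity. The only difference is bookkeeping: the paper places an $m$-dimensional sphere $S^m_\varepsilon \times \{0\}^{n-m-1}$ inside $\mathcal{X}$ so that sphere and target dimensions already match, whereas you take the full $(n-1)$-sphere and pad the codomain via $\mathbb{R}^m \hookrightarrow \mathbb{R}^{n-1}$; both are valid applications of the theorem.
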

	
	\begin{proof}
		As $\mathcal{X} \subset \mathbb{R}^n$ is open, there exists $\varepsilon >0$ and $\bar{x} \in \mathcal{X}$, such that $\bar{x} + S^{m,n}_\varepsilon \subset \mathcal{X}$, where $$S^{m,n}_\varepsilon \coloneqq S^m_\varepsilon \times \{0\}^{n-m-1} = \left\{x \in \mathbb{R}^{n}: \norm{x_{1,\ldots,m+1}}_2 = \varepsilon, \; x_i = 0 \text{ for } i \in \{m+2,\ldots,n\} \right\}.$$ Define now the homeomorphism $\mu: S_1^m \rightarrow \bar{x} + S^{m,n}_\varepsilon$, $x \mapsto \bar{x} + \varepsilon \cdot (x,0^{n-m-1})^\top$ with continuous inverse $\mu^{-1}:  \bar{x} + S^{m,n}_\varepsilon \rightarrow S_1^m$, $x \mapsto [\varepsilon^{-1}(x-\bar{x})]_{1,\ldots,m+1}$. Consequently, the map $\bar{g}: S_1^m \rightarrow \mathbb{R}^m$, $\bar{g}(x) \coloneqq g(\mu(x)) $ is continuous and the Borsuk-Ulam Theorem implies that there exists a point $\tilde{x} \in S_1^m$, such that $\bar{g}(\tilde{x}) = \bar{g}(-\tilde{x})$. Hence, the map $g$ cannot be injective, since $g(\mu(\tilde{x})) = g(\mu(-\tilde{x}))$ with $\mu(\tilde{x}) \neq \mu(-\tilde{x})$ since $\mu$ is a homeomorphism. 
	\end{proof}
	
	Applied to the map $\Phi \in C^0(\mathcal{X},\mathbb{R}^{n_\textup{out}})$, $\mathcal{X} \subset  \mathbb{R}^{n_\textup{in}} $, it follows that $\Phi$ cannot be injective if $n_{\textup{in}}>n_{\textup{out}}$. Therefore, the scalar component maps $\Phi_i \in C^0(\mathcal{X},\mathbb{R})$, $\mathcal{X} \subset \mathbb{R}^{n_\textup{in}} $ are always non-injective if $n_\textup{in} \geq 2$.
	
	\subsection{Morse Functions}
	\label{sec:morsefunctions}
	
	In this section, we introduce the class of topological Morse functions, which plays an important role in Section~\ref{sec:nonembeddable}. Topological Morse functions are scalar functions, which will be related to the scalar component maps $\Phi_i: \mathcal{X} \rightarrow \mathbb{R}$, $\mathcal{X} \subset \mathbb{R}^{n_\textup{in}} $. For the main theorems proven in Section~\ref{sec:nonembeddable}, the output dimension $n_\textup{out}$ is not relevant, as the results are based on the fact that scalar component maps which are topological Morse functions cannot be embedded in certain neural ODE architectures. In this section we introduce the concepts of (topological) Morse functions and (topologically) critical points in detail, as the specific structure of the functions is relevant for the proofs in Section~\ref{sec:nonembeddable}. First, we define Morse functions and the index of their critical points.
	
	\begin{definition}[Morse function~\cite{hirsch76, morse34}] \label{def:morse}
		A map $\Psi \in C^2(\mathcal{X}, \mathbb{R})$ with $\mathcal{X} \subset \mathbb{R}^n$ open is called a Morse function if all critical points of $\Psi$ are non-degenerate, i.e., for every critical point $p \in \mathcal{X}$ defined by a zero gradient $\nabla \Psi(p) = 0$, the Hessian matrix $H_{\Psi}(p)$ is non-singular. A critical point $p \in \mathcal{X}$ of a Morse function has index $k$, if $k$ eigenvalues of the $H_{\Psi}(p)$ are negative.
	\end{definition}
	
	The following theorem from singularity theory was first introduced by Morse for analytic functions~\cite{morse34} and then generalized for non-smooth functions on general Banach spaces by Palais~\cite{palais69}. 
	
	\begin{theorem}[Morse-Palais Lemma~\cite{hirsch76,palais69}] \label{th:morse}
		Let $\Psi \in C^{r+2}(\mathcal{X},\mathbb{R})$ with $\mathcal{X} \subset \mathbb{R}^n$ open and $r \geq 1$ be a Morse function. Suppose $p \in \mathcal{X}$ is a critical point of $\Psi$ with index $k$. Then there exists a neighborhood $\mathcal{U}$ of $0 \in \mathbb{R}^n$ and a $C^r$-diffeomorphism $\mu: \mathcal{U} \rightarrow \mu(\mathcal{U})$ with $\mu(0) = p$, such that for $(u_1,\ldots,u_n) \in \mathcal{U}$
		\begin{equation*}
			\Psi(\mu(u_1,\ldots,u_n)) = \Psi(p) - \sum_{j = 1}^k u_j^2 + \sum_{j = k+1}^n u_j^2.
		\end{equation*}
	\end{theorem}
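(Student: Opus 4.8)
The plan is to deduce the lemma from an algebraic ``simultaneous completion of squares'' performed smoothly in a neighbourhood of the critical point. After composing with the translation $x \mapsto x+p$ and subtracting the constant $\Psi(p)$, I may assume $p = 0 \in \mathcal{X}$ and $\Psi(0) = 0$; it then suffices to construct coordinates $v = v(x)$, depending $C^r$-smoothly on $x$ near $0$ with $v(0)=0$ and invertible derivative at $0$, in which $\Psi = -\sum_{j\le k} v_j^2 + \sum_{j>k} v_j^2$, and to take $\mu$ to be the inverse of $x \mapsto v(x)$ followed by the translation back by $p$.

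The first step is a twofold application of Hadamard's lemma. Since $\Psi \in C^{r+2}$ and $\nabla\Psi(0) = 0$, I would write $\Psi(x) = \int_0^1 \tfrac{\mathrm d}{\mathrm dt}\Psi(tx)\,\mathrm dt = \sum_i x_i g_i(x)$ with $g_i(x) = \int_0^1 \partial_i\Psi(tx)\,\mathrm dt \in C^{r+1}$ and $g_i(0) = \partial_i\Psi(0)=0$, and then apply the same device to each $g_i$ to obtain $\Psi(x) = \sum_{i,j} x_i x_j h_{ij}(x)$ with $h_{ij}\in C^r$; symmetrising by replacing $h_{ij}$ with $\tfrac12(h_{ij}+h_{ji})$ costs nothing. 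Differentiating this expression twice at $0$ gives $h_{ij}(0) = \tfrac12\,\partial_i\partial_j\Psi(0) = \tfrac12 (H_\Psi(0))_{ij}$, so the symmetric matrix $(h_{ij}(0))$ is nonsingular because $0$ is a non-degenerate critical point.

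The second step is an induction on $s = 1,\dots,n$ showing that in suitable $C^r$ coordinates near $0$ one has $\Psi = \sum_{j<s}\epsilon_j v_j^2 + \sum_{i,j\ge s} u_i u_j H_{ij}(u)$ with $\epsilon_j \in \{-1,+1\}$ and $H_{ij}=H_{ji}\in C^r$; computing the Hessian of $\Psi$ at $0$ in these coordinates shows it is block diagonal and congruent to $H_\Psi(p)$, hence $(H_{ij}(0))_{i,j\ge s}$ is nonsingular. If $H_{ss}(0)=0$, I first apply a linear substitution among $u_s,\dots,u_n$ (for a pair with $H_{ij}(0)\neq 0$ use $u_i = w_i+w_j$, $u_j = w_i-w_j$) to arrange $H_{ss}(0)\neq 0$, hence $H_{ss}(u)\neq 0$ on a neighbourhood. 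Completing the square, I set
\[
 v_s = \sqrt{|H_{ss}(u)|}\,\Bigl(u_s + \sum_{i>s} u_i\,\tfrac{H_{is}(u)}{H_{ss}(u)}\Bigr), \qquad v_i = u_i \ \ (i\neq s),
\]
so that $\Psi = \sum_{j\le s}\epsilon_j v_j^2 + \sum_{i,j>s} u_i u_j \tilde H_{ij}(u)$ with $\epsilon_s = \mathrm{sgn}\,H_{ss}(0)$ and $\tilde H_{ij} = H_{ij} - H_{is}H_{js}/H_{ss} \in C^r$. The substitution has at $0$ a Jacobian of determinant $\sqrt{|H_{ss}(0)|}\neq 0$, so by the inverse function theorem it is a local $C^r$-diffeomorphism, and the induction continues. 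After $n$ steps $\Psi = \sum_j \epsilon_j v_j^2$; since the Hessian $2\,\mathrm{diag}(\epsilon_1,\dots,\epsilon_n)$ in these coordinates is congruent to $H_\Psi(p)$, Sylvester's law of inertia forces the number of $\epsilon_j = -1$ to equal the index $k$, and a permutation of the $v_j$ puts the $k$ negative squares first, producing $\mu$.

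The hard part will be the bookkeeping in the inductive step rather than any single computation: one must verify at each stage that the nonlinear substitution is a genuine $C^r$-diffeomorphism on a (possibly shrinking) neighbourhood, that exactly $r$ derivatives survive (one derivative is lost in each of the two Hadamard integrations, and none in the algebraic substitutions because $H_{ss}\neq 0$ keeps $\sqrt{|H_{ss}|}$ and the quotients $C^r$), and that the residual quadratic block $(\tilde H_{ij}(0))_{i,j>s}$ remains nonsingular so that the recursion does not stall. The auxiliary linear twist handling the case $H_{ss}(0)=0$ and the tracking of the signs $\epsilon_j$ are the two other places requiring care; the remainder is routine linear algebra and estimates.
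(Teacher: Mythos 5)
Your argument is correct: it is the classical inductive completion-of-squares proof of the Morse lemma (Hadamard's lemma applied twice to get $\Psi(x)=\sum_{i,j}x_ix_jh_{ij}(x)$ with $h_{ij}\in C^r$, then successive $C^r$ substitutions justified by the inverse function theorem, with Sylvester's law of inertia fixing the number of minus signs), and your derivative bookkeeping matches the stated loss from $C^{r+2}$ to a $C^r$ diffeomorphism. Note, however, that the paper does not prove this statement at all: it quotes the Morse--Palais Lemma from the cited references (Hirsch; Palais), so there is no in-paper proof to compare against --- your proposal essentially reproduces the standard textbook argument found in those sources, including the correct treatment of the degenerate-diagonal case via a linear twist and the nonsingularity of the residual block at each stage.
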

	
	\begin{example}
		The map $\Psi: \mathbb{R} \rightarrow \mathbb{R}$, $x \mapsto 4x^2 -8x +1$ is a Morse function, as $\nabla \Psi(x) = 8x-8$, and the only critical point $p = 1$ of $\Psi$ is non-degenerate, since $H_\Psi(1) = 8 \neq 0$. The $C^\infty$-diffeomorphism $\mu: \mathbb{R} \rightarrow \mathbb{R}$, $u \mapsto \frac{u}{2} + 1$ with inverse $\mu^{-1}: \mathbb{R} \rightarrow \mathbb{R}$, $u \mapsto 2u-2$ transforms $\Psi$ into the simple quadratic form $\Psi(\mu(u)) = u^2 + 1$, as guaranteed by Theorem~\ref{th:morse}.
	\end{example}
	
	To apply the Morse-Palais Lemma, it is necessary that the map $\Psi \in C^{r+2}(\mathcal{X},\mathbb{R})$ has a critical point. In the one-dimensional case $\mathcal{X} \subset \mathbb{R}$ all non-injective maps $\Psi: \mathcal{X} \rightarrow \mathbb{R}$ have critical points, as the following proposition shows.
	
	\begin{proposition} \label{prop:criticalpoint}
		Let $\Psi: \mathcal{X} \rightarrow \mathbb{R}$ with $\mathcal{X} \subset \mathbb{R}$ be differentiable and non-injective. Then $\Psi$ has at least one critical point, i.e., there exists $p \in \mathcal{X}$, such that $\nabla \Psi(p) = 0$.
	\end{proposition}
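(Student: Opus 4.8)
The plan is to derive the statement from Rolle's theorem. Since $\Psi$ is non-injective, there exist two distinct points $x_1, x_2 \in \mathcal{X}$, which we may order as $x_1 < x_2$, with $\Psi(x_1) = \Psi(x_2)$. The idea is to restrict attention to the closed interval $[x_1,x_2]$: differentiability of $\Psi$ entails continuity, so $\Psi$ is continuous on $[x_1,x_2]$, differentiable on the open interval $(x_1,x_2)$, and takes equal values at the two endpoints. Rolle's theorem then produces a point $p \in (x_1,x_2)$ with $\Psi'(p) = 0$, and since in one dimension the gradient $\nabla\Psi(p)$ coincides with the derivative $\Psi'(p)$, this $p$ is the desired critical point.

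Concretely, I would proceed in three steps. First, use non-injectivity to fix $x_1 < x_2$ in $\mathcal{X}$ with $\Psi(x_1) = \Psi(x_2)$. Second, record that $[x_1,x_2] \subseteq \mathcal{X}$, so that $\Psi|_{[x_1,x_2]}$ is well defined with the regularity just described. Third, invoke Rolle's theorem. If a self-contained argument is preferred over citing Rolle, the third step can be replaced by its usual proof: by the extreme value theorem $\Psi$ attains a maximum and a minimum on the compact interval $[x_1,x_2]$; if both are attained only at the endpoints then $\Psi$ is constant on $[x_1,x_2]$ and every interior point is a critical point, while otherwise an interior extremum $p$ satisfies $\Psi'(p) = 0$ by Fermat's stationarity criterion.

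The only genuinely delicate point — and the one I expect to be the main obstacle — is the inclusion $[x_1,x_2] \subseteq \mathcal{X}$ in the second step, which requires $\mathcal{X}$ to be an interval (or at least to contain the segment joining the two coinciding points). On a disconnected domain the conclusion can fail: for instance $\mathcal{X} = (0,1)\cup(1,2)$ with $\Psi(x) = x$ on $(0,1)$ and $\Psi(x) = x - \frac{3}{2}$ on $(1,2)$ is differentiable and non-injective yet has no critical point. So the statement must be read with $\mathcal{X}$ an interval, which is the case of interest here; note that $\mathcal{X}$ need not be open, since Rolle's theorem only uses differentiability on the open interval $(x_1,x_2)$, and any one-sided derivatives at the endpoints play no role.
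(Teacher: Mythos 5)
Your proof is correct and takes essentially the same route as the paper: the paper's argument is exactly the extreme-value-theorem-plus-Fermat reasoning you give as the unpacked version of Rolle's theorem, locating an interior extremum of $\Psi$ on $[x_1,x_2]$ and concluding $\nabla\Psi(p)=0$ there. Your caveat that one needs $[x_1,x_2]\subset\mathcal{X}$ (i.e.\ $\mathcal{X}$ an interval, as your disconnected counterexample shows) is a point the paper passes over silently by simply writing ``the interval $[x_1,x_2]\subset\mathcal{X}$'', so your reading coincides with the intended statement.
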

	
	\begin{proof}
		As the map $\Psi$ is non-injective, there exists $x_1, x_2 \in \mathcal{X}$, $x_1  <x_2$, such that $\Psi(x_1) = \Psi(x_2)$. On the interval $[x_1,x_2] \subset \mathcal{X}$ the continuous map $\Psi$ attains its minimum $x_\textup{min}$ and its maximum $x_\textup{max}$. As $\Psi(x_1) = \Psi(x_2)$, either $x_\textup{min} \in (x_1,x_2)$ or $x_\textup{max} \in (x_1,x_2)$. Denote the extreme point in $(x_1,x_2)$ by $p$. Since the interval $(x_1,x_2)$ is open and $p$ is an extreme point of $\Psi$ it holds $\nabla \Psi(p) = 0$.
	\end{proof}
	
	\begin{remark}
		Proposition~\ref{prop:criticalpoint} does not hold for higher-dimensional input spaces. For instance the differentiable map $\Psi: \mathbb{R}^n \rightarrow \mathbb{R}$, $(x_1,\ldots,x_n) \mapsto \sum_{j = 1}^n x_j$ is by Corollary~\ref{cor:noninjective} non-injective, but has no critical point, as for all $x \in \mathbb{R}^n$ it holds $\nabla \Psi(x) = (1,\ldots,1) \in \mathbb{R}^n$.
	\end{remark}
	
	Not all maps are Morse functions, as maps with degenerate equilibria $p$ exist, i.e., \mbox{$H_\Psi(p) = 0$}. However, these functions can sometimes also be transformed in the simple quadratic form of Theorem~\ref{th:morse}, as the following example shows.
	
	\begin{example} \label{ex:topologicallymorse}
		The map $\Psi: \mathbb{R}\rightarrow \mathbb{R}$, $x \mapsto x^4$ is not a Morse function, as $\nabla \Psi(x) = 4x^3$ and the only critical point $p = 0$ is degenerate, since $H_\Psi(0) = 0$. Nevertheless, the homeomorphism
		\begin{equation*}
			\mu: \mathbb{R} \rightarrow \mathbb{R}, \qquad u \mapsto \begin{cases}
				\hspace{7.5pt} \sqrt{u}, \hspace{8pt} \; \text{if } u \geq 0, \\
				- \sqrt{-u}, \; \text{if } u<0,
			\end{cases}
			\qquad 
			\mu^{-1}: \mathbb{R} \rightarrow \mathbb{R}, \qquad u \mapsto \begin{cases}
				\hspace{7.5pt} u^2, \; \text{if } u \geq 0, \\
				- u^2, \; \text{if } u<0,
			\end{cases}
		\end{equation*}
		transforms $\Psi$ into the simple quadratic form $\Psi(\mu(u)) = u^2$.
	\end{example}
	
	The phenomenon described in Example~\ref{ex:topologicallymorse} can be made precise and defines the class of topological Morse functions, which have only topologically non-degenerate critical points.
	
	\begin{definition}[\cite{cantwell67,morse59}] \label{def:topologicallycritical}
		Let $\Psi \in C^0(\mathcal{X}, \mathbb{R})$ with $\mathcal{X} \subset \mathbb{R}^n$ open. A point $q \in \mathcal{X}$ is a topologically ordinary point of $\mathcal{X}$ if there exists a neighborhood $\mathcal{V}$ of $0 \in \mathbb{R}^n$ and a homeomorphism $\eta: \mathcal{V} \rightarrow \eta(\mathcal{V})$ with $\eta(0) = q$, such that for all $(v_1,\ldots,v_n) \in \mathcal{V}$
		\begin{equation*}
			\Psi(\eta(v_1,\ldots,v_n)) = \Psi(q) + v_n.
		\end{equation*} 
		The map $\eta$ is called the canonical mapping of the topologically ordinary point $q$. A point $p \in \mathcal{X}$, which is not topologically ordinary is called topologically critical. A topologically critical point $p \in \mathcal{X}$ is said to have index $k$, if there exists a neighborhood $\mathcal{U}$ of $0 \in \mathbb{R}^n$ and a homeomorphism $\mu: \mathcal{U} \rightarrow \mu(\mathcal{U})$ with $\mu(0) = p$, such that for $(u_1,\ldots,u_n) \in \mathcal{U}$
		\begin{equation*}
			\Psi(\mu(u_1,\ldots,u_n)) = \Psi(p) - \sum_{j = 1}^k u_j^2 + \sum_{j = k+1}^n u_j^2.
		\end{equation*}
		The map $\mu$ is called the canonical mapping of the topologically critical point $p$ with index $k$.
	\end{definition}

	\begin{proposition}
		Let $\Psi \in C^0(\mathcal{X}, \mathbb{R})$ with $\mathcal{X} \subset \mathbb{R}^n$ open. Each topologically critical point with index~$k$ of $\Psi$ is a topologically critical point. 
	\end{proposition}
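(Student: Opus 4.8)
Unwinding the definitions, the claim is the following: if at a point $p\in\mathcal{X}$ there exists a canonical mapping of index $k$, i.e.\ a homeomorphism $\mu\colon\mathcal{U}\to\mu(\mathcal{U})$ of a neighbourhood $\mathcal{U}$ of $0\in\mathbb{R}^n$ with $\mu(0)=p$ and $\Psi(\mu(u))=\Psi(p)+Q(u)$ on $\mathcal{U}$, where $Q(u)=-\sum_{j=1}^k u_j^2+\sum_{j=k+1}^n u_j^2$, then $p$ is not topologically ordinary. Intuitively, for $k=0$ the point $p$ is a strict local minimum of $\Psi$ and for $k=n$ a strict local maximum, so $\{\Psi<\Psi(p)\}$, resp.\ $\{\Psi>\Psi(p)\}$, is empty near $p$, whereas at a topologically ordinary point $q$ the canonical model $\Psi(\eta(v))=\Psi(q)+v_n$ forces both $\{\Psi<\Psi(q)\}$ and $\{\Psi>\Psi(q)\}$ to meet every neighbourhood of $q$; this already settles the extreme indices. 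To cover all $0\le k\le n$ uniformly, the plan is to use the local homology of the sublevel set near $p$ as a topological invariant.

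Set $c=\Psi(p)$. The plan is to compute $H_q(\{\Psi\le c\},\,\{\Psi\le c\}\setminus\{p\})$ in two different charts. By invariance of domain $\mu$ and $\eta$ are homeomorphisms onto open subsets of $\mathbb{R}^n$, so for small $\rho>0$ the sets $\mu(B_\rho)$ and $\eta(B_\rho)$ (with $B_\rho$ the open $\rho$-ball about $0$) are open neighbourhoods of $p$, and by excision the local homology above equals $H_q(W\cap\{\Psi\le c\},\,(W\cap\{\Psi\le c\})\setminus\{p\})$ for either such $W$. Assuming $p$ topologically ordinary and taking $W=\eta(B_\rho)$, the set $W\cap\{\Psi\le c\}\cong\{v\in B_\rho:v_n\le 0\}$ is convex, hence contractible, and after removing the origin it deformation retracts radially onto the closed lower hemisphere $\{|v|=\rho/2,\,v_n\le0\}\cong D^{n-1}$, again contractible, so the long exact sequence of the pair makes this local homology vanish in all degrees. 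Taking instead $W=\mu(B_\rho)$, one has $W\cap\{\Psi\le c\}\cong\{u\in B_\rho:Q(u)\le 0\}$, and writing $u=(u^-,u^+)\in\mathbb{R}^k\times\mathbb{R}^{n-k}$ the homotopy $(u^-,u^+)\mapsto(u^-,su^+)$, $s\in[0,1]$, stays in $B_\rho$ and in $\{Q\le0\}$; hence $W\cap\{\Psi\le c\}$ deformation retracts onto the $k$-disc $\{u^+=0\}\cap B_\rho$, while $(W\cap\{\Psi\le c\})\setminus\{p\}$ retracts onto the punctured $k$-disc $\{u^+=0,\,u^-\ne0\}\cap B_\rho\simeq S^{k-1}$. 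The long exact sequence then gives $H_q(\{\Psi\le c\},\{\Psi\le c\}\setminus\{p\})\cong\widetilde H_{q-1}(S^{k-1})$, which is nonzero (equal to $\mathbb{Z}$) in degree $q=k$ for $1\le k\le n$, and equals $\mathbb{Z}$ in degree $0$ for $k=0$ (where $W\cap\{\Psi\le c\}=\{p\}$). Since the two computations of the same group disagree in degree $k$, the point $p$ cannot simultaneously be topologically ordinary and of index $k$, which is the assertion.

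I expect the main obstacle to be the bookkeeping around this local homology rather than any conceptual difficulty: one must check carefully that excision really reduces $H_q(\{\Psi\le c\},\{\Psi\le c\}\setminus\{p\})$ to the germ at $p$ — this is precisely where the openness of $\mu(\mathcal{U})$ and $\eta(\mathcal{V})$ (via invariance of domain) is needed, so that balls form a neighbourhood basis of $p$ — and that the radial maps above are genuine deformation retractions in each case, including the degenerate indices $k=0,n$ where the quadratic model collapses one side. Granting this, the two long exact sequences together with the homology of a ball and of $S^{k-1}$ finish the argument. (This computation is, incidentally, the topological counterpart of the statement that a smooth nondegenerate critical point of Morse index $k$, put into normal form by the Morse–Palais Lemma, is a topologically critical point of the same index.)
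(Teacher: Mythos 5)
Your argument is correct, but it takes a genuinely different route from the paper. The paper's proof is a short, elementary comparison of function values in the two charts: assuming $p$ were topologically ordinary with canonical map $\eta$, it observes that $\Psi(\eta(0,\ldots,0,v_n))=\Psi(p)+v_n$ attains values strictly below $\Psi(p)$, while for a factorization $\eta=\mu\circ\nu$ through the index-$k$ chart it asserts $\Psi(\mu(\nu(0,\ldots,0,v_n)))=\Psi(p)+v_n^2\ge\Psi(p)$, and concludes that no such homeomorphism $\nu$ with $\eta=\mu\circ\nu$ can exist. That displayed identity is only immediate when the image of the $v_n$-axis under $\nu$ lands in the positive-definite directions of the quadratic model (as for index $k=0$); for intermediate indices the quadratic form itself changes sign, so the paper's sign comparison relies on this additional point. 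Your proof replaces the sign comparison by a homeomorphism invariant of the germ of the sublevel set: $H_q(\{\Psi\le c\},\{\Psi\le c\}\setminus\{p\})$ vanishes in every degree at a topologically ordinary point, but equals $\mathbb{Z}$ in degree $k$ at a point admitting an index-$k$ canonical chart, uniformly for all $0\le k\le n$ (including the degenerate cases $k=0$ and $k=n$, which you treat separately). The steps you flag as bookkeeping do go through: by invariance of domain $\mu(B_\rho)$ and $\eta(B_\rho)$ are open, so excision applies with the excised set $\{\Psi\le c\}\setminus W$ closed in the sublevel set and disjoint from $\{p\}$; and the homotopy $(u^-,u^+)\mapsto(u^-,su^+)$ avoids the origin on the punctured set because every nonzero $u$ with $Q(u)\le 0$ has $u^-\ne 0$, so the retractions onto the (punctured) $k$-disc and onto the lower hemisphere are genuine deformation retractions. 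In short, your approach buys uniformity in the index and a fully rigorous invariant-based contradiction at the cost of algebraic-topology machinery, whereas the paper opts for a more elementary direct argument tied to the distinguished coordinate axis.
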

	
	\begin{proof}
		Let $p$ be a topologically critical point with index $k$ and canonical mapping $\mu: \mathcal{U} \rightarrow \mu(\mathcal{U})$. If $p$ would be topologically ordinary, then there would exist a homeomorphism $\eta: \mathcal{V} \rightarrow \eta(\mathcal{V})$ with $\eta(0) = p$, such that it holds especially for $(0,\ldots,0,v_n)\in \mathcal{V}$ that
		\begin{equation*}
			\Psi(\eta(0,\ldots,0,v_n)) = \Psi(p) + v_n,
		\end{equation*}
		which attains all values in $[[\Psi(p)]_n-\varepsilon, [\Psi(p)]_n+\varepsilon]$ for some $\varepsilon >0$.
		By Definition~\ref{def:topologicallycritical} it holds for all homeomorphisms $\nu: \mathcal{V} \rightarrow \nu(\mathcal{V}) \subset \mathcal{U}$ with $\nu(0) = 0$ that
		\begin{equation*}
			\Psi(\mu(\nu(0,\ldots,0,v_n))) = \Psi(p)  + v_n^2 ,
		\end{equation*} 
		which is for all $(0,\ldots,0,v_n) \in \mathcal{V}$ greater or equal to $[\Psi(p)]_n$. If $p$ would be both topologically critical with index $k$ and topologically ordinary, there would exist a homeomorphism $\nu: \mathcal{V} \rightarrow \nu(\mathcal{V})$ with $\nu(0) = 0$ such that $\eta = \mu \circ \nu$, which is not the case. Hence $p$ is topologically critical.
	\end{proof}
	
	\begin{definition}[Topological Morse function] \label{def:morse_topological}
		A map $\Psi \in C^0(\mathcal{X}, \mathbb{R})$ with $\mathcal{X} \subset \mathbb{R}^n$ open is called a topological Morse function if all topologically critical points $p_i \in \mathcal{X}$ of $\Psi$ have some index \mbox{$k_i \in \{1,\ldots,n\}$}. Every Morse function is also a topological Morse function.
	\end{definition}
	
	After defining (topological) Morse functions, it is natural to ask how generic these function classes are. In the one-dimensional case, all sufficiently nice maps with extreme points are topological Morse functions. To show this, we first need the following Lemma.
	
	\begin{lemma} \label{lem:onedhigherorder}
		Let $\Psi \in C^{k+1}(\mathcal{X}, \mathbb{R})$ with $\mathcal{X} \subset \mathbb{R}$ open, $k \geq 2$ and critical point $p \in \mathcal{X}$. Suppose $\Psi^{(j)}(p) = 0$ for all $1 \leq j <k$ and $\Psi^{(k)}(p) \eqqcolon \gamma \neq 0$, where $\Psi^{(j)}(p)$ denotes the $j$-th derivative of $\Psi$ at~$p$. Then there exists a neighborhood $\mathcal{U}$ of $0$ and a $C^1$-diffeomorphism $\mu: \mathcal{U} \rightarrow \mu(\mathcal{U})$ with $\mu(0) = p$, such that
		\begin{equation*}
			\Psi(\mu(u)) = \Psi(p) + (\textup{sign}(\gamma))^{k-1} u^k.
		\end{equation*} 
	\end{lemma}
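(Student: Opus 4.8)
The plan is to reduce $\Psi$ to the stated monomial normal form by a single $C^1$ change of variable produced by the inverse function theorem, after first peeling off a nonvanishing $k$-th order factor via Taylor's theorem. Concretely, since $\Psi\in C^{k+1}(\mathcal{X},\mathbb{R})$ and $\Psi^{(j)}(p)=0$ for all $1\le j<k$ (which includes $\Psi'(p)=0$, i.e.\ criticality of $p$), Taylor's formula with the integral form of the remainder gives, on a neighborhood of $p$,
\[
\Psi(x)-\Psi(p) = (x-p)^k\, g(x), \qquad g(x) := \int_0^1 \frac{(1-s)^{k-1}}{(k-1)!}\,\Psi^{(k)}\!\bigl(p+s(x-p)\bigr)\,\dd s .
\]
Because $\Psi^{(k)}\in C^1$, differentiation under the integral sign shows $g\in C^1$ near $p$, and $g(p)=\Psi^{(k)}(p)/k! = \gamma/k!\neq 0$. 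Hence there is an open interval $\mathcal{N}\ni p$ on which $g$ is nowhere zero and has constant sign $\mathrm{sign}(\gamma)$.

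I would then split into the cases $k$ odd and $k$ even and define a candidate coordinate map $\phi$ on $\mathcal{N}$. For $k$ odd, set $\phi(x):=(x-p)\,g(x)^{1/k}$, where $t\mapsto t^{1/k}$ is the real $k$-th root, which is $C^\infty$ on $\mathbb{R}\setminus\{0\}$, so the composition is $C^1$ on $\mathcal{N}$ since $g$ stays away from $0$ there. For $k$ even, set $\phi(x):=(x-p)\,\lvert g(x)\rvert^{1/k}$, noting that $\lvert g\rvert = \mathrm{sign}(\gamma)\,g$ is $C^1$ and strictly positive on $\mathcal{N}$. In both cases $\phi\in C^1(\mathcal{N})$, $\phi(p)=0$, and $\phi'(p)$ equals the nonzero value at $p$ of the $C^1$ factor multiplying $(x-p)$, so $\phi'(p)\neq 0$. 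By the inverse function theorem, $\phi$ restricts to a $C^1$-diffeomorphism of a smaller neighborhood $\mathcal{N}'\ni p$ onto an open neighborhood $\mathcal{U}\ni 0$; set $\mu:=(\phi|_{\mathcal{N}'})^{-1}$, which is a $C^1$-diffeomorphism $\mathcal{U}\to\mathcal{N}'$ with $\mu(0)=p$.

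Finally I would compute $\phi(x)^k$: in the odd case $\phi(x)^k=(x-p)^k g(x)=\Psi(x)-\Psi(p)$, and in the even case $\phi(x)^k=(x-p)^k\lvert g(x)\rvert=\mathrm{sign}(\gamma)\bigl(\Psi(x)-\Psi(p)\bigr)$, using $\lvert g\rvert=\mathrm{sign}(\gamma)\,g$ on $\mathcal{N}$. Substituting $x=\mu(u)$ yields $\Psi(\mu(u))-\Psi(p)=u^k$ when $k$ is odd and $\Psi(\mu(u))-\Psi(p)=\mathrm{sign}(\gamma)\,u^k$ when $k$ is even; since $(\mathrm{sign}(\gamma))^{k-1}=1$ for $k$ odd and $=\mathrm{sign}(\gamma)$ for $k$ even, both cases are exactly $\Psi(\mu(u))=\Psi(p)+(\mathrm{sign}(\gamma))^{k-1}u^k$, as claimed. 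The only genuinely delicate point is the sign bookkeeping in the even case: one cannot take a real $k$-th root of $g$ when $\gamma<0$, which forces the absolute value and is precisely why the exponent on $\mathrm{sign}(\gamma)$ in the conclusion is $k-1$ rather than $1$; everything else is routine once $g\in C^1$ and $g(p)\neq 0$ are in hand.
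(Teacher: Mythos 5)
Your proof is correct and follows essentially the same route as the paper: factor $\Psi(x)-\Psi(p)=(x-p)^k g(x)$ via Taylor's theorem with $g\in C^1$ and $g(p)=\gamma/k!\neq 0$, define a new coordinate by multiplying $(x-p)$ with a $k$-th root of the nonvanishing factor, and invoke the inverse function theorem to obtain the $C^1$-diffeomorphism $\mu$. The only cosmetic difference is that you split into the cases $k$ odd and $k$ even, whereas the paper handles both at once with the single map $\eta(x)=s\sqrt[k]{s\,g(x)}\,(x-p)$, $s=\textup{sign}(\gamma)$; the sign bookkeeping comes out identically.
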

	
	\begin{proof}
		The idea of the proof is based on~\cite{castrigiano18}, where the proof is outlined for smooth functions vanishing at the origin. 
		
		As $\Psi^{(j)}(p) = 0$ for all $1 \leq j <k$ and $\Psi^{{k}}(p) \eqqcolon \gamma \neq 0$, Taylor's formula implies that 
		\begin{equation*}
			\Psi(x) = \Psi(p) + g(x) (x-p)^k, \qquad g(p) = \frac{\gamma}{k!}
		\end{equation*} 
		with a remainder function $g \in C^1(\mathcal{X},\mathbb{R})$. As $\gamma \neq 0$, there exists a neighborhood $\mathcal{V}$ of $p$, such that for \mbox{$s \coloneqq \textup{sign}(\gamma) \in \{-1,+1\}$,} the product $sg(x) >0$ for all $x \in \mathcal{V}$. Hence, $\eta: \mathcal{V} \rightarrow \eta(\mathcal{V})$ with
		\begin{equation*}
			\eta(x) = s \sqrt[k]{sg(x)}(x-p)
		\end{equation*}
		is well-defined and $\eta(\mathcal{V})$ is an interval containing $0$. As $g$ is continuously differentiable it holds
		\begin{equation*}
			\eta'(x) = s \sqrt[k]{sg(x)} + s^2
			\frac{1}{k} (sg(x))^{\frac{1}{k}-1} (x-p) g'(x) \quad \Rightarrow \quad \eta'(p) = s \sqrt[k]{sg(p)} \neq 0.
		\end{equation*}
		The inverse function theorem implies now that there exists a subset $\mathcal{V}_0 \subset \mathcal{V}$ containing $p$, such that $\eta: \mathcal{V}_0 \rightarrow \eta(\mathcal{V}_0)$ is a $C^1$-diffeomorphisms with inverse $\mu \coloneqq \eta^{-1}$ mapping from $\mathcal{U} \coloneqq \eta(\mathcal{V}_0)$ onto $\mu(\mathcal{U}) = \mathcal{V}_0$. As $\eta(p) = 0$, $\mathcal{U} = \eta(\mathcal{V}_0)$ is a neighborhood of the origin.
		Define $\varphi(x) \coloneqq \Psi(p) + s^{k-1} x^k$, then it holds for $x \in \mathcal{V}$
		\begin{equation*}
			\varphi(\eta(x)) = \Psi(p) + s^{2k} g(x) (x-p)^k = \Psi(p) + g(x) (x-p)^k = \Psi(x).
		\end{equation*}
		Consequently it holds for all $u \in \mathcal{U}$
		\begin{equation*}
			\Psi(\mu(u)) = \Psi(\eta^{-1}(u)) = \varphi(\eta(\eta^{-1}(u))) = \Psi(p) + (\textup{sign}(\gamma))^{k-1} u^k. \qedhere
		\end{equation*}
	\end{proof}
	
	\begin{proposition} \label{prop:onedtopologicalmorse}
		Let $\Psi \in C^{k+1}(\mathcal{X}, \mathbb{R})$ with $\mathcal{X} \subset \mathbb{R}$ open, $k \geq 2$ and critical points $p_i \in \mathcal{X}$. Suppose $\Psi^{(j)}(p_i) = 0$ for all $1 \leq j <k_i$ and $\Psi^{(k_i)}(p_i) \eqqcolon \gamma_i \neq 0$, for even numbers $k_i \leq k$. Then $\Psi$ is a topological Morse function.
	\end{proposition}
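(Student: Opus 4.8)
The plan is to show that every point of $\mathcal{X}$ at which $\Psi'$ does not vanish is topologically ordinary, so that the topologically critical points of $\Psi$ lie among the classical critical points $p_i$, and then that each $p_i$ is topologically critical with a well-defined index; by Definition~\ref{def:morse_topological} this yields that $\Psi$ is a topological Morse function. For the first step, let $q\in\mathcal{X}$ with $\Psi'(q)\neq 0$. Since $\Psi\in C^{k+1}(\mathcal{X},\mathbb{R})\subset C^1(\mathcal{X},\mathbb{R})$ and $\mathcal{X}$ is open, the inverse function theorem gives a neighborhood of $q$ on which $\Psi$ is a $C^1$-diffeomorphism onto an open interval around $\Psi(q)$; then $\eta(v)\coloneqq\Psi^{-1}(\Psi(q)+v)$, defined on a sufficiently small neighborhood $\mathcal{V}$ of $0\in\mathbb{R}$, is a homeomorphism with $\eta(0)=q$ and $\Psi(\eta(v))=\Psi(q)+v$, so $q$ is topologically ordinary.

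For the second step, fix a critical point $p_i$. By hypothesis $\Psi^{(j)}(p_i)=0$ for $1\le j<k_i$ and $\Psi^{(k_i)}(p_i)=\gamma_i\neq 0$ with $k_i$ even; since $\Psi'(p_i)=0$ we have $k_i\ge 2$, and $\Psi\in C^{k_i+1}(\mathcal{X},\mathbb{R})$ because $k_i\le k$. Hence Lemma~\ref{lem:onedhigherorder} applies and provides a neighborhood $\mathcal{U}_i$ of $0$ and a $C^1$-diffeomorphism $\mu_i\colon\mathcal{U}_i\to\mu_i(\mathcal{U}_i)$ with $\mu_i(0)=p_i$ and
\begin{equation*}
	\Psi(\mu_i(u)) = \Psi(p_i) + (\textup{sign}(\gamma_i))^{k_i-1}\, u^{k_i}.
\end{equation*}
As $k_i$ is even, $k_i-1$ is odd, so $(\textup{sign}(\gamma_i))^{k_i-1}=\textup{sign}(\gamma_i)\eqqcolon s_i\in\{-1,+1\}$ and thus $\Psi(\mu_i(u))=\Psi(p_i)+s_i u^{k_i}$.

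It remains to straighten the even monomial $u^{k_i}$ to a quadratic. Introduce $\rho_i\colon\mathbb{R}\to\mathbb{R}$, $\rho_i(v)\coloneqq\textup{sign}(v)\,|v|^{2/k_i}$, which is a homeomorphism of $\mathbb{R}$ (continuous, strictly increasing, bijective, with inverse $v\mapsto\textup{sign}(v)\,|v|^{k_i/2}$), and note that since $k_i$ is even $\rho_i(v)^{k_i}=|v|^2=v^2$. Then $\tilde\mu_i\coloneqq\mu_i\circ\rho_i$ is a homeomorphism from a neighborhood of $0$ (namely $\rho_i^{-1}(\mathcal{U}_i)$) onto its image, with $\tilde\mu_i(0)=p_i$ and
\begin{equation*}
	\Psi(\tilde\mu_i(v)) = \Psi(p_i) + s_i\,\rho_i(v)^{k_i} = \Psi(p_i) + s_i\, v^2.
\end{equation*}
In particular $\Psi-\Psi(p_i)$ has constant sign near $p_i$, so $p_i$ is not topologically ordinary, i.e.\ topologically critical, and the last display is the canonical form of Definition~\ref{def:topologicallycritical} with index $1$ if $\gamma_i<0$ and index $0$ if $\gamma_i>0$ (the case $k=0$ there, as for $\Psi(x)=x^4$ in Example~\ref{ex:topologicallymorse}). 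Since the topologically critical points of $\Psi$ are exactly such $p_i$, and each carries a well-defined index, $\Psi$ is a topological Morse function.

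I expect the main obstacle to be precisely this last straightening step: Lemma~\ref{lem:onedhigherorder} only normalizes $\Psi$ near $p_i$ to $\Psi(p_i)\pm u^{k_i}$ within the $C^1$ category, and one must still pass to the quadratic normal form required by Definition~\ref{def:topologicallycritical}; this forces one out of the smooth category and onto the merely continuous power map $\rho_i$, whose bijectivity and bicontinuity on a full neighborhood of $0$ have to be checked. The evenness of $k_i$ is what makes this work — it is used both to collapse $(\textup{sign}\,\gamma_i)^{k_i-1}$ to $\textup{sign}\,\gamma_i$ and to obtain $\rho_i(v)^{k_i}=v^2$ — and it is also exactly the feature that prevents $p_i$ from being topologically ordinary: for odd $k_i$ (as with $x^3$ at $0$) the map $u\mapsto s_i u^{k_i}$ would itself be a homeomorphism and $\Psi(p_i)+s_i u^{k_i}$ would already be of the ordinary form $\Psi(q)+v$.
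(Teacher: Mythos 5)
Your proof is correct and follows essentially the same route as the paper: apply Lemma~\ref{lem:onedhigherorder} at each critical point $p_i$ and then compose with the power-law homeomorphism $v \mapsto \textup{sign}(v)\,|v|^{2/k_i}$ (the paper's $\eta_i$, modeled on Example~\ref{ex:topologicallymorse}) to reach the quadratic normal form $\Psi(p_i)\pm v^2$. Your extra verification that points with $\Psi'(q)\neq 0$ are topologically ordinary, so that all topologically critical points lie among the $p_i$, is a completeness step the paper leaves implicit but does not change the approach.
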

	
	\begin{proof}
		Consider a critical point $p_i \in \mathcal{X}$. By Lemma~\ref{lem:onedhigherorder}, there exists a neighborhood $\mathcal{U}_i$ of $0$ and a $C^1$-diffeomorphism $\mu_i: \mathcal{U}_i \rightarrow \mu_i(\mathcal{U}_i)$ with $\mu_i(0) = p_i$, such that
		\begin{equation*}
			\Psi(\mu_i(u)) = \Psi(p_i) \pm u^{k_i}.
		\end{equation*} 
		In analogy to Example~\ref{ex:topologicallymorse}, define the homeomorphism
		\begin{equation*}
			\eta_i: \mathbb{R} \rightarrow \mathbb{R}, \qquad v \mapsto \begin{cases}
				\hspace{7.5pt} \sqrt[k_i]{v^2}, \; \text{if } v \geq 0, \\
				- \sqrt[k_i]{v^2}, \; \text{if } v<0,
			\end{cases}
			\qquad 
			\eta_i^{-1}: \mathbb{R} \rightarrow \mathbb{R}, \qquad v \mapsto \begin{cases}
				\hspace{23pt} v^\frac{k_i}{2}, \; \text{if } v \geq 0, \\
				- (-v)^\frac{k_i}{2}, \; \text{if } v<0.
			\end{cases}
		\end{equation*}
		Let $\mathcal{V}_i$ be a neighborhood of $0$, such that $\eta_i(\mathcal{V}_i) \subset \mathcal{U}_i$, then for all $v \in \mathcal{V}_i$ it holds
		\begin{equation*}
			\Psi(\mu_i(\eta_i(v))) = \Psi(p_i) \pm v^2,
		\end{equation*}
		such that $\Psi$ is a topological Morse function with canonical mapping $\mu_i \circ \eta_i: \mathcal{V}_i \mapsto \mu_i(\eta_i(\mathcal{V}_i))$ for the critical point $p_i \in \mathcal{X}$.
	\end{proof}
	
	\begin{remark}
		The assumptions of Proposition~\ref{prop:onedtopologicalmorse} are fulfilled, for example, by extreme points of one-dimensional analytic functions. 
	\end{remark}
	
	Also in more than one dimension, Morse functions are quite generic. In the following we present a theorem regarding Morse functions as perturbations of general maps, which then implies the density of Morse functions in a certain Banach space, which we proof in the upcoming Corollary \ref{cor:morsefunctionsdense}. To prove the theorem about Morse functions as perturbation of general functions, the following Morse-Sard Lemma is needed.
	
	\begin{lemma}[Morse-Sard Lemma~\cite{morse39, sard42}]
		Let $g \in C^k(\mathcal{X},\mathcal{Y})$ with $\mathcal{X} \subset \mathbb{R}^n$, $\mathcal{Y} \subset \mathbb{R}^m$ and $k \geq 1$. Define the critical set $\mathcal{D} = \{p \in \mathcal{X}: J_g(p) \text{ does not have full rank}\}$, where $J_{g}(p)$ denotes the Jacobian matrix of $g$ at $p$. If $k \geq n-m+1$, then the image of the critical set $g(\mathcal{D}) \coloneqq \{g(p): p \in \mathcal{D}\}$ is a zero set in~$\mathbb{R}^m$ w.r.t.\ the Lebesgue measure.
	\end{lemma}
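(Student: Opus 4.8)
The statement is the classical theorem of Morse and Sard, so the plan is to reproduce the standard proof while keeping careful track of the sharp differentiability hypothesis $k \ge n - m + 1$. Since a countable union of Lebesgue-null sets is null and $\mathcal{X}$ is a countable union of compact cubes, it suffices to prove the claim locally; hence one may assume $\mathcal{X}$ is a bounded open cube. The argument then goes by induction on the domain dimension $n$. The case $n = 0$ is trivial, and if $n < m$ then $g$, being $C^1$, is locally Lipschitz, so it maps the $n$-dimensional set $\mathcal{X}$ into a set of $m$-dimensional Lebesgue measure zero, and a fortiori $g(\mathcal{D})$ is null. So from now on assume $n \ge m \ge 1$.

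The main device is the descending filtration $\mathcal{D} \supseteq \mathcal{D}_1 \supseteq \mathcal{D}_2 \supseteq \cdots$, where $\mathcal{D}_i$ is the set of points at which all partial derivatives of $g$ of orders $1, \dots, i$ vanish. Fixing a cut-off index $r$ (essentially $r = \lceil n/m \rceil$, so that $rm \ge n$; the hypothesis $k \ge n-m+1$ then forces $r \le k$), one writes
\begin{equation*}
g(\mathcal{D}) \;=\; g(\mathcal{D} \setminus \mathcal{D}_1) \;\cup\; \bigcup_{i=1}^{r-1} g(\mathcal{D}_i \setminus \mathcal{D}_{i+1}) \;\cup\; g(\mathcal{D}_r)
\end{equation*}
and shows each piece is null. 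For $g(\mathcal{D} \setminus \mathcal{D}_1)$: near a point where, say, $\partial g_1/\partial x_1 \ne 0$, the map $x \mapsto (g_1(x), x_2, \dots, x_n)$ is a local $C^k$ diffeomorphism, and in these coordinates $g$ has first component equal to the first coordinate; a point is then critical for $g$ exactly when it is critical for the $C^k$ map formed by the remaining $m-1$ components on a level set of the first coordinate (an $(n-1)$-dimensional domain), so the inductive hypothesis makes the critical values null in each slice and Fubini's theorem makes the union over slices null in $\mathbb{R}^m$. The intermediate pieces $g(\mathcal{D}_i \setminus \mathcal{D}_{i+1})$ are treated in the same spirit: near such a point some order-$i$ partial derivative $w$ of a component of $g$ has nonvanishing gradient while $w$ vanishes identically on $\mathcal{D}_i$, so $\mathcal{D}_i$ lies locally in the $C^{k-i}$ hypersurface $\{w = 0\}$; since $J_g$ vanishes on $\mathcal{D}_i$, every point of $\mathcal{D}_i \cap \{w = 0\}$ is critical for $g|_{\{w=0\}}$, and one applies the inductive hypothesis to this lower-dimensional, lower-regularity restriction. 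Finally the tail $g(\mathcal{D}_r)$ is estimated directly: Taylor's theorem (using $g \in C^r$, valid since $r \le k$) gives $|g(x) - g(p)| = o(|x-p|^{r})$ uniformly for $p \in \mathcal{D}_r$, so subdividing a cube of side $\delta$ into $N^n$ congruent subcubes, the image of every subcube meeting $\mathcal{D}_r$ sits in an $m$-cube of side $O(N^{-r}\varepsilon_N)$ with $\varepsilon_N \to 0$; hence the total measure of $g(\mathcal{D}_r)$ in that cube is $O(N^{n-rm}\varepsilon_N^{m}) \to 0$, using $rm \ge n$. Adding the three contributions closes the induction.

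Most of this is routine: the coordinate straightenings, the Fubini step, and the cube-covering volume bound. The genuinely delicate point — and the reason one cannot merely cite the $C^\infty$ version — is the accounting of differentiability across the recursion. Straightening the zero set of an order-$i$ partial derivative costs $i$ derivatives, so one has to choose the cut-off $r$ and verify both $r \le k$ and that $g$ restricted to each auxiliary hypersurface still satisfies the inductive hypothesis, $k - i \ge (n-1) - m + 1$ — if necessary by absorbing the more degenerate sets $\mathcal{D}_i$ (large $i$) into the tail term and strengthening the inductive statement accordingly. It is precisely this bookkeeping that makes the bound $k \ge n - m + 1$ both necessary and sufficient, and handling it correctly for the intermediate pieces (as in Sard's original treatment) is the main obstacle; the measure-theoretic ingredients themselves are standard.
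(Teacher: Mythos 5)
First, a point of comparison: the paper does not prove this lemma at all. It is quoted from the literature \cite{morse39,sard42} and used as a black box, and in its only application (Theorem~\ref{th:morseperturbation}) it is invoked with $m=n$ and $k-1\geq n$, i.e.\ in the equidimensional case with far more regularity than the sharp hypothesis $k\geq n-m+1$. So there is no proof in the paper to measure yours against; your attempt has to stand on its own as a proof of the sharp statement.

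On its own terms, your sketch is the standard Milnor-style argument, and its routine parts are fine: the reduction to cubes, the case $n<m$ via Lipschitz images, the treatment of $\mathcal{D}\setminus\mathcal{D}_1$ by straightening and Fubini (the slice maps stay $C^k$ and the inductive hypothesis $(n-1)-(m-1)+1=n-m+1\leq k$ is preserved), and the Taylor/cube-counting bound for the tail $g(\mathcal{D}_r)$, including the arithmetic $\lceil n/m\rceil\leq n-m+1\leq k$. The genuine gap is exactly where you flag it, and the remedy you gesture at does not close it. For a stratum $\mathcal{D}_i\setminus\mathcal{D}_{i+1}$ your restriction argument lands on a hypersurface $\{w=0\}$ that is only $C^{k-i}$, so the inductive hypothesis at dimension $n-1$ with target $\mathbb{R}^m$ requires $k-i\geq(n-1)-m+1=n-m$, i.e.\ $i\leq k-n+m$, which at the sharp value $k=n-m+1$ forces $i\leq 1$; on the other hand the tail estimate can absorb $\mathcal{D}_i$ only when $im\geq n$. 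Whenever $n>2m$ (already $n=3$, $m=1$, $k=3$, $i=2$) there are strata satisfying neither condition, so ``absorbing the more degenerate sets into the tail and strengthening the inductive statement'' is not a fix but a restatement of the missing step: Sard's original treatment handles these intermediate strata by a direct covering estimate that exploits simultaneously the order-$i$ vanishing of the derivatives and the local containment in a codimension-one set, rather than by an induction that loses $i$ derivatives. As written, your argument proves the lemma only under a stronger regularity assumption (which, incidentally, would still cover every use the paper makes of it), not with the sharp bound $k\geq n-m+1$ appearing in the statement.
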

	
	This Lemma can now be used to prove that almost all perturbations of $C^k$-maps are Morse functions.
	
	\begin{theorem} \label{th:morseperturbation}
		Let $\Psi \in C^{k}(\mathcal{X},\mathbb{R})$ with $\mathcal{X} \subset \mathbb{R}^n$ and $k \geq n+1$. Then for all $a \in \mathbb{R}^n$ except possibly for a zero set in $\mathbb{R}^n$ w.r.t.\ the Lebesgue measure, the function 
		\begin{equation*}
			\Psi_a(x) \coloneqq \Psi(x) + \sum_{j = 1}^n a_jx_j
		\end{equation*}
		is a Morse function on $\mathcal{X}$. 
	\end{theorem}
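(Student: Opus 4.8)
The plan is to reduce the statement to the Morse--Sard Lemma applied to the gradient map $\nabla\Psi$. First I would record the elementary observation that adding the linear term $\sum_{j=1}^n a_j x_j$ changes no second derivatives: for every $a \in \mathbb{R}^n$ one has $\nabla\Psi_a(x) = \nabla\Psi(x) + a$ and $H_{\Psi_a}(x) = H_\Psi(x)$. Consequently a point $p \in \mathcal{X}$ is a critical point of $\Psi_a$ precisely when $\nabla\Psi(p) = -a$, and such a critical point is degenerate (i.e.\ $H_{\Psi_a}(p)$ is singular) precisely when $H_\Psi(p)$ is singular, i.e.\ when $p$ lies in the set
\[
\mathcal{D} \coloneqq \{p \in \mathcal{X} : H_\Psi(p) \text{ is singular}\}.
\]
Therefore $\Psi_a$ fails to be a Morse function in the sense of Definition~\ref{def:morse} if and only if $-a$ belongs to the image $\nabla\Psi(\mathcal{D})$.

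Next I would apply the Morse--Sard Lemma to $g \coloneqq \nabla\Psi$. Since $\Psi \in C^{k}(\mathcal{X},\mathbb{R})$ with $k \geq n+1$, the map $g$ is of class $C^{k-1}(\mathcal{X},\mathbb{R}^n)$ with $k-1 \geq n$, so in particular the threshold $k-1 \geq n - n + 1 = 1$ required by the Morse--Sard Lemma (with source and target both of dimension $n$) is satisfied. The Jacobian $J_g$ is exactly the Hessian $H_\Psi$, so the critical set appearing in the Morse--Sard Lemma is precisely $\mathcal{D}$, and the Lemma yields that $g(\mathcal{D}) = \nabla\Psi(\mathcal{D})$ is a Lebesgue-null set in $\mathbb{R}^n$.

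Finally I would conclude: the set of exceptional parameters is
\[
\{a \in \mathbb{R}^n : \Psi_a \text{ is not a Morse function}\} = \{a \in \mathbb{R}^n : -a \in \nabla\Psi(\mathcal{D})\} = -\nabla\Psi(\mathcal{D}),
\]
which is the image of a Lebesgue-null set under the diffeomorphism $x \mapsto -x$, hence itself a Lebesgue-null set. This is the assertion of the theorem. I do not expect a genuine obstacle here; the only bookkeeping points are that $\mathcal{X}$ should be read as open (consistent with Definition~\ref{def:morse}, so that ``critical point'' and ``non-degenerate'' are meaningful) and that the differentiability index $k-1$ of $\nabla\Psi$ still meets the Morse--Sard threshold --- which is exactly where the hypothesis $k \geq n+1$ enters, and in fact it is slightly stronger than strictly necessary. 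All the substance of the proof is contained in invoking Morse--Sard for the gradient.
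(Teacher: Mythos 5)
Your proposal is correct and follows essentially the same route as the paper's proof: both reduce the statement to the Morse--Sard Lemma applied to the gradient map $g=\nabla\Psi\in C^{k-1}(\mathcal{X},\mathbb{R}^n)$, using that $\nabla\Psi_a=\nabla\Psi+a$ and $H_{\Psi_a}=H_\Psi=J_g$, so that the exceptional parameters are exactly $-g(\mathcal{D})$, a Lebesgue-null set. Your side remarks (openness of $\mathcal{X}$, and that $k\geq n+1$ is stronger than the stated Morse--Sard threshold requires when the source and target dimensions coincide) are accurate but do not change the argument.
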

	
	\begin{proof}
		The idea of the proof is based on~\cite{guillemin74}, where the statement is proven for smooth functions $\Psi: \mathcal{X} \rightarrow \mathbb{R}$, $\mathcal{X} \subset \mathbb{R}^n$ open.
		
		Consider the map $g \in C^{k-1}(\mathcal{X},\mathbb{R}^n)$, $g(x) = \nabla \Psi(x)$ of first partial derivatives of $\Psi$. By definition of~$\Psi_a$ it follows that
		\begin{equation*}
			\nabla \Psi_a(x) = \nabla \Psi(x) + a = g(x) +a, \qquad H_{\Psi_a}(x) = H_{\Psi}(x) = J_g(x), \qquad \forall \; x \in \mathcal{X}.
		\end{equation*}
		A point $p \in \mathcal{X}$ is a critical point of $\Psi_a$ if and only if $g(p) = -a$. As $k-1 \geq n$, the Morse-Sard Lemma implies that for $\mathcal{D} \coloneqq \{p \in \mathcal{X}: J_g(p) = H_{\Psi_a}(p) \text{ does not have full rank} \}$, the set $g(\mathcal{D}) \coloneqq \{g(p): p \in \mathcal{D}\}$ has Lebesgue measure zero. As the critical points of $\Psi_a$ are defined by $g(p) = -a$, the critical points~$p$ are for all $a \in \mathbb{R}^n$, except for possibly a zero set in $\mathbb{R}^n$, non-degenerate and hence $\Psi_a$ is a Morse function.
	\end{proof}
	
	\begin{example}
		The smooth map $\Psi: \mathbb{R} \rightarrow \mathbb{R}$, $x \mapsto x^3$ has a degenerate critical point at $p = 0$, whereas the perturbed map $\Psi_a: \mathbb{R} \rightarrow \mathbb{R}$, $x \mapsto x^3-ax$ has for $a >0$ the two non-degenerate critical points $\pm \sqrt{a/3}$ and for $a<0$ no critical point at all. Hence for all parameter values $a \in \mathbb{R}/\{0\}$, $\Psi_a$ is a Morse function. The set $\{0\}$ for which $\Psi_a$ is not a Morse function is a zero set in $\mathbb{R}$, as guaranteed by Theorem~\ref{th:morseperturbation}.
	\end{example}
	
	The last theorem can be used to prove density of Morse functions in the Banach space of \mbox{$C^k$-mappings} endowed with the $C^k$-norm.
	
	\begin{corollary}\label{cor:morsefunctionsdense}
		Let $\mathcal{X} \subset \mathbb{R}^n$ be open and bounded. For $k \in \mathbb{N}_0$, the vector space
		\begin{equation*}
			C^k(\bar{\mathcal{X}},\mathbb{R}) \coloneqq \{ \Psi \in C^k(\mathcal{X},\mathbb{R}) \text{ and } \Psi^{(i)} \text{ is continuously continuable on } \bar{\mathcal{X}} \text{ for all } i \leq k  \},
		\end{equation*}
		endowed with the $C^k$-norm
		\begin{equation*}
			\norm{\Psi}_{C^k(\bar{\mathcal{X}})} \coloneqq \sum_{\vert s \vert \leq k} \norm{\partial^s \Psi}_\infty
		\end{equation*}
		is a Banach space. Hereby $\bar{\mathcal{X}}$ denotes the closure of $\mathcal{X}$ and $\norm{f}_\infty \coloneqq \sup_{x \in \mathcal{X}} \vert f(x) \vert$ the supremums norm  of a bounded function $f: \mathcal{X} \rightarrow \mathbb{R}$. If additionally $k \geq n+1$, then the set of Morse functions 
		\begin{equation*}
			M \coloneqq \left\{\Psi \in C^k(\bar{\mathcal{X}},\mathbb{R}): \Psi \big\vert_{\mathcal{X}} \text{ is a Morse function}\right\}
		\end{equation*}
		is dense in $\left(C^k(\bar{\mathcal{X}},\mathbb{R}), \norm{\cdot}_{C^k(\bar{\mathcal{X}})} \right)$.
	\end{corollary}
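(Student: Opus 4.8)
The plan is to handle the two claims in turn. For completeness of $\big(C^k(\bar{\mathcal{X}},\mathbb{R}),\norm{\cdot}_{C^k(\bar{\mathcal{X}})}\big)$ I would take a Cauchy sequence $(\Psi_j)_j$ in the $C^k$-norm. Because $\mathcal{X}$ is bounded, $\bar{\mathcal{X}}$ is compact, so for each multi-index $s$ with $|s|\le k$ the sequence $(\partial^s\Psi_j)_j$ is Cauchy in $\big(C^0(\bar{\mathcal{X}},\mathbb{R}),\norm{\cdot}_\infty\big)$ and hence converges uniformly to some continuous function $g_s$. Applying the classical theorem on differentiation of uniform limits inductively over $|s|$ on the open set $\mathcal{X}$ shows that $g_0$ is $C^k$ on $\mathcal{X}$ with $\partial^s g_0 = g_s$ for all $|s|\le k$; since every $g_s$ is already continuous on all of $\bar{\mathcal{X}}$, this gives $g_0\in C^k(\bar{\mathcal{X}},\mathbb{R})$ and $\norm{\Psi_j-g_0}_{C^k(\bar{\mathcal{X}})}\to 0$. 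This part is routine real analysis; the only mild subtlety is that differentiation is performed in the interior $\mathcal{X}$ and the limiting derivatives are only afterwards extended continuously to the boundary.

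For density, fix $\Psi\in C^k(\bar{\mathcal{X}},\mathbb{R})$ and $\varepsilon>0$. Its restriction to the open set $\mathcal{X}$ lies in $C^k(\mathcal{X},\mathbb{R})$ with $k\ge n+1$, so Theorem~\ref{th:morseperturbation} applies and yields a Lebesgue-null set $N\subset\mathbb{R}^n$ such that for every $a\in\mathbb{R}^n\setminus N$ the perturbation $\Psi_a(x)=\Psi(x)+\sum_{j=1}^n a_j x_j$ is a Morse function on $\mathcal{X}$; since $\Psi_a$ differs from $\Psi$ only by a polynomial, $\Psi_a$ and all its derivatives up to order $k$ still extend continuously to $\bar{\mathcal{X}}$, so $\Psi_a\in M$. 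It remains to make $\Psi_a$ close to $\Psi$ in the $C^k$-norm. The difference $\ell_a(x)=\sum_{j=1}^n a_j x_j$ is linear, so $\partial^s\ell_a\equiv 0$ for $|s|\ge 2$, $\partial_j\ell_a\equiv a_j$, and $\norm{\ell_a}_\infty\le R\sum_{j=1}^n|a_j|$ with $R\coloneqq\sup_{x\in\bar{\mathcal{X}}}\max_i|x_i|<\infty$ (finite by compactness of $\bar{\mathcal{X}}$); hence $\norm{\Psi_a-\Psi}_{C^k(\bar{\mathcal{X}})}=\norm{\ell_a}_{C^k(\bar{\mathcal{X}})}\le (R+1)\sum_{j=1}^n|a_j|$.

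Finally, a set of Lebesgue measure zero contains no open ball, so the ball $\big\{a\in\mathbb{R}^n:\sum_{j=1}^n|a_j|<\varepsilon/(R+1)\big\}$ meets $\mathbb{R}^n\setminus N$; choosing such an $a$ gives $\Psi_a\in M$ with $\norm{\Psi_a-\Psi}_{C^k(\bar{\mathcal{X}})}<\varepsilon$, which proves density. I do not expect a genuine obstacle in this corollary: the things to keep straight are that $\bar{\mathcal{X}}$ is compact so that all suprema (and thus the norm) are finite, that Theorem~\ref{th:morseperturbation} is invoked on the open set $\mathcal{X}$ rather than on $\bar{\mathcal{X}}$, and the elementary but essential fact that the "almost every $a$" conclusion upgrades to "arbitrarily small $a$ works" precisely because a Lebesgue-null set has empty interior.
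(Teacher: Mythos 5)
Your proposal is correct and follows essentially the same route as the paper: density is obtained from Theorem~\ref{th:morseperturbation} combined with the $C^k$-norm estimate for the linear perturbation $\sum_j a_j x_j$ (finite because $\bar{\mathcal{X}}$ is compact) and the observation that the exceptional Lebesgue-null set cannot exhaust a ball of admissible parameters $a$. The only difference is that you prove completeness directly via Cauchy sequences and uniform limits of derivatives, whereas the paper simply cites a standard reference for the Banach space property.
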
 
	
	\begin{proof}
		The vector space $C^k(\bar{\mathcal{X}},\mathbb{R})$ endowed with the $C^k$-norm $\norm{\cdot}_{C^k(\bar{\mathcal{X}})} $ is a Banach space~\cite{alt12}. As $\bar{\mathcal{X}}$ is compact, it holds
		\begin{equation*}
			\sup_{x \in \bar{\mathcal{X}}} \norm{x}_{\infty} \eqqcolon K < \infty,
		\end{equation*}
		where $\norm{x}_{\infty} \coloneqq \max\{x_1,\ldots,x_n\}$ is the supremums norm of a vector $x \in \mathbb{R}^n$. The subset \mbox{$M \subset C^k(\bar{\mathcal{X}},\mathbb{R})$} is dense for $k \geq n+1$, since Theorem~\ref{th:morseperturbation} implies that for every $\Psi \in C^k(\bar{\mathcal{X}},\mathbb{R})$ in every $\varepsilon$-neighborhood of $\Psi$ a Morse function $\Psi_a$ exists. As $\Psi_a$ is for every $a\in \mathbb{R}^n$, except for possibly a zero set in $\mathbb{R}^n$, a Morse function, there exists $a \in \mathbb{R}^n$ with $\norm{a}_\infty \leq \delta \coloneqq \varepsilon/(n(K+1))$, such that the function $\Psi_a$ lies in a $\varepsilon$-neighborhood of $\Psi$:
		\begin{equation*}
			\norm{\Psi_a - \Psi}_{C^k(\bar{\mathcal{X}})} = \Biggl\lVert \sum_{j=1}^n a_j x_j \Biggl\rVert_{C^k(\bar{\mathcal{X}})} = \Biggl\lVert\sum_{j=1}^n a_j x_j\Biggl\rVert_{\infty} + \sum_{j=1}^n \norm{ a_j}_{\infty} \leq \delta n K + \delta n = \varepsilon. \qedhere
		\end{equation*}
	\end{proof}
	
	\subsection{Implications on Neural ODEs}
	\label{sec:nonembeddable}
	
	In this section, we use the properties of Morse functions introduced in Section \ref{sec:morsefunctions} to prove several theorems about the non-embeddability of function classes in neural ODEs. We assume the following on the map $\Phi: \mathcal{X} \rightarrow \mathbb{R}^{n_\textup{out}}$ with $\mathcal{X}\subset \mathbb{R}^{n_\textup{in}}$:
	
	\begin{assumpA} \label{assA:morse}
		The map $\Phi \in C^0(\mathcal{X},\mathbb{R}^{n_\textup{out}})$ with $\mathcal{X} \subset \mathbb{R}^{n_\textup{in}}$ open, has at least one component map $\Phi_i \in C^0(\mathcal{X},\mathbb{R})$, $i \in \{1,\ldots,n_\textup{out}\}$, which is a topological Morse function as characterized in Definition~\ref{def:morse_topological}. Furthermore, the component map $\Phi_i$ has at least one topologically critical point.
	\end{assumpA}
	
	In the following, we begin with Theorem~\ref{th:neg_1} for neural ODEs with a linear layer, as introduced in Section~\ref{sec:node_lin}. The result also holds for basic neural ODEs (c.f.\ Section~\ref{sec:node}) by choosing the linear layer to be the identity. Afterwards we continue with Theorem~\ref{th:neg_2} for augmented neural ODEs (c.f.\ Section~\ref{sec:node_aug}). In both cases we use the local symmetry properties of Morse functions to show with the Borsuk-Ulam Theorem that an embedding is not possible, if the solution curves of the underlying initial value problem are unique.
	
	\begin{theorem} \label{th:neg_1}
		Under Assumptions~\ref{assA:ode_existence},~\ref{assA:ode_unique} and~\ref{assA:morse}, the map $\Phi$ cannot be embedded in a neural ODE with a linear layer (c.f.\ Section~\ref{sec:node_lin}).
	\end{theorem}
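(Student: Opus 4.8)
The plan is to argue by contradiction, exploiting that the time-$T$ map of the inner basic neural ODE is injective while a topologically critical point forces a local antipodal coincidence once one recalls that the affine output layer only reads off a single scalar combination of the coordinates. First I would assume an embedding exists, i.e.\ $\Phi(x) = A\,h_x(T) + a$ for all $x\in\mathcal{X}$, where $h_x(T)$ is the time-$T$ map of a basic neural ODE in dimension $n\coloneqq n_{\textup{in}}$. By Theorem~\ref{th:continuousdependence} together with Assumption~\ref{assA:ode_unique}, the map $H\colon x\mapsto h_x(T)$ is a homeomorphism onto its image, in particular injective. Then I would pick a component $\Phi_i$ that is a topological Morse function with a topologically critical point $p$ of some index $k$ with $1\le k\le n$, and let $\mu\colon\mathcal{U}\to\mu(\mathcal{U})\subset\mathcal{X}$ be its canonical homeomorphism from Definition~\ref{def:topologicallycritical}, so that
\begin{equation*}
  \Phi_i(\mu(u)) = \Phi_i(p) - \sum_{j=1}^k u_j^2 + \sum_{j=k+1}^n u_j^2 \eqqcolon \Phi_i(p) + Q(u), \qquad u\in\mathcal{U}.
\end{equation*}
Writing $A_i$ for the $i$-th row of $A$ and $F\coloneqq H\circ\mu$ (a continuous injection defined near $0$), the embedding relation near $p$ reads $\langle A_i, F(u)\rangle = c + Q(u)$ with $c\coloneqq\Phi_i(p)-a_i$; if $A_i=0$ this already contradicts $Q\not\equiv 0$, so $A_i\neq 0$. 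Note this also covers the affine layer $a\neq 0$, and taking $A$ to be the identity recovers the basic architecture $\textup{NODE}_{(1)}$.

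The decisive point I would exploit is that the Morse normal form $Q$ is an \emph{even} function, $Q(u)=Q(-u)$, so the component of $F(u)$ along $A_i$ is the same at $u$ and at $-u$. Choosing $\varepsilon>0$ small enough that the sphere $S^{n-1}_\varepsilon=\{u:\norm{u}_2=\varepsilon\}$ lies in $\mathcal{U}$, I would (for $n\ge 2$) apply the Borsuk--Ulam Theorem~\ref{th:borsukulam} to the continuous map $S^{n-1}_\varepsilon\to A_i^\perp\cong\mathbb{R}^{n-1}$ sending $u$ to the component of $F(u)$ orthogonal to $A_i$: it yields $u^\ast$ with these orthogonal components equal at $u^\ast$ and $-u^\ast$, and since the $A_i$-components agree automatically by evenness of $Q$, we obtain $F(u^\ast)=F(-u^\ast)$ with $u^\ast\neq-u^\ast$, contradicting injectivity of $F$. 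The degenerate case $n_{\textup{in}}=1$ I would dispatch separately: there necessarily $k=1$, so $\Phi_i$ has a strict interior local maximum at $p$, whereas $x\mapsto A_i h_x(T)+a_i$ is constant (excluded by $Q\not\equiv 0$) or strictly monotone in $x$ by Proposition~\ref{prop:monotonicallyincreasing}, a contradiction.

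I expect the only genuine obstacle to be bookkeeping rather than anything deep: one must check that the canonical chart $\mu$ really admits a full Euclidean sphere centred at the critical point inside its domain, and that Borsuk--Ulam is invoked in the correct dimension $m=n_{\textup{in}}-1\ge 1$ — which is precisely why the one-dimensional case has to be peeled off by the monotonicity argument. No invariance-of-domain or openness statement about $F(\mathcal{U})$ is needed, since only injectivity of $F$ and the scalar identity $\langle A_i,F(u)\rangle=c+Q(u)$ enter the contradiction; the conceptual heart of the argument is entirely the pairing of the evenness of the Morse normal form $Q$ with the Borsuk--Ulam Theorem.
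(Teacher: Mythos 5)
Your proposal is correct, and its skeleton coincides with the paper's: assume an embedding $\Phi(x)=A\,h_x(T)+a$, use Assumptions~\ref{assA:ode_existence},~\ref{assA:ode_unique} together with Theorem~\ref{th:continuousdependence} to get a continuous injective time-$T$ map, pass to the canonical chart of the topologically critical point so that $\Phi_i\circ\mu_1$ becomes the even quadratic normal form, and then play Borsuk--Ulam on a small sphere in chart coordinates against injectivity. The genuine difference is the $(n-1)$-dimensional quantity you feed into Borsuk--Ulam. The paper takes $\tilde H(u)=[H(\mu_1(\mu_2(u)))]_{1,\ldots,n-1}$, i.e.\ simply drops the last coordinate, and then argues that equality of the affine combinations $\sum_j A_{ij}[H(\cdot)]_j+a_i$ at $\pm\tilde u$ forces the dropped $n$-th coordinates to agree as well; as written, that final recovery step tacitly uses that the coefficient $A_{in}$ of the dropped coordinate is nonzero (otherwise one must drop a coordinate $j$ with $A_{ij}\neq 0$, which exists only after ruling out $A_i=0$). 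You instead project $F=H\circ\mu_1$ onto $A_i^{\perp}\cong\mathbb{R}^{n-1}$ and recover the remaining $A_i$-direction directly from the scalar identity $\langle A_i,F(u)\rangle=c+Q(u)$ and the evenness of $Q$; this needs only $A_i\neq 0$, which you dispose of explicitly (if $A_i=0$ the identity would force $Q$ constant). So your decomposition buys robustness: it removes the implicit nonvanishing-coefficient assumption and makes transparent that the affine offset $a$ is harmless. You also peel off $n_{\textup{in}}=1$ separately via Proposition~\ref{prop:monotonicallyincreasing} (strict monotonicity of the scalar time-$T$ map versus non-injectivity of $\Phi_i$ near $p$), which is the honest thing to do since the Borsuk--Ulam Theorem~\ref{th:borsukulam} is stated for $m\geq 1$; the paper does not separate this case. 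All remaining ingredients (the sphere fitting inside the chart, rescaling to the unit sphere, continuity of the composed map) are handled the same way in both arguments.
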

	
	\begin{proof}
		As introduced in Section~\ref{sec:node_lin}, we denote by $h_x(T)$ the time-$T$ map of the neural ODE in dimension $\mathbb{R}^n$ with $n = n_\textup{in}$. The neural ODE is followed by a linear layer \SV{$L: \mathbb{R}^n \rightarrow \mathbb{R}^{n_\textup{out}}$}, resulting in the neural ODE architecture \SV{$\textup{NODE}_{(2)}(x) =L(h_x(T)) =  A \cdot h_x(T)+a$}. Suppose that we can embed the map \mbox{$\Phi \in C^0(\mathcal{X},\mathbb{R}^{n_\textup{out}})$}, $\mathcal{X} \subset \mathbb{R}^{n_\textup{in}}$ in the neural ODE architecture $\textup{NODE}_{(2)}:\mathcal{X} \rightarrow \mathbb{R}^{n_\textup{out}} $, i.e.,  $\textup{NODE}_{(2)}(x) = \Phi(x)$ for all $x \in \mathcal{X}$.
		
		By Assumptions~\ref{assA:ode_existence},~\ref{assA:ode_unique}, the solution $h_x: [0,T] \rightarrow \mathbb{R}^n$ of the initial value problem appearing in the neural ODE architecture $\textup{NODE}_{(2)}$ is unique and exists for $t \in [0,T]$. Consequently the solution curves do not cross and the time-$T$ map $H: \mathcal{X} \rightarrow \mathbb{R}^n$, $H(x) \coloneqq h_x(T)$ is injective. 
		
		The map $\Phi \in C^0(\mathcal{X},\mathbb{R}^{n_\textup{out}})$ has by Assumption~\ref{assA:morse} a component map $\Phi_i \in C^0(\mathcal{X},\mathbb{R})$, which is a topological Morse function with topologically critical point $p \in \mathcal{X}$. By Definitions~\ref{def:topologicallycritical} and~\ref{def:morse_topological}, there exists a neighborhood $\mathcal{U}$ of $0 \in \mathbb{R}^{n}$ and a homeomorphism $\mu_1: \mathcal{U} \rightarrow \mu_1(\mathcal{U})$ with $\mu_1(0) = p$, such that
		\begin{equation*}
			\Phi_i(\mu_1(u_1,\ldots,u_n)) = \Phi_i(p) - \sum_{j = 1}^k u_j^2 + \sum_{j = k+1}^n u_j^2
		\end{equation*}
		for $(u_1,\ldots,u_n) \in \mathcal{U}$ and some index $k \in \{1,\ldots,n\}$. As $\mathcal{U}$ is a neighborhood of the origin, there exists $\varepsilon >0$, such that $B_\varepsilon^{n} \coloneqq \{u \in \mathbb{R}^{n}: \norm{u}_2 < \varepsilon\} \subset \mathcal{U}$. For $u \in B_\varepsilon^n$ it holds $\Phi_i(\mu_1(u)) = \Phi_i(\mu_1(-u))$.
		
		Theorem~\ref{th:continuousdependence} implies that the unique solution of the neural ODE depends continuously on the initial condition $x \in \mathcal{X}$, such that the time-$T$ map $H: \mathcal{X} \mapsto \mathbb{R}^n$ is continuous in $x$. For $\delta$ with $0 < \delta < \varepsilon$, the sphere $S_\delta^{n-1} \coloneqq \{u \in \mathbb{R}^n: \norm{u}_2 = \delta\}$ is contained in the ball $B_\varepsilon^n$. Define a second homeomorphism $\mu_2:S_1^{n-1} \rightarrow S_\delta^{n-1}$, $u \mapsto \delta u$ with continuous inverse $\mu_2^{-1} :S_\delta^{n-1} \rightarrow S_1^{n-1}$, $u \mapsto \delta^{-1} u$. The map 
		\begin{equation*}
			\tilde{H}: S_1^{n-1} \rightarrow \mathbb{R}^{n-1}, \qquad \tilde{H}(u) \coloneqq [H(\mu_1(\mu_2(u)))]_{1,\ldots,n-1}
		\end{equation*}
		transfers an input $u \in S_1^{n-1}$ to an input $\mu_1(\mu_2 (u)) \in \mathcal{X}$ of the neural ODE. The output of the map~$\tilde{H}$ is then the time-$T$ map $H$ restricted to the first $n-1$ components. As $H$ is continuous and  $\mu_1$ and~$\mu_2$ are homeomorphisms, the map $\tilde{H}$ is continuous in $u$. By the Borsuk-Ulam Theorem~\ref{th:borsukulam}, a point $\tilde{u} \in S_1^{n-1}$ exists, such that $\tilde{H}(\tilde{u}) = \tilde{H}(-\tilde{u})$. 
		
		As we suppose that the map $\Phi$ is embedded in $\textup{NODE}_{(2)}$, the component map $\Phi_i$ is embedded in the $i$-th component of the neural ODE architecture, given by
		\SV{\begin{equation*}
				\Phi_i(x) = [\textup{NODE}_{(2)}(x)]_i = [A \cdot H(x) + a]_i = \sum_{j = 1}^n A_{ij} \cdot [H(x)]_j + a , \qquad x \in \mathcal{X}.
		\end{equation*}}
		Applying the two homeomorphisms $\mu_1$ and $\mu_2$ and inserting the point $\tilde{u} \in S_1^{n-1}$ with the property $\tilde{H}(\tilde{u}) = \tilde{H}(-\tilde{u})$ leads to the condition
		\SV{\begin{equation*}
				\sum_{j = 1}^n A_{ij} \cdot [H(\mu_1(\mu_2(\tilde{u})))]_j + a = \Phi_i(\mu_1(\mu_2(\tilde{u}))) = \Phi_i(\mu_1(\mu_2(-\tilde{u}))) = 	\sum_{j = 1}^n A_{ij} \cdot [H(\mu_1(\mu_2(-\tilde{u})))]_j + a,
		\end{equation*}}
		since $\mu_2(\tilde{u}) = -\mu_2(-\tilde{u}) \in B_\varepsilon^n$ and for $u \in B_\varepsilon^n$ it holds $\Phi_i(\mu_1(u)) =\Phi_i(\mu_1(-u))$. By definition it holds $\tilde{H}(\tilde{u}) = [H(\mu_1(\mu_2(\tilde{u})))]_{1,\ldots,n-1} $, such that the equality above implies that also the last component agrees: $[H(\mu_1( \mu_2(\tilde{u})))]_{n} = [H(\mu_1(\mu_2(-\tilde{u})))]_{n}$. Consequently the time-$T$ map $H(x)$ is not injective as $H(\mu_1( \mu_2(\tilde{u}))) = H(\mu_1(\mu_2(-\tilde{u})))$. This is a contradiction to Assumption~\ref{assA:ode_unique}. Hence, the map $\Phi$ cannot be embedded in a neural ODE with a linear layer as defined in Section~\ref{sec:node_lin}.
	\end{proof}
	
	\begin{theorem} \label{th:neg_2}
		Under Assumptions~\ref{assA:ode_existence},~\ref{assA:ode_unique} and~\ref{assA:morse}, the map $\Phi$ cannot be embedded in an augmented neural ODE (c.f.\ Section~\ref{sec:node_aug}).
	\end{theorem}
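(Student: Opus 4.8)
The plan is to follow the blueprint of the proof of Theorem~\ref{th:neg_1} almost verbatim, with the finite-time-$T$ invariance of the subspace $\mathbb{R}^{n}\times\{0\}^{m-n}$ that is built into the architecture $\textup{NODE}_{(3)}$ playing the role that the linear layer played there. First I would reduce the statement to showing that $\Phi$ cannot be injective. Indeed, suppose $\Phi$ is embedded in an augmented neural ODE of the form~\eqref{eq:node_aug} in dimension $\mathbb{R}^m$, $m>n\coloneqq n_\textup{in}=n_\textup{out}$. Under Assumptions~\ref{assA:ode_existence} and~\ref{assA:ode_unique} the $m$-dimensional flow exists on $[0,T]$ and its solution curves are unique, so by Theorem~\ref{th:continuousdependence} its time-$T$ map is a homeomorphism onto its image; in particular $H\colon\mathcal{X}\to\mathbb{R}^m$, $H(x)\coloneqq h_{(x,0)^\top}(T)$, is injective. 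Since the architecture demands $h_{(x,0)^\top}(T)\in\mathbb{R}^{n}\times\{0\}^{m-n}$ and the embedding gives $[H(x)]_{1,\ldots,n}=\textup{NODE}_{(3)}(x)=\Phi(x)$, we have $H(x)=(\Phi(x),0^{m-n})^\top$; as the last $m-n$ coordinates of $H$ are constant, injectivity of $H$ forces injectivity of $\Phi$.

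It remains to show that Assumption~\ref{assA:morse} makes $\Phi$ non-injective, which is the geometric heart of the argument and parallels the corresponding part of the proof of Theorem~\ref{th:neg_1}. Let $\Phi_i$ be the component that is a topological Morse function and let $p\in\mathcal{X}$ be one of its topologically critical points, of index $k\in\{1,\ldots,n\}$. By Definitions~\ref{def:topologicallycritical} and~\ref{def:morse_topological} there are a neighborhood $\mathcal{U}$ of $0\in\mathbb{R}^n$ and a homeomorphism $\mu_1\colon\mathcal{U}\to\mu_1(\mathcal{U})$ with $\mu_1(0)=p$ and
\begin{equation*}
	\Phi_i(\mu_1(u_1,\ldots,u_n)) = \Phi_i(p) - \sum_{j=1}^k u_j^2 + \sum_{j=k+1}^n u_j^2 ,\qquad (u_1,\ldots,u_n)\in\mathcal{U}.
\end{equation*}
This quadratic form is even, so on any small sphere $S_\delta^{n-1}\subset\mathcal{U}$ the $i$-th component of $\Phi\circ\mu_1$ takes equal values at antipodal points. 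For $n=1$ this already contradicts injectivity of $\Phi$, since $\mu_1(\delta)\neq\mu_1(-\delta)$. For $n\ge 2$ I would apply the Borsuk-Ulam Theorem~\ref{th:borsukulam} to the continuous map $S_1^{n-1}\to\mathbb{R}^{n-1}$ given by the remaining $n-1$ components of $u\mapsto\Phi(\mu_1(\delta u))$, obtaining a point $\tilde u$ at which those $n-1$ components agree with their values at $-\tilde u$; combined with the $i$-th component this yields $\Phi(\mu_1(\delta\tilde u))=\Phi(\mu_1(-\delta\tilde u))$ with $\mu_1(\delta\tilde u)\neq\mu_1(-\delta\tilde u)$ (since $\mu_1$ is a homeomorphism and $\delta>0$). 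Hence $\Phi$ is not injective, contradicting the reduction, and no embedding in $\textup{NODE}_{(3)}$ exists.

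The argument requires essentially no computation, and the only substantive difference from Theorem~\ref{th:neg_1} is the reduction step: one must genuinely use the subspace-invariance of $\textup{NODE}_{(3)}$ — the vanishing of the last $m-n$ coordinates of the time-$T$ map — to push injectivity of the $m$-dimensional flow down to injectivity of $\Phi$. Without this property a non-injective $\Phi$ could coexist with an injective $\mathbb{R}^m$-valued time-$T$ map and the contradiction would evaporate; this, rather than any technical estimate, is the point to get right.
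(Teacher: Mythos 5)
Your proposal is correct and takes essentially the same approach as the paper: injectivity of the time-$T$ map from Assumptions~\ref{assA:ode_existence},~\ref{assA:ode_unique}, the topological Morse normal form around the critical point, and the Borsuk--Ulam Theorem applied to the $n-1$ components away from the Morse component. The only difference is organizational: you first use the invariance of $\mathbb{R}^n\times\{0\}^{m-n}$ to push injectivity of the $m$-dimensional time-$T$ map down to injectivity of $\Phi$ and then contradict that, whereas the paper applies Borsuk--Ulam to the (under the embedding hypothesis, identical) components of the time-$T$ map itself and contradicts its injectivity directly.
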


	\begin{proof}
		As defined in Section~\ref{sec:node_aug}, the time-$T$ map of the augmented neural ODE in dimension $\mathbb{R}^m$ with $m > n \coloneqq  n_\textup{in} = n_\textup{out}$ is denoted by $h_{(x,0)^\top}(T)$. Suppose that we can embed the map $\Phi \in C^0(\mathcal{X},\mathbb{R}^{n})$, $\mathcal{X} \subset \mathbb{R}^n$ in the neural ODE architecture $\textup{NODE}_{(3)}:\mathcal{X} \rightarrow \mathbb{R}^{n} $, $\textup{NODE}_{(3)}(x) \coloneqq [h_{(x,0)^\top}(T)]_{1,\ldots,n}$ with $h_{(x,0)^\top}(T) \in \mathbb{R}^n \times \{0\}^{m-n}$, then $\textup{NODE}_{(3)}(x) = \Phi(x)$ for all $x \in \mathcal{X}$. 
		
		As by Assumptions~\ref{assA:ode_existence},~\ref{assA:ode_unique} the solution $h_{(x,0)^\top}: [0,T] \rightarrow \mathbb{R}^n \times \{0\}^{m-n}$ is unique and exists for $t \in [0,T]$, the solution curves do not cross and the time-$T$ map $H: \mathcal{X} \times \{0\}^{m-n} \rightarrow \mathbb{R}^n \times \{0\}^{m-n}$, $H((x,0)^\top) \coloneqq h_{(x,0)^\top}(T)$ is injective. 
		
		By Assumption~\ref{assA:morse} the map $\Phi \in C^0(\mathcal{X},\mathbb{R}^{n})$ has a component map $\Phi_i \in C^0(\mathcal{X},\mathbb{R})$, which is a topological Morse function with topologically critical point $p \in \mathcal{X}$. Definitions~\ref{def:topologicallycritical} and~\ref{def:morse_topological} imply that there exists a neighborhood $\mathcal{U}$ of $0 \in \mathbb{R}^{n}$ and a homeomorphism $\mu_1: \mathcal{U} \rightarrow \mu_1(\mathcal{U})$ with $\mu_1(0) = p$, such that for all $(u_1,\ldots,u_n) \in \mathcal{U}$ it holds
		\begin{equation*}
			\Phi_i(\mu_1(u_1,\ldots,u_n)) = \Phi_i(p) - \sum_{j = 1}^k u_j^2 + \sum_{j = k+1}^n u_j^2
		\end{equation*}
		with some index $k \in \{1,\ldots,n\}$. As $\mathcal{U}$ is a neighborhood of the origin, there exists $\varepsilon >0$, such that $B_\varepsilon^{n} \subset \mathcal{U}$. For $u \in B_\varepsilon^n$ it holds $\Phi_i(\mu_1(u)) = \Phi_i(\mu_1(-u))$.
		
		For $\delta$ with $0 < \delta < \varepsilon$ it holds $S_\delta^{n-1} \subset B_\varepsilon^n$. We now define a second homeomorphism $\mu_2:S_1^{n-1} \rightarrow S_\delta^{n-1}$, $u \mapsto \delta u$ with continuous inverse $\mu_2^{-1} :S_\delta^{n-1} \rightarrow S_1^{n-1}$, $u \mapsto \delta^{-1} u$. To transfer initial conditions in $\mathcal{X}$ to initial conditions in the augmented space $\mathcal{X} \times \{0\}^{m-n}$, a third homeomorphism $\mu_3: \mathbb{R}^n \rightarrow \mathbb{R}^n \times \{0\}^{m-n}$, $x \mapsto (x,0)^\top$ with continuous inverse $\mu_3^{-1}: \mathbb{R}^n \times \{0\}^{m-n} \rightarrow \mathbb{R}^n$, $(x,0)^\top \mapsto x$ is defined. By Theorem~\ref{th:continuousdependence} the solution of the neural ODE depends continuously on the initial condition $\bar{x} \in \mathbb{R}^m$, such that the time-$T$ map $H: \mathbb{R}^m \rightarrow \mathbb{R}^m$ is continuous in $\bar{x}$. Consequently, also the time-$T$ map $H: \mathcal{X} \times \{0\}^{m-n} \rightarrow \mathbb{R}^n \times \{0\}^{m-n}$ with restricted initial conditions $(x,0)^\top \in \mathcal{X} \times \{0\}^{m-n}$ is continuous in $x \in \mathcal{X}$. Consider now the map 
		\begin{equation*}
			\tilde{H}: S_1^{n-1} \rightarrow \mathbb{R}^{n-1}, \qquad \tilde{H}(u) \coloneqq [\mu_3^{-1}(H(\mu_3(\mu_1(\mu_2(u)))))]_{1,\ldots,i-1,i+1,\ldots,n},
		\end{equation*}
		which transfers an input $u \in S_1^{n-1}$ to an input $ \mu_3(\mu_1(\mu_2 (u))) \in \mathcal{X} \times \{0\}^{m-n}$ of the neural ODE. The time-$T$ map $H(\mu_3(\mu_1(\mu_2(u)))) \in \mathbb{R}^n \times \{0\}^{m-n}$ is then restricted to its first $n$ components by $\mu_3^{-1}$ and afterwards the $i$-th component is removed. As all occurring maps are continuous, the map $\tilde{H}$ is continuous in $u$. By the Borsuk-Ulam Theorem~\ref{th:borsukulam}, a point $\tilde{u} \in S_1^{n-1}$ exists, such that $\tilde{H}(\tilde{u}) = \tilde{H}(-\tilde{u})$. 
		
		As we suppose that the map $\Phi$ is embedded in $\textup{NODE}_{(3)}$, the component map $\Phi_i$ is embedded in the $i$-th component of the neural ODE architecture, given by
		\begin{equation*}
			\Phi_i(x) = [\textup{NODE}_{(3)}(x)]_i = [h_{(x,0)^\top}(T)]_i = [H((x,0)^\top)]_i = [\mu_3^{-1}(H(\mu_3(x)))]_i, \qquad x \in \mathcal{X}. 
		\end{equation*}
		Inserting the point $\mu_1(\mu_2(\tilde{u})) \in \mathcal{X}$ with $\tilde{u} \in S_1^{n-1}$ into the map $\Phi$ leads to 
		\begin{equation*}
			\Phi(\mu_1(\mu_2(\tilde{u}))) = [\mu_3^{-1}(H(\mu_3(\mu_1(\mu_2(\tilde{u})))))]_{i} = [\mu_3^{-1}(H(\mu_3(\mu_1(\mu_2(-\tilde{u})))))]_{i} =  \Phi(\mu_1(\mu_2(-\tilde{u})))
		\end{equation*}
		as $\mu_2(\tilde{u}) = -\mu_2(-\tilde{u}) \in B_\varepsilon^n$ and for $u \in B_\varepsilon^n$ it holds $\Phi_i(\mu_1(u)) =\Phi_i(\mu_1(-u))$. Together with the property $\tilde{H}(\tilde{u}) = \tilde{H}(-\tilde{u})$ it follows that
		\begin{equation*}
			\mu_3^{-1}(H(\mu_3(\mu_1(\mu_2(\tilde{u}))))) = \mu_3^{-1}(H(\mu_3(\mu_1(\mu_2(-\tilde{u}))))).
		\end{equation*}
		By definition of $\mu_3^{-1}$, it follows that also $H(\mu_3(\mu_1(\mu_2(\tilde{u})))) = H(\mu_3(\mu_1(\mu_2(-\tilde{u}))))$, such that time-$T$ map $H$ is not injective, which is a contradiction to Assumption~\ref{assA:ode_unique}. Hence, the map $\Phi$ cannot be embedded in an augmented neural ODE as defined in Section~\ref{sec:node_aug}.
	\end{proof}
	
	As a special case of Theorem~\ref{th:neg_1} or~\ref{th:neg_2}, we obtain the following Corollary.
	
	\begin{corollary} \label{cor:neg_3}
		Under Assumptions~\ref{assA:ode_existence},~\ref{assA:ode_unique} and~\ref{assA:morse}, the map $\Phi$ cannot be embedded in a basic neural ODE (c.f.\ Section~\ref{sec:node}).
	\end{corollary}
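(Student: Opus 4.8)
The plan is to derive the statement as an immediate specialization of Theorem~\ref{th:neg_1} (equivalently, of Theorem~\ref{th:neg_2}). The key observation is that a basic neural ODE is nothing but a neural ODE with a linear layer in which the linear layer is taken to be the identity: choosing $A = I^{n\times n}$ and $a = 0$ in the definition of $\textup{NODE}_{(2)}$ gives $\textup{NODE}_{(2)}(x) = A\cdot h_x(T) + a = h_x(T) = \textup{NODE}_{(1)}(x)$. Since for $\textup{NODE}_{(1)}$ one has $n \coloneqq n_\textup{in} = n_\textup{out}$, a map $\Phi\colon\mathcal{X}\to\mathbb{R}^n$, $\mathcal{X}\subset\mathbb{R}^n$, that satisfies Assumption~\ref{assA:morse} in the setting of $\textup{NODE}_{(1)}$ also satisfies Assumption~\ref{assA:morse} in the setting of $\textup{NODE}_{(2)}$, with the same domain $\mathcal{X}$ and the same distinguished component $\Phi_i$.

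First I would note that if $\Phi$ were embedded in $\textup{NODE}_{(1)}$ via some vector field $f$ satisfying Assumptions~\ref{assA:ode_existence} and~\ref{assA:ode_unique}, then the same $f$ together with the identity linear layer gives an embedding of $\Phi$ in $\textup{NODE}_{(2)}$ whose underlying initial value problem still satisfies Assumptions~\ref{assA:ode_existence} and~\ref{assA:ode_unique}. Applying Theorem~\ref{th:neg_1} to this situation produces a contradiction, so no such $f$ can exist, which is precisely the claim.

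An equally short alternative would be to regard $\textup{NODE}_{(1)}$ as the non-augmented case $m = n$ of $\textup{NODE}_{(3)}$: here the subspace constraint $h_{(x,0)^\top}(T)\in\mathbb{R}^n\times\{0\}^{m-n}$ is vacuous (the block $\{0\}^{0}$ is empty) and $h_{(x,0)^\top}(T) = h_x(T)$, so an embedding in $\textup{NODE}_{(1)}$ is literally an embedding in $\textup{NODE}_{(3)}$ and Theorem~\ref{th:neg_2} applies verbatim.

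There is no genuine obstacle to overcome here: the only points requiring (minimal) care are checking that the identity map is an admissible linear layer in the sense of Section~\ref{sec:node_lin} (it is, with $A = I$, $a = 0$) and that the dimension conventions of the two architectures coincide on $\textup{NODE}_{(1)}$ (they do, since $n_\textup{in} = n_\textup{out} = n$), after which the corollary follows directly.
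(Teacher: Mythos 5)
Your argument is exactly the paper's: the corollary is obtained as the special case of Theorem~\ref{th:neg_1} in which the linear layer is the identity ($A = I$, $a = 0$), so the proposal is correct and follows the same route. Your alternative via $\textup{NODE}_{(3)}$ is a harmless aside, though strictly speaking the paper defines the augmented architecture with $m > n$, so the case $m = n$ is not literally covered by its conventions; the identity-layer reduction is the clean way to state it.
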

	
	By the density of Morse functions described by Corollary~\ref{cor:morsefunctionsdense}, it follows that a large class of functions cannot be embedded in the neural ODE architectures described in Sections~\ref{sec:node_lin} and~\ref{sec:node_aug}. 
	
	\begin{corollary}
		Let $\mathcal{X} \subset \mathbb{R}^{n_\textup{in}}$ be open and bounded. Under Assumptions~\ref{assA:ode_existence},~\ref{assA:ode_unique} and~\ref{assA:morse}, a dense subset of the Banach space
		\begin{equation*}
			\left(C^k(\bar{\mathcal{X}},\mathbb{R}^{n_\textup{out}}),\norm{\cdot}_{C^k(\bar{\mathcal{X}})} \right) \quad \text{with} \quad k \geq n_\textup{in}+1
		\end{equation*}
		can neither be embedded in a neural ODE with a linear layer as defined in Section~\ref{sec:node_lin} nor in an augmented neural ODE as defined in Section~\ref{sec:node_aug}.
	\end{corollary}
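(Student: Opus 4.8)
The plan is to exhibit a dense subset $D$ of the Banach space $\bigl(C^k(\bar{\mathcal{X}},\mathbb{R}^{n_\textup{out}}),\norm{\cdot}_{C^k(\bar{\mathcal{X}})}\bigr)$, every element of which satisfies Assumption~\ref{assA:morse}; the conclusion then follows at once from Theorems~\ref{th:neg_1} and~\ref{th:neg_2}, since a map satisfying Assumption~\ref{assA:morse} can be embedded neither in a neural ODE with a linear layer nor in an augmented neural ODE. For $D$ I would take the collection of all $\Phi\in C^k(\bar{\mathcal{X}},\mathbb{R}^{n_\textup{out}})$ having at least one component map that is a Morse function (hence, by Definition~\ref{def:morse_topological}, a topological Morse function) and possesses at least one critical point, which, being non-degenerate, is in particular a topologically critical point in the sense of Definition~\ref{def:topologicallycritical}. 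Thus every $\Phi\in D$ meets Assumption~\ref{assA:morse}, and it only remains to prove that $D$ is dense.

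First I would reduce the density statement to a single-component perturbation: given $\Phi=(\Phi_1,\dots,\Phi_{n_\textup{out}})$ and $\varepsilon>0$, it suffices to replace $\Phi_1$ by some $\tilde\Phi_1$ with $\norm{\tilde\Phi_1-\Phi_1}_{C^k(\bar{\mathcal{X}})}<\varepsilon$ that is a Morse function with a critical point in $\mathcal{X}$, leaving the remaining components untouched, since then $\tilde\Phi\coloneqq(\tilde\Phi_1,\Phi_2,\dots,\Phi_{n_\textup{out}})\in D$ and $\norm{\tilde\Phi-\Phi}_{C^k(\bar{\mathcal{X}})}$ is controlled by $\norm{\tilde\Phi_1-\Phi_1}_{C^k(\bar{\mathcal{X}})}<\varepsilon$. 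To construct $\tilde\Phi_1$ I would proceed in two stages. In the first stage I fix an interior point $x_0\in\mathcal{X}$ and a radius $\rho>0$ with $\overline{B_\rho(x_0)}\subset\mathcal{X}$, and add to $\Phi_1$ a $C^\infty$ function supported in $B_\rho(x_0)$, chosen so that the sum acquires a strict local extremum (hence a critical point) inside $B_\rho(x_0)$ while its $C^k(\bar{\mathcal{X}})$-norm is at most $\varepsilon/2$. In the second stage I apply Theorem~\ref{th:morseperturbation} to the resulting function, perturbing it by a linear term $\sum_j a_jx_j$ with $\norm{a}_\infty$ so small (exactly as in the estimate at the end of the proof of Corollary~\ref{cor:morsefunctionsdense}) that the total $C^k$-change from $\Phi_1$ stays below $\varepsilon$; for almost every such $a$ the perturbed function is a Morse function. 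Since the critical point built in the first stage is a strict local extremum, the gradient of the stage-one function points (uniformly) inward along a small sphere around it, so its Brouwer degree is nonzero and, being stable under a $C^0$-small perturbation of the gradient, a critical point of the stage-two function survives in a small neighbourhood of $x_0$ once $\norm{a}$ is small enough. This yields the required $\tilde\Phi_1$, establishes density of $D$, and completes the argument.

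I expect the genuinely delicate point to be the first stage: forcing a critical point of $\Phi_1$ inside $\mathcal{X}$ by a $C^k$-small, compactly supported perturbation, bearing in mind that a Morse function on a bounded domain need not have \emph{any} critical point (a coordinate projection being the obvious example, which is moreover stable under small $C^1$-perturbations). Making this step work requires localising the perturbation in a region where $\nabla\Phi_1$ is suitably controlled and balancing the amplitude of the bump against its support radius, so that the created extremum is not destroyed by the estimate needed to keep the $C^k$-norm small. The remaining ingredients — the persistence-of-critical-point degree argument in the second stage and the bookkeeping of $C^k$-norms — are routine and parallel the proof of Corollary~\ref{cor:morsefunctionsdense}.
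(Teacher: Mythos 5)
There is a genuine gap, and it is exactly at the point you yourself flag as delicate: stage one cannot be carried out in general, and as a consequence the set $D$ you propose is not dense. To create a critical point of $\Phi_1+b$ you need $\nabla b(x)=-\nabla\Phi_1(x)$ at some $x\in\mathcal{X}$, so any perturbation $b$ that produces a critical point must satisfy $\norm{b}_{C^k(\bar{\mathcal{X}})}\geq c\,\inf_{x\in\mathcal{X}}\vert\nabla\Phi_1(x)\vert$ for a dimensional constant $c>0$; this cannot be made smaller than $\varepsilon/2$ when the gradient is uniformly large. Concretely, take $\Phi_i(x)=Mx_1$ for every component $i$ and $M$ huge: every map within $C^k$-distance $\varepsilon<cM$ of this $\Phi$ has non-vanishing gradient in every component, hence no component has any critical point, hence no such map lies in $D$. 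Your suggested remedy, ``localising the perturbation in a region where $\nabla\Phi_1$ is suitably controlled,'' fails because no such region need exist. The same example shows that no choice of component rescues the reduction step, since all components can be equal. (Your stage two — the degree/persistence argument combined with Theorem~\ref{th:morseperturbation} — is fine in itself, but it never gets off the ground.)

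The paper avoids this by proving a weaker density statement: by Corollary~\ref{cor:morsefunctionsdense} the maps having at least one component that is a Morse function (with no requirement that a critical point exists) form a dense subset, and the non-embeddability conclusion is then drawn from Theorems~\ref{th:neg_1} and~\ref{th:neg_2} \emph{under} Assumption~\ref{assA:morse}, which is listed among the hypotheses of the corollary and is what supplies the topologically critical point. In other words, the critical-point requirement is absorbed into the standing assumption rather than built into the dense set. You correctly noticed that a Morse function on a bounded domain may have no critical point at all and tried to strengthen the density claim so that every element of the dense set satisfies Assumption~\ref{assA:morse} outright; that stronger claim is false, so the proposal as written does not prove the corollary. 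A correct write-up along the paper's lines needs only Corollary~\ref{cor:morsefunctionsdense}, the observation that adding one Morse component keeps the set dense in the vector-valued space, and the citation of Theorems~\ref{th:neg_1} and~\ref{th:neg_2} under the stated assumptions.
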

	
	\begin{proof}
		By Corollary~\ref{cor:morsefunctionsdense}, the set of Morse functions 	
		\begin{equation*}
			M \coloneqq \left\{\Psi \in C^k(\bar{\mathcal{X}},\mathbb{R}): \Psi \big\vert_{\mathcal{X}} \text{ is a Morse function}\right\}
		\end{equation*}
		is for $k \geq n+1$ dense in the Banach space $\left(C^k(\bar{\mathcal{X}},\mathbb{R}), \norm{\cdot}_{C^k(\bar{\mathcal{X}})} \right)$. Consequently, the set 
		\begin{equation*}
			\left\{\Phi \in C^k(\bar{\mathcal{X}},\mathbb{R}^{n_\textup{out}}): \exists \; i \in [n]: \text{ such that } \Phi_i \big\vert_{\mathcal{X}} \text{ is a Morse function}\right\}
		\end{equation*}
		is for $k \geq n+1$ dense in the Banach space $\left(C^k(\bar{\mathcal{X}},\mathbb{R}^{n_\textup{out}}), \norm{\cdot}_{C^k(\bar{\mathcal{X}})} \right)$. The statement now follows from Theorems~\ref{th:neg_1} and~\ref{th:neg_2}.
	\end{proof}
	
	\section{Suspension Flows and Differential Geometry}
	\label{sec:suspensionflows}
	
	In Theorem~\ref{th:suspension}, the suspension flow on the $n+1$-dimensional mapping torus $\mathcal{M}$ was introduced. Via the suspension flow it is possible to embed every diffeomorphism $\Phi\in C^1( \mathcal{X},\mathcal{X})$, $\mathcal{X} \subset \mathbb{R}^n$ in an augmented neural ODE in dimension $n+1$. As it is often not practical in machine learning applications to work on a general topological manifold $\mathcal{M}$, it is possible to embed $\mathcal{M}$ as a submanifold in $\mathbb{R}^{2n+2}$, which we prove in Theorem \ref{th:twononlinearlayers} for smooth diffeomorphisms. The resulting neural ODE architecture is then a neural ODE with two additional, possibly nonlinear layers. The idea of embedding the suspension flow in an Euclidean space was mentioned but not proven by Zhang et al.\ in~\cite{zhang20}. To rigorously prove this statement for smooth diffeomorphisms, we need results from differential geometry introduced in the following section.
	
	\subsection{Whitney Embedding and Quotient Manifolds}
	
	The embedding of the mapping torus $\mathcal{M}$ in the Euclidean space $\mathbb{R}^{2n+2}$ is based on the Whitney Embedding Theorem.
	
	\begin{theorem}[Whitney Embedding Theorem~\cite{whitney44}] \label{th:whitney}
		Let $\mathcal{N}$ be a $p$-dimensional smooth manifold with $p \geq 1$. Then there exists a smooth embedding of $\mathcal{N}$ into $\mathbb{R}^{2p}$.
	\end{theorem}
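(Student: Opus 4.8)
The plan is to follow the classical two-stage strategy: first produce a smooth embedding of $\mathcal{N}$ into some high-dimensional Euclidean space, and then cut the target dimension down to $2p$ by a sequence of generic projections, with a final, more delicate step to remove the last self-intersections. For the first stage, suppose initially that $\mathcal{N}$ is compact. Choose a finite atlas $(\varphi_i,U_i)_{i=1}^r$ with $\varphi_i:U_i\to\mathbb{R}^p$, a subordinate partition of unity $(\rho_i)_{i=1}^r$, and set
$F=(\rho_1\varphi_1,\ldots,\rho_r\varphi_r,\rho_1,\ldots,\rho_r):\mathcal{N}\to\mathbb{R}^{r(p+1)}$.
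One checks directly that $F$ is a smooth immersion (the $\rho_i$-block detects where one is, the $\rho_i\varphi_i$-block sees the chart) and injective, hence an embedding by compactness. For non-compact $\mathcal{N}$ one replaces the finite atlas by a countable locally finite one together with an exhaustion by compact sets, and builds from it a \emph{proper} injective immersion into some $\mathbb{R}^N$; properness then upgrades it to an embedding onto a closed submanifold.

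Next I would iteratively decrease $N$. The key lemma is: if $N>2p+1$, then for almost every unit vector $v\in S^{N-1}$ the orthogonal projection $\pi_v:\mathbb{R}^N\to v^{\perp}\cong\mathbb{R}^{N-1}$ restricts to an injective immersion on the submanifold $F(\mathcal{N})$. This is a Sard-type argument: $\pi_v$ fails to be injective on $F(\mathcal{N})$ precisely when $v$ lies in the image of the secant map $(F(x)-F(y))/\lVert F(x)-F(y)\rVert$ defined on $(\mathcal{N}\times\mathcal{N})\setminus\Delta$, a manifold of dimension $2p$, and it fails to be an immersion precisely when $v$ lies in the image of the normalized differential on the unit tangent bundle, a manifold of dimension $2p-1$; since both source dimensions are strictly below $N$, the union of bad directions has measure zero in $S^{N-1}$. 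Composing finitely many such projections yields an embedding $\mathcal{N}\hookrightarrow\mathbb{R}^{2p+1}$ (keeping track of properness throughout in the non-compact case, which a routine additional argument preserves).

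The main obstacle, and the only genuinely hard step, is the last descent from $\mathbb{R}^{2p+1}$ to $\mathbb{R}^{2p}$. A generic projection now produces an immersion $g:\mathcal{N}\to\mathbb{R}^{2p}$ whose self-intersections are transverse double points, forming a discrete (in the compact case finite) set. To eliminate them one invokes the Whitney trick: after arranging the double points to occur in pairs of opposite local intersection sign, one joins each pair by an embedded Whitney disk whose interior is disjoint from $g(\mathcal{N})$ and uses it to define an isotopy of $g$ that cancels the pair. Embedding the two-dimensional Whitney disk disjointly from the $p$-dimensional image $g(\mathcal{N})$ inside the $2p$-dimensional ambient space requires $p\ge3$ (so that $2+p<2p$ leaves positive codimension for general position), together with an orientation/framing bookkeeping argument to make the cancellation signs work out; the low-dimensional cases $p=1$ (embedding $1$-manifolds in $\mathbb{R}^2$) and $p=2$ (embedding surfaces in $\mathbb{R}^4$) must be handled by separate elementary constructions.

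For the purposes of the present paper one in fact only needs the instance $\mathcal{N}=\mathcal{M}$, the $(n+1)$-dimensional mapping torus, embedded into $\mathbb{R}^{2n+2}=\mathbb{R}^{2(n+1)}$, so it suffices to cite Theorem~\ref{th:whitney} verbatim; the outline above is the argument one would fill in for a self-contained account, and the hard Whitney-trick step is exactly the part that a reference such as~\cite{whitney44} (or a standard differential topology text) allows us to bypass.
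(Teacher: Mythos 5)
The paper does not prove this statement at all---it is imported verbatim from Whitney's 1944 paper via the citation---so there is no internal proof to compare against, and your closing observation that citing the reference suffices is exactly how the paper treats it. Your outline (partition-of-unity embedding into a high-dimensional Euclidean space, Sard-type generic projections down to $\mathbb{R}^{2p+1}$, then a generic projection to $\mathbb{R}^{2p}$ with transverse double points cancelled by the Whitney trick for $p\ge 3$ and separate elementary treatment of $p=1,2$, plus the flagged properness bookkeeping in the non-compact case) is the correct standard strategy, with the genuinely hard step left, appropriately, at citation level.
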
 
	
	To apply Whitney's Embedding Theorem, we need to prove that the mapping torus $\mathcal{M}$ is a smooth manifold if $\Phi$ is a smooth diffeomorphism. To that purpose we use the following Quotient Manifold Theorem.
	
	\begin{theorem}[Quotient Manifold Theorem~\cite{lee13,lee18}] \label{th:quotientmanifold}
		Let $G$ be a Lie group acting smoothly, freely and properly on a smooth manifold $M$. Then the quotient space $M/G$ is a topological manifold with dimension $\dim{M}-\dim{G}$ and it has a smooth structure, such that the quotient map $\pi: M \rightarrow M/G$ is a smooth submersion. 
	\end{theorem}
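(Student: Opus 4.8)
This is a classical result in the theory of Lie group actions, and the standard proof I would follow proceeds by constructing a \emph{local slice} through each point. First I would verify that $M/G$ is a topological manifold. Hausdorffness comes from properness: the map $G\times M\to M\times M$, $(g,p)\mapsto(g\cdot p,p)$, is proper and its image is the orbit equivalence relation $R\subset M\times M$; a proper map into a locally compact Hausdorff space is closed, so $R$ is closed, and since $\pi\colon M\to M/G$ is an open map this forces $M/G$ to be Hausdorff. Second countability follows because $\pi$ is open and $M$ is second countable. Local Euclideanness, together with the dimension count $\dim(M/G)=\dim M-\dim G$, will fall out of the slice construction.

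The technical heart is the slice theorem. Because the action is free and proper, each orbit map $g\mapsto g\cdot p$ is an injective immersion of constant rank that is also proper, hence a smooth embedding; thus every orbit is an embedded submanifold of dimension $k:=\dim G$. Fixing $p$, setting $n:=\dim M$, and choosing an embedded $(n-k)$-submanifold $S\ni p$ transverse to the orbit ($T_pM=T_p(G\cdot p)\oplus T_pS$), the map $\Psi\colon G\times S\to M$, $(g,s)\mapsto g\cdot s$, has invertible differential at $(e,p)$; after shrinking $S$ it is a local diffeomorphism onto an open set. The delicate point is to shrink $S$ once more so that $\Psi$ becomes globally injective: if not, one obtains sequences $(g_i,s_i),(g_i',s_i')$ with $g_i\cdot s_i=g_i'\cdot s_i'$, $s_i,s_i'\to p$, $g_i\neq g_i'$, and properness forces $g_i^{-1}g_i'$ to subconverge to some $g$ with $g\cdot p=p$, whence $g=e$ by freeness, contradicting local injectivity near $(e,p)$. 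So $\Psi\colon G\times S\to G\cdot S$ is a $G$-equivariant diffeomorphism onto an open saturated neighborhood of the orbit, and $S$ meets every orbit it touches exactly once.

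With slices in hand the smooth atlas is built as follows. For each slice $S_\alpha$ the restriction $\pi|_{S_\alpha}\colon S_\alpha\to\pi(S_\alpha)$ is a homeomorphism onto an open subset of $M/G$; composing its inverse with a chart of the $(n-k)$-manifold $S_\alpha$ gives a chart of $M/G$, which settles local Euclideanness and the dimension formula. For two overlapping slices the transition map sends $s\in S_\alpha$ to the unique point of $S_\beta$ lying in the $G$-orbit of $s$; writing this as $\Psi_\beta^{-1}$ followed by projection onto the $S_\beta$ factor shows it is smooth, so the charts form a smooth atlas. In the adapted coordinates $G\times S_\alpha\to S_\alpha$ the projection $\pi$ is literally the coordinate projection, hence a smooth submersion. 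Uniqueness of the smooth structure is immediate: any smooth structure making $\pi$ a submersion admits smooth local sections of $\pi$, and comparing two such structures through these sections shows the identity map between them is a diffeomorphism.

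I expect the injectivity step in the slice construction to be the main obstacle. It is precisely where properness of the action must be invoked in an essential way --- in contrast to the case of a compact group, where one could average a Riemannian metric and argue via the exponential map --- to upgrade the merely local diffeomorphism $\Psi$ to one that is injective on a full product neighborhood $G\times S$. Everything after that is careful but routine bookkeeping with charts and $G$-equivariance.
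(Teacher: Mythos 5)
The paper does not prove this statement: it is quoted as a classical result and attributed to the cited references (Lee's texts), whose proof is precisely the slice/adapted-chart construction you outline --- Hausdorffness from closedness of the orbit relation via properness, second countability from openness of $\pi$, orbits as embedded submanifolds, local diffeomorphism $G\times S\to M$ upgraded to an equivariant diffeomorphism onto a saturated neighborhood using properness and freeness, and charts on $M/G$ from the slices in which $\pi$ becomes a coordinate projection. Your sketch is correct in its essentials (including the delicate injectivity step) and takes the same route as the standard proof the paper relies on, so there is nothing to flag.
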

	
	In order to use Theorem~\ref{th:quotientmanifold}, we need to introduce covering maps and the automorphism group.	Proposition~\ref{prop:liegroup} then shows that the automorphism group is a Lie group, which can be used for the Quotient Manifold Theorem \ref{th:quotientmanifold}. 
	
	\begin{definition}[Covering Map~\cite{lee13}] \label{def:coveringmap}
		Let $E,M$ be connected smooth manifolds. A smooth covering map is a smooth and surjective map $\pi: E \rightarrow M$, such that every point of $M$ has a neighborhood $\mathcal{U}$, such that each component of $\pi^{-1}(\mathcal{U})$ is mapped diffeomorphically onto $\mathcal{U}$ by $\pi$.
	\end{definition}
	
	\begin{definition}[Automorphism Group~\cite{lee13}] \label{def:automorphismgroup}
		Let $E,M$ be connected smooth manifolds and \mbox{$\pi: E \rightarrow M$} be a smooth covering map. An automorphism of $\pi$ is a homeomorphism $\varphi: E \rightarrow E$ with the property
		\begin{equation*}
			\pi \circ \varphi = \pi.
		\end{equation*}
		The set of all automorphisms of $\pi$ is called the automorphism group $\text{Aut}_\pi(E)$.
	\end{definition}
	
	\begin{proposition}[\cite{lee13,lee18}] \label{prop:liegroup}
		Let $E,M$ be smooth manifolds and $\pi: E \rightarrow M$ be a smooth covering map. Equipped with the discrete topology, the automorphism group $\text{Aut}_\pi(E)$ is a zero-dimensional discrete Lie group acting smoothly, freely and properly on $E$.
	\end{proposition}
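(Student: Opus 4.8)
The plan is to verify in turn that $\mathrm{Aut}_\pi(E)$ is a group, that with the discrete topology it is a zero-dimensional Lie group, and that the resulting action on $E$ is smooth, free, and proper, leaning throughout on the fact that a smooth covering map is a local diffeomorphism.

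First I would dispatch the structural facts. Since $\mathrm{Aut}_\pi(E)$ contains $\mathrm{id}_E$ and is closed under composition and inversion, it is a group. Each $\varphi\in\mathrm{Aut}_\pi(E)$ is a diffeomorphism: given $p\in E$ one chooses an evenly covered neighbourhood $\mathcal U$ of $\pi(p)$ and lets $V,V'$ be the \emph{sheets} (connected components) of $\pi^{-1}(\mathcal U)$ containing $p$ and $\varphi(p)$ respectively, using $\pi(\varphi(p))=\pi(p)$; on the open set $W\coloneqq V\cap\varphi^{-1}(V')$ the identity $\pi\circ\varphi=\pi$ together with $\pi|_{V'}$ being a diffeomorphism gives $\varphi|_W=(\pi|_{V'})^{-1}\circ(\pi|_{V})|_W$, so $\varphi$ is smooth near $p$, and the same argument applied to $\varphi^{-1}$ finishes the point. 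Fixing $q\in M$ and $p\in\pi^{-1}(q)$, the evaluation $\varphi\mapsto\varphi(p)$ sends $\mathrm{Aut}_\pi(E)$ into the fibre $\pi^{-1}(q)$, a closed discrete---hence countable---subset of $E$; thus, with the discrete topology, $\mathrm{Aut}_\pi(E)$ is a countable zero-dimensional smooth manifold, and since every map to or from a zero-dimensional manifold is smooth, its group operations are smooth, i.e.\ it is a zero-dimensional Lie group. The action map $\mathrm{Aut}_\pi(E)\times E\to E$, $(\varphi,x)\mapsto\varphi(x)$, restricts on each slice $\{\varphi\}\times E$ to the diffeomorphism $\varphi$, so it is smooth because the first factor is discrete.

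Next, freeness. Suppose $\varphi(p)=p$ and put $F\coloneqq\{x\in E:\varphi(x)=x\}$. Then $F$ is nonempty and, since $E$ is Hausdorff, closed. It is open as well: for $x\in F$ pick an evenly covered $\mathcal U\ni\pi(x)$ with sheet $V\ni x$, choose a neighbourhood $W\subseteq V$ of $x$ with $\varphi(W)\subseteq V$, and observe that for $y\in W$ the relation $\pi(\varphi(y))=\pi(y)$ combined with the injectivity of $\pi|_V$ forces $\varphi(y)=y$. Connectedness of $E$ then gives $F=E$, i.e.\ $\varphi=\mathrm{id}_E$; hence the action is free. (En passant this shows an automorphism is pinned down by its value at one point, consistent with the injectivity of the evaluation map used above.)

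Finally, properness, which I expect to be the main obstacle. I would invoke the standard criterion (e.g.\ \cite{lee13}) that an action of a discrete group on a manifold $E$ is proper iff every pair $x,y\in E$ has neighbourhoods $U_x\ni x$, $U_y\ni y$ for which $\{\varphi:\varphi(U_x)\cap U_y\neq\varnothing\}$ is finite. If $\pi(x)\neq\pi(y)$, separate them by $U_x,U_y$ with $\pi(U_x)\cap\pi(U_y)=\varnothing$; then $\pi(\varphi(U_x))=\pi(U_x)$ shows $\varphi(U_x)\cap U_y=\varnothing$ for every $\varphi$. If $\pi(x)=\pi(y)=q$, take an evenly covered $\mathcal U\ni q$ and let $U_x,U_y$ be the sheets containing $x,y$; whenever $\varphi(U_x)\cap U_y\neq\varnothing$, the connected set $\varphi(U_x)\subseteq\pi^{-1}(\mathcal U)$ meets the sheet $U_y$ and therefore lies in it, whence $\varphi|_{U_x}=(\pi|_{U_y})^{-1}\circ(\pi|_{U_x})$ is uniquely determined, so at most one $\varphi$ qualifies. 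In either case the set is finite, so the action is proper. The only genuinely delicate points are the ``connected image lies in a single sheet'' argument (which is exactly where local connectedness of $E$ enters) and citing the correct version of the properness criterion; the rest is bookkeeping. With all this in hand, $\mathrm{Aut}_\pi(E)$ is eligible to play the role of $G$ in the Quotient Manifold Theorem~\ref{th:quotientmanifold}.
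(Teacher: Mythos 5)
The paper does not prove this proposition at all---it is quoted as a known result from the cited references---so there is no in-paper argument to compare against; judged on its own, your proof is correct and is essentially the standard argument from those references. All the key steps are in place: automorphisms are locally of the form $(\pi|_{V'})^{-1}\circ\pi$ and hence diffeomorphisms; freeness follows from the open-and-closed fixed-point-set argument, which legitimately uses connectedness of $E$ (available here because Definition~\ref{def:coveringmap} builds connectedness into the notion of covering map, even though the proposition's wording omits it); and properness is verified through the correct pairwise-neighbourhood criterion for discrete group actions, with the two cases $\pi(x)\neq\pi(y)$ and $\pi(x)=\pi(y)$ handled properly, the ``connected image lies in a single sheet'' step being exactly the right rigidity input. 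Two small presentational points: the countability of $\mathrm{Aut}_\pi(E)$ (needed so that, with the discrete topology, it is a second-countable zero-dimensional manifold and hence a Lie group) rests on injectivity of the evaluation map $\varphi\mapsto\varphi(p)$, which you only justify later, in the parenthetical to the freeness argument---this should be stated up front or forward-referenced; and the phrase ``every map to or from a zero-dimensional manifold is smooth'' is too strong (maps \emph{into} a discrete space need not even be continuous)---what you actually use, and what is true, is that every map \emph{from} a discrete manifold, and from a product of a discrete manifold with a manifold restricted slice-wise to smooth maps, is smooth. Neither issue affects the validity of the proof.
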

	
	In the following section, the notations introduced are combined to prove the embedding of the suspension manifold $\mathcal{M}$ in the Euclidean space $\mathbb{R}^{2n+2}$. Afterwards we show that the neural ODE on the embedded manifold can be written as a basic neural ODE with two additional layers.
	
	\subsection{Implications on Neural ODEs}
	
	The first step to prove the embedding of the suspension manifold $\mathcal{M}$ in the Euclidean space $\mathbb{R}^{2n+2}$ is to show that $\mathcal{M}$ is a smooth manifold if $\Phi \in C^\infty(\mathcal{X},\mathcal{X})$ is a smooth diffeomorphism. The embedding of $\mathcal{M}$ in $\mathbb{R}^{2n+2}$ then follows from Whitney's Embedding Theorem~\ref{th:whitney}. 
	
	\begin{proposition}
		Let $\Phi \in C^\infty(\mathcal{X},\mathcal{X})$, $\mathcal{X} \subset \mathbb{R}^n$ be a diffeomorphism. Then the mapping torus $\mathcal{M}$ is a smooth manifold.
	\end{proposition}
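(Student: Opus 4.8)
The plan is to present $\mathcal{M}$ as the quotient of a smooth manifold by a smooth, free, and proper action of a discrete Lie group, and then invoke the Quotient Manifold Theorem~\ref{th:quotientmanifold}. Concretely, set $E \coloneqq \mathcal{X} \times \mathbb{R}$, which (for $\mathcal{X}\subset\mathbb{R}^n$ open) is an open subset of $\mathbb{R}^{n+1}$, hence a smooth $(n+1)$-manifold, and let $G \coloneqq \mathbb{Z}$, viewed as a zero-dimensional discrete Lie group, act on $E$ by
\begin{equation*}
  k \cdot (x,s) \coloneqq \left(\Phi^{-k}(x),\, s + kT\right), \qquad k \in \mathbb{Z},\ (x,s)\in E,
\end{equation*}
where $\Phi^{-k}$ is the $k$-fold composition of $\Phi^{-1}$ and $\Phi^0 = \mathrm{id}$. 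The first step is to check that $E/G$ is homeomorphic to the mapping torus $\mathcal{M}$: the slab $\mathcal{X} \times [0,T]$ surjects onto $E/G$, and the only nontrivial identifications it carries are $(x,T) = 1\cdot(x,T) = (\Phi^{-1}(x),0)$, i.e.\ $(\Phi(y),0) \sim (y,T)$ after setting $y \coloneqq \Phi^{-1}(x)$, which is precisely the defining relation of $\mathcal{M}$; comparing the two quotient topologies then gives the homeomorphism.

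Next I would verify the three hypotheses of Theorem~\ref{th:quotientmanifold}. Smoothness of the action: since $G$ is discrete it suffices that each $k\cdot(-)$ be a diffeomorphism of $E$, which holds because $\Phi$ is a $C^\infty$-diffeomorphism (so $\Phi^{-k}$ is smooth with smooth inverse $\Phi^{k}$) and $s\mapsto s+kT$ is an affine diffeomorphism of $\mathbb{R}$. Freeness: if $k\cdot(x,s) = (x,s)$ then $s+kT = s$, so $k=0$ since $T>0$. Properness: for a discrete group this is equivalent to the condition that for every compact $K\subseteq E$ the set $\{k\in\mathbb{Z} : (k\cdot K)\cap K \neq \varnothing\}$ is finite; choosing $R>0$ with $K \subseteq \mathcal{X}\times[-R,R]$, the $\mathbb{R}$-component of $k\cdot K$ lies in $[kT-R,\,kT+R]$, which meets $[-R,R]$ only when $|k| \leq 2R/T$, so only finitely many $k$ contribute.

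With these three properties established, the Quotient Manifold Theorem~\ref{th:quotientmanifold} yields that $E/G$ carries a smooth structure making $\pi\colon E \to E/G$ a smooth submersion, of dimension $\dim E - \dim G = (n+1)-0 = n+1$; since $\pi$ is also a local diffeomorphism with discrete fibres it is a smooth covering map, with $G$ as its automorphism group, in line with Proposition~\ref{prop:liegroup}. Transporting this smooth structure along the homeomorphism $E/G \cong \mathcal{M}$ shows that $\mathcal{M}$ is a smooth $(n+1)$-manifold. The only genuinely delicate points are bookkeeping ones: fixing the $\Phi$-versus-$\Phi^{-1}$ convention so that $E/G$ is exactly the mapping torus as defined, and, when $\mathcal{X}\subsetneq\mathbb{R}^n$, noting that one needs $\mathcal{X}$ open (or itself a manifold) for $E=\mathcal{X}\times\mathbb{R}$ to be a manifold; the verification of properness, while the hypothesis requiring most care, is immediate from the unbounded shift in the $\mathbb{R}$-direction.
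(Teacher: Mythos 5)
Your proposal is correct and rests on the same core tool as the paper: both arguments realize the mapping torus as the quotient of a product manifold by the $\mathbb{Z}$-action generated by $(x,t)\mapsto(\Phi(x),t-T)$ (your $k\cdot(x,s)=(\Phi^{-k}(x),s+kT)$ is this same action up to reindexing) and then invoke the Quotient Manifold Theorem~\ref{th:quotientmanifold}. The difference is in how the hypotheses of that theorem are supplied. The paper constructs the covering map $\pi(x,t)=(\Phi^{n}(x),r)$, $t=nT+r$, computes its automorphism group, and gets smoothness, freeness and properness of the action wholesale from Proposition~\ref{prop:liegroup}; you instead verify the three hypotheses by hand, which makes the properness argument (finiteness of $\{k:(k\cdot K)\cap K\neq\varnothing\}$ via the unbounded shift in the $\mathbb{R}$-direction) explicit rather than delegated, and avoids the covering-space detour. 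You are also more careful about the domain: you work on $\mathcal{X}\times\mathbb{R}$ with $\mathcal{X}$ open, whereas the paper writes $\Phi^{n}(x)$ for $x\in\mathbb{R}^n$, implicitly requiring $\mathcal{X}=\mathbb{R}^n$ or an extension of $\Phi$. One bookkeeping slip to fix: with your stated action, $1\cdot(x,T)=(\Phi^{-1}(x),2T)$, not $(\Phi^{-1}(x),0)$; the identification you want comes from $k=-1$, namely $(-1)\cdot(x,T)=(\Phi(x),0)$, which yields the defining relation $(\Phi(x),0)\sim(x,T)$ of $\mathcal{M}$ directly, with no substitution needed. This does not affect the validity of the argument, since the orbits of the action are unchanged.
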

	
	\begin{proof}
		We define the smooth covering map $\pi$ mapping from the smooth manifold $\mathbb{R}^n \times \mathbb{R}$ onto the smooth manifold $\mathbb{R}^n  \times [0,T)$ as follows:
		\begin{equation*}
			\pi(x,t) = \left(\Phi^n(x),r\right), \qquad t = nT+r, \quad r \in [0,T), \quad n \in \mathbb{Z}.
		\end{equation*}
		Inserting the definition of $\pi$ into the constraint $\pi \circ \varphi = \pi$ of the automorphism group $\text{Aut}_\pi(\mathbb{R}^n \times \mathbb{R})$ leads to 
		\begin{equation*}
			\pi(\varphi(x,t)) = \left( \Phi^{n_1}(\varphi(x,t)_x), \varphi(x,t)_t \text{ mod } T \right) =  \left( \Phi^{n_2}(x), t \text{ mod } T \right) = \pi(x,t)
		\end{equation*}
		for $n_1, n_2 \in \mathbb{Z}$. Hereby $\varphi(x,t)_x$ denotes the $x$-components and $\varphi(x,t)_t$ the $t$-component of the \mbox{map $\varphi$}. The second component is taken modulo $T$ as $\pi$ is a smooth covering map onto $\mathbb{R}^n \times [0,T)$. The constraint above implies with $n \coloneqq n_2-n_1$, that
		\begin{equation*}
			\varphi(x,t) = \left(\Phi^n(x), t-nT\right).
		\end{equation*}
		Consequently the automorphism group of $\pi$ is given by 
		\begin{equation*}
			\text{Aut}_\pi(\mathbb{R}^n \times \mathbb{R}) = \left\{\varphi: \mathbb{R}^n \times \mathbb{R} \rightarrow \mathbb{R}^n \times \mathbb{R} \; \vert \; \varphi(x,t) = \left(\Phi^n(x), t-nT\right), \; n \in \mathbb{Z}\right\}.
		\end{equation*}
		By Proposition~\ref{prop:liegroup}, $\text{Aut}_\pi(\mathbb{R}^n \times \mathbb{R})$ is a zero-dimensional discrete Lie group acting smoothly, freely and properly on $\mathbb{R}^n \times \mathbb{R}$. The group $\text{Aut}_\pi(\mathbb{R}^n \times \mathbb{R})$ induces an equivalence relation $\sim$ on $\mathbb{R}^n \times \mathbb{R}$ by identifying $(x,t) \sim (\tilde{x}, \tilde{t})$ if there exists $\varphi \in \text{Aut}_\pi(\mathbb{R}^n \times \mathbb{R}) $, such that $\varphi(x,t) =(\tilde{x}, \tilde{t})$. The Quotient Manifold Theorem~\ref{th:quotientmanifold} now implies that 
		\begin{equation*}
			\frac{\mathbb{R}^n \times \mathbb{R}}{ \text{Aut}_\pi(\mathbb{R}^n \times \mathbb{R})}
		\end{equation*}
		is a smooth $(n+1)$-dimensional manifold. Written in terms of an equivalence relation, this smooth manifold has the representation
		\begin{equation*}
			\frac{\mathbb{R}^n \times \mathbb{R}}{\sim}, \qquad \text{with} \qquad 
			(x,t) \sim \left(\Phi^n(x),t-nT\right), \qquad n \in \mathbb{Z}.
		\end{equation*}
		This manifold is precisely the suspension manifold $\mathcal{M}$ by restricting the phase space and the equivalence relation to $\mathbb{R}^n \times [0,T]$, as $\mathcal{M}$ arises by gluing points of $\mathbb{R}^n \times [0,T]$ together by $(x,T) \sim (\Phi(x),0)$. Consequently the suspension manifold is a smooth $(n+1)$-dimensional manifold.
	\end{proof}
	
	For $\Phi$ being a smooth diffeomorphism on the $(n+1)$-dimensional suspension manifold $\mathcal{M}$, we can now apply Whitney's Embedding Theorem~\ref{th:whitney} to $\mathcal{M}$. The following theorem shows, how the embedding of the suspension manifold leads to the embedding of the map $\Phi$ in a neural ODE architecture with two additional layers.
	
	\begin{theorem} \label{th:twononlinearlayers}
		Let $\Phi \in C^\infty(\mathcal{X},\mathcal{X})$, $\mathcal{X} \subset \mathbb{R}^n$ be a diffeomorphism. Then $\Phi$ can be embedded in a neural ODE in dimension $2n+2$ with two additional (possibly nonlinear) layers.
	\end{theorem}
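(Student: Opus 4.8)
The plan is to combine the smoothness of the mapping torus just established with the Whitney Embedding Theorem~\ref{th:whitney} and the Suspension Flow Theorem~\ref{th:suspension}. Since $\mathcal{M}$ is a smooth $(n+1)$-dimensional manifold, Theorem~\ref{th:whitney} supplies a smooth embedding $\iota: \mathcal{M} \rightarrow \mathbb{R}^{2(n+1)} = \mathbb{R}^{2n+2}$ onto a smooth submanifold $\mathcal{M}' \coloneqq \iota(\mathcal{M})$. On $\mathcal{M}$ the suspension flow of Theorem~\ref{th:suspension} is generated by the smooth vector field which in the coordinates $(h,t)$ reads $W = (0,1)^\top$; it is complete, since from $[(x,0)^\top]$ the flow reaches $[(\Phi^k(x),r)^\top]$ at time $kT+r$ and this is defined for all times as $\Phi$ is a diffeomorphism of $\mathcal{X}$, and its time-$T$ map restricted to the invariant slice $\{t = 0\} \cong \mathcal{X}$ is $\Phi$. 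Pushing $W$ forward along the embedding $\iota$ produces a smooth vector field $V$ on $\mathcal{M}'$ whose flow is conjugate, via $\iota$, to the suspension flow; in particular the time-$T$ map of $V$ on $\mathcal{M}'$ is $\iota \circ \Phi_T \circ \iota^{-1}$, where $\Phi_T$ denotes the suspension time-$T$ map.

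The main work, and the step I expect to be the principal obstacle, is to turn $V$ into an admissible right-hand side of a neural ODE on all of $\mathbb{R}^{2n+2}$: I must extend $V$ to a vector field $f$ on $\mathbb{R}^{2n+2}$ so that $\mathcal{M}'$ stays invariant, the dynamics on $\mathcal{M}'$ is unchanged, and $f$ is regular enough for Assumptions~\ref{assA:ode_existence} and~\ref{assA:ode_unique}. I would do this via a tubular neighborhood $\mathcal{N}$ of $\mathcal{M}'$ with smooth retraction $\rho: \mathcal{N} \rightarrow \mathcal{M}'$: viewing $V$ as a smooth map $\mathcal{M}' \rightarrow \mathbb{R}^{2n+2}$, set $\hat{V} \coloneqq V \circ \rho$ on $\mathcal{N}$, which agrees with $V$, hence is tangent to $\mathcal{M}'$, along $\mathcal{M}'$; then pick a smooth bump function $\beta: \mathbb{R}^{2n+2} \rightarrow [0,1]$ that equals $1$ on a smaller neighborhood of $\mathcal{M}'$ and is supported in $\mathcal{N}$, and let $f \coloneqq \beta \hat{V}$ on $\mathcal{N}$ and $f \coloneqq 0$ off $\mathcal{N}$. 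Then $f$ is smooth on $\mathbb{R}^{2n+2}$, hence locally Lipschitz, so Assumptions~\ref{assA:ode_existence} and~\ref{assA:ode_unique} are satisfied; since $\beta \equiv 1$ and $f = V$ on $\mathcal{M}'$, the submanifold $\mathcal{M}'$ is invariant under $h' = f(h)$ and the induced flow there is precisely the conjugated, and therefore complete, suspension flow, whose time-$T$ map on $\mathcal{M}'$ is $\iota \circ \Phi_T \circ \iota^{-1}$.

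It then remains to read off the two additional layers. Let $j: \mathcal{X} \rightarrow \mathcal{M}$, $x \mapsto [(x,0)^\top]$ be the smooth inclusion of the base slice and put $L_1 \coloneqq \iota \circ j: \mathcal{X} \rightarrow \mathbb{R}^{2n+2}$, a smooth, generally nonlinear, layer. Because $\Phi_T([(x,0)^\top]) = [(\Phi(x),0)^\top]$, the basic neural ODE $h' = f(h)$ with $h(0) = L_1(x)$ has time-$T$ map $h_{L_1(x)}(T) = \iota(j(\Phi(x))) = L_1(\Phi(x))$. Since $L_1$ is itself a smooth embedding of the $n$-manifold $\mathcal{X}$ into $\mathbb{R}^{2n+2}$, it possesses a tubular neighborhood with smooth retraction $\rho_1$ onto $L_1(\mathcal{X})$; I would define $L_2 \coloneqq L_1^{-1} \circ \rho_1$ on that neighborhood and extend it to a smooth map $L_2: \mathbb{R}^{2n+2} \rightarrow \mathbb{R}^n$ by multiplying with a cut-off, again a possibly nonlinear layer. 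Then $\textup{NODE}_{(5)}(x) = L_2(h_{L_1(x)}(T)) = L_1^{-1}(L_1(\Phi(x))) = \Phi(x)$ for all $x \in \mathcal{X}$, which is the asserted embedding of $\Phi$ in a neural ODE in dimension $2n+2$ with the two layers $L_1$ and $L_2$. A routine point to check along the way is that the $t = 0$ slice injects into $\mathcal{M}$, so that $j$ and $L_1$ are injective; this holds because in the mapping torus only the slices $t = T$ and $t = 0$ are glued together.
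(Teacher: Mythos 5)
Your proposal is correct and follows essentially the same route as the paper: Whitney-embed the mapping torus $\mathcal{M}$ into $\mathbb{R}^{2n+2}$, push the suspension vector field forward, and take the two (nonlinear) layers to be the embedding of the base slice $\{t=0\}$ and an inverse of it. The only difference is added rigor rather than strategy: you extend the pushed-forward vector field to all of $\mathbb{R}^{2n+2}$ via a tubular neighborhood and a bump function so that Assumptions~\ref{assA:ode_existence} and~\ref{assA:ode_unique} hold for a globally defined smooth right-hand side, and you realize the output layer through a tubular-neighborhood retraction composed with $L_1^{-1}$, whereas the paper works directly with the vector field on $\mu(\mathcal{M})$ and appeals to a local inverse of $\mu$.
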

	
	\begin{proof}
		By Whitney's Embedding Theorem~\ref{th:whitney} a smooth embedding of $\mathcal{M}$ into $\mathbb{R}^{2n+2}$ exists. Hence there exists an injective map $\mu \in C^\infty(\mathcal{M}, \mathbb{R}^{2n+2})$, such that 
		$\mu(\mathcal{M}) \subset \mathbb{R}^{2n+2}$. The suspension flow $(x',t')^\top = (0^{(n)},1)^\top$ defines a smooth vector field on $\mathcal{M}$. As the embedding is smooth, the embedded vector field is smooth on $\mu(\mathcal{M}) \subset \mathbb{R}^{2n+2}$ and has the form 
		\begin{equation*}
			\mu' \begin{pmatrix}
				x \\ t
			\end{pmatrix} = J_\mu(x,t)\cdot \begin{pmatrix}
				x' \\ t'
			\end{pmatrix} = J_\mu(x,t) \cdot \begin{pmatrix}
				0^{(n)} \\ 1
			\end{pmatrix} =  \frac{\partial \mu(x,t)}{\partial t} \in \mathbb{R}^{2n+2}.
		\end{equation*} 
		The time-$T$ map on $\mathcal{M}$ is for an initial condition $(x_0,t_0)^\top \in \mathcal{M}$ the point 
		\begin{equation*}
			\begin{pmatrix}
				x_0 \\ t_0
			\end{pmatrix} + \int_0^T \begin{pmatrix}
				x' \\ t'
			\end{pmatrix} \; \dd s = \begin{pmatrix}
				x_0 \\ t_0
			\end{pmatrix} + \int_0^T \begin{pmatrix}
				0^{(n)} \\ 1
			\end{pmatrix}\; \dd s = \begin{pmatrix}
				x_0 \\ t_0
			\end{pmatrix}  + \begin{pmatrix}
				0^{(n)} \\ T
			\end{pmatrix} \equiv \begin{pmatrix}
				\Phi(x_0) \\ t_0
			\end{pmatrix} 
		\end{equation*}	
		and the time-$T$ map on $\mu(\mathcal{M})$ is for the initial condition $\mu((x_0,t_0)^\top) \in \mu(\mathcal{M})$ the point 
		\begin{equation*}
			\mu\begin{pmatrix}
				x_0 \\ t_0
			\end{pmatrix}+ \int_0^T \mu'\begin{pmatrix}
				x \\ t
			\end{pmatrix} \; \dd s = \mu\begin{pmatrix}
				x_0 \\ t_0
			\end{pmatrix} + \mu\begin{pmatrix}
				x(T) \\ t(T)
			\end{pmatrix} - \mu\begin{pmatrix}
				x_0 \\ t_0
			\end{pmatrix}\equiv \mu \begin{pmatrix}
				\Phi(x_0) \\ t_0
			\end{pmatrix} ,
		\end{equation*}
		such that the time-$T$ map on $\mathcal{M}$ is under $\mu$ the time-$T$ map on $\mu(\mathcal{M})$. The layer before the neural ODE is the (possibly nonlinear) map $\mu$ and the (possibly nonlinear) layer after the neural ODE is given by its local inverse $\mu^{-1}$. The inverse function theorem implies that locally always a inverse function of $\mu$ exists, as $\mu'(y) \neq 0$ for all $y \in \mathcal{M}$ by injectivity of $\mu$.
	\end{proof}
	
	The last theorem has shown, that the idea of the suspension flow can be transferred to an Euclidean space. The disadvantage is that $2n+2$ instead of $n+1$ dimensions are needed, and that the neural ODE architecture of Theorem \ref{th:twononlinearlayers} is more complicated than the augmented neural ODE of the suspension flow in Theorem \ref{th:suspension}.
	
	\section{Conclusion and Outlook}
	
	Neural ordinary differential equations are a class of neural networks that has gained particular interest in the last years. The advantages are that neural ODEs can either be trained with constant memory cost and that they can represent input-output relations or time series data. In this work we focused on input-output maps of different neural ODE architectures and their embedding capability. Even though in practice, universal approximation theorems are quite useful, the study of embeddings via a dynamical systems viewpoint has helped us to understand and compare the structure and capabilities of different neural ODE architectures.    
	
	\medskip
	
	In Section 2, we introduced five neural ODE architectures, illustrated their behavior in low dimensional examples, refined and generalized already existing results, and then stated several new structure theorems. In particular, we focused on three different fundamental questions: the performance in low dimensions, the existence of non-embeddable function classes and the universal embedding property. Hereby we assumed that the solution of the ODE contained in the neural ODE architecture exists on the time interval $[0,T]$ in order to have a well-defined time-$T$ map. Furthermore we assumed that the solution of the initial value problem is unique, implying that the time-$T$ map is injective and continuous.
	
	The easiest neural ODE architecture is a basic neural ODE, which maps an initial condition of an ODE to its time-$T$ map. In other contexts, this problem is also called the restricted embedding problem, discussed in Section~\ref{sec:restrictedembedding}. Via the Jabotinsky equations, we derived Julia's functional equation~\eqref{eq:julia}, which gives a possibility to determine a vector field $f$ for the neural ODE embedding a given map $\Phi$ if the pair $\Phi,f$ solves~\eqref{eq:julia}.
	
	We have seen, that basic neural ODEs have restricted embedding capability, in particular every map $\Phi$ embedded in a basic neural ODE has to be strictly monotonically increasing. To overcome this problem, we studied two advanced neural ODE architectures: neural ODEs followed by a linear layer and neural ODEs with augmented phase space. In both cases we showed via one-dimensional examples, that these architectures perform better than basic neural ODEs. Nevertheless, there exist functions that cannot be embedded in these two neural ODE architectures. We characterized the non-embeddable function classes via Morse functions, introduced in Section~\ref{sec:morsesection}. Additionally we showed, that Morse functions are dense in the Banach space defined in Corollary \ref{cor:morsefunctionsdense}, implying that neural ODE architectures with a linear layer or with augmented phase space are still far away from having a universal embedding property. But already the combination of both - augmented neural ODEs with a linear layer - have the property to embed any Lebesgue integrable function. 
	
	As a last neural ODE architecture we studied neural ODEs with two additional, possibly nonlinear layers. This architecture contains all already discussed neural ODE architectures as special cases. We were motivated to study this architecture as an embedding of the suspension manifold in an Euclidean space; see Section~\ref{sec:suspensionflows}. The suspension manifold allowed us to construct an augmented neural ODE with one additional dimension to embed any diffeomorphism, which is interesting from a theoretical point of view as it provides a very direct geometric explanation for neural network functionality. Both universal embedding theorems, Theorem~\ref{th:node_aug_lin} for any Lebesgue integrable function and Theorem~\ref{th:twononlinearlayers} for diffeomorphisms need the same order of dimensions ($2n$ respectively $2n+2$) to embed a given map $\Phi: \mathcal{X} \rightarrow \mathbb{R}^n$, $\mathcal{X}\subset \mathbb{R}^n$. 
	
	\medskip
	
	It is left for future work to use the established embedding theorems as a starting point for a perturbation analysis to derive approximation results of neural ODEs. Even though a large class of functions cannot be embedded in a certain neural ODE architecture, it is still possible that these functions can be approximated arbitrarily well. Nevertheless, the development of a more transparent context for the embedding capabilities of different neural ODE architecture explains, why certain architectures perform better than others.
	
	The results obtained in this work regarding the non-embeddability of certain function classes assumed the uniqueness of solution curves of the underlying initial value problem. Even though there exist typical activation functions, which are not differentiable everywhere (for example the ReLU function $f(x) = \max\{0,x\}$, $x \in \mathbb{R}$), differentiability in neural networks is often a desired property to be able to back-propagate through the network. Therefore, the uniqueness assumption of solution curves is reasonable when combining neural ODEs with a learning process.\\
	
	\textbf{Acknowledgments:} CK and SVK would like to thank the DFG for partial support via the SPP2298 ``Theoretical Foundations of Deep Learning". CK would like to thank the VolkswagenStiftung for support via a Lichtenberg Professorship. SVK would like to thank the Munich Data Science Institute (MDSI) for partial support via a Linde doctoral fellowship.
	
	%Even though it is for all neural networks desired to be sufficiently smooth in order to back-propagate through the network and learn the parameters, this is not the case for all typical activation functions. For example the ReLU activation function, given by $f(x) = \max\{0,x\}$ for $x \in \mathbb{R}$ is non-differentiable at $x = 0$, such that we cannot easily guarantee uniqueness of solution curves of the neural ODE $h' = f(h)$, $h(0) = x$. In practice, it is dealt with this problem by defining a plausible value of the derivative at the non-differentiable point of the function $f$. As the neural ODEs considered in this work do not use the typical activation functions as vector field $f$, the assumption on uniqueness  

	%\newpage
	
	\begin{appendices}
		\section{Foundations of ODE Theory}
		\label{app:odetheory}
		
		In the following, we collect some basis of ordinary differential equations for reference. We consider the ordinary differential equation
		\begin{equation} \label{eq:ode_theory} \tag{$\text{NODE}_\text{basic}$}
			\frac{\dd h}{\dd t} = f(h(t),t), \qquad h(0) = x,
		\end{equation}
		with vector field $f: \mathbb{R}^n \times \mathcal{I} \rightarrow \mathbb{R}^n$ and initial condition $h(0) = x \in \mathcal{X} \subset \mathbb{R}^n$. Hereby $\mathcal{I}$ denotes the maximal time interval of existence. The following theorem guarantees the existence of local solutions to~\eqref{eq:ode_theory} as long as the vector field $f$ is continuous.
		
		\begin{theorem}[Peano Existence Theorem~\cite{peano90}] \label{th:peano}
			Let the vector field $f(h(t),t)$ of the initial value problem~\eqref{eq:ode_theory} be continuous on $R \coloneqq  K^n_r(x) \times [0,t_0]$, where $[0,t_0] \subset \mathcal{I}$ and $K^n_r(x) \coloneqq $ $\{h \in \mathbb{R}^n: \norm{h-x}_2 \leq r\} \subset \mathcal{X}$. Furthermore let $M$ be an upper bound for $\vert f(h(t),t) \vert $ on $R$ and define $\alpha \coloneqq \min\{t_0,r/M\}$. Then there exists at least one solution to~\eqref{eq:ode_theory} for $t \in [0,\alpha]$.
		\end{theorem}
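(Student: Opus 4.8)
The plan is to recast the initial value problem~\eqref{eq:ode_theory} as the equivalent integral equation
\begin{equation*}
	h(t) = x + \int_0^t f(h(s),s)\, \dd s, \qquad t \in [0,\alpha],
\end{equation*}
and to obtain a solution as a uniform limit of explicit Euler polygons. First I would fix, for each $k \in \mathbb{N}$, the uniform partition $t_j^{(k)} = j\alpha/k$, $j = 0,\dots,k$, and define $h_k \colon [0,\alpha] \to \R^n$ recursively by $h_k(0) = x$ and, on the interval $[t_j^{(k)}, t_{j+1}^{(k)}]$, by $h_k(t) = h_k(t_j^{(k)}) + (t - t_j^{(k)})\, f\!\left(h_k(t_j^{(k)}), t_j^{(k)}\right)$. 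An induction over $j$, using $\norm{f} \le M$ on $R$ together with the choice $\alpha \le r/M$, shows that the graph of every $h_k$ stays inside $R$ (so the recursion is well defined) and that $\norm{h_k(t) - h_k(s)} \le M\,|t - s|$. Hence $\{h_k\}$ is uniformly bounded and uniformly Lipschitz, in particular equicontinuous.

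Next I would apply the Arzel\`a--Ascoli theorem to extract a subsequence $h_{k_j}$ converging uniformly on $[0,\alpha]$ to some continuous $h$, whose graph again lies in $R$. It then remains to pass to the limit in the integral representation of the Euler polygons. Writing $\sigma_k(s)$ for the left endpoint of the mesh interval containing $s$, one checks directly that
\begin{equation*}
	h_k(t) = x + \int_0^t f\!\left(h_k(\sigma_k(s)), \sigma_k(s)\right) \dd s,
\end{equation*}
with $|\sigma_k(s) - s| \le \alpha/k$ and $\norm{h_k(\sigma_k(s)) - h_k(s)} \le M\alpha/k$. Combining these bounds with the uniform convergence $h_{k_j} \to h$ and the uniform continuity of $f$ on the compact set $R$, the integrand converges uniformly on $[0,\alpha]$ to $f(h(s),s)$; letting $k = k_j \to \infty$ yields $h(t) = x + \int_0^t f(h(s),s)\, \dd s$. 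Since the right-hand side is continuously differentiable in $t$, the limit $h$ is $C^1$ and solves~\eqref{eq:ode_theory} on $[0,\alpha]$ (the case $M = 0$ being trivial, with $h \equiv x$).

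The main obstacle is precisely this last passage to the limit: one must simultaneously control the $\sigma_k$-defect (the deviation of the polygon from the integral of $f$ along its own trajectory) and the error from replacing $h_{k_j}$ by its limit $h$, and both must be made uniform in $s$. Both are governed by the compactness of $R$ — hence the uniform continuity of $f$ there — together with the uniform Lipschitz estimate from the first step; turning this into a single $\eps$--$\delta$ argument is the technical heart of the proof. An alternative route, which rests on the same analytic input, is to apply Schauder's fixed point theorem to the operator $T(h)(t) = x + \int_0^t f(h(s),s)\,\dd s$ on the closed, bounded, convex subset of $C([0,\alpha],\R^n)$ consisting of $M$-Lipschitz curves starting at $x$ with graph in $R$; here Arzel\`a--Ascoli is what supplies the compactness of $T$ needed to invoke Schauder.
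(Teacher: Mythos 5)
Your argument is correct: it is the standard proof of Peano's theorem via Euler polygons, the a priori bound $\norm{h_k(t)-x}\le Mt\le r$ keeping the graphs in $R$, Arzel\`a--Ascoli, and passage to the limit in the integral equation using uniform continuity of $f$ on the compact set $R$ (with the Schauder fixed-point route as an equivalent alternative). Note, however, that the paper does not prove this statement at all: Theorem~\ref{th:peano} is quoted as a classical result with a citation in Appendix~\ref{app:odetheory}, so there is no proof in the paper to compare against; your sketch supplies a correct and complete strategy for the omitted classical argument.
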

		
		Consequently, the existence of a solution to the initial value problem~\eqref{eq:ode_theory} in the time \mbox{interval $[0,T]$} can be guaranteed if for a given $f$ the radius $r$ can be chosen in such a way that $r/M \geq T$. By assuming additionally Lipschitz continuity for the vector field $f$, uniqueness of the solutions to~\eqref{eq:ode_theory} can be established.
		
		\begin{definition}[Lipschitz Continuity~\cite{lipschitz76}]
			A function $f: \mathbb{R}^n \rightarrow \mathbb{R}^n$ is called Lipschitz continuous on $\mathcal{U} \subset \mathbb{R}^n$, if there exists a Lipschitz constant $L>0$, such that for all $x_1,x_2 \in \mathcal{U}$ it holds
			\begin{equation*}
				\norm{f(x_1)-f(x_2)} \leq L \norm{x_1-x_2}
			\end{equation*}  
			for some norm $\norm{\cdot}$ on $\mathbb{R}^n$.
		\end{definition}
		
		Lipschitz continuity can easily be proven for continuously differentiable functions.
		
		\begin{proposition}
			If $f \in C^1(\mathcal{U},\mathbb{R}^n)$ on a compact and convex set $\mathcal{U} \subset \mathbb{R}^n$, then $f$ is Lipschitz continuous on $\mathcal{U}$.
		\end{proposition}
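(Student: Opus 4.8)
The plan is the classical mean value argument: bound the operator norm of the Jacobian uniformly on $\mathcal{U}$ and then integrate along line segments, using convexity to guarantee that these segments stay inside $\mathcal{U}$.

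First I would observe that since $f \in C^1(\mathcal{U},\mathbb{R}^n)$, the Jacobian matrix $Df: \mathcal{U} \to \mathbb{R}^{n\times n}$ is continuous. Equipping $\mathbb{R}^{n\times n}$ with the operator norm induced by the chosen norm $\norm{\cdot}$ on $\mathbb{R}^n$, the map $x \mapsto \norm{Df(x)}$ is a continuous real-valued function on the compact set $\mathcal{U}$, hence attains a finite maximum; call it $L \coloneqq \max_{x \in \mathcal{U}} \norm{Df(x)} < \infty$. (If $L = 0$ the function is constant and the estimate is trivial, so one may assume $L>0$.)

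Next, fix $x_1, x_2 \in \mathcal{U}$ and consider the auxiliary curve $g:[0,1] \to \mathbb{R}^n$, $g(s) \coloneqq f\bigl(x_2 + s(x_1 - x_2)\bigr)$. By convexity of $\mathcal{U}$, the segment $\{x_2 + s(x_1-x_2) : s \in [0,1]\}$ lies in $\mathcal{U}$, so $g$ is well-defined and $C^1$ with $g'(s) = Df\bigl(x_2 + s(x_1-x_2)\bigr)\,(x_1 - x_2)$. The fundamental theorem of calculus applied componentwise gives
\begin{equation*}
    f(x_1) - f(x_2) = g(1) - g(0) = \int_0^1 Df\bigl(x_2 + s(x_1-x_2)\bigr)(x_1 - x_2)\, \dd s .
\end{equation*}
Taking norms, using the triangle inequality for integrals and the submultiplicativity of the operator norm,
\begin{equation*}
    \norm{f(x_1) - f(x_2)} \leq \int_0^1 \norm{Df\bigl(x_2 + s(x_1-x_2)\bigr)}\,\norm{x_1 - x_2}\, \dd s \leq L\,\norm{x_1 - x_2},
\end{equation*}
which is exactly Lipschitz continuity with constant $L$.

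There is no genuine obstacle here; the statement is a textbook consequence of the mean value inequality. The only points requiring a word of care are (i) that compactness of $\mathcal{U}$ is used to extract the finite bound $L$ on $\norm{Df}$, and (ii) that convexity of $\mathcal{U}$ is what allows the line-segment integration to make sense — without it the curve $g$ could leave the domain. Everything else is routine.
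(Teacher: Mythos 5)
Your proposal is correct and follows essentially the same route as the paper: integrate the Jacobian along the line segment (which lies in $\mathcal{U}$ by convexity) via the fundamental theorem of calculus, and use compactness to bound the Jacobian's operator norm, yielding the Lipschitz constant $L$. If anything, your version states the constant more cleanly than the paper's, so nothing further is needed.
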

		
		\begin{proof}
			As the set $\mathcal{U}$ is convex, for all $x_1,x_2 \in \mathcal{U}$ the line $\{x_1+t(x_2 -x_1): t \in [0,1]\}$ is contained in $\mathcal{U}$. By the mean value theorem it holds for $x_1,x_2 \in \mathcal{U}$
			\begin{equation*}
				{f(x_1)-f(x_2)} = \left( \int_0^1 J_f(x_1 + t(x_2-x_1)) \,\dd t\right) \cdot (x_2 - x_1),
			\end{equation*}
			where $J_f(x) \in \mathbb{R}^{n \times n}$ denotes the Jacobian of $f$ in $x$. Since $f \in C^1(\mathcal{U},\mathbb{R}^n)$, the map $y \mapsto J_f(x) \cdot y$ is continuous and hence bounded on the compact domain $\mathcal{U}$. It follows, that $f$ is Lipschitz continuous on $\mathcal{U}$ with Lipschitz constant $L \coloneqq \sup_{y \in \mathcal{U}} \norm{J_f(x) \cdot y}$:
			\begin{equation*}
				\norm{f(x_1)-f(x_2)} = \norm{\left( \int_0^1 J_f(x_1 + t(x_2-x_1)) \,\dd t\right) \cdot (x_2 - x_1)} \leq L \norm{x_1-x_2}. \qedhere
			\end{equation*} 
		\end{proof}

		\begin{theorem}[Picard-Lindelöf Theorem~\cite{lindeloef84,picard90}]
			Assume the setting of Theorem~\ref{th:peano} and let for each fixed $\bar{t} \in [0,t_0]$ the function $f(h,\bar{t}): \mathbb{R}^n \rightarrow \mathbb{R}^n$ be Lipschitz continuous on $K_r^n(x)$. Then there exists a unique solution to~\eqref{eq:ode_theory} for $t \in [0,\alpha]$.
		\end{theorem}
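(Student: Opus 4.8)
The plan is to recast the initial value problem~\eqref{eq:ode_theory} as a fixed-point equation and invoke the Banach fixed-point theorem. First I would observe that a continuous $h:[0,\alpha]\to K_r^n(x)$ solves~\eqref{eq:ode_theory} if and only if it solves the integral equation
\begin{equation*}
	h(t) = x + \int_0^t f(h(s),s)\,\dd s, \qquad t \in [0,\alpha],
\end{equation*}
which follows from the fundamental theorem of calculus together with continuity of $s \mapsto f(h(s),s)$. Accordingly I would define the Picard operator $\mathcal{T}$ on $X \coloneqq \{h \in C([0,\alpha],\mathbb{R}^n) : h(t) \in K_r^n(x) \text{ for all } t \in [0,\alpha]\}$, equipped with a suitable complete metric, by $(\mathcal{T}h)(t) \coloneqq x + \int_0^t f(h(s),s)\,\dd s$, and write $L$ for a Lipschitz constant of $f$ in its first argument, which we may take uniform over $t \in [0,t_0]$.

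Next I would check that $\mathcal{T}$ maps $X$ into itself: using $\vert f \vert \le M$ on $R$ and $\alpha \le r/M$ one gets $\norm{(\mathcal{T}h)(t) - x}_2 \le M\alpha \le r$, so $(\mathcal{T}h)(t) \in K_r^n(x)$, and $\mathcal{T}h$ is continuous (indeed Lipschitz) in $t$ as the integral of a bounded function. Since $K_r^n(x)$ is closed, $X$ with the supremum metric is complete, so $X$ is a nonempty complete metric space on which $\mathcal{T}$ acts.

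The decisive step is to make $\mathcal{T}$ a contraction. On $[0,\alpha]$ with the plain supremum norm the naive estimate only yields contraction constant $L\alpha$, which need not be $<1$. I would therefore work with the equivalent Bielecki norm $\norm{h}_* \coloneqq \sup_{t\in[0,\alpha]} e^{-2Lt}\norm{h(t)}_2$ (assuming $L>0$; the case $L=0$ being trivial), for which
\begin{align*}
	e^{-2Lt}\norm{(\mathcal{T}h_1)(t) - (\mathcal{T}h_2)(t)}_2
	&\le e^{-2Lt}\int_0^t L\,\norm{h_1(s)-h_2(s)}_2\,\dd s \\
	&\le L\,\norm{h_1-h_2}_*\, e^{-2Lt}\int_0^t e^{2Ls}\,\dd s \le \tfrac{1}{2}\,\norm{h_1-h_2}_*,
\end{align*}
so $\mathcal{T}$ is a $\tfrac12$-contraction on $(X,\norm{\cdot}_*)$. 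The Banach fixed-point theorem then gives a unique fixed point $h^\ast \in X$, which is the unique solution of~\eqref{eq:ode_theory} on $[0,\alpha]$. I would also note that mere existence already follows from the Peano Existence Theorem~\ref{th:peano} under the present hypotheses, so the genuinely new content is uniqueness, which could alternatively be obtained from Grönwall's inequality applied to $\norm{h_1(t)-h_2(t)}_2 \le L\int_0^t \norm{h_1(s)-h_2(s)}_2\,\dd s$.

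I expect the contraction step to be the only real obstacle: without the Bielecki-norm device (or, equivalently, without first shrinking $\alpha$ below $1/L$ and then patching solutions over finitely many overlapping subintervals using uniqueness on the overlaps), one does not get a contraction on the full interval $[0,\alpha]$. The remaining ingredients — equivalence of the IVP with the integral equation, completeness of $X$, and the self-mapping property — are routine verifications.
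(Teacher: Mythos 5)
The paper itself offers no proof of this statement: the Picard--Lindel\"of theorem is quoted in Appendix~\ref{app:odetheory} purely as a classical reference result, so there is no in-paper argument to compare against, and your proposal must be judged on its own. It is the standard contraction-mapping proof and it is correct: the equivalence of \eqref{eq:ode_theory} with the integral equation, the self-mapping of the closed set $X$ (via $M\alpha\le r$, since $\alpha=\min\{t_0,r/M\}$), completeness of $X$ in the supremum metric because $K_r^n(x)$ is closed, and the Bielecki-norm estimate giving a $\tfrac12$-contraction on all of $[0,\alpha]$ are all sound, and Banach's fixed-point theorem then delivers existence and uniqueness without any interval-shrinking. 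This is essentially the argument in the cited classical sources; the only cosmetic difference is that Picard/Lindel\"of iterate directly and bound the $n$-th iterate difference by $(L\alpha)^n/n!$ (or shrink and patch), while the weighted norm packages the same estimate as a one-step contraction. Your alternative uniqueness route via Gr\"onwall is equally standard.

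One point deserves to be made explicit rather than asserted in passing: you write that the Lipschitz constant ``may be taken uniform over $t\in[0,t_0]$'', but the hypothesis as stated only gives Lipschitz continuity of $f(\cdot,\bar t)$ for each \emph{fixed} $\bar t$, and joint continuity of $f$ on the compact set $R$ does not upgrade pointwise-in-$t$ constants to a uniform one (consider $f(h,t)=\sqrt{t}\,\sin(h/t)$ for $t>0$, $f(h,0)=0$, whose constants blow up like $1/\sqrt{t}$). Indeed, with genuinely unbounded $L(t)$ uniqueness can fail: taking $f(h,t)=0$ for $h\le 0$, $f(h,t)=2h/t$ for $0\le h\le t^2$ and $f(h,t)=2t$ for $h\ge t^2$ (with $f(h,0)=0$) gives a continuous vector field, Lipschitz in $h$ for every fixed $t$, for which both $h\equiv 0$ and $h(t)=t^2$ solve the IVP at $x=0$. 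So the uniform-in-$t$ Lipschitz constant is an (intended, standard) hypothesis of the classical theorem rather than a consequence of the assumptions as literally written; with that reading made explicit, your proof is complete.
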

		
		Besides the Picard-Lindelöf Theorem, also other uniqueness theorems with weaker assumptions exist~\cite{hartman02, hsieh99, nagumo93, osgood98}. In this work, often a continuous vector field $f$ and uniqueness of solution curves is assumed (c.f.\ Assumption~\ref{assA:ode_unique}). These two requirements imply continuous dependence on initial conditions, as the following theorem shows.
		
		\begin{theorem}[Continuous Dependence~\cite{hartman02, hsieh99}]\label{th:continuousdependence}
			Let $f \in C^{0,0}(\mathbb{R}^n \times \mathcal{I},\mathbb{R}^n)$ and assume that the solution $h_x: \mathcal{I} \rightarrow \mathbb{R}^n$ of the initial value problem~\eqref{eq:ode_theory} with $h(0)=x$ is unique. Then the solution $h_x$ depends continuously on the initial condition $x$. 
		\end{theorem}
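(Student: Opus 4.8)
The plan is to prove continuity by a compactness argument based on the Arzel\`a--Ascoli theorem, since without a Lipschitz hypothesis on $f$ the usual Gr\"onwall estimate for $\norm{h_{x_k}(t)-h_{x_0}(t)}$ is unavailable. It suffices to prove sequential continuity, the spaces involved being metric: fix $x_0 \in \mathcal{X}$, a sequence $x_k \to x_0$, and a compact interval $[0,\tau]$ on which $h_{x_0}$ exists; I will show $h_{x_k} \to h_{x_0}$ uniformly on $[0,\tau]$ (the two-sided case is identical after time reversal). Throughout I work with the equivalent integral formulation $h_x(t) = x + \int_0^t f(h_x(s),s)\,\dd s$.

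First I would establish that $h_{x_k}$ is defined on all of $[0,\tau]$ for $k$ large and stays in a common compact set. Since the graph $\{(h_{x_0}(t),t): t \in [0,\tau]\}$ is compact, it admits a compact tubular neighborhood $R \subset \mathbb{R}^n \times \mathcal{I}$ on which the continuous function $f$ is bounded, say by $M$, and on whose interior $h_{x_0}$ stays. A continuation argument shows $h_{x_k}$ remains in $R$ on $[0,\tau]$ for $k$ large: if not, pass to a subsequence with first exit (or blow-up) times $t_k \to t_\ast \in (0,\tau]$; on $[0,t_k]$ the solutions are uniformly bounded and, because $h_{x_k}'(t)=f(h_{x_k}(t),t)$ is bounded by $M$, uniformly Lipschitz, hence by Arzel\`a--Ascoli a further subsequence converges uniformly to a solution $g$ on $[0,t_\ast]$ with $g(0)=x_0$; by uniqueness $g = h_{x_0}$, yet $g(t_\ast)=\lim h_{x_k}(t_k)\in\partial R$, contradicting that $h_{x_0}$ stays in $\operatorname{int}(R)$. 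Consequently the family $\{h_{x_k}\}$ is uniformly bounded and equicontinuous on $[0,\tau]$.

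Then I would apply Arzel\`a--Ascoli on $[0,\tau]$: every subsequence of $(h_{x_k})$ has a further subsequence converging uniformly to some $g \in C([0,\tau],\mathbb{R}^n)$. Passing to the limit in the integral equation is legitimate because the $h_{x_k}$ converge uniformly and $f$ is (uniformly) continuous on the compact set $R$, so $\int_0^t f(h_{x_k}(s),s)\,\dd s \to \int_0^t f(g(s),s)\,\dd s$; hence $g(t) = x_0 + \int_0^t f(g(s),s)\,\dd s$, i.e.\ $g$ solves the IVP with initial value $x_0$. By the uniqueness hypothesis $g = h_{x_0}$. Since every subsequence of $(h_{x_k})$ thus has a sub-subsequence converging uniformly to the same limit $h_{x_0}$, the full sequence converges: $h_{x_k}\to h_{x_0}$ uniformly on $[0,\tau]$. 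As $x_0$ and $[0,\tau]$ were arbitrary, $x\mapsto h_x$ is continuous as a map into $C([0,\tau],\mathbb{R}^n)$, which in particular yields joint continuity of $(x,t)\mapsto h_x(t)$.

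I expect the main obstacle to be the first step rather than the limit passage: without Lipschitz continuity one cannot quantitatively control the spreading of nearby solutions, so one must argue purely qualitatively (via compactness and continuation) that initial conditions near $x_0$ produce solutions that live on a common interval and inside a common compact tube. This is precisely where the uniqueness assumption does double duty --- beyond identifying the Arzel\`a--Ascoli limit as $h_{x_0}$, it is needed to exclude premature escape of nearby solutions via branching; indeed, without uniqueness only a weaker ``upper semicontinuous'' dependence (convergence into the solution funnel) holds.
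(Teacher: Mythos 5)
The paper does not prove this theorem at all: it is quoted in Appendix~A as a classical result with citations to Hartman and Hsieh--Sibuya, so there is no in-paper argument to compare against. Your proposal is essentially the standard proof found in those references (Peano-type compactness: trap nearby solutions in a compact tube around the graph of $h_{x_0}$, get equicontinuity from the bound on $f$, apply Arzel\`a--Ascoli, pass to the limit in the integral equation, and invoke uniqueness at $x_0$ to identify the limit, then upgrade subsequential to full convergence), and it is correct; it even isolates the right point that uniqueness is only needed at the limiting initial condition. Two small details are worth tightening if you write it out in full: in the continuation step the functions $h_{x_k}$ live on different intervals $[0,t_k]$, so Arzel\`a--Ascoli should be applied after a harmless extension (or on $[0,t_\ast-\varepsilon]$ with a diagonal argument), and you should note why the exit times are bounded away from $0$ (since $x_k\to x_0$ lies well inside the tube and the speed is bounded by $M$ while in $R$, the exit time is at least a fixed positive constant), which justifies $t_\ast\in(0,\tau]$. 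Also, ``exit through $\partial R$'' should be understood as exit through the lateral boundary $\{\,\lVert y-h_{x_0}(t)\rVert=\rho\,\}$, which is what produces the contradiction with $g=h_{x_0}$; exit through $t=\tau$ is not an exit at all. With these routine fixes the argument is complete.
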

		
		This theorem implies that under Assumption~\ref{assA:ode_existence} the time-$T$ map $H(x) \coloneqq h_x(T): \mathcal{X} \rightarrow \mathbb{R}^n$ is continuous and injective. This result is important, as neural ODEs use the time-$T$ map $H(x)$ to approximate or embed a given map $\Phi$.

		%\newpage

		\nomenclature[A]{ResNet}{Residual Neural Network}
		\nomenclature[A]{ODE}{Ordinary Differential Equation}
		\nomenclature[A]{NODE}{Neural Ordinary Differential Equation}
		\nomenclature[A]{IVP}{Initial Value Problem}
		
		\nomenclature[B, 01]{\(h_k\)}{layers of a feed-forward neural network for \(k \in \{0,1,\ldots,K\}\), \\ hidden layers for \(k \in \{1,\ldots,K-1\}\)}
		\nomenclature[B, 02]{\(h_0\)}{input layer}
		\nomenclature[B, 03]{\(h_K\)}{output layer}
		\nomenclature[B, 04]{\(n_k\)}{number of neurons in layer \(h_k\), \(k \in \{0,1,\ldots,K\}\)}
		\nomenclature[B, 05]{\(\theta_k\)}{weight matrix \(\theta_k \in \mathbb{R}^{n_{k+1} \times n_k}\) for \(k \in \{0,1,\ldots,K-1\}\)}
		\nomenclature[B]{\(\mathcal{N}\)}{general neural network}
		
		\nomenclature[C, 02]{\(h_x\)}{solution of a neural ODE with initial condition $h(0) = x$}
		\nomenclature[C, 01]{\(h(0)\)}{initial condition}
		\nomenclature[C, 03]{\(h_x(T)\)}{time-$T$ map of the solution of a neural ODE with initial condition $h(0)  =x$}
		\nomenclature[C, 04]{\(\theta\)}{parameters $\theta \in \mathbb{R}^p$ of the vector field}
		\nomenclature[C, 05]{\(\Phi\)}{general map}
		\nomenclature[C, 06]{\(\mathcal{X}\)}{set of initial conditions}
		\nomenclature[C, 07]{\(\mathcal{I}\)}{maximal time interval of existence of the ODE}
		
		\nomenclature[D,01]{$\nabla f(x)$}{gradient of a scalar function $f: \mathbb{R}^n \rightarrow \mathbb{R}$ at $x \in \mathbb{R}^n$ }
		\nomenclature[D,02]{$J_f(x)$}{Jacobian matrix of function $f: \mathbb{R}^n \rightarrow \mathbb{R}^m$ at $x \in \mathbb{R}^n$}
		\nomenclature[D,03]{$H_f(x)$}{Hessian matrix of function $f: \mathbb{R}^n \rightarrow \mathbb{R}^m$ at $x \in \mathbb{R}^n$}
		\nomenclature[D,04]{$C^{k,l}(\mathcal{X}_1\times \mathcal{X}_2,\mathcal{Y}$)}{space of $k$ times in $\mathcal{X}_1$ and $l$ times in $\mathcal{X}_2$ continuously differentiable functions $f: \mathcal{X}_1 \times \mathcal{X}_2 \rightarrow \mathcal{Y}$}
		\nomenclature[D,05]{$\norm{x}_2$}{Euclidean norm $\norm{x}_2 \coloneqq \left(\sum_{i = 1}^n x_i^2\right)^{1/2}$ of $x\in \mathbb{R}^n$}
		\nomenclature[D,05]{$\norm{x}_\infty$}{maximums norm $\norm{x}_\infty \coloneqq  \max\{x_1,...,x_n\}$ of a vector $x\in \mathbb{R}^n$}
		\nomenclature[D,06]{$\norm{f}_\infty$}{supremums norm $\norm{f}_\infty \coloneqq \sup_{x \in \mathcal{X}} \vert f(x) \vert$ of a bounded function $f: \mathcal{X} \rightarrow \mathbb{R}$}

		\nomenclature[E,03]{\(\mathbb{R}\)}{real numbers}
		\nomenclature[E,01]{\(\mathbb{N}\)}{natural numbers $\mathbb{N} \coloneqq \{1,2,3,\ldots \}$}
		\nomenclature[E,02]{\(\mathbb{N}_0\)}{natural numbers including zero $\mathbb{N}_0 \coloneqq \{0,1,2,3,\ldots \}$}
		\nomenclature[E,04]{\(\mathcal{U},\mathcal{V},\mathcal{W},\mathcal{X}, \mathcal{Y}\)}{spaces or sets}
		\nomenclature[E,05]{$\partial \mathcal{U}$}{boundary of the set $\mathcal{U}$}
		\nomenclature[E,06]{$\text{int}( \mathcal{U})$}{interior of the set $\mathcal{U}$, $\text{int}( \mathcal{U}) \coloneqq \mathcal{U} \setminus \partial \mathcal{U}$}
		\nomenclature[E,07]{$\bar{ \mathcal{U}}$}{closure of the set $\mathcal{U}$, $\bar{\mathcal{U}} \coloneqq \text{int}(\mathcal{U}) \cup \partial \mathcal{U}$}
		
		\nomenclature[F]{$[v]_{i,\ldots,j}$}{components $\{i,\ldots,j\}\subset \{1,\ldots,n\}$ of the vector $v\in \mathbb{R}^n$}

		%\printnomenclature[2.7cm]
	\end{appendices}
	
	%\newpage
	
	\bibliographystyle{abbrvurl}
	\bibliography{references}
	
\end{document}